\pgfplotsset{compat=newest} 
\pgfplotsset{plot coordinates/math parser=false}
\newlength\fwidth
\definecolor{myBlue}{rgb}{0.0,0.0,0.55}
  \newcounter{mnote}
  \let\oldmarginpar\marginpar
    \renewcommand\marginpar[1]{\-\oldmarginpar[\raggedleft\footnotesize #1]%
    {\raggedright\footnotesize #1}}
\newtheorem{theorem}{Theorem}[section]
\newtheorem{lemma}[theorem]{Lemma}
\newtheorem{corollary}[theorem]{Corollary}
\newtheorem{proposition}[theorem]{Proposition}
\newtheorem{example}[theorem]{Example}
\newtheorem{remark}[theorem]{Remark}
\newcommand{\dx}{\,{\rm d}x}
\newcommand{\dd}{\,{\rm d}}
\newcommand{\bs}{\boldsymbol}
\DeclareMathOperator*{\img}{img}
\DeclareMathOperator*{\spa}{span}
\newcommand{\curl}{{\rm curl\,}}
\renewcommand{\div}{\operatorname{div}}
\newcommand{\grad}{{\rm grad\,}}
\newcommand{\rot}{{\rm rot}}
\newcommand{\tr}{\operatorname{tr}}
\newcommand{\dev}{\operatorname{dev}}
\newcommand{\sym}{\operatorname{sym}}
\newcommand{\skw}{\operatorname{skw}}
\newcommand{\mskw}{\operatorname{mskw}}
\newcommand{\vskw}{\operatorname{vskw}}
\newcommand{\defm}{\operatorname{def}}
\newcommand{\hess}{\operatorname{hess}}
\newcommand{\inc}{\operatorname{inc}}
\newcommand{\step}[1]{\noindent\raisebox{1.5pt}[10pt][0pt]{\tiny\framebox{$#1$}}\xspace}
\newcommand{\vertiii}[1]{{\left\vert\kern-0.25ex\left\vert\kern-0.25ex\left\vert #1 
    \right\vert\kern-0.25ex\right\vert\kern-0.25ex\right\vert}}
\newcommand{\Oplus}{\ensuremath{\vcenter{\hbox{\scalebox{1.5}{$\oplus$}}}}}
\begin{document}
\title[Finite Element Complexes from Complexes]{Complexes from Complexes: Finite Element Complexes in Three Dimensions}

 \author{Long Chen}%
 \address{Department of Mathematics, University of California at Irvine, Irvine, CA 92697, USA}%
 \email{chenlong@math.uci.edu}%
 \author{Xuehai Huang}%
 \address{School of Mathematics, Shanghai University of Finance and Economics, Shanghai 200433, China}%
 \email{huang.xuehai@sufe.edu.cn}%

 \thanks{The first author was supported by NSF DMS-2309785 and DMS-2309777.}
 \thanks{The second author is the corresponding author. The second author was supported by the National Natural Science Foundation of China (Grant No. 12171300) and the Natural Science Foundation of Shanghai (Grant No. 21ZR1480500).}

\makeatletter
\@namedef{subjclassname@2020}{\textup{2020} Mathematics Subject Classification}
\makeatother
\subjclass[2020]{
65N30;   
58J10;   
65N12;   
}

\begin{abstract}
In the field of solving partial differential equations (PDEs), Hilbert complexes have become highly significant. Recent advances focus on creating new complexes using the Bernstein-Gelfand-Gelfand (BGG) framework, as shown by Arnold and Hu [Complexes from complexes. {\em Found. Comput. Math.}, 2021]. This paper extends their approach to three-dimensional finite element complexes. The finite element Hessian, elasticity, and divdiv complexes are systematically derived by applying techniques such as smooth finite element de Rham complexes, the $t$-$n$ decomposition, and trace complexes, along with related two-dimensional finite element analogs. The construction includes two reduction operations and one augmentation operation to address continuity differences in the BGG diagram, ultimately resulting in a comprehensive and effective framework for constructing finite element complexes, which have various applications in PDE solving.
\end{abstract}
\maketitle


\section{Introduction}
Hilbert complexes are essential in developing robust numerical methods for solving partial differential equations (PDEs)~\cite{ArnoldFalkWinther2006, Arnold;Falk;Winther:2010Finite, Arnold:2018Finite, ChenHuang2018}. Arnold and Hu~\cite{Arnold;Hu:2020Complexes} have recently introduced a systematic methodology for creating new complexes by applying the Bernstein-Gelfand-Gelfand (BGG) framework to well-known Hilbert complexes, including the de Rham complex. In this study, we focus on systematically constructing finite element complexes in a three-dimensional setting using the BGG approach.



Let $\Omega$ be a domain in $\mathbb R^3$. The de Rham complex reads as
\begin{equation}\label{eq:deRham}
\mathbb R\hookrightarrow{} H^1(\Omega)\xrightarrow{\grad} H(\curl,\Omega)\xrightarrow{\curl} H(\div,\Omega)\xrightarrow{\div}L^2(\Omega) \xrightarrow{}0, 
\end{equation}
where Sobolev spaces
\begin{equation*}
\begin{aligned}
H^1(\Omega) &:= \{\phi \in L^2(\Omega) : \grad \phi \in  L^2(\Omega;\mathbb R^3) \},\\
H(\curl,\Omega) &:= \{\boldsymbol u \in  L^2(\Omega;\mathbb R^3): \curl \boldsymbol u \in  L^2(\Omega;\mathbb R^3) \},\\
 H(\div,\Omega) &:= \{\boldsymbol u \in L^2(\Omega;\mathbb R^3): \div \boldsymbol u\in L^2(\Omega) \}.
\end{aligned}
\end{equation*}
By stacking copies of de Rham complexes to form a BGG diagram, several complexes can be derived from the BGG framework~\cite{Arnold;Hu:2020Complexes}, including, but not limited to, the Hessian complex, the elasticity complex, and the divdiv complex. Both the Hessian complex and the divdiv complex are applied in solving the biharmonic equation~\cite{ChenHuang2024div-div-conforming,Chen;Hu;Huang:2018Multigrid,PaulyZulehner2020} and the Einstein-Bianchi equation~\cite{QuennevilleBelair2015}. The space $H^{-1}(\div\div,\Omega;\mathbb S)$ in the divdiv complex can also be used to address elasticity problems~\cite{PechsteinSchoeberl2011} and the Reissner-Mindlin plate model~\cite{PechsteinSchoeberl2017}. The space $H(\div,\Omega;\mathbb S)$ in the elasticity complex is crucial for modeling stress in elasticity problems~\cite{JohnsonMercier1978,ArnoldDouglasGupta1984}. The incompatibility operator ${\rm inc}$ in the elasticity complex has applications in intrinsic elasticity~\cite{CiarletGratieMardare2009}, dislocation theory~\cite{VanGoethem2010}, elastoplasticity~\cite{Amstutz;Van-Goethem:2019incompatibility}, and relativity~\cite{Christiansen:2011linearization,Li2018}. However, this paper does not present numerical methods for specific partial differential equations. Instead, it focuses on developing a framework for constructing finite element complexes, which may have applications in the areas mentioned above.

Recently, there have been significant developments in the construction of finite element Hessian complexes, elasticity complexes, and divdiv complexes. These constructions have typically been approached on a case-by-case basis in previous works~\cite{ChenHuang2020, Chen;Huang:2020Finite, Chen;Huang:2021Finite, Christiansen;Gopalakrishnan;Guzman;Hu:2020discrete, HuLiang2021, Hu;Liang;Ma:2021Finite, Hu;Liang;Ma;Zhang:2022conforming, Hu;Ma;Zhang:2020family}. Our objective is to extend the application of the BGG construction to finite element complexes, thereby unifying these previously scattered results and systematically generating new ones. This has been achieved in our recent work~\cite{Chen;Huang:2022femcomplex2d}, which focused on two-dimensional cases.

However, extending to three dimensions introduces additional challenges. One significant challenge is the construction of finite element de Rham complexes with varying degrees of smoothness in three dimensions. We have successfully tackled this issue in our recent work~\cite{Chen;Huang:2022FEMcomplex3D}, which we will briefly summarize below.

We firstly recall the smooth finite elements constructed in ~\cite{huConstructionConformingFinite2021} by Hu, Lin and Wu.  
Given an integer vector $\boldsymbol r = (r^{\texttt{v}}, r^e, r^f)^{\intercal}$ with $r^{\texttt{v}} \geq 2r^e\geq 4r^f\geq 0$ and polynomial degree $k\geq 2r^{\texttt{v}}+1$, one can construct a decomposition of the simplicial lattice and design finite elements with $C^{r^{\texttt{v}}}$ smoothness at vertices, $C^{r^e}$ smoothness on edges, and $C^{r^f}$ smoothness across faces. Therefore the finite element space is $H^{r^f +1}$-conforming. Such approach can be generalized to arbitrary dimension; see~\cite{huConstructionConformingFinite2021} or~\cite[Appendix A]{Chen;Huang:2022FEMcomplex3D}. It unifies the scattered results ~\cite{BrambleZlamal1970,Zenisek1970,ArgyrisFriedScharpf1968} in two dimensions (including the well known $C^1$ Argyris element),~\cite{Zenisek1974a,Zhang2009a,Lai;Schumaker:2007Trivariate} in three dimensions, and~\cite{Zhang2016a} in four dimensions. 
Notice that the finite element spaces constructed in~\cite{WangXu2006,WangXu2013,WuXu2019} are $H^m$ non-conforming while this paper focus on conforming discretization. 

The requirement $r^{\texttt{v}} \geq 2r^e\geq 4r^f\geq 0$ can be relaxed. We  call $\boldsymbol r$ a smoothness vector if 
$r^f \geq -1$, $r^e \geq \max\{2r^f, -1\}$, and $r^{\texttt{v}} \geq \max\{2r^e, -1\}$. Here $-1$ means no continuity and thus a lower bound $\boldsymbol r\geq -1$ is imposed. To reflect such requirement, for a smoothness vector $\boldsymbol r$,  define $\boldsymbol r\ominus 1 := \max \{\boldsymbol r-1, -1\}$. Through the utilization of a simplicial lattice decomposition, we are able to construct scalar finite element space $\mathbb V^{\grad}_{k}(\boldsymbol{r})$ which is $C^{r^f}$ conforming with $r^f\geq 0$, and space $\mathbb V^{L^2}_{k}(\boldsymbol{r})$ which admits $r^f=-1$. 


Let $\boldsymbol r_0 \geq 0, \boldsymbol r_1 = \boldsymbol r_0 -1, \boldsymbol r_2\geq\boldsymbol r_1\ominus1, \boldsymbol r_3\geq \boldsymbol r_2\ominus1$. Introduce spaces
\begin{align*}
\mathbb V^{\curl}_{k+1}(\boldsymbol r_1, \boldsymbol r_2) &:= \{ \boldsymbol v\in \mathbb V^3_{k+1}(\boldsymbol r_1) \cap H(\curl,\Omega): \curl \boldsymbol v\in \mathbb V_{k}^{\div}(\boldsymbol r_2)\}, \\
\mathbb V^{\div}_{k}(\boldsymbol r_2, \boldsymbol r_3) &:= \{ \boldsymbol v\in \mathbb V^3_{k}(\boldsymbol r_2)\cap H(\div,\Omega): \div \boldsymbol v\in \mathbb V_{k-1}^{L^2}(\boldsymbol r_3)\}.
\end{align*}
Assume $(\boldsymbol r_2, \boldsymbol r_3,k)$ is div stable, i.e., $\div\mathbb V^{\div}_{k}(\boldsymbol{r}_2,\boldsymbol{r}_3)=\mathbb V^{L^2}_{k-1}(\boldsymbol{r}_3)$, and $k\geq\max\{2 r_1^{\texttt{v}} + 1,2 r_2^{\texttt{v}} + 1,2 r_3^{\texttt{v}} + 2,1\}$. In~\cite{Chen;Huang:2022FEMcomplex3D}, we construct the finite element de Rham complex as follows:
\begin{equation}\label{eq:intro-femderham}
\mathbb R\xrightarrow{\subset}\mathbb V^{\grad}_{k+2}(\boldsymbol{r}_0)\xrightarrow{\grad}\mathbb V^{\curl}_{k+1}(\boldsymbol{r}_1,\boldsymbol{r}_2)\xrightarrow{\curl}\mathbb V^{\div}_{k}(\boldsymbol{r}_2,\boldsymbol{r}_3)\xrightarrow{\div}\mathbb V^{L^2}_{k-1}(\boldsymbol{r}_3)\to 0.
\end{equation}
And we give finite element descriptions for spaces $\mathbb V^{\curl}_{k+1}(\boldsymbol r_1, \boldsymbol r_2)$ and $\mathbb V^{\div}_{k}(\boldsymbol r_2, \boldsymbol r_3)$ in~\eqref{eq:intro-femderham}.
When \(r_2^f\geq 0\),~\eqref{eq:intro-femderham} transforms into a finite element Stokes complex, given that the space \(\mathbb V^{\div}_{k}(\boldsymbol{r}_2,\boldsymbol{r}_3)\subset H^1(\Omega;\mathbb R^3)\). This enables the discretization of Stokes equation. 

The major challenge of extending BGG to the finite element complexes emerges from the mis-match in the continuity of Sobolev spaces, specifically $H^1(\Omega)$, $H(\curl,\Omega)$, and $H(\div,\Omega)$. This discrepancy can be explained using the following diagram:
\begin{equation*}
\begin{tikzcd}
{H^1(\Omega;\mathbb R^3)}\arrow{r}{{\grad}} &{H(\curl, \Omega; \mathbb{M})}  \arrow{r}{{\curl}} &{H(\div, \Omega; \mathbb{M})} \arrow{r}{{\div}} & {L^2(\Omega;\mathbb R^3)} \to \boldsymbol {0}\\
H^{1}(\Omega) \arrow{r}{\grad} \arrow[ur, "\iota"]& H(\curl,\Omega)  \arrow{r}{\curl} \arrow[ur, "\mskw"]& H(\div,\Omega) \arrow{r}{\div}\arrow[ur, "\mathrm{id}"] & L^2(\Omega)  \longrightarrow 0.
 \end{tikzcd}
\end{equation*}
\begin{enumerate}[1.]
\item The mapping $\iota: H^1(\Omega)\to H(\curl, \Omega; \mathbb{M})$, where $\mathbb M = \mathbb R^{3\times 3}$ and $\iota v = v \boldsymbol I\in \mathbb M$, is well-defined. However, its right inverse $\tr: H(\curl, \Omega; \mathbb{M})\to H^1(\Omega)$ is not, as functions in $H(\curl,\Omega)$ possess only the tangential continuity.
\item The inclusion $\mskw H(\curl,\Omega) \subset H(\div,\Omega; \mathbb{M})$ is justifiable (refer to Section 2.1), yet its right inverse $\vskw: H(\div, \Omega; \mathbb{M}) \to H(\curl,\Omega)$ is not attainable.

\item The obvious inclusion $H(\div,\Omega)\subset L^2(\Omega,\mathbb R^3)$ is not surjective.
\end{enumerate}
In the work of Arnold and Hu~\cite{Arnold;Hu:2020Complexes}, the domain Sobolev spaces $H(\dd,\Omega)$, where $\dd = \curl$ or $\div$, are substituted with $H^s(\Omega)$ Sobolev spaces having matching indices $s$, as depicted in diagram~\eqref{eq:3DBGG}. For finite element spaces that solely conform to $H(\curl,\Omega)$ or $H(\div,\Omega)$, the challenge posed by the mis-match in tangential or normal continuity serves as the primary obstacle to extending the BGG framework to the discrete context.

We tackle this challenge by identifying sub-complexes within the finite element de Rham complex~\eqref{eq:intro-femderham} by imposing appropriate subspace restrictions. The $\ \widetilde{}\ $ and $\ \widehat{}\ $ operations applied to a short exact sequence are thoroughly discussed in Section~\ref{sec:bggconstruct}. By applying these two reduction operations to suitable smooth finite element de Rham complexes, we are able to construct the finite element Hessian complex, the finite element elasticity complex, and the finite element divdiv complex using the BGG framework. Additionally, we propose an augmentation operation to further extend the constructed complexes to a broader set of smoothness vectors.

Deriving finite element descriptions for these subspaces, which involve element-wise degrees of freedom (DoFs), presents a more intricate challenge and requires significant effort. We will provide DoFs for these tensor finite element spaces using three key methodologies:

\begin{enumerate}[1.]
\item Smooth finite element de Rham complexes. As previously mentioned and discussed in detail in~\cite{Chen;Huang:2022FEMcomplex3D}, these complexes offer a fundamental basis for the construction.
  
\item The $t$-$n$ decomposition approach. Introduced in~\cite{Chen;Huang:2021Geometric}, this approach plays a critical role in the construction of $H(\div)$-conforming elements.
  
\item Trace complexes and two-dimensional (2D) finite element complexes. We employ two trace complexes on each face and draw insights from 2D finite element complexes~\cite{Chen;Huang:2022femcomplex2d} to guide the construction of DoFs on edges.
\end{enumerate}

\begin{figure}[htbp]
\begin{center}
\includegraphics[width=11.75cm]{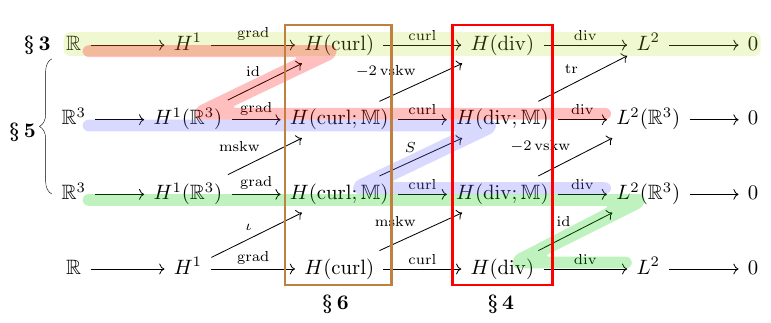}
\caption{Organization of Sections 3 - 6.}
\label{fig:sections}
\end{center}
\end{figure}

The remainder of this paper is organized as follows. Section~\ref{sec:preliminary} provides a detailed overview of the BGG framework and the two reduction operations and one augment operation, laying the groundwork for subsequent sections. Section~\ref{sec:femderhamcomplex} reviews the smooth finite element de Rham complexes. Section~\ref{sec:faceelements} focuses on the construction of face finite elements. The BGG-based construction of finite element Hessian, elasticity, and divdiv complexes is presented in Section~\ref{sec:FEMcomplexes}, followed by the construction of edge elements in Section~\ref{sec:edgeelements}.  
Appendix~\ref{sec:bubblecomplex} develops various bubble polynomial complexes. Figure~\ref{fig:sections} visually outlines the organization of the paper.
%

\section{Preliminary}\label{sec:preliminary}
In this section, we will briefly review the framework developed in~\cite{Arnold;Hu:2020Complexes} for deriving complexes using the Bernstein-Gelfand-Gelfand (BGG) construction~\cite{BernsteinGelfandGelfand1975}. Earlier contributions to this area can be found in works such as~\cite{Eastwood2000,Arnold;Winther:2002finite,ArnoldFalkWinther2006b}. Additionally, we introduce two reduction operations and one augmentation operation applied to a short exact sequence, which effectively broaden the applicability of the BGG framework. 

Throughout this paper, we assume that $\Omega$ is topologically trivial, ensuring that the de Rham complex~\eqref{eq:deRham} is exact. As a result, all derived complexes based on de Rham complexes are also exact~\cite{Arnold;Hu:2020Complexes}. For simplicity, we omit $\Omega$ in the notation for spaces. For instance, $H^s=H^s(\Omega)$ represents the standard Sobolev space with real index $s$.

\subsection{Notation}
Following \cite{Chen;Huang:2021Finite}, we define the dot product and cross product from the left, denoted as \(\boldsymbol{b} \cdot \boldsymbol{A}\) and \(\boldsymbol{b} \times \boldsymbol{A}\), respectively. These operations are applied column-wise to the matrix \(\boldsymbol{A}\). Conversely, when the vector appears on the right of the matrix, i.e., \(\boldsymbol{A} \cdot \boldsymbol{b}\) and \(\boldsymbol{A} \times \boldsymbol{b}\), the operations are defined row-wise. The order of performing row and column products is interchangeable, resulting in the associative rule for triple products:
$$
\boldsymbol{b} \times \boldsymbol{A} \times \boldsymbol{c} := (\boldsymbol{b} \times \boldsymbol{A}) \times \boldsymbol{c} = \boldsymbol{b} \times (\boldsymbol{A} \times \boldsymbol{c}).
$$
Similar rules apply for \(\boldsymbol{b} \cdot \boldsymbol{A} \cdot \boldsymbol{c}\) and \(\boldsymbol{b} \cdot \boldsymbol{A} \times \boldsymbol{c}\), allowing parentheses to be omitted.

We treat the Hamilton operator \(\nabla = (\partial_1, \partial_2, \partial_3)^{\intercal}\) as a column vector. For a scalar function \(u\), \(\nabla u = \grad u\) represents the gradient of \(u\). For a vector function \(\boldsymbol{u} = (u_1, u_2, u_3)^{\intercal}\), \(\curl \boldsymbol{u} = \nabla \times \boldsymbol{u}\) and \(\div \boldsymbol{u} = \nabla \cdot \boldsymbol{u}\) are the standard differential operations. Define \(\nabla \boldsymbol{u} = \nabla \boldsymbol{u}^{\intercal} = (\partial_i u_j)\), which can be understood as the dyadic product of the Hamilton operator \(\nabla\) and the column vector \(\boldsymbol{u}\).

By applying matrix-vector operations to the Hamilton operator \(\nabla\), we obtain column-wise differentiations \(\nabla \cdot \boldsymbol{A}\), \(\nabla \times \boldsymbol{A}\), and row-wise differentiations \(\boldsymbol{A} \cdot \nabla\), \(\boldsymbol{A} \times \nabla\). We introduce the following double differential operators:
\begin{equation*}
\inc \boldsymbol{A} := \nabla \times \boldsymbol{A} \times \nabla, \quad \div\div\boldsymbol{A} := \nabla \cdot \boldsymbol{A} \cdot \nabla.
\end{equation*}

In the literature, differential operators for matrices are typically applied row-wise to tensors. To distinguish this from the \(\nabla\) notation, we define the operators using letters:
\begin{align*}
\grad \boldsymbol{u} &:= \boldsymbol{u} \nabla^{\intercal} = (\partial_j u_i) = (\nabla \boldsymbol{u})^{\intercal},\\
\curl \boldsymbol{A} &:= - \boldsymbol{A} \times \nabla = (\nabla \times \boldsymbol{A}^{\intercal})^{\intercal},\\
\div \boldsymbol{A} &:= \boldsymbol{A} \cdot \nabla = (\nabla \cdot \boldsymbol{A}^{\intercal})^{\intercal}.
\end{align*}
Notice the sign change in $\curl \boldsymbol{A} = - \boldsymbol{A} \times \nabla$.

For a given plane \(F\) with a normal vector \(\boldsymbol{n}\), we define the projection matrix and its rotation as
$$
\Pi_F := \boldsymbol{I} - \boldsymbol{n}\boldsymbol{n}^{\intercal}, \quad \Pi_F^{\bot} := \boldsymbol{n} \times \Pi_F.
$$
We then introduce the following definitions:
$$
\nabla_F := \Pi_F \nabla, \quad
\nabla_F^{\bot} := \Pi_F^{\bot} \nabla.
$$
For a scalar function \(v\), we have:
\begin{align*}
\grad_F v = \nabla_F v &= \Pi_F (\nabla v) = - \boldsymbol{n} \times (\boldsymbol{n} \times \nabla v), \\
\curl_F v = \nabla_F^{\bot} v &= \boldsymbol{n} \times \nabla v = \boldsymbol{n} \times \nabla_F v,
\end{align*}
where \(\grad_F v\) is the surface gradient of \(v\), and \(\curl_F v\) is the surface \(\curl\).

For a vector function \(\boldsymbol{v}\), the surface divergence is defined as:
$$
\div_F \boldsymbol{v} := \nabla_F \cdot \boldsymbol{v} = \nabla_F \cdot (\Pi_F \boldsymbol{v}).
$$
Additionally, the surface rot operator is defined as:
\begin{equation*}
{\rm rot}_F \boldsymbol{v} := \nabla_F^{\bot} \cdot \boldsymbol{v} = (\boldsymbol{n} \times \nabla) \cdot \boldsymbol{v} = \boldsymbol{n} \cdot (\nabla \times \boldsymbol{v}),
\end{equation*}
which represents the normal component of \(\nabla \times \boldsymbol{v}\). 


We denote by $\mathbb M$ the space of $3\times 3$ matrices, $\mathbb S$ the subspace of symmetric matrices, $\mathbb T$ the subspace of traceless matrices, and $\mathbb K$ the subspace of skew-symmetric matrices. A matrix $\boldsymbol{\tau}$ can be decomposed into $\boldsymbol{\tau}=\sym\boldsymbol{\tau}+\skw\boldsymbol{\tau}$ with the symmetric part $\sym\boldsymbol{\tau}:=(\boldsymbol{\tau}+\boldsymbol{\tau}^{\intercal})/2$ and the skew-symmetric part $\skw\boldsymbol{\tau}:=(\boldsymbol{\tau}-\boldsymbol{\tau}^{\intercal})/2$.

We will use Iverson bracket  $[\cdot]$~\cite{iverson1962programming}, which extends the Kronecker delta function to a statement $P$
$$
[P]= 
\begin{cases}1 & \text { if } P \text { is true}, \\ 0 & \text { otherwise}.\end{cases}
$$

\subsection{BGG Construction}\label{sec:bggconstruct}
We follow~\cite{Arnold;Hu:2020Complexes} to briefly review the BGG construction.  A bounded Hilbert complex is a sequence of Hilbert spaces connected by a sequence of linear bounded operators satisfying the property: the composition of two consecutive operators vanishes. Assume we have two bounded Hilbert complexes $(\mathcal V_{\bullet}\otimes\mathbb E_{\bullet}, d_{\bullet})$, $(\mathcal V_{\bullet}\otimes\tilde{\mathbb E}_{\bullet}, \tilde{d}_{\bullet})$ and bounded linking maps $S_i={\rm id}\otimes s_i: \mathcal V_{i+1}\otimes\tilde{\mathbb E}_i\to \mathcal V_{i+1}\otimes\mathbb E_{i+1}$ for $i=0,\ldots, n-1$ 
\begin{equation}\label{eq:BGGgeneral}
\begin{tikzcd}[column sep=small, row sep=large]
0  \arrow{r}{} &  \mathcal V_0\otimes\mathbb E_0  \arrow{r}{d_0} & \mathcal V_1\otimes\mathbb E_1 \arrow{r}{d_1} & \;\;\;\;\,\cdots\;\;\;\;\, \arrow{r}{d_J} & \quad\,\cdots\quad\, \arrow{r}{d_{n-1}} & \mathcal V_n\otimes\mathbb E_n \arrow{r}{} &  0\\
0  \arrow{r}{} &  \mathcal V_1\otimes\tilde{\mathbb E}_0 \arrow{r}{\tilde d_0} \arrow[ur, "S_0"] & \mathcal V_2\otimes\tilde{\mathbb E}_1 \arrow{r}{\tilde d_1} \arrow[ur, "S_1"] & \;\;\;\;\,\cdots\;\;\;\;\,\arrow{r}{d_J} \arrow[ur, "S_{J}"]  & \quad\,\cdots\quad\,  \arrow{r}{\tilde d_{n-1}}\arrow[ur, "S_{n-1}"] 
& \mathcal V_{n+1}\otimes\tilde{\mathbb E}_n \arrow{r}{} & 0,
\end{tikzcd}
\end{equation}
in which $s_i: \tilde{\mathbb E}_i\to\mathbb E_{i+1}$ is a linear operator between finite-dimensional spaces. The operators in~\eqref{eq:BGGgeneral} satisfy anti-commutativity: $S_{i+1}\tilde{d}_i=-d_{i+1}S_i$, and $J$-injectivity/surjectivity condition: for some particular $J$ with $0\leq J<n$, $s_i$ is injective for $i=0,\ldots,J$ and is surjective for $i=J,\ldots,n-1$. The output complex is 
$$
0\xrightarrow{}\varUpsilon_0\xrightarrow{\mathcal D_0}\varUpsilon_1\xrightarrow{\mathcal D_1}\cdots\xrightarrow{\mathcal D_{J-1}}\varUpsilon_J\xrightarrow{\mathcal D_J}\varUpsilon_{J+1}\xrightarrow{\mathcal D_{J+1}}\cdots\xrightarrow{\mathcal D_{n-1}}\varUpsilon_n\to0,
$$
where $\varUpsilon_i$ is $\mathcal V_i\otimes(\mathbb E_i/\img(s_{i-1}))$ for $i=0,\ldots, J$ and $\mathcal V_{i+1}\otimes\ker(s_{i})$ for $i=J+1,\ldots, n$, $\mathcal D_i$ is the projection of $d_i$ onto $\varUpsilon_{i+1}$ for $i=0,\ldots, J-1$, $\tilde d_i$ for $i=J+1,\ldots,n-1$ and $\tilde d_J(S_J)^{-1}d_J$ for $i=J$.
By the BGG framework, many new complexes can be generated from old ones.

The following example is presented in~\cite{Arnold;Hu:2020Complexes}. In three dimensions, we stack copies of de Rham complexes to form the diagram
\begin{equation}\label{eq:3DBGG}
\begin{tikzcd}
\mathbb{R} \longrightarrow H^{s} \arrow{r}{\grad} &{H^{s-1}(\mathbb R^3)} \arrow{r}{\curl} &{H^{s-2}(\mathbb R^3)} \arrow{r}{{\div}} & H^{s-3} \longrightarrow 0\\
\mathbb{R}^3 \to { H^{s-1}(\mathbb R^3)}\arrow{r}{\grad} \arrow[ur, "{\mathrm{id}}"]& {H^{s-2}(\mathbb{M}) } \arrow{r}{{\curl}} \arrow[ur, "{-2\vskw}"]& {H^{s-3}(\mathbb{M})} \arrow{r}{{\div}}\arrow[ur, "{\tr}"] & {H^{s-4}(\mathbb{R}^3) } \to \bs{0}\\
\mathbb{R}^3 \to {H^{s-2}(\mathbb{R}^3)}\arrow{r}{{\grad}} \arrow[ur, "{\mskw}"]&{H^{s-3}(\mathbb{M})}  \arrow{r}{{\curl}} \arrow[ur, "{S}"]&{H^{s-4}(\mathbb{M})} \arrow{r}{{\div}}\arrow[ur, "{-2\vskw}"] & {H^{s-5}(\mathbb{R}^3)} \to \bs{0}\\
\mathbb{R} \longrightarrow H^{s-3}\arrow{r}{\grad} \arrow[ur, "\iota"]&H^{s-4}(\mathbb{R}^3)  \arrow{r}{\curl} \arrow[ur, "\mskw"]&H^{s-5}(\mathbb{R}^3) \arrow{r}{\div}\arrow[ur, "\mathrm{id}"] & H^{s-6} \longrightarrow 0,
 \end{tikzcd}
\end{equation}
where $H^s(\mathbb X) = H^s\otimes \mathbb X$ for $\mathbb X = \mathbb R^3$ or $\mathbb M$, and operators
\begin{align*}
S \boldsymbol  \tau &= \boldsymbol  \tau^{\intercal} - \tr(\boldsymbol  \tau) \boldsymbol  I, \quad \iota: \mathbb R\to \mathbb M,\; \iota v = v \boldsymbol I,\\
\mskw \boldsymbol  \omega &:=
\begin{pmatrix}
 0 & -\omega_3 & \omega_2 \\
\omega_3 & 0 & - \omega_1\\
-\omega_2 & \omega_1 & 0
\end{pmatrix},\quad 
\vskw := \mskw^{-1}\circ \skw.
\end{align*}
Considering operators in the $\nearrow$ direction, the three operators in the diagonal are one-to-one,  the lower triangular part is injective, and the upper triangular part is surjective. By direct calculation, the parallelogram formed by the north-east diagonal  $\nearrow$ and the horizontal operators is anticommutative. 

Then applying the BGG construction, several complexes can be derived from the BGG framework including but not limited to the Hessian complex, the elasticity complex, and the divdiv complex; see the three zigzag paths in Fig.~\ref{fig:sections}. 

Recent efforts have led to the individual construction of finite element Hessian complexes, elasticity complexes, and divdiv complexes, as demonstrated in works such as~\cite{ChenHuang2020,Chen;Huang:2020Finite,Chen;Huang:2021Finite,Christiansen;Gopalakrishnan;Guzman;Hu:2020discrete,HuLiang2021,Hu;Liang;Ma:2021Finite,Hu;Liang;Ma;Zhang:2022conforming,Hu;Ma;Zhang:2020family}. However, these constructions have been carried out on a case-by-case basis, raising the question: Can the overarching BGG framework be employed to establish a unified foundation for these diverse constructions?

Applying the BGG methodology to finite element complexes is a challenging task. A significant difficulty lies in constructing finite element de Rham complexes with different degrees of smoothness. We have successfully addressed this challenge, as detailed in~\cite{Chen;Huang:2022FEMcomplex3D}, and will revisit the specific techniques in Section \ref{sec:femderhamcomplex}.

Another intricate difficulty that arises from the diagram~\eqref{eq:3DBGG} becomes evident when we shift from the Sobolev space $H^s$ to the domain spaces $H(\grad)$, $H(\curl)$, or $H(\div)$ within the diagram:
\begin{equation}\label{eq:3DBGGdomain}
\begin{tikzcd}
\mathbb{R} \longrightarrow H^{1}  \arrow{r}{\grad} & H(\curl) \arrow{r}{\curl} & H(\div) \arrow{r}{{\div}} & L^2 \longrightarrow {0}\\
\mathbb{R}^3 \to { H^1(\mathbb R^3)}\arrow{r}{\grad} \arrow[ur, "{\mathrm{id}}"]& {H(\curl; \mathbb{M}) } \arrow{r}{{\curl}} \arrow[ur, "{-2\vskw}"]& {H(\div; \mathbb{M})} \arrow{r}{{\div}}\arrow[ur, "{\tr}"] & {L^2(\mathbb R^3)} \to \bs{0}\\
\mathbb{R}^3 \to {H^1(\mathbb R^3)}\arrow{r}{{\grad}} \arrow[ur, "{\mskw}"]&{H(\curl; \mathbb{M})}  \arrow{r}{{\curl}} \arrow[ur, "{S}"]&{H(\div; \mathbb{M})} \arrow{r}{{\div}}\arrow[ur, "{-2\vskw}"] & {L^2(\mathbb R^3)} \to \boldsymbol {0}\\
\mathbb{R} \longrightarrow H^{1} \arrow{r}{\grad} \arrow[ur, "\iota"]& H(\curl)  \arrow{r}{\curl} \arrow[ur, "\mskw"]& H(\div) \arrow{r}{\div}\arrow[ur, "\mathrm{id}"] & L^2  \longrightarrow 0,
 \end{tikzcd}
\end{equation}
where $H(\curl; \mathbb{M}) = \mathbb R^3\otimes H(\curl)$ and $H(\div; \mathbb{M}) = \mathbb R^3\otimes H(\div)$.
The Hamilton operator $\nabla$ can be employed to represent the differential operators.
Symbolically, by substituting $\nabla$ with the outward unit normal vector $\boldsymbol{n}$, these operations maintain their respective anticommutative properties. For example, 
\begin{align}
\notag
(2\vskw \boldsymbol \tau)\cdot\boldsymbol n & = -\tr(\boldsymbol \tau\times \boldsymbol n),\\ 
\label{eq:nS} (S \boldsymbol \tau)\boldsymbol n &= -2\vskw(\boldsymbol \tau\times \boldsymbol n).
\end{align}
Therefore, the operators in the $\nearrow$ direction retain their well-defined nature. To illustrate, consider the operator $S: H(\curl; \mathbb{M}) \to H(\div; \mathbb{M}), S \boldsymbol  \tau = \boldsymbol  \tau^{\intercal} - \tr(\boldsymbol  \tau) \boldsymbol  I$, positioned at the center of~\eqref{eq:3DBGGdomain}. Through the integration by parts and identity~\eqref{eq:nS}, it becomes evident that in the distributional sense, 
$$
\langle \div S \boldsymbol{\tau}, \boldsymbol{\phi} \rangle = ( 2\vskw \curl \boldsymbol{\tau}, \boldsymbol{\phi} ), \quad \boldsymbol{\phi} \in C_0^{\infty}(\Omega;\mathbb{R}^3).
$$
Given that $\boldsymbol{\tau} \in H(\curl; \mathbb{M})$, leading to ${\rm vskw\,}\curl \boldsymbol{\tau} \in L^2(\mathbb{M})$, we can conclude that $S \boldsymbol{\tau} \in H(\div; \mathbb{M})$. 

Nonetheless, the converse direction, i.e., the $\swarrow$ direction, is not well-defined due to inherent continuity mismatches. For instance, all three operators along the diagonal of diagram~\eqref{eq:3DBGGdomain} are clearly not one-to-one, which prevents a straightforward application of the BGG procedure.

In cases where finite element spaces possess sufficient smoothness, the framework corresponds to diagram~\eqref{eq:3DBGG}. However, for certain finite element spaces, the situation is better represented by~\eqref{eq:3DBGGdomain} rather than~\eqref{eq:3DBGG}. The smoothness discrepancy is the main barrier to extending the BGG construction into the discrete setting. To overcome this issue, we introduce two reduction operations for an exact sequence.

\subsection{Two reduction operations and one augment operation}
Let $V_i, i=0,1,2$ be Hilbert spaces forming an exact sequence:
\begin{equation}\label{eq:exact}
\ker(\dd_0) \xrightarrow{\subset} V_0 \xrightarrow{\dd_0} V_1 \xrightarrow{\dd_1} V_2 \to 0.
\end{equation}
The exactness implies $\ker(\dd_1)=\img(\dd_0)$ and $\dd_1 V_1 = V_2$. When they are finite-dimensional, the following dimension identity holds:
\begin{equation}\label{eq:Euler}
 \dim \ker(\dd_0) - \dim V_0 + \dim V_1  - \dim V_2 = 0.
\end{equation}

Introduce a closed subspace $\widetilde{V}_2 \subseteq V_2$, and define $\widetilde{V}_1$ as the preimage of $\widetilde{V}_2$, namely,
$$
\widetilde{V}_1 = \{ v\in V_1: \dd_1 v\in \widetilde{V}_2\} \subseteq V_1. 
$$
Since $\dd_1\dd_0 = 0$, the exact sequence remains intact through this construction:
\begin{equation}\label{eq:tilde}
\ker(\dd_0) \xrightarrow{\subset} V_0 \xrightarrow{\dd_0} \widetilde{V}_1 \xrightarrow{\dd_1} \widetilde{V}_2 \to 0.
\end{equation}
We shall refer to the process of transitioning from~\eqref{eq:exact} to~\eqref{eq:tilde} as a $\widetilde{\quad}$ (tilde) operation.

Suppose we have two additional operators, $\dd_2$ acting on $V_0$ and $s^*$ acting on $V_1$ respectively. In application $s^*$ in the $\swarrow$ direction is the adjoint operator of $s$ used in the BGG diagram. Consider a closed subspace $W$ contained in $s^*(V_1)$. Define two subspaces in $V_0$ and $V_1$:
\begin{equation}\label{eq:hatspace}
\widehat V_0 = \{ u\in V_0: \dd_2 u\in W\}, \quad \widehat V_1 = \{ v\in V_1: s^* v\in W\}. 
\end{equation}
Assume we have the following triangular commutative diagram
\begin{equation}\label{eq:sd}
\begin{tikzcd}
\widehat{V}_0
 \arrow[d,swap, "\dd_2"]
 \arrow{r}{\dd_0} 
 & \widehat{V}_1
 \arrow[dl, swap, "s^*"]
\\
W & 
\end{tikzcd}.
\end{equation}
That is $s^*\dd_0 = \dd_2.$

\begin{lemma}\label{lm:hat0}
Assume we have the exact sequence~\eqref{eq:exact} and triangular commutative diagram~\eqref{eq:sd}, where $\widehat V_0$ and $\widehat V_1$ are defined by~\eqref{eq:hatspace}.
Assume $\ker(\dd_0)\subset \widehat V_0$ and $\dd_1(\widehat{V}_1)= V_2$.
Then 
\begin{equation}\label{eq:hat}
\ker(\dd_0) \xrightarrow{\subset} \widehat{V}_0 \xrightarrow{\dd_0} \widehat{V}_1 \xrightarrow{\dd_1} V_2 \to 0
\end{equation}
is also an exact sequence.
\end{lemma}
\begin{proof}
As $s^*\dd_0 = \dd_2$, $\dd_0 \widehat V_0\subseteq \widehat V_1$ and thus~\eqref{eq:hat} is indeed a complex. To show its exactness, we only have to demonstrate that $\ker(\dd_1)\cap \widehat{V}_1= \dd_0\widehat{V}_0.$ Taking a $\widehat v\in \widehat{V}_1$ and $\dd_1\widehat v = 0$, by the exactness of~\eqref{eq:exact}, there exists a $u\in V_0$ s.t. $\dd_0 u = \widehat v$. Recall that $\dd_2 = s^*\dd_0$. So $\dd_2 u = s^*\dd_0 u = s^*\widehat v\in W$, i.e., $u\in \widehat{V}_0$. 
\end{proof}

When $V_i, i=0, 1, 2,$ are finite dimensional, the condition $\dd_1(\widehat{V}_1)= V_2$ can be verified by dimension count.
\begin{lemma}\label{lm:hat}
Assume we have the exact sequence~\eqref{eq:exact} with finite-dimensional spaces and triangular commutative diagram~\eqref{eq:sd}, where $\widehat V_0$ and $\widehat V_1$ are defined by~\eqref{eq:hatspace}.
Assume $W\subseteq s^*(V_1)$ and $\ker(\dd_0)\subset \widehat V_0$.
Additionally assume that the equation $\dim V_0 - \dim \widehat{V}_0 = \dim V_1 - \dim \widehat{V}_1$, equivalently $\dim\dd_2V_0 - \dim\dd_2\widehat{V}_0 = \dim s^*V_1 - \dim s^*\widehat{V}_1$, holds. Then complex \eqref{eq:hat}
is an exact sequence.
\end{lemma}
\begin{proof}
By the proof of Lemma~\ref{lm:hat0}, \eqref{eq:hat} is a complex and $\ker(\dd_1)\cap \widehat{V}_1= \dd_0\widehat{V}_0.$

For the second part, it is evident that $\dd_1(\widehat{V}_1)\subseteq V_2$. To prove $\dd_1(\widehat{V}_1)= V_2$, we count the dimensions: 
\begin{align*}
\dim \dd_1(\widehat{V}_1) &= \dim \widehat{V}_1 - \dim \dd_0(\widehat{V}_0) = \dim \widehat{V}_1 - \dim \widehat{V}_0 + \dim \ker(\dd_0)\\
& = \dim V_1 - \dim V_0 + \dim \ker(\dd_0) = \dim V_2,
\end{align*}
where in the last step, we have used the dimension identity~\eqref{eq:Euler}. 
\end{proof}
The procedure described above, which transforms the sequence from ~\eqref{eq:exact} to the sequence presented in~\eqref{eq:hat}, is referred to as a $\widehat{\quad}$ (hat) operation. This operation specifically modifies the domain and co-domain of the operator $\dd_0$. By combining the hat and tilde operations, it becomes possible to create a further reduced exact sequence:
\begin{equation*}
\ker(\dd_0) \xrightarrow{\subset} \widehat{V}_0 \xrightarrow{\dd_0} \widehat{\widetilde{V}}_1 \xrightarrow{\dd_1} \widetilde{V}_2 \to 0.
\end{equation*}
Throughout the process, the spaces $\widehat V$, $\widetilde V$, or $\widehat{\widetilde V}$ can be renamed as needed, depending on the context and the spaces being constructed.

We further introduce an inverse hat operation to enlarge the spaces. 
\begin{lemma}
Assume we have a short exact sequence 
\begin{equation*}
\ker(\dd_0) \xrightarrow{\subset} \widehat{V}_0 \xrightarrow{\dd_0} \widehat{V}_1 \xrightarrow{\dd_1} V_2 \to 0.
\end{equation*}
Let $V_0\supseteq \widehat{V}_0$ and $V_1\supseteq \widehat{V}_1$ satisfy
$$
\dd_1 V_1\subseteq V_2, \quad \dd_0 V_0 = V_1\cap \ker(\dd_1).
$$
Then we have the exact sequence
$$
\ker(\dd_0) \xrightarrow{\subset} V_0 \xrightarrow{\dd_0} V_1 \xrightarrow{\dd_1} V_2 \to 0.
$$
\end{lemma}
\begin{proof}
 It suffices to prove $\dd_1 V_1 = V_2$ which is from the fact $\dd_1 \widehat{V}_1 =  V_2$ and $V_1\supseteq \widehat{V}_1$.
\end{proof}
 
 To illustrate these operations and their interplay, please refer to Fig.~\ref{fig:hattilde}.
\begin{figure}[htbp]
\begin{center}
\includegraphics[width=4.35in]{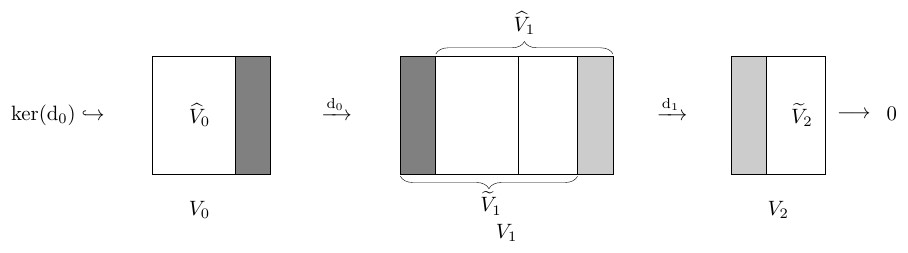}
\caption{Two reduction operations. The $\widetilde{\quad}$ operation reduces the space $V_1$ to $\widetilde V_1$ and $V_2$ to $\widetilde V_2$ by removing the sub-spaces marked by the light gray color. The $\widehat{\quad}$ operation reduces the space $V_0$ to $\widehat{V}_0$ and $V_1$ to $\widehat{V}_1$ by removing the sub-spaces marked by the dark gray color. The inverse hat operation is adding these sub-spaces back.}
\label{fig:hattilde}
\end{center}
\end{figure}

\subsection{Derived complexes}
By diagram~\eqref{eq:3DBGG},
the surjectivity $\tr H(\div; \mathbb{M})=L^2$ follows from $\div H^1(\mathbb R^3)=L^2$, and $-2\vskw H(\curl; \mathbb{M})=H(\div)$ follows from the regular decomposition $H(\div)=H^1(\mathbb R^3)+\curl H^1(\mathbb R^3)$~\cite{HiptmaXu2007auxiliary}.
Then applying the hat operation $\widehat{\quad}$ to $V_0= H^1$ and $V_1 = H(\curl)$ with $\widehat V_0 = H^2, \widehat V_1 = H^1(\mathbb R^3)$ and $W = H^1(\mathbb R^3)$, we obtain 
\begin{equation}\label{eq:3DBGGdomainHessian}
\begin{tikzcd}
\mathbb{R} \longrightarrow H^{2}  \arrow{r}{\grad} & H^1(\mathbb R^3) \arrow{r}{\curl} & H(\div) \arrow{r}{{\div}} & L^2 \longrightarrow {0}\\
\mathbb{R}^3 \to { H^1(\mathbb R^3)}\arrow{r}{\grad} \arrow[ur, "{\mathrm{id}}"]& {H(\curl; \mathbb{M}) } \arrow{r}{{\curl}} \arrow[ur, "{-2\vskw}"]& {H(\div; \mathbb{M})} \arrow{r}{{\div}}\arrow[ur, "{\tr}"] & {L^2(\mathbb R^3)} \to \bs{0},
 \end{tikzcd}
\end{equation}
which leads to the Hessian complex~\cite{Arnold;Hu:2020Complexes,PaulyZulehner2020}
\begin{equation*}
\mathbb P_1 \xrightarrow{\subset} H^{2}\xrightarrow{\hess} H(\curl;\mathbb S)\xrightarrow{\curl}  H(\div;\mathbb T) \xrightarrow{\div}  L^2(\mathbb R^3)\xrightarrow{}\boldsymbol0,
\end{equation*}
where $H(\curl;\mathbb S)=H(\curl;\mathbb M)\cap L^2(\mathbb S)$ and $H(\div;\mathbb T)=H(\div;\mathbb M)\cap L^2(\mathbb T)$.

Now we look at the second and third rows of diagram~\eqref{eq:3DBGGdomain}. We shall apply the two reductions to construct the BGG diagram
\begin{equation}\label{eq:3DBGGdomainElasticity}
\begin{tikzcd}
\mathbb{R}^3 \to H^1(\curl) \arrow{r}{\grad} & \widehat{\widetilde{H}}(\curl;\mathbb M) \arrow{r}{\curl} & \widetilde{H}(\div;\mathbb M) \arrow{r}{\div} & L^2(\mathbb R^3) \to \boldsymbol{0} \\
\mathbb{R}^3 \to H^1(\mathbb R^3) \arrow[ur,swap,"\mskw"'] \arrow{r}{\grad} & H(\curl;\mathbb M) \arrow[ur,swap,"S"'] \arrow{r}{\curl} & H(\div;\mathbb M) \arrow[ur,swap,"-2\vskw"'] \arrow{r}{\div} \arrow[r] & L^2(\mathbb R^3)\to \boldsymbol{0}.
 \end{tikzcd}
\end{equation}
The bottom complex is three copies of the standard de Rham complex. 

As we mentioned before $S: H(\curl; \mathbb{M}) \to H(\div; \mathbb{M})$ is well-defined but $S^{-1}: H(\div; \mathbb{M}) \to H(\curl; \mathbb{M})$ is not. To fix it, we apply the $\widetilde{\quad}$ operation to reduce space $H(\div; \mathbb{M}) $ to $\widetilde{H}(\div;\mathbb M):=S H(\curl;\mathbb M)$. Then clearly $S: H(\curl; \mathbb{M}) \to \widetilde{H}(\div; \mathbb{M})$ is one-to-one. 

The div stability $\div \widetilde{H}(\div; \mathbb{M}) = L^2(\mathbb{R}^3)$ holds because $H^1(\mathbb{M}) \subset \widetilde{H}(\div; \mathbb{M})$ and $\div H^1(\mathbb{M}) = L^2(\mathbb{R}^3)$. Since $\div S = 2 \vskw \curl$, it follows that $\vskw: H(\div, \mathbb{M}) \to L^2(\mathbb{R}^3)$ is surjective. The injectivity of the operator $\mskw: H^1(\mathbb{R}^3) \to \widehat{\widetilde{H}}(\curl; \mathbb{M})$ can be deduced from the definition of the space $\widehat{\widetilde{H}}(\curl; \mathbb{M})$ given below.

To apply the $\widehat{\quad}$ operation, we use $2\vskw \grad \boldsymbol u= \curl \boldsymbol u$ and the triangular diagram
\begin{equation*}
\begin{tikzcd}
H^1(\curl)
 \arrow[d,swap, "\curl"]
 \arrow{r}{\grad} 
 & \widehat{\widetilde{H}}(\curl;\mathbb M)
 \arrow[dl, swap, "2\vskw"]
\\
H^1(\mathbb R^3)  & 
\end{tikzcd},
\end{equation*}
where 
\begin{align*}
H^1(\curl)&:=\{\boldsymbol{v}\in H^1(\mathbb R^3): \curl\boldsymbol{v}\in H^1(\mathbb R^3)\}, \\
\widehat{\widetilde{H}}(\curl;\mathbb M)&:=\{\boldsymbol{\tau}\in H(\curl;\mathbb M): \vskw\boldsymbol{\tau}\in H^1(\mathbb R^3), \curl\boldsymbol{\tau}\in \widetilde{H}(\div;\mathbb M)\}.
\end{align*}
The condition $\curl \widehat{\widetilde{H}}(\curl;\mathbb M) = \widetilde{H}(\div;\mathbb M)\cap \ker(\div)$ is again due to the existence of regular potential, i.e. $\curl \widetilde{H}^1(\mathbb M) = \widetilde{H}(\div; \mathbb M)\cap \ker(\div)$ induced from the tilde operation applied to $\curl H^1(\mathbb M) = H(\div; \mathbb M)\cap \ker(\div)$ with
\begin{equation*}
\widetilde{H}^1(\mathbb M):=\{\boldsymbol{\tau}\in H^1(\mathbb M): \curl\boldsymbol{\tau}\in \widetilde{H}(\div;\mathbb M)\},
\end{equation*}
 and the fact $\widetilde{H}^1(\mathbb M)\subset \widehat{\widetilde{H}}(\curl;\mathbb M)$. By Lemma \ref{lm:hat0}, the top complex of \eqref{eq:3DBGGdomainElasticity} is exact.

Then by the BGG construction applied to \eqref{eq:3DBGGdomainElasticity}, we obtain the elasticity complex
\begin{equation}\label{eq:elasticitycomplex1inc}
{\rm RM}\xrightarrow{\subset}  H^{1} (\curl) \xrightarrow{\defm}  H(\mathrm{inc}^+;\mathbb{S}) \xrightarrow{\inc}  H(\operatorname{div}; \mathbb{S}) \xrightarrow{\div} L^{2} (\mathbb R^3)\rightarrow \boldsymbol{0},
\end{equation}
where $\defm \boldsymbol u = \sym \grad \boldsymbol u, \; \inc \boldsymbol  \tau = \nabla \times \boldsymbol  \tau \times \nabla$, ${\rm RM} =  \{\boldsymbol{a} \times \boldsymbol{x} + \boldsymbol{b}: \boldsymbol{a}, \boldsymbol{b}\in \mathbb{R}^3\} $ is the space of the linearized rigid body motion, $H(\div;\mathbb S)=H(\div;\mathbb M)\cap L^2(\mathbb S)$, and the space
$$
H(\inc^+; \mathbb S):=H(\inc; \mathbb S)\cap H(\curl; \mathbb S)=\{ \boldsymbol \tau \in H (\curl; \mathbb S) : \inc \boldsymbol \tau \in {L}^2(\mathbb S)\}
$$
with $H(\inc; \mathbb S)=\{ \boldsymbol \tau \in L^2 (\mathbb S) : \inc \boldsymbol \tau \in {L}^2(\mathbb S)\}$.
Throughout this paper, we use the script ${}^+$ to denote extra smoothness. An $L^2$ function $\boldsymbol \tau \in H(\inc; \mathbb S)$ only requires $\inc \boldsymbol \tau \in L^2$, not necessarily $\curl \boldsymbol \tau \in L^2$.
The elasticity complex~\eqref{eq:elasticitycomplex1inc} is slightly smoother than the elasticity complex~\cite{Eastwood2000,ArnoldFalkWinther2006}
\begin{equation}\label{eq:elasticitycomplex}
{\rm RM}\xrightarrow{\subset}  H^{1} (\mathbb R^3) \xrightarrow{\defm}  H(\mathrm{inc}; \mathbb{S}) \xrightarrow{\inc}  H(\operatorname{div}; \mathbb{S}) \xrightarrow{\div} L^{2} (\mathbb R^3)\rightarrow \boldsymbol{0},
\end{equation}
which can be obtained from~\eqref{eq:elasticitycomplex1inc}  by an inverse $\widehat{\quad}$ operation. The fact $\defm H^1(\mathbb R^3) = (H(\mathrm{inc}, \mathbb{S})\cap \ker(\inc))$ can be derived from the elasticity complex with  Sobolev spaces 
$$
{\rm RM}\xrightarrow{\subset}  H^{1} (\mathbb R^3) \xrightarrow{\defm}  L^2(\mathbb{S}) \xrightarrow{\inc}  H^{-2}(\mathbb{S}) \xrightarrow{\div} H^{-3} (\mathbb R^3)\rightarrow \boldsymbol{0}
$$
derived by the BGG framework in \cite{Arnold;Hu:2020Complexes}, cf. diagram \eqref{eq:3DBGG}.

Similarly by applying the two reduction operations to the third and fourth rows of diagram~\eqref{eq:3DBGGdomain}, we obtain 
\begin{equation*}
\begin{tikzcd}
\mathbb{R}^3 \to H^1(\div) \arrow{r}{\grad} & \widehat{\widetilde{H}}(\curl;\mathbb M) \arrow{r}{\curl} & \widetilde{H}(\div;\mathbb M) \arrow{r}{\div} & H(\div) \to \boldsymbol{0} \\
\mathbb{R} \to H^1 \arrow[ur,swap,"\iota"'] \arrow{r}{\grad} & H(\curl) \arrow[ur,swap,"\mskw"'] \arrow{r}{\curl} & H(\div) \arrow[ur,swap,"\mathrm{id}"'] \arrow{r}{\div} \arrow[r] & L^2\to 0,
 \end{tikzcd}
\end{equation*}
where
\begin{align*}
H^1(\div)&:=\{\boldsymbol{v}\in H^1(\mathbb R^3): \div\boldsymbol{v}\in H^1\}, \\
H(\div\div^+;\mathbb S)&:=\{\boldsymbol{\tau}\in H(\div;\mathbb S): \div\boldsymbol{\tau}\in H(\div)\},\\
\widetilde{H}(\div;\mathbb M)&:=H(\div\div^+;\mathbb S)\oplus\mskw H(\curl), \\
\widehat{\widetilde{H}}(\curl;\mathbb M)&:=\{\boldsymbol{\tau}\in H(\curl;\mathbb M): \tr\boldsymbol{\tau}\in H^1, \curl\boldsymbol{\tau}\in\widetilde{H}(\div;\mathbb M)\}.
\end{align*}
This leads to the $\div\div$ complex
\begin{equation*}
{\rm RT}\xrightarrow{\subset} H^1(\div)\xrightarrow{\dev\grad} H(\sym\curl_+^+;\mathbb T)\xrightarrow{\sym\curl} H(\div\div^+;\mathbb S) \xrightarrow{\div{\div}} L^2\xrightarrow{}0,
\end{equation*}
where $\dev \boldsymbol \sigma = \boldsymbol \sigma - \tr(\boldsymbol \sigma)\boldsymbol I/3,$ ${\rm RT}= \{a\boldsymbol x + \boldsymbol b: a\in \mathbb R, \boldsymbol b \in \mathbb R^3\}$, and the space
\begin{equation*}	
H(\sym\curl_+^+;\mathbb T):=\{ \boldsymbol \tau \in H (\curl; \mathbb T) : \curl \boldsymbol \tau \in \widetilde{H}(\div;\mathbb M)\}
\end{equation*}
requires additional smoothness on $\boldsymbol \tau$ and $\curl\boldsymbol \tau$ comparing with the definition of space
\begin{equation*}	
H(\sym\curl;\mathbb T):=\{ \boldsymbol \tau \in L^2(\mathbb T) : \sym\curl \boldsymbol \tau \in L^2(\mathbb S)\}.
\end{equation*}
The derived $\div\div$ complex is slightly smoother than the $\div\div$ complex~\cite{Arnold;Hu:2020Complexes,PaulyZulehner2020}
\begin{equation*}
{\rm RT}\xrightarrow{\subset} H^1(\mathbb R^3)\xrightarrow{\dev\grad} H(\sym\curl,\mathbb T)\xrightarrow{\sym\curl} H(\div\div,\mathbb S) \xrightarrow{\div{\div}} L^2\xrightarrow{}0
\end{equation*}
with $H(\div\div,\mathbb S)=\{\boldsymbol{\tau}\in L^2(\mathbb S): \div\div\boldsymbol{\tau}\in L^2\}$,
which again can be obtained by an inverse hat operation.  

We would like to highlight that the complexes derived from the BGG framework typically encompass spaces that possess a slightly higher degree of smoothness.


\section{Smooth Finite Element de Rham Complexes}\label{sec:femderhamcomplex}
In this section we shall review the smooth finite element de Rham complexes developed in~\cite{Chen;Huang:2022FEMcomplex3D}. We construct finite element spaces with different smoothness at vertices, edges, and faces which is characterized by a smoothness vector. 

An integer vector $\boldsymbol r= (r^{\texttt{v}}, r^e, r^f)^{\intercal}$ is called a  smoothness vector if $r^f\geq-1$, $r^e\geq \max\{2r^f,-1\}$ and $r^{\texttt{v}}\geq \max\{2r^e,-1\}$. Its restriction $(r^{\texttt{v}}, r^e)^{\intercal}$ is a two-dimensional smoothness vector. For a smoothness vector $\boldsymbol r$ and positive integer $m$, define $\boldsymbol r\ominus m: = \max \{\boldsymbol r-m, -1\}$ and $\boldsymbol r_+ = \max\{\boldsymbol r, \boldsymbol 0\}$ where the $\max$ operator is applied component-wise. 

%

\subsection{Smooth bubble functions}\label{sec:smoothbubble}
For edge $e$, let $r^{\texttt{v}}\geq 0$ and $k\geq 2r^{\texttt{v}}+1$, define edge bubble polynomial space
$$
\mathbb B_k(e; r^{\texttt{v}}) : = \{ u\in\mathbb P_k(e): \, \partial_t^ju \text{ vanishes at all vertices of } e \text{ for } j=0,\ldots, r^{\texttt{v}} \},
$$
where $\partial_t$ is the tangential derivative along $e$. This bubble space can be easily characterized as
$\mathbb B_{k}(e; r^{\texttt{v}}) =  b_e^{r^{\texttt{v}}+1} \mathbb P_{k-2(r^{\texttt{v}}+1)}(e),$ where $b_e\in \mathbb P_2(e)$ vanishes at two vertices of $e$. 

For triangle $f$ and a smoothness vector $\boldsymbol r= (r^{\texttt{v}}, r^e)^{\intercal}$, define face bubble polynomial space
\begin{align*}
\mathbb B_{k}(f; 
\begin{pmatrix}
r^{\texttt{v}}\\
r^e
\end{pmatrix}
):=\{u\in\mathbb P_k(f):&\, \nabla_f^ju \textrm{ vanishes at all vertices of $f$ for $j=0,\ldots, r^{\texttt{v}}$}, \\
& \textrm{ and $\nabla_f^ju$ vanishes on all edges of $f$ for $j=0,\ldots, r^{e}$}\},
\end{align*}
where $\nabla_f$ is the surface gradient on $f$. All polynomials defined on $e$ and $f$ can be naturally extended to the whole tetrahedron using the Bernstein basis of polynomials. 

For a tetrahedron \(T\) and a smoothness vector \(\boldsymbol{r}= (r^{\texttt{v}}, r^e, r^f)^{\intercal}\), define the bubble polynomial space as follows:
\begin{align*}
\mathbb B_{k}(T;\boldsymbol{r}) := \{u \in \mathbb P_k(T) : &\, \nabla^j u \text{ vanishes at all vertices of } T \text{ for } j=0,\ldots, r^{\texttt{v}}, \\
& \nabla^j u \text{ vanishes on all edges of } T \text{ for } j=0,\ldots, r^e, \\
& \nabla^j u \text{ vanishes on all faces of } T \text{ for } j=0,\ldots, r^f\}.
\end{align*}
When \(r^f = -1\), the bubble function may not vanish on the boundary of \(T\).

For simplicity of notation, for a three-dimensional smoothness vector \(\boldsymbol{r} = (r^{\texttt{v}}, r^e, r^f)^{\intercal}\), define \(\mathbb B_{k}(f; \boldsymbol{r}) := \mathbb B_{k}(f; (r^{\texttt{v}}, r^e)^{\intercal})\), which is the face bubble using the restriction of \(\boldsymbol{r}\) on \(f\). Similarly, \(\mathbb B_{k}(e; \boldsymbol{r}) = \mathbb B_{k}(e; r^{\texttt{v}})\).

A precise characterization of the bubble polynomial spaces \(\mathbb B_{k}(f; \boldsymbol{r})\) and \(\mathbb B_{k}(T; \boldsymbol{r})\) can be obtained through decompositions of simplicial lattice points (see~\cite{Chen;Huang:2022femcomplex2d,Chen;Huang:2022FEMcomplex3D} for more details):
\begin{align*}
\mathbb B_k(f; \boldsymbol{r}) &= 
\mathbb B_k(f; \boldsymbol{r}_+) \Oplus_{e\in \Delta_1(T)} [r^e=-1] \mathbb B_k(e; \boldsymbol{r}_+) \Oplus [r^{\texttt{v}}=-1]\mathbb P_1(f), \\
\mathbb B_k(T; \boldsymbol{r}) &= 
\mathbb B_k(T; \boldsymbol{r}_+) \Oplus_{f\in \Delta_2(T)} 
[r^f=-1] \mathbb B_k(f; \boldsymbol{r}_+) \Oplus_{e\in \Delta_1(T)} [r^e=-1] \mathbb B_k(e; \boldsymbol{r}_+) \\
&\hskip 1.95cm \Oplus [r^{\texttt{v}}=-1]\mathbb P_1(T).
\end{align*}

For a vector space \(V\), we abbreviate \(V \otimes \mathbb{R}^3\) as \(V^3\). We define the following bubble spaces:
\begin{align*}
\mathbb B^{\curl}_{k}(T; \boldsymbol{r}) := {} & \{\boldsymbol{v} \in \mathbb B^{3}_{k}(T; \boldsymbol{r}) : \boldsymbol{v} \times \boldsymbol{n}|_{\partial T} = \boldsymbol{0}\}, \\
\mathbb B^{\div}_{k}(T; \boldsymbol{r}) := {} & \{\boldsymbol{v} \in \mathbb B^{3}_{k}(T; \boldsymbol{r}) : \boldsymbol{v} \cdot \boldsymbol{n}|_{\partial T} = 0\}, \\
\mathbb B^{L^2}_{k}(T; \boldsymbol{r}) := {} & \mathbb B_{k}(T; \boldsymbol{r}) \cap L_0^2(T).
\end{align*}
Typically, \(T\) will be omitted from the notation, i.e., \( \mathbb B_{k}(\boldsymbol{r}) = \mathbb B_{k}(T; \boldsymbol{r})\). When \(r^f \geq 0\), functions in \( \mathbb B_{k}(\boldsymbol{r})\) vanish on \(\partial T\), thus \(\mathbb B^{\curl}_{k}(\boldsymbol{r}) = \mathbb B^{\div}_{k}(\boldsymbol{r}) =  \mathbb B^{3}_{k}(\boldsymbol{r})\). When \(r^f = -1\), we have \(\mathbb B^{3}_{k}(\boldsymbol{r}_+) \subset  \mathbb B^{\dd}_{k}(\boldsymbol{r}) \subset \mathbb B^{3}_{k}(\boldsymbol{r})\), for \(\dd = \curl\) or \(\div\), as only the tangential or normal components of \(\mathbb B^{\dd}_{k}(\boldsymbol{r})\) vanish, respectively.

For each edge \(e\), we choose a tangential vector \(\boldsymbol{t}_e\) and two normal vectors \(\boldsymbol{n}_1^e\) and \(\boldsymbol{n}_2^e\), abbreviated as \(\boldsymbol{t}\), \(\boldsymbol{n}_1\), and \(\boldsymbol{n}_2\). For each face \(f\), we select a normal vector \(\boldsymbol{n}_f\) and two tangential vectors \(\boldsymbol{t}_1^f\) and \(\boldsymbol{t}_2^f\), abbreviated as \(\boldsymbol{n}\), \(\boldsymbol{t}_1\), and \(\boldsymbol{t}_2\) when the face \(f\) is clear from the context. In a conforming mesh \(\mathcal{T}_h\), \(\boldsymbol{n}_1^e\), \(\boldsymbol{n}_2^e\), or \(\boldsymbol{n}_f\) depend on the edge \(e\) or the face \(f\), not the element containing them. In expressions such as \(\partial_n u\), we use the regular font \(n\) rather than the boldface \(\boldsymbol{n}\).
For $T \in \mathcal{T}_h$ and $0 \leq \ell \leq 3$, we denote by $\Delta_{\ell}(T)$ and $\Delta_{\ell}(\mathcal{T}_h)$ the sets of all $\ell$-dimensional subsimplices of $T$ and $\mathcal{T}_h$, respectively.

The bubble spaces \(\mathbb B^{\curl}_{k}(T; \boldsymbol{r})\) and \(\mathbb B^{\div}_{k}(T; \boldsymbol{r})\) have the following decomposition given in \cite{Chen;Huang:2022FEMcomplex3D,ChenChenHuangWei2023}
\begin{equation}
\label{eq:curlbubbledecomp}
\mathbb B_k^{\curl}(T; \boldsymbol{r}) = 
\mathbb B_k^3(T; \bs r_+) \Oplus_{f\in \Delta_{2}(T)} 
[r^f=-1]\big(\mathbb B_{k}(f; \bs r_+)\otimes{\rm span}\{\boldsymbol{n}_f\}\big),
\end{equation}
\begin{align}
\label{eq:divbubbledecomp}
\mathbb B_k^{\div}(T; \boldsymbol{r}) = {}\,
\mathbb B_k^3(T; \bs r_+) &\Oplus_{f\in \Delta_{2}(T)} 
[r^f=-1]\big(\mathbb B_{k}(f; \bs r_+)\otimes{\rm span}\{\boldsymbol{t}_1^f,\boldsymbol{t}_2^f\}\big)\\
&\Oplus_{e\in \Delta_1(T)} [r^e=-1]
\big(\mathbb B_{k}(e; \bs r_+)\otimes{\rm span}\{\boldsymbol{t}_e\}\big).
\notag 
\end{align}
Dimension of $\mathbb B_k^{\curl}(T; \boldsymbol{r})$ and $\mathbb B_k^{\div}(T; \boldsymbol{r})$ can be calculated based on \eqref{eq:dimBr} in Appendix~\ref{sec:bubblecomplex}.

For an $f\in\Delta_2(T)$ and a smooth vector $\boldsymbol{r}$, 
define bubble spaces on face $f$
\begin{align*}
\mathbb B^{\div_f}_{k}(f; \boldsymbol{r}):={}&\{\boldsymbol{v}\in \mathbb B_{k}^2(f;\boldsymbol{r}): \boldsymbol{v}\cdot\boldsymbol{n}|_{\partial f}=0\} \\
={}& \mathbb B_{k}^2(f;\boldsymbol{r}_+) \Oplus_{e\in \Delta_1(f)} [r^e=-1]
\big(\mathbb B_{k}(e; \bs r_+)\otimes{\rm span}\{\boldsymbol{t}_e\}\big),\\
\mathbb B^{\rot_f}_{k}(f; \boldsymbol{r}):={}&\{\boldsymbol{v}\in \mathbb B_{k}^2(f;\boldsymbol{r}): \boldsymbol{v}\cdot\boldsymbol{t}|_{\partial f}=0\} \\
={}& \mathbb B_{k}^2(f;\boldsymbol{r}_+) \Oplus_{e\in \Delta_1(f)} [r^e=-1]
\big(\mathbb B_{k}(e; \bs r_+)\otimes{\rm span}\{\boldsymbol{n}_e\}\big).
\end{align*}

\subsection{Bubble de Rham complexes}
The bubble spaces will form a de Rham complex. As it is not explicitly stated in~\cite{Chen;Huang:2022femcomplex2d,Chen;Huang:2022FEMcomplex3D}, we present the result below and provide a detailed proof in Appendix~\ref{sec:bubblecomplex}. 

\begin{lemma}
Let smoothness vectors $\boldsymbol{r}_1\geq-1$, $\boldsymbol{r}_0 = \boldsymbol{r}_1+ 1$, and $\boldsymbol{r}_2=\boldsymbol{r}_1\ominus1$. Let $f\in\Delta_2(T)$ and $k\geq\max\{2r_1^{\texttt{v}}-1,0\}$.
Then the bubble de Rham complexes 
\begin{equation*}
0\xrightarrow{\subset}\mathbb B_{k+2}(f;\boldsymbol{r}_0)\xrightarrow{\curl_f}\mathbb B^{\div_f}_{k+1}(f; \boldsymbol{r}_1)\xrightarrow{\div_f}\mathbb B_{k}(f;\boldsymbol{r}_2)/\mathbb R\to0,
\end{equation*}
\begin{equation*}
0\xrightarrow{\subset}\mathbb B_{k+2}(f;\boldsymbol{r}_0)\xrightarrow{\grad_f}\mathbb B^{\rot_f}_{k+1}(f; \boldsymbol{r}_1)\xrightarrow{\rot_f}\mathbb B_{k}(f;\boldsymbol{r}_2)/\mathbb R\to0
\end{equation*}
are exact.
\end{lemma}

When move to three dimensions, we require the following condition on a smoothness vector $\boldsymbol r = (r^{\texttt{v}}, r^e, r^f)^{\intercal}$:
\begin{equation}\label{eq:boundr2fordivbubble}
\begin{cases}
 r^f\geq 0, & \,r^e\geq2r^f+1\geq 1, \quad r^{\texttt{v}}\geq 2r^e\geq 2,\\
 r^f = -1, & 
\begin{cases}
 r^e \geq 1, & r^{\texttt{v}}\geq 2r^e\geq 2,\\
 r^e \in \{0, -1\}, & r^{\texttt{v}}\geq 2r^e + 1.
\end{cases}
\end{cases}
\end{equation}
\begin{lemma}[Theorem 4.5 in~\cite{Chen;Huang:2022FEMcomplex3D}]\label{lem:divbubbleonto}
Let $\boldsymbol r$ be a smoothness vector satisfying \eqref{eq:boundr2fordivbubble}.
Assume $k\geq\max\{2r^{\texttt{v}}, 1\}$.
Then we have the div stability
\begin{equation}\label{eq:divbubbleonto}
\div\mathbb B^{\div}_{k}(\boldsymbol{r})=\mathbb B_{k-1}(\boldsymbol{r}\ominus1)/\mathbb R.
\end{equation}
\end{lemma}


We now present the bubble de Rham complexes in three dimensions.
\begin{lemma}
Let $\boldsymbol r_0 \geq 0, \bs r_1 = \bs r_0 - 1, \boldsymbol r_2=\boldsymbol r_1\ominus1, \boldsymbol r_3=\boldsymbol r_2\ominus1$ be smoothness vectors.
Assume $\boldsymbol r_2$ satisfies \eqref{eq:boundr2fordivbubble}, and $k\geq\max\{2r_{2}^{\texttt{v}}, 1\}$.
Then the bubble de Rham complex 
\begin{equation*}
0\xrightarrow{\subset}\mathbb B_{k+2}(\boldsymbol{r}_0)\xrightarrow{\grad}\mathbb B^{\curl}_{k+1}(\boldsymbol{r}_1)\xrightarrow{\curl}\mathbb B^{\div}_{k}(\boldsymbol{r}_2)\xrightarrow{\div}\mathbb B_{k-1}(\boldsymbol{r}_3)/\mathbb R\to0
\end{equation*}
is exact.
\end{lemma}

\subsection{Smooth scalar finite elements}

 Let $\boldsymbol r= (r^{\texttt{v}}, r^e, r^f)^{\intercal}$ be a smoothness vector, and nonnegative integer $k\geq 2r^{\texttt{v}}+1$. The shape function space $\mathbb P_{k}(T)$ is determined by the DoFs
\begin{subequations}\label{eq:Cr3D}
\begin{align}
\label{eq:C13d0}
\nabla^j u (\texttt{v}), & \quad j=0,1,\ldots,r^{\texttt{v}}, \texttt{v}\in \Delta_0(T), \\
\label{eq:C13d1}
\int_e \frac{\partial^{j} u}{\partial n_1^{i}\partial n_2^{j-i}} \, q \dd s, & \quad q \in \mathbb B_{k-j}(e; r^{\texttt{v}} - j), 0\leq i\leq j\leq r^{e}, e\in \Delta_1(T), \\
\label{eq:C13d2}
\int_f \frac{\partial^{j} u}{\partial n_f^{j}} \, q \dd S, & \quad q \in \mathbb B_{k-j}(f;\boldsymbol r-j), 0\leq j\leq r^{f},  f\in \Delta_2(T), \\
\label{eq:C13d3}
\int_T u \, q \dx, & \quad q \in \mathbb B_k(T;\boldsymbol r).
\end{align} 
\end{subequations}
As $b_e\geq 0$, the test function space in~\eqref{eq:C13d1} can be changed to $q\in \mathbb P_{k - 2(r^{\texttt{v}}+1) + j}(e)$. 

For the sake of simplifying notation, we use \( \text{DoF}_k(\boldsymbol{r}) \) to represent the set of DoFs as defined in~\eqref{eq:Cr3D}, and \( \text{DoF}_k(s; \boldsymbol{r}) \) for the subset corresponding to the sub-simplex \( s \). The unisolvence can be expressed as:
$$
\mathbb P_k(T) \text{ is uniquely determined by } \text{DoF}_k(\boldsymbol{r}).
$$

When considering a mesh \( \mathcal T_h \), the DoFs~\eqref{eq:Cr3D} define the global \( C^{r^f} \)-continuous finite element space as follows:
\begin{align*}
\mathbb V_k(\mathcal T_h; \boldsymbol{r}) = \{u\in C^{r^f}(\Omega): & \, u|_T\in\mathbb P_k(T)\textrm{ for all } T\in\mathcal T_h, \\
&\qquad\textrm{ and all the DoFs~\eqref{eq:Cr3D} are single-valued}\}.
\end{align*}
In cases where \( \mathbb V_k(\mathcal T_h; \boldsymbol{r}) \) is used as a subspace of \( H^1(\Omega) \) or \( L^2(\Omega) \), notation \( \mathbb V_k^{\grad}(\mathcal T_h; \boldsymbol{r}) \) or \( \mathbb V_k^{L^2}(\mathcal T_h; \boldsymbol{r}) \) are employed respectively. The reference to the mesh \( \mathcal T_h \) will subsequently be omitted in the notation to emphasize the dependence on the smoothness vector $\boldsymbol r$.


\subsection{$H(\div)$-conforming finite elements}
Let $\boldsymbol{r}_3\geq \boldsymbol r_2\ominus 1$ and positive integer $k\geq \max \{ 2r_2^{\texttt{v}}+1, 2r_3^{\texttt{v}}+2\}$. We define the space 
$$
\mathbb V^{\div}_{k}(\boldsymbol r_2, \boldsymbol r_3) := \{ \boldsymbol v\in \mathbb V^3_{k}(\boldsymbol r_2)\cap H(\div,\Omega):  \div \boldsymbol v\in \mathbb V_{k-1}^{L^2}(\boldsymbol r_3)\},
$$
%
and abbreviate $\mathbb V_{k}^{\div}(\boldsymbol r_2, \boldsymbol r_2\ominus1)= \mathbb V^3_{k}(\boldsymbol r_2)\cap H(\div,\Omega)$ as $\mathbb V_{k}^{\div}(\boldsymbol r_2)$. We construct $\mathbb V_{k}^{\div}(\boldsymbol r_2)$ by using the $t$-$n$ decomposition approach developed in \cite{Chen;Huang:2021Geometric}.

In order to have a stable discretization of Stokes equations, it is crucial to have the surjectivity of the div operator in view of Babu\v{s}ka-Brezzi condition~\cite{BoffiBrezziFortinothers2013finite}, which is thus called the div stability; see for example~\cite{Gunzbu1996Navier-Stokes}.
The div stability \(\div\mathbb V^{\div}_{k}(\boldsymbol{r}_2, \boldsymbol{r}_3) = \mathbb V^{L^2}_{k-1}(\boldsymbol r_3)\) is established under certain restrictions on \((\boldsymbol r_2, \boldsymbol r_3)\) and for sufficiently large values of \(k\).

\begin{lemma}[Theorem 4.10 in~\cite{Chen;Huang:2022FEMcomplex3D}] \label{thm:divonto}
Let $\boldsymbol r_2$ and $\boldsymbol r_3$ be two smoothness vectors. 
Assume $\boldsymbol r_2$ satisfies \eqref{eq:boundr2fordivbubble}
and $\boldsymbol r_3 \geq \boldsymbol r_2\ominus 1$. Assume $k\geq\max\{2r_{2}^{\texttt{v}}+1, r_{2}^{\texttt{v}}+2, 3(r_2^e+1), 2r_{3}^{\texttt{v}}+2, 4r_3^f+5, (r_3^e+r_3^f+5)[r_{3}^{\texttt{v}}=0]\}.$ 
It holds that
\begin{equation}\label{eq:divonto3dsimple}
\div\mathbb V^{\div}_{k}(\boldsymbol{r}_2,\boldsymbol{r}_3)=\mathbb V^{L^2}_{k-1}(\boldsymbol{r}_3).   
\end{equation}
\end{lemma}

When~\eqref{eq:divonto3dsimple} holds, we shall call $(\boldsymbol r_2, \boldsymbol r_3, k)$ div stable and we give finite element description of such space $\mathbb V^{\div}_{k}(\boldsymbol r_2, \boldsymbol r_3)$ in~\cite[Section 4.4]{Chen;Huang:2022FEMcomplex3D}. We emphasize that for continuous div element, i.e., $r^f\geq 0$, the minimal $\boldsymbol r_2$ satisfying \eqref{eq:boundr2fordivbubble} is $\boldsymbol{r}_2=(2,1,0)^{\intercal}$. Consequently $\boldsymbol{r}_3=(1,0,-1)^{\intercal}$ and $k\geq 6$. The corresponding Stokes element $\mathbb V^{\div}_{k}( (2,1,0)^{\intercal})\times \mathbb V^{L^2}_{k-1}((1,0,-1)^{\intercal}) $ is firstly constructed by Neilan~\cite{Neilan2015}. 


\subsection{$H(\curl)$-conforming finite elements}\label{sec:Hcurl}
Let $\boldsymbol{r}_2\geq \boldsymbol r_1\ominus 1$ be two smoothness vectors. Next we introduce
$$
\mathbb V^{\curl}_{k+1}(\boldsymbol r_1, \boldsymbol r_2) := \{ \boldsymbol v\in \mathbb V^3_{k+1}(\boldsymbol r_1)\cap H(\curl,\Omega): \curl \boldsymbol v\in \mathbb V_{k}^{\div}(\boldsymbol r_2)\}, 
$$
and will abbreviate $\mathbb V_{k+1}^{\curl}(\boldsymbol r_1, \boldsymbol r_1\ominus1)$ as $\mathbb V_{k+1}^{\curl}(\boldsymbol r_1)$. We give finite element description of  $\mathbb V_{k+1}^{\curl}(\boldsymbol r_1)$, i.e., local DoFs for the shape function space $\mathbb P_{k+1}(T;\mathbb R^3)$ in~\cite[Section 5.3]{Chen;Huang:2022FEMcomplex3D}. 

\subsection{Finite element de Rham complexes in three dimensions}

\begin{theorem}[Theorem 5.9 in~\cite{Chen;Huang:2022FEMcomplex3D}]\label{thm:femderhamcomplex3dgeneral}
Let $\boldsymbol r_0 \geq 0, \boldsymbol r_1 = \boldsymbol r_0 -1, \boldsymbol r_2\geq\boldsymbol r_1\ominus1, \boldsymbol r_3\geq \boldsymbol r_2\ominus1$ be smoothness vectors. 
Assume $(\boldsymbol r_2, \boldsymbol r_3,k)$ is div stable. Assume $k\geq\max\{2r_{1}^{\texttt{v}}+1, 2r_{2}^{\texttt{v}}+1, r_{2}^{\texttt{v}}+2, 3(r_2^e+1), 2r_{3}^{\texttt{v}}+2, 4r_3^f+5, (r_3^e+r_3^f+5)[r_{3}^{\texttt{v}}=0]\}$. Then the finite element de Rham complex
\begin{equation}\label{eq:femderhamcomplex3dgeneral}
\mathbb R\xrightarrow{\subset}\mathbb V^{\grad}_{k+2}(\boldsymbol{r}_0)\xrightarrow{\grad}\mathbb V^{\curl}_{k+1}(\boldsymbol{r}_1,\boldsymbol{r}_2)\xrightarrow{\curl}\mathbb V^{\div}_{k}(\boldsymbol{r}_2,\boldsymbol{r}_3)\xrightarrow{\div}\mathbb V^{L^2}_{k-1}(\boldsymbol{r}_3)\to0
\end{equation}
is exact. 
 \end{theorem}
 
We refer to a parameter sequence $(\boldsymbol r_0, \boldsymbol r_1, \boldsymbol r_2, \boldsymbol r_3, k)$ as a valid de Rham parameter sequence if~\eqref{eq:femderhamcomplex3dgeneral} holds with exactness in~\cite{Chen;Huang:2022FEMcomplex3D}. We also provide the finite element description of the space $\mathbb V^{\curl}_{k+1}(\boldsymbol{r}_1, \boldsymbol{r}_2)$ in \eqref{eq:femderhamcomplex3dgeneral}.

When $r_2^f \geq 0$,~\eqref{eq:femderhamcomplex3dgeneral} transforms into a finite element Stokes complex, as the space $\mathbb V^{\div}_{k}(\boldsymbol{r}_2, \boldsymbol{r}_3) \subset H^1(\Omega; \mathbb R^3)$. This  allows for the discretization of the Stokes equation. Notably, existing works on finite element Stokes complexes~\cite{Neilan2015} and finite element de Rham complexes~\cite{Christiansen;Hu;Hu:2018finite} are specific instances of~\eqref{eq:femderhamcomplex3dgeneral}, depending on the selection of different smoothness vectors.

\section{Face Elements}\label{sec:faceelements}
In this section, our objective is to construct finite elements conforming to \(H(\div; \mathbb X)\) for either \(\mathbb X = \mathbb S\) or \(\mathbb T\). We will use the $t$-$n$ decomposition approach introduced in~\cite{Chen;Huang:2021Geometric} to construct the finite elements, and we will subsequently leverage the BGG framework to establish the divergence stability property.

\subsection{Smooth $H(\div; \mathbb T)$ and $H(\div; \mathbb S)$ finite elements}\label{sec:divX}
Given a smoothness vector \(\boldsymbol r\) with \(r^{\texttt{v}}\geq 0\), we examine the space associated with \(\boldsymbol r_+\geq 0\), leading to the unisolvence condition:
\begin{equation}\label{eq:r+unisolvence}
\mathbb P_k(T) \otimes \mathbb X \text{ is uniquely determined by } {\rm DoF}_k(\boldsymbol r_+)\otimes \mathbb X,
\end{equation}
and its global extension, \(\mathbb V_k(\boldsymbol r_+) \otimes \mathbb X\). For $\bs\sigma \in \mathbb P_k(T) \otimes \mathbb X$, as $\tr^{\div}\boldsymbol \sigma = \boldsymbol \sigma\boldsymbol n$, to be $H(\div)$-conforming, $\boldsymbol \sigma\boldsymbol n$ must be continuous across the faces of the triangulation. In cases where \(r^f = -1\) or \(r^e = -1\), we need to adjust the continuity of the finite element corresponding to \(\boldsymbol r_+\) by transferring the tangential component into the bubble space.
The key lies in an appropriate $t$-$n$ decomposition of the tensor \(\mathbb X\) at a sub-simplex \(s\):
\begin{equation*}
\mathbb X = \mathscr T^s(\mathbb X) \oplus \mathscr N^s(\mathbb X),
\end{equation*}
where \(\mathscr T^s\) and \(\mathscr N^s\) represent the tangential and normal planes of \(s\), respectively.

For a sub-simplex $s$, we use the following decomposition of DoFs associated to $s$
\begin{equation}\label{eq:DoFdec}
\resizebox{0.92\textwidth}{!}{$
{\rm DoF}(s; \boldsymbol r_+)\otimes \mathbb X = 
\begin{cases}
{\rm DoF}(s; \boldsymbol r)\otimes \mathbb X, \qquad\qquad\qquad\qquad\qquad\;\; \text{ when } r^s\geq 0, \\
\medskip 
\begin{cases}
{\rm DoF}(s; \boldsymbol r_+) \otimes \mathscr N^s(\mathbb X) & \text{normal trace},\\
\oplus  & \qquad\qquad\qquad \text{ when } r^s = -1. \\
{\rm DoF}(s; \boldsymbol r_+) \otimes \mathscr T^s(\mathbb X) & \text{div bubbles},
\end{cases}
\end{cases}
$}
\end{equation}
We relocate the tangential component into the $\div$ bubble space and introduce
\begin{align} \label{eq:divbubbler} 
\mathbb B_{k}^{\div}(\boldsymbol{r}; \mathbb X) :={} (\mathbb B_{k}(\boldsymbol{r}_+) \otimes \mathbb X) &\oplus[r^e=-1]\Oplus_{e\in\Delta_1(T)}\mathbb B_k(e; r^{\texttt{v}}_+) \otimes \mathscr T^e(\mathbb X)\\
&\oplus[r^f=-1]\Oplus_{f\in\Delta_2(T)}\mathbb B_{k} (f;\boldsymbol{r}_+) \otimes \mathscr T^f(\mathbb X) \notag.
\end{align}
Let $\mathbb B_{k}(\boldsymbol{r}; \mathbb X):=\mathbb B_{k}(\boldsymbol{r})\otimes\mathbb X$ for $\mathbb X=\mathbb R^3, \mathbb M, \mathbb S$, or $\mathbb T$.

Different frames will be employed for distinct sub-simplices. On an edge \(e\), one possible frame is \(\{\boldsymbol n_{f_1}, \boldsymbol n_{f_2}, \boldsymbol t_e\}\), where \(f_1\) and \(f_2\) denote the two faces containing \(e\). Another option is \(\{\boldsymbol n_{1}^e, \boldsymbol n_{2}^e, \boldsymbol t_e\}\), where \(\boldsymbol n_{1}^e\) and \(\boldsymbol n_{2}^e\) represent two orthogonal normal vectors of \(e\). Notably, \(\boldsymbol n_{i}^e\) depends only on \(e\), while \(\boldsymbol n_{f_i}\) is contingent on face \(f_i\). On a face \(f\), an orthonormal frame \(\{\boldsymbol t_{1}^f, \boldsymbol t_{2}^f, \boldsymbol n^f\}\) is utilized, comprising two tangential vectors \(\boldsymbol t_i^f\) and a face normal \(\boldsymbol n^f\), both of which depend solely on face \(f\). 

Decompositions for traceless matrix $\mathbb T$ on edge $e$ and face $f$ are given below and illustrated in Fig. \ref{fig:tnT}
\begin{align*}
\mathscr T^e(\mathbb T) &:= \textrm{span}\big\{\boldsymbol n_i^e\otimes \boldsymbol t_e, i=1,2\big\},\\
\mathscr N^e(\mathbb T) &:= \textrm{span}\big\{ \boldsymbol t_e\otimes \boldsymbol n_{f_i}, \; \boldsymbol n_{f_i}\otimes \boldsymbol n_{f_j} - (\boldsymbol n_{f_i}\cdot\boldsymbol n_{f_j})\boldsymbol t_e\otimes \boldsymbol t_e,  i,j=1,2 \big\},\\
\mathscr T^f(\mathbb T) &:= \textrm{span}\big\{\boldsymbol n_f\otimes \boldsymbol t_i^f,   i=1,2,\boldsymbol t_2^f\otimes \boldsymbol t_1^f, \boldsymbol t_1^f\otimes \boldsymbol t_2^f, \boldsymbol t_2^f\otimes \boldsymbol t_2^f - \boldsymbol t_1^f\otimes \boldsymbol t_1^f \big\},\\
\mathscr N^f(\mathbb T) &:= \textrm{span}\big\{ \boldsymbol n^f\otimes \boldsymbol n^f - \boldsymbol t_1^f\otimes \boldsymbol t_1^f, \boldsymbol t_i^f\otimes \boldsymbol n^f, i=1,2 \big\}.
\end{align*}

\begin{figure}[htbp]
\subfigure[Decomposition on an edge.]{
\begin{minipage}[t]{0.5\linewidth}
\centering
\includegraphics*[width=3.125cm]{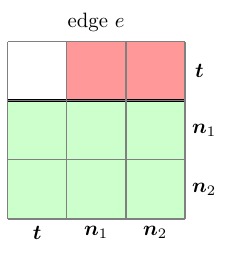}
\end{minipage}}
\subfigure[Decomposition on a face.]
{\begin{minipage}[t]{0.5\linewidth}
\centering
\includegraphics*[width=3.125cm]{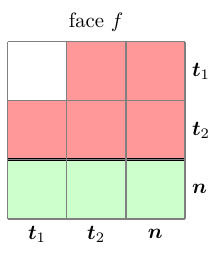}
\end{minipage}}
\caption{The $t$-$n$ decompositions of $\mathbb T$ on edges and faces. Red blocks are associated to bubbles and green blocks for the normal traces which are redistributed to faces.}
\label{fig:tnT}
\end{figure}

The tangential component will be integrated into the div bubble space. As an example, consider a function \(b_e \boldsymbol n^e_1\otimes \boldsymbol t_e\in \mathbb B_k(e; r^{\texttt{v}}) \otimes \mathscr T^e(\mathbb T)\). For two faces \(f\) that include the edge \(e\), \(\boldsymbol t_e \cdot \boldsymbol n_f = 0\). For the other two faces \(f\) that do not contain \(e\), the quadratic edge bubble function \(b_e\) vanishes on \(f\), i.e., \(b_e|_f = 0\). Consequently, \((b_e \boldsymbol n^e_1\otimes \boldsymbol t_e)\boldsymbol n |_{\partial T} = \boldsymbol 0\), which falls within \(\mathbb B_{k}^{\div}(\boldsymbol{r}; \mathbb T)\). A less apparent fact is that \(\mathbb B_{k}^{\div}(\boldsymbol{r}; \mathbb T)\) defined in~\eqref{eq:divbubbler} encompasses all div bubble polynomials $\mathbb B_{k}(\boldsymbol{r}; \mathbb T)\cap \ker(\tr^{\div})$, which was proved in~\cite{Chen;Huang:2021Geometric} for $\boldsymbol{r} = (0, -1, - 1)^{\intercal}$.

The normal component can be reallocated to each face to enforce the desired normal continuity. Further details will be elucidated in the proof of Lemma~\ref{lm:divT}.

Take $\mathbb P_{k}(T;\mathbb T)$ as the space of shape functions. When $r^f\geq 0$, DoFs are simply tensor product of \( \text{DoF}_k(\boldsymbol{r}) \) in~\eqref{eq:Cr3D} and $\mathbb T$. We thus focus on the case $r^f = -1$. The DoFs are 
\begin{subequations}\label{eq:divTdof}
\begin{align}
\nabla^i\boldsymbol{\tau}(\texttt{v}), & \quad i=0,\ldots, r^{\texttt{v}}, \texttt{v}\in \Delta_{0}(T), \label{eq:3dCrmodidivTfemdofV}\\
\int_e \frac{\partial^{j}\boldsymbol{\tau}}{\partial n_1^{i}\partial n_2^{j-i}}:\boldsymbol{q} \dd s, &\quad \boldsymbol{q}\in \mathbb B_{k-j}(e; r^{\texttt{v}}-j)\otimes \mathbb T, 0\leq i\leq j\leq r^e, e\in \Delta_{1}(T), \label{eq:3dCrmodidivTfemdofE}\\
%
%
\int_f (\Pi_f\boldsymbol{\tau}\boldsymbol{n})\cdot\boldsymbol{q} \dd S, &\quad \boldsymbol{q}\in (\mathbb B_{k}^2 (f;\boldsymbol r)/{\rm RT}(f)) \oplus {\rm RT}(f), f\in \Delta_{2}(T), \label{eq:3dCrmodidivTfemdofF1}\\
\int_f (\boldsymbol{n}^{\intercal}\boldsymbol{\tau}\boldsymbol{n})\,{q} \dd S, &\quad {q}\in (\mathbb B_{k}(f;\boldsymbol r)/\mathbb P_0(f))\oplus\mathbb P_0(f), f\in \Delta_{2}(T), \label{eq:3dCrmodidivTfemdofF2}\\
\int_T \boldsymbol{\tau}:\boldsymbol{q} \dx, &\quad \boldsymbol{q}\in \mathbb B_{k}^{\div}(\boldsymbol{r};\mathbb T). \label{eq:3dCrmodidivTfemdofT}
\end{align}
\end{subequations}

\begin{lemma}\label{lm:divT}
Let $\boldsymbol r$ be a smoothness vector with $r^f = -1, r^{\texttt{v}}\geq 0$, and let $k\geq 2r^{\texttt{v}}+1$. DoFs~\eqref{eq:divTdof} are unisolvent for $\mathbb P_{k}(T;\mathbb T)$. 
Given a triangulation $\mathcal T_h$ of $\Omega$, define
\begin{align*}  
\mathbb V^{\div}_{k}(\boldsymbol{r};\mathbb T):=\{\boldsymbol{\tau}\in {L}^2(\Omega;\mathbb T): & \boldsymbol{\tau}|_T\in\mathbb P_{k}(T;\mathbb T) \textrm{ for all } T\in\mathcal T_h, \\
&\textrm{ and all the DoFs~\eqref{eq:divTdof} are single-valued}\}.
\end{align*} 
Then  $\mathbb V^{\div}_{k}(\boldsymbol{r};\mathbb T)\subset H(\div,\Omega;\mathbb T)$. 
\end{lemma}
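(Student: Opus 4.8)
The plan is to prove two separate assertions: (i) unisolvence of DoFs~\eqref{eq:divTdof} for $\mathbb P_k(T;\mathbb T)$, and (ii) the resulting global space is $H(\div)$-conforming. For (i), since the number of DoFs must equal $\dim \mathbb P_k(T;\mathbb T) = 8\binom{k+3}{3}$, I would first carry out a dimension count, matching each group of DoFs against the pieces of the $t-n$ decomposition~\eqref{eq:DoFdec}: the vertex and edge DoFs~\eqref{eq:3dCrmodidivTfemdofV}--\eqref{eq:3dCrmodidivTfemdofE} are the full $\mathbb T$-valued versions of ${\rm DoF}(s;\boldsymbol r)\otimes \mathbb T$ for $r^s\geq 0$ (here $r^{\texttt v},r^e$ could be $-1$ as well, in which case those groups are empty and their contribution is absorbed into bubbles), the face DoFs~\eqref{eq:3dCrmodidivTfemdofF1}--\eqref{eq:3dCrmodidivTfemdofF2} account for the five-dimensional normal trace $\mathscr N^f(\mathbb T)$ (three components $\Pi_f\boldsymbol\tau\boldsymbol n$ and one $\boldsymbol n^\intercal\boldsymbol\tau\boldsymbol n$ — wait, that is only four; the fifth, the $\boldsymbol n^f\otimes \boldsymbol n^f - \boldsymbol t_1^f\otimes\boldsymbol t_1^f$ direction minus trace, is actually also captured since $\Pi_f\boldsymbol\tau\boldsymbol n$ carries two tangential components and $\boldsymbol n^\intercal\boldsymbol\tau\boldsymbol n$ one, and tracelessness ties the remaining diagonal entry), and the interior DoFs~\eqref{eq:3dCrmodidivTfemdofT} against $\dim\mathbb B_k^{\div}(\boldsymbol r;\mathbb T)$ as given by~\eqref{eq:divbubbler}. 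The decomposition identity $\mathbb B_k^{\div}(\boldsymbol r;\mathbb T) = \mathbb B_k^3(\boldsymbol r;\mathbb T)\cap \ker(\operatorname{tr}^{\div})$ asserted in the text is what makes this bookkeeping close.

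For the vanishing argument, suppose all DoFs in~\eqref{eq:divTdof} vanish for some $\boldsymbol\tau\in\mathbb P_k(T;\mathbb T)$. The vertex DoFs~\eqref{eq:3dCrmodidivTfemdofV} kill $\nabla^i\boldsymbol\tau$ at all vertices up to order $r^{\texttt v}$, and together with the edge DoFs~\eqref{eq:3dCrmodidivTfemdofE}, which by the same mechanism as in the scalar smooth element (using that the edge bubbles $\mathbb B_{k-j}(e;r^{\texttt v}-j)$ are rich enough) force all tangential derivatives of $\boldsymbol\tau$ along each edge up to order $r^e$ to vanish, we conclude $\boldsymbol\tau$ vanishes to the appropriate order on the $1$-skeleton, i.e., $\boldsymbol\tau\in \mathbb B_k^3(\boldsymbol r;\mathbb T)$ modulo the face contributions. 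Then on each face $f$ I would use the DoFs~\eqref{eq:3dCrmodidivTfemdofF1}--\eqref{eq:3dCrmodidivTfemdofF2}: these test $\Pi_f\boldsymbol\tau\boldsymbol n$ and $\boldsymbol n^\intercal\boldsymbol\tau\boldsymbol n$ against spaces that span all of $\mathbb B_k^2(f;\boldsymbol r)/\boldsymbol{RT}(f)\oplus\boldsymbol{RT}(f)$ respectively $\mathbb B_k(f;\boldsymbol r)/\mathbb P_0(f)\oplus\mathbb P_0(f)$, hence the full normal trace $\boldsymbol\tau\boldsymbol n|_{\partial T}$ is annihilated. The slightly delicate point — and the reason the face test spaces are written as quotients glued to $\boldsymbol{RT}(f)$ or $\mathbb P_0(f)$ — is that the lowest-order piece of $\boldsymbol\tau\boldsymbol n$ on a face is coupled across faces through the edge values already pinned down; one has to check that the complementary part ($\boldsymbol{RT}(f)$ and $\mathbb P_0(f)$ respectively) is exactly what is left after the edge DoFs have acted, so the face DoFs are well-defined and, combined, determine $\boldsymbol\tau\boldsymbol n|_f$. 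With the normal trace gone, $\boldsymbol\tau\in\mathbb B_k^{\div}(\boldsymbol r;\mathbb T)$, and the interior DoFs~\eqref{eq:3dCrmodidivTfemdofT} finish the job; together with the matching dimension count this yields unisolvence.

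For (ii), $H(\div;\mathbb T)$-conformity, I would argue that single-valuedness of the DoFs forces continuity of the normal trace $\boldsymbol\tau\boldsymbol n$ across every interior face $f$. The normal component $\boldsymbol\tau\boldsymbol n|_f$ decomposes into $\Pi_f\boldsymbol\tau\boldsymbol n$ (two tangential-to-$f$ components) and $\boldsymbol n^\intercal\boldsymbol\tau\boldsymbol n$, and since $\boldsymbol n^f$ depends only on $f$ (not on the element), these are frame-independent. The vertex DoFs~\eqref{eq:3dCrmodidivTfemdofV}, edge DoFs~\eqref{eq:3dCrmodidivTfemdofE}, and face DoFs~\eqref{eq:3dCrmodidivTfemdofF1}--\eqref{eq:3dCrmodidivTfemdofF2} together determine $\boldsymbol\tau\boldsymbol n|_f$ completely as a polynomial in $\mathbb P_k(f;\mathbb R^3)$ — the vertex/edge DoFs pin down its restriction to $\partial f$ and its derivatives there, and the face DoFs pin down the genuine face-interior part — so the trace is single-valued, which is exactly $\boldsymbol\tau\in H(\div,\Omega;\mathbb T)$ by the standard characterization. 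The main obstacle I anticipate is the careful dimension accounting in step (i): reconciling the quotient-plus-$\boldsymbol{RT}(f)$ structure of the face DoFs with the $t-n$ decomposition of $\mathscr N^f(\mathbb T)$ and with the precise dimension of $\mathbb B_k^{\div}(\boldsymbol r;\mathbb T)$ from~\eqref{eq:divbubbler}, particularly verifying the non-obvious identity $\mathbb B_k^{\div}(\boldsymbol r;\mathbb T)=\mathbb B_k^3(\boldsymbol r;\mathbb T)\cap\ker(\operatorname{tr}^{\div})$, which I would either cite from~\cite{Chen;Huang:2021Geometric,Chen;Huang:2022FEMcomplex3D} or prove by the edge/face-bubble vanishing computation sketched in the text after~\eqref{eq:divbubbler}.
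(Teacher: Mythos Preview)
Your approach—explicit dimension count followed by a vanishing argument—is sound and would work, but it differs from the paper's strategy. The paper does not carry out a from-scratch count or a step-by-step vanishing argument; instead it observes that the DoFs~\eqref{eq:divTdof} are a \emph{rearrangement} of ${\rm DoF}_k(\boldsymbol r_+)\otimes\mathbb T$ for the already-unisolvent continuous element~\eqref{eq:r+unisolvence}. Concretely, on each face one splits the face DoFs via the $t$--$n$ decomposition~\eqref{eq:DoFdec}, moves the tangential part into the bubble DoF~\eqref{eq:3dCrmodidivTfemdofT}, and keeps only the normal part as~\eqref{eq:3dCrmodidivTfemdofF1}--\eqref{eq:3dCrmodidivTfemdofF2} (via a Petrov--Galerkin pairing of the trial direction $\boldsymbol n\otimes\boldsymbol n-\boldsymbol t_1\otimes\boldsymbol t_1$ against the test vector $\boldsymbol n\otimes\boldsymbol n$); when $r^e=-1$, the edge normal components are not discarded but \emph{redistributed} to the two adjacent faces, which is why the face bubble index in~\eqref{eq:3dCrmodidivTfemdofF1}--\eqref{eq:3dCrmodidivTfemdofF2} is $\boldsymbol r$ rather than $\boldsymbol r_+$. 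Unisolvence then follows immediately from~\eqref{eq:r+unisolvence} with no separate count or vanishing step. Your route is more self-contained but would require verifying the bubble identity $\mathbb B_k^{\div}(\boldsymbol r;\mathbb T)=\mathbb B_k^3(\boldsymbol r;\mathbb T)\cap\ker(\operatorname{tr}^{\div})$ and doing the full dimension bookkeeping. One correction along the way: $\dim\mathscr N^f(\mathbb T)=3$, not five—it is spanned by $\boldsymbol n\otimes\boldsymbol n-\boldsymbol t_1\otimes\boldsymbol t_1$ and $\boldsymbol t_i\otimes\boldsymbol n$, $i=1,2$, matching precisely the three scalar components of $\boldsymbol\tau\boldsymbol n$ tested in~\eqref{eq:3dCrmodidivTfemdofF1}--\eqref{eq:3dCrmodidivTfemdofF2}.
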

\begin{proof}
First consider the case $r^e \geq 0$. The continuous element $\mathbb V_k(\boldsymbol r_+)\otimes \mathbb T$ is determined by DoFs~\eqref{eq:3dCrmodidivTfemdofV}-\eqref{eq:3dCrmodidivTfemdofE} plus 
\begin{align}
\label{eq:divdofproof2}
\int_f \boldsymbol  \tau: \boldsymbol q \dd S, & \quad \boldsymbol{q}\in  \mathbb B_{k}(f; 
\begin{pmatrix}
 r^{\texttt{v}}\\
 r^{e}
\end{pmatrix}
) \otimes \mathbb T, f\in \Delta_2(T), \\
\label{eq:divdofproof3}
\int_T \boldsymbol \tau : \boldsymbol q \dx, &\quad \boldsymbol q\in \mathbb B_k(T; \boldsymbol r_+)\otimes \mathbb T.
\end{align}
For~\eqref{eq:divdofproof2}, we decompose $\mathbb T = \mathscr T^s(\mathbb X) \oplus \mathscr N^s(\mathbb X)$ and move $ \mathbb B_{k} (f; \boldsymbol r)\otimes \mathscr T^f(\mathbb T)$ into the volume DoFs~\eqref{eq:3dCrmodidivTfemdofT} by utilizing $\boldsymbol q\in \mathbb B_{k}^{\div}(\boldsymbol{r};\mathbb T)$. For the normal component, we employ the idea of Petrov-Galerkin method. The function $\boldsymbol \tau$ is in the trial space containing basis $\boldsymbol n^f\otimes \boldsymbol n^f - \boldsymbol t_1^f\otimes \boldsymbol t_1^f$ for which the test function could be just $\boldsymbol n^f\otimes \boldsymbol n^f$ as $(\boldsymbol n^f\otimes \boldsymbol n^f - \boldsymbol t_1^f\otimes \boldsymbol t_1^f)\boldsymbol n^f = \boldsymbol n^f$, corresponding to DoF~\eqref{eq:3dCrmodidivTfemdofF2}. We then combine this with the other two components $\boldsymbol t_i^f\otimes \boldsymbol n$, i.e. DoF~\eqref{eq:3dCrmodidivTfemdofF1}, to determine the vector $\boldsymbol \tau \boldsymbol n$. 

The test function space is further decomposed, e.g. $\mathbb B_{k}(f;\boldsymbol r) = (\mathbb B_{k}(f;\boldsymbol r)/\mathbb P_0(f))\oplus\mathbb P_0(f)$ so that the moment $\int_f \boldsymbol{n}^{\intercal}\boldsymbol{\tau}\boldsymbol{n}\dd S$ is included in DoF, which is crucial for the div stability. Similar modification is applied in \eqref{eq:3dCrmodidivTfemdofF1} to include ${\rm RT}(f)$ in the test function space.

Consequently,~\eqref{eq:divdofproof2}-\eqref{eq:divdofproof3} are rearranged as
\begin{align*}
\int_f (\Pi_f\boldsymbol{\tau}\boldsymbol{n})\cdot\boldsymbol{q} \dd S, &\quad \boldsymbol{q}\in (\mathbb B_{k}^2 (f;\boldsymbol r)/{\rm RT}(f)) \oplus {\rm RT}(f), f\in \Delta_{2}(T), \\
\int_f (\boldsymbol{n}^{\intercal}\boldsymbol{\tau}\boldsymbol{n})\,{q} \dd S, &\quad {q}\in (\mathbb B_{k}(f;\boldsymbol r)/\mathbb P_0(f))\oplus\mathbb P_0(f), f\in \Delta_{2}(T), \\
\int_f \boldsymbol{\tau}:\boldsymbol{q} \dd S, &\quad \boldsymbol{q}\in \mathbb B_{k}(f;\boldsymbol r)\otimes\mathscr T^f(\mathbb T), f\in \Delta_{2}(T), \\
\int_T \boldsymbol{\tau}:\boldsymbol{q} \dx, &\quad \boldsymbol{q}\in \mathbb B_k(T; \boldsymbol r_+)\otimes \mathbb T,
\end{align*}
which are equivalent to~\eqref{eq:3dCrmodidivTfemdofF1}-\eqref{eq:3dCrmodidivTfemdofT}. The unisolvence then follows from that for tensor product spaces; see~\eqref{eq:r+unisolvence}. 

Now let us turn our attention to the case where \(r^{\texttt{v}}\geq 0\), \(r^e= -1\), \(r^f = -1\), and thus \(\boldsymbol r_+ =  (r^{\texttt{v}}, 0, 0)^{\intercal}\). The set of DoFs \({\rm DoF}_k(\boldsymbol r_+)\otimes \mathbb T\) includes vertex DoF~\eqref{eq:3dCrmodidivTfemdofV}, volume DoF~\eqref{eq:divdofproof3}, as well as the following edge and face DoFs:
\begin{align}
\label{eq:edgeDoFdivT}
\int_e \boldsymbol \tau:\boldsymbol q \dd s, & \quad \boldsymbol q\in  \mathbb B_{k}(e; 
r^{\texttt{v}})\otimes \mathbb T, \quad e\in \Delta_1(T), \\
\label{eq:faceDoFdivT}
\int_f \boldsymbol \tau : \boldsymbol q \dd S, & \quad \boldsymbol q\in  \mathbb B_{k}(f; 
\begin{pmatrix}
 r^{\texttt{v}}\\
0
\end{pmatrix}
)\otimes \mathbb T, \quad f\in \Delta_2(T).
\end{align}

As previously mentioned, on each edge \(e\), we employ the frame \(\{\boldsymbol n_{f_1}, \boldsymbol n_{f_2}, \boldsymbol t_e\}\), where \(f_1, f_2\) are two faces containing \(e\). The tangential component \(\mathbb B_{k}(e; r^{\texttt{v}}) \otimes \mathscr T^e(\mathbb T)\) is moved into the bubble space \(\mathbb B_k^{\div}(\boldsymbol r; \mathbb T)\). The normal components will be redistributed to the two faces \(f_i, i=1,2,\) containing \(e\). More precisely, we can first modify the DoF \eqref{eq:edgeDoFdivT} with $\boldsymbol q\in \mathbb B_{k}(e; r^{\texttt{v}}) \otimes \mathscr N^e(\mathbb T)$ to 
$$
\int_e (\boldsymbol \tau \boldsymbol n_{f_i})|_e \cdot \boldsymbol q  \quad  \text{ for } \boldsymbol q\in \mathbb B_{k}^3(e; r^{\texttt{v}}), \quad i= 1,2. 
$$
Then redistribute this edge DoF to the face $f_i, i=1,2$ containing $e$:
\[
\int_e (\boldsymbol \tau \boldsymbol n_{f_i})|_e \cdot \boldsymbol q \to \int_{e} (\boldsymbol \tau \boldsymbol n_{f_i})|_{f_i} \cdot \boldsymbol q \quad \text{ for } \boldsymbol q\in \mathbb B_{k}^3(e; r^{\texttt{v}}), \quad i= 1,2,
\]
so that in \eqref{eq:faceDoFdivT}
\[
\mathbb B_{k}(f; 
\begin{pmatrix}
 r^{\texttt{v}}\\
0
\end{pmatrix}
) \oplus_{e\in \Delta_1(f)} \mathbb B_{k}(e; 
r^{\texttt{v}}) = 
\mathbb B_{k}(f; 
\begin{pmatrix}
 r^{\texttt{v}}\\
-1
\end{pmatrix}
)
\]
for \(r^{\texttt{v}}\geq 0, r^e = -1\), which leads to \eqref{eq:3dCrmodidivTfemdofF1}-\eqref{eq:3dCrmodidivTfemdofF2}. Thus, the unisolvence is proven.

The conclusion $\mathbb V^{\div}_{k}(\boldsymbol{r};\mathbb T)\subset H(\div,\Omega;\mathbb T)$ is obvious as $\boldsymbol \tau\boldsymbol n$ is continuous on each face $f$ due to the single-valued DoFs~\eqref{eq:3dCrmodidivTfemdofV}-\eqref{eq:3dCrmodidivTfemdofF2}. 
\end{proof}

When \(r^{\texttt{v}}= 0\) and \(r^e = r^f = -1\), the element \(\mathbb V^{\div}_{k}(\boldsymbol{r};\mathbb T)\) exhibits continuity at vertices, which is reminiscent of the Stenberg element~\cite{stenbergNonstandardMixedFinite2010} designed for \(H(\div)\)-conforming vector functions. We refer to \cite{Chen;Huang:2021Geometric} for an illustration of Stenberg element and the corresponding inf-sup condition.

\begin{figure}[htbp]
\subfigure[Decomposition on an edge]{
\begin{minipage}[t]{0.5\linewidth}
\centering
\includegraphics*[width=3.25cm]{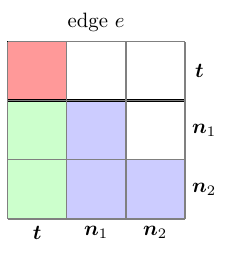}
\end{minipage}}
\subfigure[Decomposition on a face]
{\begin{minipage}[t]{0.5\linewidth}
\centering
\includegraphics*[width=3.25cm]{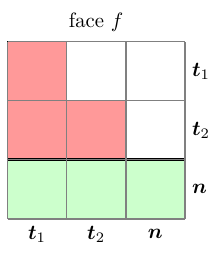}
\end{minipage}}
\caption{The $t$-$n$ decomposition of $\mathbb S$ on edges and faces. Red blocks are associated to bubbles, and green and blue blocks for the normal traces. The green blocks can be redistributed to faces while the blue blocks introduces stronger continuity on the normal plane \(\mathscr N^e(\mathbb S)\).}
\label{fig:tnS}
\end{figure}

The construction of an \(H(\div; \mathbb S)\)-conforming element follows a similar approach, albeit with additional complexities introduced by \(\mathscr N^e(\mathbb S)\). Decompositions on edge $e$ and $f$ are
\begin{align*}
\mathscr T^e(\mathbb S) &:= \textrm{span}\big\{\boldsymbol t_e\otimes \boldsymbol t_e \big\},\\
\mathscr N^e(\mathbb S) &:= \textrm{span}\big\{\sym(\boldsymbol t_e \otimes \boldsymbol n_{f_i}),   i=1,2 \big\} \oplus\textrm{span}\big\{\sym(\boldsymbol n_i^e\otimes \boldsymbol n_j^e),  1\leq i\leq j\leq 2 \big\},\\
\mathscr T^f(\mathbb S) &:= \textrm{span}\big\{ \sym(\boldsymbol t_i^f\otimes \boldsymbol t_j^f),  1\leq i\leq j\leq 2 \big\},\\
\mathscr N^f(\mathbb S) &:= \textrm{span}\big\{\boldsymbol n^f\otimes \boldsymbol n^f, \sym(\boldsymbol t_i^f\otimes \boldsymbol n^f), i=1,2 \big\}.
\end{align*}

Once again, the tangential component will be incorporated into the bubble space. However, the redistribution of certain normal components to faces might be constrained by symmetry conditions. On an edge \(e\), for instance, the symmetry constraint demands that the normal plane of \(e\) must obey \(\textrm{span}\big\{\sym(\boldsymbol n_i^e\otimes \boldsymbol n_j^e),  1\leq i\leq j\leq 2 \big\}\), which is a global requirement, indicating that the two normal vectors \(\{\boldsymbol n_1^e, \boldsymbol n_2^e\}\) are independent of the elements containing \(e\). We refer to the blue blocks in Fig.~\ref{fig:tnS} for clarification. Conversely, in \(\mathscr N^e(\mathbb T)\), all the components can be effectively redistributed to faces, as demonstrated by the green blocks in Fig.~\ref{fig:tnS}.

Take $\mathbb P_{k}(T;\mathbb S)$ as the space of shape functions. Again we focus on the case $r^f = -1$.
The DoFs are 
\begin{subequations}\label{eq:divSdof}
\begin{align}
\nabla^i\boldsymbol{\tau}(\texttt{v}), & \quad i=0,\ldots, r^{\texttt{v}}, \texttt{v}\in \Delta_{0}(T), \label{eq:3dCrdivSfemdofV}\\
\int_e \frac{\partial^{j}\boldsymbol{\tau}}{\partial n_1^{i}\partial n_2^{j-i}}:\boldsymbol{q} \dd s, &\quad \boldsymbol{q}\in \mathbb B_{k-j}(e; r^{\texttt{v}}-j) \otimes \mathbb S, 0\leq i\leq j\leq r^e, e\in \Delta_{1}(T), \label{eq:3dCrdivSfemdofE1}\\
\int_e (\boldsymbol{n}_i^{\intercal}\boldsymbol{\tau}\boldsymbol{n}_j)\,q \dd s, &\quad q\in \mathbb B_{k}(e; r^{\texttt{v}}), 1\leq i\leq j\leq 2, e\in \Delta_{1}(T), \textrm{ if } r^{e}=-1, \label{eq:3dCrdivSfemdofE2}\\
\int_f (\Pi_f \boldsymbol{\tau}\boldsymbol{n})\cdot\boldsymbol{q} \dd S, &\quad \boldsymbol{q}\in (\mathbb B_{k}^{\div} (f;\boldsymbol r)/{\rm RM}(f)) \oplus {\rm RM}(f), f\in \Delta_{2}(T), \label{eq:3dCrdivSfemdofF1}\\
\int_f (\boldsymbol n^{\intercal}\boldsymbol{\tau}\boldsymbol{n}) \, q \dd S, &\quad q\in (\mathbb B_{k}(f;\boldsymbol{r}_+)/\mathbb P_1(f))\oplus \mathbb P_1(f), f\in \Delta_{2}(T), \label{eq:3dCrdivSfemdofF2}\\
\int_T \boldsymbol{\tau}:\boldsymbol{q} \dx, &\quad \boldsymbol{q}\in \mathbb B_{k}^{\div}(\boldsymbol{r};\mathbb S). \label{eq:3dCrdivSfemdofT}
\end{align}
\end{subequations}

\begin{lemma}\label{lm:divS}
Let $\boldsymbol r$ be a valid smoothness vector with $r^f = -1, r^{\texttt{v}}\geq 0$, and let $k\geq 2r^{\texttt{v}}+1$. DoFs~\eqref{eq:divSdof} are unisolvent for $\mathbb P_{k}(T;\mathbb S)$. 
Given a triangulation $\mathcal T_h$ of $\Omega$, define
\begin{align*}  
\mathbb V^{\div}_{k}(\boldsymbol{r};\mathbb S):=\{\boldsymbol{\tau}\in {L}^2(\Omega;\mathbb S): & \,\boldsymbol{\tau}|_T\in\mathbb P_{k}(T;\mathbb S) \textrm{ for all } T\in\mathcal T_h, \\
&\textrm{ and all the DoFs~\eqref{eq:divSdof} are single-valued}\}.
\end{align*} 
Then  $\mathbb V^{\div}_{k}(\boldsymbol{r};\mathbb S)\subset H(\div,\Omega;\mathbb S)$. 
\end{lemma}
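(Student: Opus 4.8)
The plan is to follow the proof of Lemma~\ref{lm:divT} almost verbatim, the only new ingredient being the richer $t-n$ decomposition of $\mathbb S$ on edges: the ``blue'' block $\operatorname{span}\{\sym(\boldsymbol n_i^e\otimes\boldsymbol n_j^e):1\le i\le j\le2\}\subset\mathscr N^e(\mathbb S)$ in Fig.~\ref{fig:tnS}, which cannot be redistributed to faces. Concretely, I would start from the continuous unisolvence \eqref{eq:r+unisolvence} for $\mathbb V_k(\boldsymbol r_+)\otimes\mathbb S$ (valid since $k\ge 2r^{\texttt{v}}+1$) and show that the DoFs \eqref{eq:divSdof} are obtained from ${\rm DoF}_k(\boldsymbol r_+)\otimes\mathbb S$ by the rearrangement recorded in \eqref{eq:DoFdec}: the tangential block $\mathscr T^s(\mathbb S)$ at each sub-simplex $s$ is absorbed into the volume bubble space $\mathbb B_k^{\div}(\boldsymbol r;\mathbb S)$ of \eqref{eq:divbubbler}, while the normal block $\mathscr N^s(\mathbb S)$ supplies the surviving boundary DoFs. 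I would split into the cases $r^e\ge0$ and $r^e=-1$, exactly as for $\mathbb T$.

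In the case $r^e\ge0$ (so \eqref{eq:3dCrdivSfemdofE2} is empty), the argument copies the first part of the proof of Lemma~\ref{lm:divT}: the continuous element is fixed by \eqref{eq:3dCrdivSfemdofV}--\eqref{eq:3dCrdivSfemdofE1} together with the face and interior moments that are the $\mathbb S$-analogues of \eqref{eq:divdofproof2}--\eqref{eq:divdofproof3}. On each face I decompose $\mathbb S=\mathscr N^f(\mathbb S)\oplus\mathscr T^f(\mathbb S)$, push $\mathbb B_k(f;\boldsymbol r)\otimes\mathscr T^f(\mathbb S)$ into \eqref{eq:3dCrdivSfemdofT}, and recover the normal part by a Petrov--Galerkin pairing: the trial basis vector $\boldsymbol n^f\otimes\boldsymbol n^f$ is tested against $\boldsymbol n^f\otimes\boldsymbol n^f$ because $(\boldsymbol n^f\otimes\boldsymbol n^f)\boldsymbol n^f=\boldsymbol n^f$, yielding \eqref{eq:3dCrdivSfemdofF2}, whereas $\sym(\boldsymbol t_\ell^f\otimes\boldsymbol n^f)$, $\ell=1,2$, is recovered by \eqref{eq:3dCrdivSfemdofF1}; together they determine $\boldsymbol\tau\boldsymbol n$ on $f$. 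Hence \eqref{eq:3dCrdivSfemdofF1}--\eqref{eq:3dCrdivSfemdofT} merely repackage the face and interior moments, and unisolvence follows from \eqref{eq:r+unisolvence}.

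In the case $r^e=r^f=-1$ one has $\boldsymbol r_+=(r^{\texttt{v}},0,0)^{\intercal}$, and ${\rm DoF}_k(\boldsymbol r_+)\otimes\mathbb S$ consists of \eqref{eq:3dCrdivSfemdofV}, interior moments against $\mathbb B_k(T;\boldsymbol r_+)\otimes\mathbb S$, edge moments against $\mathbb B_k(e;r^{\texttt{v}})\otimes\mathbb S$, and face moments against $\mathbb B_k(f;(r^{\texttt{v}},0)^{\intercal})\otimes\mathbb S$. On faces I proceed as above, keeping only $\mathscr N^f(\mathbb S)$. On each edge $e$, using the frame $\{\boldsymbol n_{f_1},\boldsymbol n_{f_2},\boldsymbol t_e\}$, I split $\mathbb S=\mathscr T^e(\mathbb S)\oplus\mathscr N^e(\mathbb S)$ with $\mathscr T^e(\mathbb S)=\operatorname{span}\{\boldsymbol t_e\otimes\boldsymbol t_e\}$ going into $\mathbb B_k^{\div}(\boldsymbol r;\mathbb S)$; the ``green'' part $\operatorname{span}\{\sym(\boldsymbol t_e\otimes\boldsymbol n_{f_i}):i=1,2\}$ is redistributed to the two faces $f_i\ni e$ — which upgrades the test space in \eqref{eq:3dCrdivSfemdofF1} from $\mathbb B_k(f;\boldsymbol r_+)$-valued bubbles to $\mathbb B_k^{\div}(f;\boldsymbol r)$-valued ones, the two-dimensional analogue of the identity $\mathbb B_k(f;(r^{\texttt{v}},0)^{\intercal})\Oplus_{e\in\Delta_1(f)}\mathbb B_k(e;r^{\texttt{v}})=\mathbb B_k(f;(r^{\texttt{v}},-1)^{\intercal})$ used in Lemma~\ref{lm:divT}, combined with the 2D finite element bubble complexes of~\cite{Chen;Huang:2022femcomplex2d} — while the ``blue'' part $\operatorname{span}\{\sym(\boldsymbol n_i^e\otimes\boldsymbol n_j^e):1\le i\le j\le2\}$ remains on $e$ and becomes the new DoF \eqref{eq:3dCrdivSfemdofE2}. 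A bookkeeping of dimensions per edge ($1$ tangential $+\,2$ green $+\,3$ blue $=6=\dim\mathbb S$, each scaled by $\dim\mathbb B_k(e;r^{\texttt{v}})$) shows nothing is lost or double counted, so ${\rm DoF}_k(\boldsymbol r_+)\otimes\mathbb S$ is precisely \eqref{eq:divSdof} and unisolvence again follows from \eqref{eq:r+unisolvence}.

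Finally, for the conformity $\mathbb V_k^{\div}(\boldsymbol r;\mathbb S)\subset H(\div,\Omega;\mathbb S)$ I would verify that $\boldsymbol\tau\boldsymbol n$ is single-valued on each interior face $f$: the tangential part $\Pi_f\boldsymbol\tau\boldsymbol n$ is determined by the shared DoFs \eqref{eq:3dCrdivSfemdofV} and \eqref{eq:3dCrdivSfemdofF1} (the latter already containing the redistributed green edge data), while the scalar $\boldsymbol n^{\intercal}\boldsymbol\tau\boldsymbol n\in\mathbb P_k(f)$ is determined by \eqref{eq:3dCrdivSfemdofV}, \eqref{eq:3dCrdivSfemdofF2}, and — for its trace on $\partial f$ — by \eqref{eq:3dCrdivSfemdofE2}, since for an edge $e\subset f$ the face normal $\boldsymbol n^f$ lies in the normal plane of $e$, so $\boldsymbol n^{f\intercal}\boldsymbol\tau\boldsymbol n^f$ is a fixed linear combination of the $\boldsymbol n_i^{e\intercal}\boldsymbol\tau\boldsymbol n_j^e$. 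Because $\{\boldsymbol n_1^e,\boldsymbol n_2^e\}$ is chosen globally, this edge trace is element-independent, which is exactly why the blue block must stay an edge DoF. The step I expect to be the main obstacle is precisely this $\mathbb S$-specific bookkeeping: deciding which pieces of $\mathscr N^e(\mathbb S)$ can be pushed to faces and which cannot, and then carrying out the two-dimensional bubble accounting on each face (assembling $\mathbb B_k^{\div}(f;\boldsymbol r)$ from $\mathbb B_k(f;\boldsymbol r_+)\otimes\mathscr N^f(\mathbb S)$ together with the green edge contributions, and checking that $\mathbb B_k^{\div}(\boldsymbol r;\mathbb S)$ has the dimension forced by \eqref{eq:divbubbler}) so that the global DoF count matches $\dim\mathbb P_k(T;\mathbb S)$.
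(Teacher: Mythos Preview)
Your proposal is correct and follows essentially the same approach as the paper: you reduce to Lemma~\ref{lm:divT}, identify that the only new phenomenon is the ``blue'' block $\operatorname{span}\{\sym(\boldsymbol n_i^e\otimes\boldsymbol n_j^e)\}\subset\mathscr N^e(\mathbb S)$ which must remain as the edge DoF~\eqref{eq:3dCrdivSfemdofE2}, redistribute the ``green'' block $\sym(\boldsymbol t_e\otimes\boldsymbol n_{f_i})$ to faces to obtain $\mathbb B_k^{\div}(f;\boldsymbol r)$ in~\eqref{eq:3dCrdivSfemdofF1}, and note that consequently~\eqref{eq:3dCrdivSfemdofF2} keeps $\boldsymbol r_+$ rather than $\boldsymbol r$. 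Your conformity argument and dimension bookkeeping are in fact more detailed than the paper's own proof, which simply points to Lemma~\ref{lm:divT} and records the formula $\mathbb B_k^{\div}(f;\boldsymbol r)=\mathbb B_k^2(f;\boldsymbol r_+)\oplus\bigoplus_{e\in\Delta_1(f)}\mathbb B_k(e;r^{\texttt{v}})\boldsymbol t_e$.
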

\begin{proof}
The core approach of the proof aligns with that of Lemma~\ref{lm:divT}. We will highlight the differences here. The case where \(r^e \geq 0\) remains unchanged. When \(r^e = -1\), the components \(\boldsymbol t_e \otimes \boldsymbol n_{f_i}\) can be redistributed to faces, resulting in the expression:
\[
\mathbb B_{k}^{\div} (f;\boldsymbol{r}) = \mathbb B_{k}^2(f;\boldsymbol{r}_+) \Oplus \oplus_{e\in \Delta_1(f)} \mathbb B_k(e, r^{\texttt{v}}) \boldsymbol t_e,
\]
which leads to the form in~\eqref{eq:3dCrdivSfemdofF1}. The components \(\boldsymbol n_i^e\otimes \boldsymbol n_j^e\) cannot be redistributed to faces and are preserved in~\eqref{eq:3dCrdivSfemdofE2}. Therefore, in~\eqref{eq:3dCrdivSfemdofF2}, the notation \(\boldsymbol r_+\) is still retained in $\mathbb B_{k}(f;\boldsymbol{r}_+)$, while in \eqref{eq:3dCrmodidivTfemdofF2} for $H(\div,\mathbb T)$ elements, $\mathbb B_{k}(f;\boldsymbol{r})$ is used.
\end{proof}

\begin{remark}\label{rm:HZdof}\rm 
When $r^{\texttt{v}}=0, r^e=r^f = -1$, if we do not redistribute the tangential-normal component  of edge DoFs, cf. \cite{Chen;Huang:2021divFinite,Chen;Huang:2021Geometric} for detailed explanation, we can recover the Hu-Zhang element~\cite{HuZhang2015}.
We prefer the redistribution of tangential-normal DoF~\eqref{eq:3dCrdivSfemdofF1} as it is more close to the vector face elements. $\qed$
\end{remark}

\begin{remark}\rm 
The continuity at vertices is enforced due to the constraints -- tracelessness conditions in \(\mathbb T\) or symmetry conditions in \(\mathbb S\). This constraint-driven continuity at vertices cannot be relaxed. To elucidate, let \(\texttt{v}_0,\texttt{v}_1,\texttt{v}_2,\texttt{v}_3\) be the four vertices of a tetrahedron \(T\), with corresponding barycentric coordinates \(\lambda_0,\lambda_1,\lambda_2,\lambda_3\). Selecting \(\texttt{v}_0\) as the origin, we define \(\boldsymbol{t}_{0i}:=\texttt{v}_i-\texttt{v}_0\) for \(i=1,2,3\), which serve as three basis vectors. For a smooth traceless tensor \(\boldsymbol{\tau}\), due to the duality between \(\{\boldsymbol{t}_{01},\boldsymbol{t}_{02}, \boldsymbol{t}_{03}\}\) and \(\{\nabla\lambda_1,\nabla\lambda_2, \nabla\lambda_3\}\), we can represent
\[
\boldsymbol{\tau}(\texttt{v}_0)=\sum_{i=0}^3(\boldsymbol{\tau}\nabla\lambda_i)|_{f_i}(\texttt{v}_0)\boldsymbol{t}_{0i}^{\intercal}.
\]
The traceless property of \(\boldsymbol{\tau}\) implies
\[
\sum_{i=0}^3\boldsymbol{t}_{0i}^{\intercal}(\boldsymbol{\tau}\nabla\lambda_i)|_{f_i}(\texttt{v}_0)=0,
\]
indicating that \((\boldsymbol{\tau}\boldsymbol{n}_1)|_{f_1}\), \((\boldsymbol{\tau}\boldsymbol{n}_2)|_{f_2}\), and \((\boldsymbol{\tau}\boldsymbol{n}_3)|_{f_3}\) are linearly dependent at vertex \(\texttt{v}_0\). Consequently, the vertex DoFs in equation~\eqref{eq:3dCrmodidivTfemdofV} cannot be reallocated to the faces, which underscores the inalterable nature of the constraint \(r^{\texttt{v}}\geq 0\). To relax the continuity at vertices, we can use the barycentric refinement in \cite{ChenHuang2025} or the distributional finite element in \cite{ChenHuangZhang2023}.
$\qed$
\end{remark}

\subsection{Div stability}
Due to the similarity, we use $H(\div; \mathbb S)$-conforming finite element to illustrate the BGG procedure. Consider the diagram
\begin{equation*}
\begin{tikzcd}[column sep=small, row sep=normal]
&
\mathbb V^{\div}_{k+1}(\boldsymbol{r}+1; \mathbb M)
  \arrow{r}{\div}
 &
\mathbb V^{L^2}_{k}(\boldsymbol{r}; \mathbb R^3)
\arrow{r}{}
& \boldsymbol{0} \\
{\mathbb V}^{\curl}_{k+1}(\boldsymbol{r}+1;\mathbb M)
 \arrow[ur,swap,"S"'] \arrow{r}{\curl}
 & 
\mathbb V^{\div}_{k}(\boldsymbol{r};\mathbb M)
 \arrow[ur,swap,"- 2\vskw "'] \arrow{r}{\div}
 & 
\mathbb V^{L^2}_{k-1}(\boldsymbol{r}\ominus 1;\mathbb R^3)
\arrow[r] 
&\boldsymbol{0},
\end{tikzcd}
\end{equation*}
where $\mathbb V^{\div}_{k}(\boldsymbol{r};\mathbb M)=\mathbb R^3\otimes\mathbb V^{\div}_{k}(\boldsymbol{r})$ and $\mathbb V^{\curl}_{k+1}(\boldsymbol{r}+1;\mathbb M)=\mathbb R^3\otimes\mathbb V^{\curl}_{k+1}(\boldsymbol{r}+1)$.
%
%
We require that both $(\boldsymbol r + 1, \boldsymbol r, k+1)$  and $(\boldsymbol r, \boldsymbol r\ominus 1, k)$ are div stable. Then 
\begin{equation*}
\boldsymbol r +1 \geq \begin{pmatrix}
 2\\
 1\\
 0
\end{pmatrix}, 
\quad
\boldsymbol r \geq 
\begin{pmatrix}
 1\\
 0\\
 -1
\end{pmatrix},
\quad
\boldsymbol r \ominus 1 \geq
\begin{pmatrix}
 0\\
 -1\\
 -1
\end{pmatrix}.
\end{equation*}
In particular $r^e \geq 0$. 

\begin{lemma}\label{lm:divSstability}
Let $\boldsymbol r$ be a smoothness vector and $k$ large enough satisfying: both $(\boldsymbol r + 1, \boldsymbol r, k+1)$  and $(\boldsymbol r, \boldsymbol r\ominus 1, k)$ are div stable. Then we have the $(\div; \mathbb S)$ stability:
\begin{equation*}
\div \mathbb V^{\div}_{k}(\boldsymbol{r};\mathbb S) = 
\mathbb V^{L^2}_{k-1}(\boldsymbol{r}\ominus 1;\mathbb R^3).
\end{equation*}
\end{lemma}
\begin{proof}
As \(\boldsymbol{r}+1\geq 0\), we have \({\mathbb V}^{\div}_{k+1}(\boldsymbol{r}+1;\mathbb M) = {\mathbb V}^{\curl}_{k+1}(\boldsymbol{r}+1;\mathbb M) = {\mathbb V}^{\grad}_{k+1}(\boldsymbol{r} +1)\otimes \mathbb M\). Therefore, \(S\) is one-to-one.

Given \(\boldsymbol{u} \in \mathbb V^{L^2}_{k}(\boldsymbol{r}; \mathbb R^3)\), since \((\boldsymbol{r} + 1, \boldsymbol{r}, k+1)\) is div stable, we can find \(\boldsymbol{\sigma} \in {\mathbb V}^{\div}_{k+1}(\boldsymbol{r}+1;\mathbb M)\) such that \(\div \boldsymbol{\sigma} = \boldsymbol{u}\). Then, by defining \(\boldsymbol{\tau}:=\curl S^{-1}\boldsymbol{\sigma}\), we have \(\boldsymbol{\tau} \in \mathbb V^{\div}_{k}(\boldsymbol{r};\mathbb M)\) and
\[
2\vskw \boldsymbol{\tau} = 2\vskw \curl (S^{-1}\boldsymbol{\sigma}) = \div S(S^{-1}\boldsymbol{\sigma}) = \boldsymbol{u}.
\]
Thus, \(\vskw: \mathbb V^{\div}_{k}(\boldsymbol{r};\mathbb M)\to  \mathbb V^{L^2}_{k}(\boldsymbol{r}; \mathbb R^3)\) is surjective.

We can apply the BGG construction to conclude that \(\div: \mathbb V^{\div}_{k}(\boldsymbol{r};\mathbb M)\cap \ker(\vskw)\to \mathbb V^{L^2}_{k-1}(\boldsymbol{r}\ominus 1;\mathbb R^3)\) is surjective. Our next step is to establish the relationship
\begin{equation}\label{eq:VdivS=kervskw}
\mathbb V^{\div}_{k}(\boldsymbol{r};\mathbb S) = \mathbb V^{\div}_{k}(\boldsymbol{r};\mathbb M)\cap \ker(\vskw).
\end{equation}
Namely, we need to show that the subspace \(\mathbb V^{\div}_{k}(\boldsymbol{r};\mathbb M)\cap \ker(\vskw)\) derived via BGG corresponds to the finite element space \(\mathbb V^{\div}_{k}(\boldsymbol{r};\mathbb S)\) defined by DoFs~\eqref{eq:divSdof}.

The inclusion \(\mathbb V^{\div}_{k}(\boldsymbol{r};\mathbb S)\subseteq \mathbb V^{\div}_{k}(\boldsymbol{r};\mathbb M)\cap \ker(\vskw)\) is evident. To establish their equality, it suffices to demonstrate that
\[
\dim \mathbb V^{\div}_{k}(\boldsymbol{r};\mathbb S) = \dim (\mathbb V^{\div}_{k}(\boldsymbol{r};\mathbb M)\cap \ker(\vskw)),
\]
which is equivalent to showing
\begin{equation}\label{eq:dimchange}
\dim \mathbb V^{\div}_{k}(\boldsymbol{r};\mathbb M) -  \dim \mathbb V^{\div}_{k}(\boldsymbol{r};\mathbb S) = \dim \mathbb V^{L^2}_{k}(\boldsymbol{r}; \mathbb R^3),
\end{equation}
since we have proved that \(\vskw: \mathbb V^{\div}_{k}(\boldsymbol{r};\mathbb M)\to  \mathbb V^{L^2}_{k}(\boldsymbol{r}; \mathbb R^3)\) is surjective.

In the case where \(r^f \geq 0\), we have \(\mathbb V^{\div}_{k}(\boldsymbol{r};\mathbb X) = \mathbb V_{k}(\boldsymbol{r}) \otimes \mathbb X\) for \(\mathbb X = \mathbb M, \mathbb S\), or \(\mathbb R^3\). Consequently,~\eqref{eq:dimchange} trivially holds. Let us now consider the case where \(r^f = -1\) and \(r^e\geq 0\). For the vertex and edge DoFs~\eqref{eq:3dCrdivSfemdofV}-\eqref{eq:3dCrdivSfemdofE1}, we find that \(\dim \mathbb M - \dim \mathbb S = \dim \mathbb R^3\). The face DoFs~\eqref{eq:3dCrdivSfemdofF1}-\eqref{eq:3dCrdivSfemdofF2} remain the same. The only remaining dimension change is within the bubble spaces, and this can be computed as follows:
\begin{align*}
&\dim \mathbb B_k^{\div}(\boldsymbol r; \mathbb M) - \dim \mathbb B_k^{\div}(\boldsymbol r; \mathbb S) \\
&\quad = \dim \mathbb B_k(\boldsymbol r_+; \mathbb M) - \dim \mathbb B_k(\boldsymbol r_+; \mathbb S)\\
&\quad\quad +  4\dim \mathbb B_k(f; \boldsymbol r)\otimes \mathscr T^f(\mathbb M) -  4\dim \mathbb B_k(f; \boldsymbol r)\otimes \mathscr T^f(\mathbb S)\\
&\quad = 3(\dim \mathbb B_k(\boldsymbol r_+) + 4\dim \mathbb B_k(f; \boldsymbol r))\\
&\quad = \dim \mathbb B_k(\boldsymbol r; \mathbb R^3),
\end{align*}
where $\mathbb B_k^{\div}(\boldsymbol r; \mathbb M)=\mathbb R^3\otimes\mathbb B_k^{\div}(\boldsymbol r)$.
Hence,~\eqref{eq:dimchange} holds, and consequently,~\eqref{eq:VdivS=kervskw} is confirmed.
\end{proof}
As 
$
\mathbb V_{k}^{\div}(
(r^{\texttt{v}}, 0, -1)^{\intercal} ;\mathbb S) 
\subset
\mathbb V_{k}^{\div}(
(r^{\texttt{v}}, -1, -1)^{\intercal};\mathbb S),
$
we also observe the $(\div; \mathbb S)$ stability for the pair 
$
( r^{\texttt{v}},  -1,  -1)^{\intercal}
- ( r^{\texttt{v}}-1,  -1,  -1)^{\intercal}
$
when \(r^{\texttt{v}}\geq 1\). The $(\div; \mathbb S)$ stability for the case with the lowest level of smoothness, i.e., 
$
( 0,  -1,  -1)^{\intercal} - ( -1,  -1,  -1)^{\intercal},
$
has been established in~\cite{HuZhang2015}, and it appears to be challenging to obtain this result through the BGG construction. Again, cases that can be handled by the BGG construction are slightly smoother.

For the situation where \(r^e=-1\), we encounter different variants of $\mathbb V^{\div}_k(\boldsymbol r; \mathbb S)$ elements depending on whether the tangential-normal component is redistributed to faces or not, as discussed in Remark~\ref{rm:HZdof}. Despite these variations, the $(\div; \mathbb S)$ stability still holds.

Discussion on $(\div; \mathbb T)$ stability is similar. 
\begin{lemma}
Let $\boldsymbol r$ be a smoothness vector and $k$ large enough satisfying: both $(\boldsymbol r + 1, \boldsymbol r, k+1)$  and $(\boldsymbol r, \boldsymbol r\ominus 1, k)$ are div stable. Then we have the $(\div; \mathbb T)$ stability:
\begin{equation*}
\div \mathbb V^{\div}_{k}(\boldsymbol{r};\mathbb T) = 
\mathbb V^{L^2}_{k-1}(\boldsymbol{r}\ominus 1;\mathbb R^3).
\end{equation*}
\end{lemma}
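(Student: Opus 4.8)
The plan is to lift an arbitrary $\boldsymbol g\in\mathbb V^{L^2}_{k-1}(\boldsymbol r\ominus1;\mathbb R^3)$ to some $\boldsymbol\tau\in\mathbb V^{\div}_k(\boldsymbol r;\mathbb M)$ with $\div\boldsymbol\tau=\boldsymbol g$, and then to remove the skew-symmetric part of $\boldsymbol\tau$ by subtracting a curl drawn from the upper row of the diagram, producing a symmetric preimage. The two div-stability hypotheses supply, applied row by row, the surjectivity of $\div\colon\mathbb V^{\div}_k(\boldsymbol r;\mathbb M)\to\mathbb V^{L^2}_{k-1}(\boldsymbol r\ominus1;\mathbb R^3)$ and of $\div\colon\mathbb V^{\div}_{k+1}(\boldsymbol r+1;\mathbb M)\to\mathbb V^{L^2}_k(\boldsymbol r;\mathbb R^3)$, and they also force $r^{\texttt v}\ge 0$, $r^e\ge 0$ and $\boldsymbol r+1\ge\boldsymbol 0$, as already recorded above the lemma. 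I will also use two elementary facts: $S\boldsymbol\rho=\boldsymbol\rho^{\intercal}-\tr(\boldsymbol\rho)\boldsymbol I$ is an algebraic automorphism of $\mathbb M$ (with inverse $S^{-1}\boldsymbol\eta=\boldsymbol\eta^{\intercal}-\tfrac12\tr(\boldsymbol\eta)\boldsymbol I$), hence it preserves componentwise smoothness at vertices, edges and faces; and the integration-by-parts identity of Section~\ref{sec:preliminary} gives $\div(S\boldsymbol\rho)=2\vskw(\curl\boldsymbol\rho)$.

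Given $\boldsymbol g$, the steps will be: first pick $\boldsymbol\tau\in\mathbb V^{\div}_k(\boldsymbol r;\mathbb M)$ with $\div\boldsymbol\tau=\boldsymbol g$ and set $\boldsymbol w:=2\vskw\boldsymbol\tau$. Next I will argue $\boldsymbol w\in\mathbb V^{L^2}_k(\boldsymbol r;\mathbb R^3)$: the vertex degrees of freedom~\eqref{eq:3dCrdivfemdofV1} give componentwise $C^{r^{\texttt v}}$-smoothness at vertices, and along an edge the degrees of freedom~\eqref{eq:3dCrdivfemdofE1}--\eqref{eq:3dCrdivfemdofE4}, via the identity $\partial_{n_2}(\boldsymbol\tau\boldsymbol n_2)=\div\boldsymbol\tau-\partial_{n_1}(\boldsymbol\tau\boldsymbol n_1)-\partial_{t}(\boldsymbol\tau\boldsymbol t)$, upgrade the control on $\boldsymbol\tau\boldsymbol t$ and $\boldsymbol\tau\boldsymbol n_1$ to componentwise $C^{r^e}$-smoothness of $\boldsymbol\tau$ at edges, so $\vskw\boldsymbol\tau$ carries exactly the smoothness defining $\mathbb V^{L^2}_k(\boldsymbol r;\mathbb R^3)$. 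Then by the upper-row surjectivity I obtain $\boldsymbol\xi\in\mathbb V^{\div}_{k+1}(\boldsymbol r+1;\mathbb M)$ with $\div\boldsymbol\xi=\boldsymbol w$, and set $\boldsymbol\rho:=S^{-1}\boldsymbol\xi$. Because $\boldsymbol r+1\ge\boldsymbol 0$, the space $\mathbb V^{\div}_{k+1}(\boldsymbol r+1;\mathbb M)$ is componentwise $C^{r^f+1}$-continuous, so $\boldsymbol\rho$ is componentwise $C^{r^f+1}$ of degree $k+1$; consequently $\boldsymbol\rho$ is row-wise in $H(\curl)$, $\curl\boldsymbol\rho$ is row-wise in $H(\div)$ with $\div(\curl\boldsymbol\rho)=0$ and componentwise smoothness $(r^{\texttt v},r^e,r^f)$, hence $\curl\boldsymbol\rho\in\mathbb V^{\div}_k(\boldsymbol r;\mathbb M)$ and $\boldsymbol\rho\in\mathbb V^{\curl}_{k+1}(\boldsymbol r+1;\mathbb M)$. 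The identity then yields $2\vskw(\curl\boldsymbol\rho)=\div(S\boldsymbol\rho)=\div\boldsymbol\xi=\boldsymbol w=2\vskw\boldsymbol\tau$, so $\boldsymbol\sigma:=\boldsymbol\tau-\curl\boldsymbol\rho\in\mathbb V^{\div}_k(\boldsymbol r;\mathbb M)$ is symmetric ($\vskw\boldsymbol\sigma=\boldsymbol0$) and $\div\boldsymbol\sigma=\div\boldsymbol\tau-\div(\curl\boldsymbol\rho)=\boldsymbol g$.

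The last point will be to confirm that $\boldsymbol\sigma$ actually lies in $\mathbb V^{\div}_k(\boldsymbol r;\mathbb S)$, not merely that it is a symmetric element of $\mathbb V^{\div}_k(\boldsymbol r;\mathbb M)$: since $r^{\texttt v}\ge 0$ and $r^e\ge 0$, a symmetric field with the conformity of $\mathbb V^{\div}_k(\boldsymbol r;\mathbb M)$ automatically carries the single-valued vertex data~\eqref{eq:3dCrdivSfemdofV} and the edge data needed for DoFs~\eqref{eq:divSdof} (the extra edge degrees of freedom~\eqref{eq:3dCrdivSfemdofE2} appear only when $r^e=-1$), and $\div\boldsymbol\sigma=\boldsymbol g\in\mathbb V^{L^2}_{k-1}(\boldsymbol r\ominus1;\mathbb R^3)$; the reverse inclusion $\div\mathbb V^{\div}_k(\boldsymbol r;\mathbb S)\subseteq\mathbb V^{L^2}_{k-1}(\boldsymbol r\ominus1;\mathbb R^3)$ is immediate from the definition of the space. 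I expect the main obstacle to be precisely the continuity book-keeping of the second paragraph---verifying that $\vskw$ maps $\mathbb V^{\div}_k(\boldsymbol r;\mathbb M)$ into $\mathbb V^{L^2}_k(\boldsymbol r;\mathbb R^3)$ and that $\curl(S^{-1}\boldsymbol\xi)$ returns to $\mathbb V^{\div}_k(\boldsymbol r;\mathbb M)$ with the prescribed variable smoothness. This is exactly the spot where the mismatch of continuity usually blocks a discrete BGG argument, and it is tamed here only because the div-stability hypotheses force $r^e\ge 0$ (so the div element already has enough edge smoothness to host a symmetric field) and $\boldsymbol r+1\ge\boldsymbol 0$ (so the upper-row spaces are fully continuous and the algebraic twist $S$ does no harm).
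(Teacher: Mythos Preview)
Your proposal is correct and is essentially the explicit diagram chase underlying the paper's BGG argument: lift $\boldsymbol g$ to $\boldsymbol\tau\in\mathbb V^{\div}_k(\boldsymbol r;\mathbb M)$, correct the skew part by $\curl S^{-1}\boldsymbol\xi$ with $\boldsymbol\xi$ drawn from the upper row, and land in a symmetric preimage. The only noteworthy difference is in the last step: the paper identifies $\mathbb V^{\div}_k(\boldsymbol r;\mathbb S)$ with $\mathbb V^{\div}_k(\boldsymbol r;\mathbb M)\cap\ker(\vskw)$ by a clean dimension count~\eqref{eq:dimchange}, whereas you verify directly that a symmetric element of the matrix space satisfies the single-valued DoFs~\eqref{eq:divSdof} --- which in turn forces you to carry out the edge-smoothness bookkeeping of your step~2 (the paper avoids that step altogether by proving surjectivity of $\vskw$ via an explicit preimage rather than checking its range).
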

\begin{proof}
The dimension identity for traceless matrices 
\begin{equation}\label{eq:divTdimdiff}
 \dim \mathbb V^{\div}_{k-1}(\boldsymbol r; \mathbb M) - \dim \mathbb V^{\div}_{k-1}(\boldsymbol r; \mathbb T) = \dim \mathbb V^{L^2}_{k}(\boldsymbol{r})
\end{equation}
holds for smoothness vector $\boldsymbol r$ with $r^{\texttt{v}}\geq 0$ but without requirement $r^e\geq 0$ as all normal components can be redistributed to faces.
\end{proof}

We summarize the result below by treating $\mathbb S$ and $\mathbb T$ together. 
\begin{theorem}\label{thm:divX}
Assume the smoothness vector $\boldsymbol r$ and polynomial degree $k$ satisfy: 
\begin{enumerate}
\item Case $r^{\texttt{v}}\geq 1$ and $r^{e}\geq 0$: both $(\boldsymbol r + 1, \boldsymbol r, k+1)$ and $(\boldsymbol r, \boldsymbol r\ominus 1, k)$ are div stable;
\item Case $\bs r = (r^{\texttt{v}}, -1, -1)$ with $r^{\texttt{v}} \geq 1$: $k\geq \max\{2r^{\texttt{v}}+1,4\}$;
\item Case $\bs r = (0, -1, -1)^{\intercal}$: $k\geq \begin{cases}
4, & \textrm{ for }\, \mathbb X = \mathbb S,\\
2, & \textrm{ for }\, \mathbb X = \mathbb T.
 \end{cases}$
\end{enumerate}
Then we have the $(\div; \mathbb X)$ stability, for $\mathbb X = \mathbb S$ or $\mathbb T$,
\begin{equation}\label{eq:divXstability}
\div \mathbb V^{\div}_{k}(\boldsymbol{r};\mathbb X)
= \mathbb V^{L^2}_{k-1}(\boldsymbol{r}\ominus 1;\mathbb R^3).
\end{equation}
\end{theorem}
When~\eqref{eq:divXstability} is satisfied, we will refer to the triple $( \boldsymbol r, \boldsymbol r\ominus 1, k)$ as being $(\div; \mathbb X)$ stable. The conditions presented in Theorem~\ref{thm:divX} are sufficient to establish this stability, although they might not be necessary in all cases. It is important to note that due to the redistribution of edge DoFs to faces, when \(r^{\texttt{v}}=0\), for $\mathbb X = \mathbb T$, the face DoFs~\eqref{eq:3dCrmodidivTfemdofF1}-\eqref{eq:3dCrmodidivTfemdofF2} include $\mathbb P_1^3(f)$ for \(k\geq 2\), which is necessary to prove the div stability. On the other hand, for $\mathbb X = \mathbb S$, \(k\geq 4\) is required, since the face DoF~\eqref{eq:3dCrdivSfemdofF2} contains a smaller face bubble $\mathbb B_{k}(f;\boldsymbol{r}_+)$ that demands higher degree of polynomial.

By the same proof, we also have the div stability for the bubble spaces and include the proof in Appendix: Lemma \ref{lem:divbubbleontoT} and Lemma \ref{lem:divbubbleontoS}. 
\begin{lemma}\label{lem:divbubbleontoST}
Assume the polynomial degree $k\geq \max\{2r^{\texttt{v}}+1,2\}$, and the smoothness vector $\boldsymbol r$  satisfies either: 
\begin{enumerate}
\item $r^{\texttt{v}}\geq 2r^e+1$ and $r^{e}\geq 2(r^f+1)$,  or
\item $r^{\texttt{v}}\geq 0$ and $r^{e}=r^{f}=-1$.
\end{enumerate}
Then we have the $(\div; \mathbb X)$ stability, for $\mathbb X = \mathbb S$ or $\mathbb T$,
\begin{equation*}
\div\mathbb B^{\div}_{k}(\boldsymbol{r};\mathbb X)=\mathbb B_{k-1}(\boldsymbol{r}\ominus1;\mathbb R^3)/{\rm RX}.
\end{equation*}
Here ${\rm RX} = {\rm RM}$ for $\mathbb X = \mathbb S$, and ${\rm RX} = {\rm RT}$ for $\mathbb X = \mathbb T$.
\end{lemma}

\subsection{Inequality constraints}\label{sec:r2r3}
We can apply one $\widetilde{\quad}$ operation to get the div stability with an inequality constraint on the smoothness vectors. 
\begin{corollary}
Let $( \boldsymbol r_2, \boldsymbol r_2\ominus 1, k)$ be $(\div; \mathbb X)$ stable and $\boldsymbol r_3 \geq \boldsymbol r_2\ominus 1$. Define $$\mathbb V^{\div}_{k}(\boldsymbol{r}_2, \boldsymbol r_3;\mathbb X) = \{ \boldsymbol \tau \in \mathbb V^{\div}_{k}(\boldsymbol{r}_2;\mathbb X): \div \boldsymbol \tau \in \mathbb V^{L^2}_{k-1}(\boldsymbol{r}_3;\mathbb R^3)\}.$$ 
Then we have the $(\div; \mathbb X)$ stability 
\begin{equation*}
\div \mathbb V^{\div}_{k}(\boldsymbol{r}_2, \boldsymbol r_3;\mathbb X) = 
\mathbb V^{L^2}_{k-1}(\boldsymbol{r}_3;\mathbb R^3).
\end{equation*}
\end{corollary}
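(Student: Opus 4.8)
The plan is to read the statement as a direct instance of the $\,\widetilde{\ }\,$ (tilde) operation of Section~\ref{sec:preliminary} applied to the short exact sequence behind the $(\div;\mathbb X)$ stability hypothesis; accordingly almost no new work is needed, only a bookkeeping argument with preimages.

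First I would record the elementary nesting
$$
\mathbb V^{L^2}_{k-1}(\boldsymbol{r}_3;\mathbb R^3) \subseteq \mathbb V^{L^2}_{k-1}(\boldsymbol{r}_2\ominus 1;\mathbb R^3),
$$
which holds because $\boldsymbol r_3\geq\boldsymbol r_2\ominus 1$: both spaces consist of piecewise polynomials of the same degree $k-1$, and a larger smoothness vector only adds continuity constraints at vertices, edges and faces, hence shrinks the space. Now set $V_1:=\mathbb V^{\div}_{k}(\boldsymbol r_2;\mathbb X)$, $V_2:=\mathbb V^{L^2}_{k-1}(\boldsymbol r_2\ominus 1;\mathbb R^3)$, $\dd_1:=\div$, and $\widetilde V_2:=\mathbb V^{L^2}_{k-1}(\boldsymbol r_3;\mathbb R^3)\subseteq V_2$. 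The hypothesis that $(\boldsymbol r_2,\boldsymbol r_2\ominus 1,k)$ is $(\div;\mathbb X)$ stable is precisely $\dd_1 V_1=V_2$, and by construction the space $\mathbb V^{\div}_{k}(\boldsymbol r_2,\boldsymbol r_3;\mathbb X)$ is the preimage $\widetilde V_1=\{\boldsymbol\tau\in V_1:\div\boldsymbol\tau\in\widetilde V_2\}$.

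Then I would apply the tilde operation verbatim: given $\boldsymbol u\in\widetilde V_2\subseteq V_2$, surjectivity of $\div\colon V_1\to V_2$ yields $\boldsymbol\tau\in V_1$ with $\div\boldsymbol\tau=\boldsymbol u$; since $\div\boldsymbol\tau=\boldsymbol u\in\widetilde V_2$ we get $\boldsymbol\tau\in\widetilde V_1$. Hence $\div\mathbb V^{\div}_{k}(\boldsymbol r_2,\boldsymbol r_3;\mathbb X)\supseteq\mathbb V^{L^2}_{k-1}(\boldsymbol r_3;\mathbb R^3)$, and the reverse inclusion is immediate from the definition of $\mathbb V^{\div}_{k}(\boldsymbol r_2,\boldsymbol r_3;\mathbb X)$, so equality follows. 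There is no genuine obstacle; the only item worth checking carefully is the nesting of the two $L^2$-spaces above, which is what legitimizes taking a preimage. (If one also wants the accompanying exact sequence $\ker(\div)\hookrightarrow\mathbb V^{\div}_{k}(\boldsymbol r_2;\mathbb X)\xrightarrow{\div}\mathbb V^{L^2}_{k-1}(\boldsymbol r_2\ominus1;\mathbb R^3)\to 0$ to pass to $\widetilde V_1,\widetilde V_2$, one simply notes $\ker(\div)$ is unchanged and reuses exactness at that node; but for the stated surjectivity only $\div V_1=V_2$ is needed.)
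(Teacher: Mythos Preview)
Your argument is correct and is precisely the paper's approach: the paper states this corollary as an immediate consequence of applying one $\widetilde{\ }$ operation (cf.\ the sentence preceding the corollary and the discussion of \eqref{eq:tilde} in Section~\ref{sec:preliminary}), and you have simply unpacked that operation in this specific setting. The only ingredient you make explicit beyond the paper's one-line justification is the nesting $\mathbb V^{L^2}_{k-1}(\boldsymbol r_3;\mathbb R^3)\subseteq\mathbb V^{L^2}_{k-1}(\boldsymbol r_2\ominus 1;\mathbb R^3)$ from $\boldsymbol r_3\geq\boldsymbol r_2\ominus 1$, which is indeed the point that legitimizes the preimage construction.
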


The subspace $\mathbb V^{\div}_{k}(\boldsymbol{r}_2, \boldsymbol r_3;\mathbb X)$ always exists as $\mathbb V^{L^2}_{k-1}(\boldsymbol{r}_3;\mathbb R^3)\subseteq \mathbb V^{L^2}_{k-1}(\boldsymbol r_2\ominus 1;\mathbb R^3)$. However, the challenge lies in formulating local DoFs for this subspace. In this pursuit, we draw insights from our recent work, as outlined in~\cite[Section 4.4]{Chen;Huang:2022FEMcomplex3D}. We add DoFs to determine $\div \boldsymbol \tau$ first but remove non-free index (white blocks in Fig.~\ref{fig:tnT} and Fig.~\ref{fig:tnS}) in the $t$-$n$ decomposition. For example, for face DoFs, we remove component $\partial_n^j(\boldsymbol t_1^{\intercal}\boldsymbol \tau \boldsymbol t_1)$ from vector $\partial_n^j(\boldsymbol \tau \boldsymbol t_1)$ as $\boldsymbol t_1\otimes \boldsymbol t_1\not\in \mathscr T^f(\mathbb T)$. Similarly remove $\boldsymbol t^{\intercal}\boldsymbol \tau\boldsymbol t$ from the edge DoF. 

To save space, we only write out DoFs for $\mathbb V^{\div}_{k}(\boldsymbol{r}_2, \boldsymbol r_3;\mathbb S)$.
Take $\mathbb P_{k}(T;\mathbb S)$ as the space of shape functions with $k\geq\max\{2 r_2^{\texttt{v}} + 1, 2 r_3^{\texttt{v}} + 2\}$. Assume $\boldsymbol r_3 \geq \boldsymbol r_2\ominus 1, r_2^{\texttt{v}}\geq 0$ and $( \boldsymbol r_2, \boldsymbol r_2\ominus 1, k)$ is $(\div; \mathbb S)$ stable. The DoFs are 
\begin{subequations}\label{eq:divSr2r3dof}
\begin{align}
\nabla^i\boldsymbol{\tau}(\texttt{v}), & \quad i=0,\ldots, r_2^{\texttt{v}}, \label{eq:3dCr2divSr2r3V1}\\
\nabla^j\div\boldsymbol{\tau}(\texttt{v}),  & \quad j=r_2^{\texttt{v}},\ldots, r_3^{\texttt{v}}, \label{eq:3dCr2divSr2r3V2}\\
\int_e (\boldsymbol{n}_i^{\intercal}\boldsymbol{\tau}\boldsymbol{n}_j)\,q \dd s, &\quad q\in \mathbb B_{k}(e; r_2^{\texttt{v}}), 1\leq i\leq j\leq 2, \textrm{ if } r_2^{e}=-1, \label{eq:3dCrdivSr2r3E2}\\
\int_e \frac{\partial^{j}(\boldsymbol t^{\intercal}\boldsymbol{\tau}\boldsymbol{t})}{\partial n_1^{i}\partial n_2^{j-i}}q \dd s, &\quad q\in \mathbb B_{k-j}(e; r_2^{\texttt{v}} - j), 0\leq i\leq j\leq r_2^e,\label{eq:3dCr2divSr2r3E4}\\
\int_e \frac{\partial^{j}(\boldsymbol t^{\intercal}\boldsymbol{\tau}\boldsymbol{n}_{1})}{\partial n_1^{i}\partial n_2^{j-i}} q \dd s, &\quad q\in \mathbb B_{k-j}(e; r_2^{\texttt{v}} - j), 0\leq i\leq j\leq r_2^e, \label{eq:3dCr2divSr2r3E5}\\
\int_e \frac{\partial^{j}(\boldsymbol n_1^{\intercal}\boldsymbol{\tau}\boldsymbol{n}_{1})}{\partial n_1^{i}\partial n_2^{j-i}} q \dd s, &\quad q\in \mathbb B_{k-j}(e; r_2^{\texttt{v}} - j), 0\leq i\leq j\leq r_2^e, \label{eq:3dCr2divSr2r3E6}\\
\int_e \partial_{n_1}^j(\boldsymbol{\tau}\boldsymbol{n}_2)\cdot \boldsymbol q \dd s, &\quad \boldsymbol{q}\in \mathbb B_{k-j}^3(e; r_2^{\texttt{v}} - j), 0\leq j\leq r_2^e, \label{eq:3dCr2divSr2r3E1}\\
\int_e \frac{\partial^{j}(\div\boldsymbol{\tau})}{\partial n_1^{i}\partial n_2^{j-i}} \cdot\boldsymbol{q} \dd s, &\quad \boldsymbol{q}\in \mathbb B_{k-1-j}^3(e; r_3^{\texttt{v}} - j), 0\leq i\leq j\leq r_3^{e}, \label{eq:3dCr2divSr2r3E7}\\
\int_f (\Pi_f\boldsymbol{\tau}\boldsymbol{n})\cdot\boldsymbol{q} \dd S, &\quad \boldsymbol{q}\in (\mathbb B_{k}^{\div} (f; \boldsymbol r_2)/{\rm RM}(f)) \oplus {\rm RM}(f), 
\label{eq:3dCrdivSr2r3F1}\\
\int_f (\boldsymbol n^{\intercal}\boldsymbol{\tau}\boldsymbol{n})\,q \dd S, &\quad q\in (\mathbb B_{k}(f;(\boldsymbol{r}_2)_+)/\mathbb P_1(f))\oplus \mathbb P_1(f),
\label{eq:3dCrdivSr2r3F2}\\
\int_f \partial_n^j(\Pi_f\boldsymbol{\tau}\Pi_f): \boldsymbol{q} \dd S, &\quad \boldsymbol{q}\in \mathbb B_{k - j}(f; \boldsymbol r_2-j)\otimes \mathbb S(f),  0\leq j\leq r_2^{f}, \label{eq:3dCr2divSr2r3F2}\\
\int_f \partial_n^j(\div\boldsymbol{\tau})\cdot\boldsymbol{q} \dd S, &\quad \boldsymbol{q}\in \mathbb B_{k-1 - j}^3 (f; \boldsymbol r_3-j),  0\leq j\leq r_3^{f}, \label{eq:3dCr2divTfemdofF5}\\
\int_T (\div\boldsymbol{\tau})\cdot\boldsymbol{q}\dx, &\quad \boldsymbol{q}\in \mathbb B_{k-1}^3(\boldsymbol{r}_3)/{\rm RM}, \label{eq:3dCr2divSr2r3T1}\\
\int_T \boldsymbol{\tau}:\boldsymbol{q} \dx, &\quad \boldsymbol{q}\in \mathbb B_{k}^{\div}(\boldsymbol{r}_2;\mathbb S)\cap\ker(\div) \label{eq:3dCr2divSSr2r3T2}
\end{align}
\end{subequations}
for each $\texttt{v}\in \Delta_{0}(T)$, $e\in \Delta_{1}(T)$ and $f\in \Delta_{2}(T)$.

\begin{lemma}\label{lem:PkSr2r3unisolvence}
Let $( \boldsymbol r_2, \boldsymbol r_2\ominus 1, k)$ be $(\div; \mathbb S)$ stable and $\boldsymbol r_3 \geq \boldsymbol r_2\ominus 1$. The DoFs~\eqref{eq:divSr2r3dof} are uni-solvent for $\mathbb P_{k}(T;\mathbb S)$.
\end{lemma}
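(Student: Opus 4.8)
The plan is to follow the two-step unisolvence argument for refined elements of~\cite[Section~4.4]{Chen;Huang:2022FEMcomplex3D}: first verify that the cardinality of~\eqref{eq:divSr2r3dof} equals $\dim\mathbb P_{k}(T;\mathbb S)$, then show that any $\boldsymbol\tau\in\mathbb P_{k}(T;\mathbb S)$ annihilating every functional in~\eqref{eq:divSr2r3dof} vanishes. For the count, observe that on a single element both $\mathbb V^{\div}_{k}(\boldsymbol r_2;\mathbb S)$ and $\mathbb V^{\div}_{k}(\boldsymbol r_2,\boldsymbol r_3;\mathbb S)$ carry the same shape function space $\mathbb P_{k}(T;\mathbb S)$, so~\eqref{eq:divSr2r3dof} is a re-grouping of the unisolvent set~\eqref{eq:divSdof} of Lemma~\ref{lm:divS} (together with its evident $r_2^{f}\geq0$ counterpart): the volume bubble DoF~\eqref{eq:3dCrdivSfemdofT}, taken over $\mathbb B^{\div}_{k}(\boldsymbol r_2;\mathbb S)=(\mathbb B^{\div}_{k}(\boldsymbol r_2;\mathbb S)\cap\ker(\div))\oplus\mathbb W$, contributes its $\ker(\div)$ summand verbatim as~\eqref{eq:3dCr2divSSr2r3T2}, while the complement $\mathbb W$, on which $\div$ is injective, is traded by integration by parts for functionals on $\div\boldsymbol\tau$, namely the interior moments~\eqref{eq:3dCr2divSr2r3T1} and the higher-order sub-simplex jets~\eqref{eq:3dCr2divSr2r3V2},~\eqref{eq:3dCr2divSr2r3E7},~\eqref{eq:3dCr2divTfemdofF5}. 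The bubble-level $(\div;\mathbb S)$ stability implicit in Corollary~\ref{cor:divX}, whereby $\div$ maps $\mathbb W$ onto $\mathbb B_{k-1}^{3}(\boldsymbol r_2\ominus1)$ modulo $\boldsymbol{RM}$, the scalar Lagrange unisolvence~\eqref{eq:Cr3D} applied to $\mathbb V^{L^2}_{k-1}(\boldsymbol r_3;\mathbb R^3)$, and the $t$--$n$ bookkeeping~\eqref{eq:DoFdec} on edges and faces then make all pieces balance.

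For the vanishing statement, suppose $\boldsymbol\tau\in\mathbb P_{k}(T;\mathbb S)$ kills all of~\eqref{eq:divSr2r3dof}. In Step~(i) I would reconstruct jets: on each edge $e$ with orthonormal frame $\{\boldsymbol t,\boldsymbol n_1,\boldsymbol n_2\}$ one has $\div\boldsymbol\tau=\partial_{t}(\boldsymbol\tau\boldsymbol t)+\partial_{n_1}(\boldsymbol\tau\boldsymbol n_1)+\partial_{n_2}(\boldsymbol\tau\boldsymbol n_2)$, and on each face $f$ with frame $\{\boldsymbol t_1^{f},\boldsymbol t_2^{f},\boldsymbol n\}$ one has $\div\boldsymbol\tau=\partial_{t_1^{f}}(\boldsymbol\tau\boldsymbol t_1^{f})+\partial_{t_2^{f}}(\boldsymbol\tau\boldsymbol t_2^{f})+\partial_{n}(\boldsymbol\tau\boldsymbol n)$; since tangential derivatives preserve jets along $e$ or $f$, the normal-direction jet components of $\boldsymbol\tau$ not listed in~\eqref{eq:3dCr2divSr2r3E4}--\eqref{eq:3dCr2divSr2r3E1} (resp.~\eqref{eq:3dCr2divSr2r3F2}) are recovered inductively from the jets of $\div\boldsymbol\tau$ given by~\eqref{eq:3dCr2divSr2r3E7} (resp.~\eqref{eq:3dCr2divTfemdofF5}), so that the vanishing of~\eqref{eq:3dCr2divSr2r3V1},~\eqref{eq:3dCr2divSr2r3V2},~\eqref{eq:3dCrdivSr2r3E2}--\eqref{eq:3dCr2divTfemdofF5} forces $\boldsymbol\tau\in\mathbb B^{\div}_{k}(\boldsymbol r_2;\mathbb S)$ and $\div\boldsymbol\tau\in\mathbb B_{k-1}^{3}(\boldsymbol r_3)$. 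In Step~(ii), by~\eqref{eq:3dCr2divSr2r3T1} $\int_{T}\div\boldsymbol\tau\cdot\boldsymbol q\dx=0$ for $\boldsymbol q$ in a complement of $\boldsymbol{RM}$ in $\mathbb B_{k-1}^{3}(\boldsymbol r_3)$, while for $\boldsymbol q=\boldsymbol a+\boldsymbol b\times\boldsymbol x\in\boldsymbol{RM}$ the divergence theorem gives $\int_{T}\div\boldsymbol\tau\cdot\boldsymbol q\dx=\int_{\partial T}(\boldsymbol\tau\boldsymbol n)\cdot\boldsymbol q\dd S-\int_{T}\boldsymbol\tau:\nabla\boldsymbol q\dx=0$, since $\boldsymbol\tau\boldsymbol n|_{\partial T}=\boldsymbol 0$ ($\boldsymbol\tau$ being an $H(\div)$ bubble) and $\nabla\boldsymbol q$ is skew while $\boldsymbol\tau\in\mathbb S$; taking $\boldsymbol q=\div\boldsymbol\tau\in\mathbb B_{k-1}^{3}(\boldsymbol r_3)$ yields $\div\boldsymbol\tau=\boldsymbol 0$. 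In Step~(iii), $\boldsymbol\tau\in\mathbb B^{\div}_{k}(\boldsymbol r_2;\mathbb S)\cap\ker(\div)$ is an admissible test function in~\eqref{eq:3dCr2divSSr2r3T2}, so $\int_{T}\boldsymbol\tau:\boldsymbol\tau\dx=0$ and $\boldsymbol\tau=\boldsymbol 0$.

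The main obstacle is Step~(i), the edge- and face-jet bookkeeping. One must verify that on each edge the components $\boldsymbol t^{\intercal}\boldsymbol\tau\boldsymbol t$, $\boldsymbol t^{\intercal}\boldsymbol\tau\boldsymbol n_1$, $\boldsymbol n_1^{\intercal}\boldsymbol\tau\boldsymbol n_1$ and the $\partial_{n_1}$-jet of $\boldsymbol\tau\boldsymbol n_2$ from~\eqref{eq:3dCr2divSr2r3E4}--\eqref{eq:3dCr2divSr2r3E1}, together with the $\boldsymbol r_3$-jet of $\div\boldsymbol\tau$ from~\eqref{eq:3dCr2divSr2r3E7}, indeed pin down the full $\boldsymbol r_2$-jet of $\boldsymbol\tau$ and the full $\boldsymbol r_3$-jet of $\div\boldsymbol\tau$ along that edge, and the analogous statement on faces with $\Pi_f\boldsymbol\tau\Pi_f$, $\Pi_f\boldsymbol\tau\boldsymbol n$, $\boldsymbol n^{\intercal}\boldsymbol\tau\boldsymbol n$ and~\eqref{eq:3dCr2divTfemdofF5}. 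The induction on the jet order closes exactly because $r_3^{s}\geq r_2^{s}-1$ for every sub-simplex $s$ (from $\boldsymbol r_3\geq\boldsymbol r_2\ominus1$) and $k\geq\max\{2r_2^{\texttt{v}}+1,2r_3^{\texttt{v}}+2\}$. This is the symmetric-tensor transcription of the vector computation in~\cite[Section~4.4]{Chen;Huang:2022FEMcomplex3D} combined with the $t$--$n$ decomposition recorded in~\eqref{eq:DoFdec}; one must also track the low-order $\boldsymbol{RM}(f)$ and $\mathbb P_1(f)$ parts of the face DoFs~\eqref{eq:3dCrdivSr2r3F1}--\eqref{eq:3dCrdivSr2r3F2}, which are precisely what lets the boundary term in Step~(ii) vanish when $\boldsymbol{RM}\subseteq\mathbb B_{k-1}^{3}(\boldsymbol r_3)$.
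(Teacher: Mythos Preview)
Your proposal is correct and follows the same two-part structure as the paper: a dimension count reducing to the unisolvent set~\eqref{eq:divSdof}, followed by a vanishing argument that lands $\boldsymbol\tau$ in $\mathbb B_k^{\div}(\boldsymbol r_2;\mathbb S)\cap\ker(\div)$ and finishes with~\eqref{eq:3dCr2divSSr2r3T2}. The one substantive difference is the order of the vanishing steps. You attempt to recover the full $\boldsymbol r_2$-jet of $\boldsymbol\tau$ on sub-simplices \emph{before} knowing $\div\boldsymbol\tau=\boldsymbol 0$, by inductively feeding in the $\boldsymbol r_3$-jet of $\div\boldsymbol\tau$; this works (and is indeed the transcription of the vector argument you cite), but the bookkeeping you flag as ``the main obstacle'' is real. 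The paper sidesteps it: from the order-zero edge and face DoFs alone one already gets $\boldsymbol\tau\boldsymbol n|_{\partial T}=\boldsymbol 0$, and this is all that is needed for the $\boldsymbol{RM}$ integration-by-parts, so the paper establishes $\div\boldsymbol\tau=\boldsymbol 0$ \emph{first} and only then uses the frame identity $\div\boldsymbol\tau=\partial_t(\boldsymbol\tau\boldsymbol t)+\partial_{n_1}(\boldsymbol\tau\boldsymbol n_1)+\partial_{n_2}(\boldsymbol\tau\boldsymbol n_2)$ to recover the missing $\partial_{n_2}$-jets of $\boldsymbol\tau\boldsymbol n_2$, now with zero right-hand side. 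For the dimension count the paper is also slightly sharper than your ``re-grouping'': it observes that the total number of div-$\boldsymbol\tau$ DoFs~\eqref{eq:3dCr2divSr2r3V2},~\eqref{eq:3dCr2divSr2r3E7},~\eqref{eq:3dCr2divTfemdofF5},~\eqref{eq:3dCr2divSr2r3T1} together with~\eqref{eq:3dCr2divSr2r3V1} is $6\binom{r_2^{\texttt v}+3}{3}+\dim\mathbb P_{k-1}^3(T)-6-3\binom{r_2^{\texttt v}+2}{3}$, which is visibly independent of $\boldsymbol r_3$, so one may count at $\boldsymbol r_3=\boldsymbol r_2\ominus1$ and invoke Lemma~\ref{lm:divS} directly.
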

\begin{proof}
The introduced DoFs given by~\eqref{eq:3dCr2divSr2r3V2},~\eqref{eq:3dCr2divSr2r3E7}, and~\eqref{eq:3dCr2divTfemdofF5}-\eqref{eq:3dCr2divSr2r3T1} play a crucial role in characterizing the divergence of $\boldsymbol \tau$. The total number of these DoFs, along with~\eqref{eq:3dCr2divSr2r3V1}, is independent of $\boldsymbol r_3$, specifically given by the expression:
$$
6 { r_2^{\texttt{v}} + 3 \choose 3} + \dim \mathbb P_{k-1}^3(T) - \dim {\rm RM} - 3 { r_2^{\texttt{v}} + 2 \choose 3}.
$$
This count remains unaffected by variations in $\boldsymbol{r}_3$. For convenience, we can proceed with $\boldsymbol{r}_3=\boldsymbol{r}_2\ominus1$. Then by making comparisons with~\eqref{eq:divSdof}, we deduce that the number of DoFs~\eqref{eq:divSr2r3dof} is equal to $\dim\mathbb P_{k}(T;\mathbb S)$.

Suppose we have $\boldsymbol{\tau}\in\mathbb P_{k}(T;\mathbb S)$ satisfying the vanishing conditions for all DoFs~\eqref{eq:divSr2r3dof}. It then follows that $\boldsymbol \tau\boldsymbol n|_{\partial T} = \boldsymbol{0}$. By applying integration by parts and utilizing the vanishing DoFs~\eqref{eq:3dCrdivSr2r3F1}-\eqref{eq:3dCrdivSr2r3F2}, we deduce the critical relation:
$$
\int_T (\div\boldsymbol{\tau})\cdot\boldsymbol{q}\dx=0, \quad \boldsymbol{q}\in {\rm RM}.
$$
This result, combined with DoFs~\eqref{eq:3dCr2divSr2r3V2},~\eqref{eq:3dCr2divSr2r3E7}, and~\eqref{eq:3dCr2divTfemdofF5}-\eqref{eq:3dCr2divSr2r3T1}, which pertain to the divergence of $\boldsymbol \tau$, leads to the conclusion that $\div\boldsymbol{\tau}=\boldsymbol{0}$.

The situation is somewhat analogous when dealing with edges. By expressing $\div$ in the frame $\{\boldsymbol t, \boldsymbol n_1, \boldsymbol n_2 \}$, we uncover a representation of $\div\boldsymbol{\tau}$ that encompasses the partial derivatives along tangential ($\boldsymbol t$) and normal ($\boldsymbol n_1$, $\boldsymbol n_2$) directions:
$$
\div\boldsymbol{\tau}=\partial_{t}(\boldsymbol{\tau}\boldsymbol{t})+\partial_{n_1}(\boldsymbol{\tau}\boldsymbol{n_1})+\partial_{n_2}(\boldsymbol{\tau}\boldsymbol{n_2}).
$$
Taking into account~\eqref{eq:3dCr2divSr2r3V1}-\eqref{eq:3dCr2divSr2r3E7}, it becomes evident that $\nabla^j\boldsymbol{\tau}$ vanishes along edges, where $0\leq j\leq r_2^e$. A similar reasoning applies to faces, where the decomposition into $\{\boldsymbol n, \boldsymbol t_1, \boldsymbol t_2 \}$ aids in expressing 
$$\div\boldsymbol{\tau}=\partial_{n}(\boldsymbol{\tau}\boldsymbol{n})+\partial_{t_1}(\boldsymbol{\tau}\boldsymbol{t_1})+\partial_{t_2}(\boldsymbol{\tau}\boldsymbol{t_2}),$$ 
 and highlighting that $\nabla^j\boldsymbol{\tau}$ vanishes along faces for $0\leq j\leq r_2^f$. This collective analysis demonstrates that $\boldsymbol{\tau}\in\mathbb B_{k}^{\div}(\boldsymbol{r}_2;\mathbb S)\cap\ker(\div)$. This observation, coupled with DoF~\eqref{eq:3dCr2divSSr2r3T2}, solidifies the conclusion that $\boldsymbol{\tau}=\boldsymbol{0}$.\end{proof}

\begin{example}\rm 
The space $\mathbb V_k^{\div}(\boldsymbol r_2, \boldsymbol r_3; \mathbb S)$ for $\boldsymbol r_2 = \boldsymbol r_3 =
( 0,  -1,  -1)^{\intercal}$ and $k\geq 4$ has been constructed recently in~\cite{Hu;Liang;Ma;Zhang:2022conforming}.
\end{example}

\subsection{Smooth $H(\div\div^+;\mathbb S)$ and $H(\div\div;\mathbb S)$ elements}
In this subsection, we proceed to construct various $H(\div\div)$-conforming finite elements characterized by a smoothness vector $\boldsymbol{r}$. 
Define the spaces:
$$
 H(\div\div;\mathbb S):=\{\boldsymbol{\tau}\in {L}^2(\Omega;\mathbb S): \div\div\boldsymbol{\tau}\in L^2(\Omega)\},
$$
$$
 H(\div\div^+;\mathbb S):=\{\boldsymbol{\tau}\in {L}^2(\Omega;\mathbb S): \div\boldsymbol{\tau}\in H(\div,\Omega)\}.
$$
It is evident that the inclusion $ H(\div\div^+;\mathbb S)\subset H(\div\div;\mathbb S)$ holds.

We will now proceed to construct finite elements that are $H(\div\div^+;\mathbb S)$-conforming. Specifically, when $r^f\geq1$, we define $\mathbb V^{\div\div^+}_{k}(\boldsymbol{r};\mathbb S)$ as $\mathbb V_k(\boldsymbol{r})\otimes\mathbb S$. For the cases where $r^f=-1$ or $r^f=0$, we will make use of a recent approach presented in~\cite{Hu;Ma;Zhang:2020family} and~\cite{Chen;Huang:2021divFinite}.  The space of shape functions is still $\mathbb P_k(T;\mathbb S)$. By modifying the DoFs~\eqref{eq:divSdof}, which are originally designed for $H(\div;\mathbb S)$-conforming finite elements, we ensure that $\div \boldsymbol \tau$ belongs to $\mathbb V^{\div}_{k-1}(\boldsymbol r\ominus 1)$, satisfying the $H(\div;\mathbb S)$-conforming condition. 

Take $\mathbb P_{k}(T;\mathbb S)$ as the shape function space. The DoFs are
\begin{subequations}\label{eq:divdiv+dof}
\begin{align}
\label{eq:divdiv+SdV}
\nabla^j\boldsymbol{\tau} (\texttt{v}), & \quad j=0,1,\ldots,r^{\texttt{v}}, \\
\label{eq:divdiv+SdE1}
\int_e \frac{\partial^{j}\boldsymbol{\tau}}{\partial n_1^{i}\partial n_2^{j-i}}:\boldsymbol{q} \dd s, & \quad \boldsymbol{q} \in \mathbb B_{k-j}(e; r^{\texttt{v}}-j)\otimes\mathbb S, 0\leq i\leq j\leq r^{e}, \\
\int_e (\boldsymbol{n}_i^{\intercal}\boldsymbol{\tau}\boldsymbol{n}_j)\,q \dd s, &\quad q\in \mathbb B_{k}(e; r^{\texttt{v}}), 1\leq i\leq j\leq 2, \textrm{ if } r^{e}=-1, \label{eq:divdiv+SdE2}\\
\label{eq:divdiv+SdF1}
\int_f \boldsymbol{\tau} :\boldsymbol{q} \dd S, & \quad  \boldsymbol{q}\in \mathbb B_{k}(f;\boldsymbol r)\otimes \mathbb S, \textrm{ if } r^f=0,\\
\label{eq:divdiv+SdF2}
\int_f (\Pi_f\boldsymbol{\tau}\boldsymbol n) \cdot \boldsymbol{q} \dd S, & \quad  \boldsymbol{q}\in \mathbb B_{k}^{\div}(f;\boldsymbol r)/{\rm RM}(f) \oplus {\rm RM}(f), \textrm{ if } r^f=-1,\\
\label{eq:divdiv+SdF22}
\int_f (\boldsymbol n^{\intercal}\boldsymbol{\tau}\boldsymbol n) \ q \dd S, & \quad  q\in \mathbb B_{k}(f;\boldsymbol{r}_+)/\mathbb P_1(f)\oplus \mathbb P_1(f),\textrm{ if } r^f=-1,\\
\label{eq:divdiv+SdF3}
\int_f \boldsymbol{n}^{\intercal}\div\boldsymbol{\tau} \, q \dd S, & \quad {q}\in \mathbb B_{k-1}(f;\boldsymbol r\ominus1),\\
\label{eq:divdiv+SdT1}
\int_T (\div\boldsymbol{\tau})\cdot\boldsymbol{q}\dx, & \quad \boldsymbol{q}\in \mathbb B_{k-1}^{\div}(\boldsymbol r\ominus1)/{\rm RM}, \\
\label{eq:divdiv+SdT2}
\int_T \boldsymbol{\tau}:\boldsymbol{q} \dx, & \quad \boldsymbol{q} \in \mathbb B_k^{\div}(\boldsymbol r;\mathbb S)\cap\ker(\div),
\end{align}
\end{subequations}
for each $\texttt{v}\in \Delta_{0}(T)$, $e\in \Delta_{1}(T)$ and $f\in \Delta_{2}(T)$.

\begin{lemma}
Let $r^f=-1,0$.
The DoFs~\eqref{eq:divdiv+dof} are uni-solvent for $\mathbb P_{k}(T;\mathbb S)$.
\end{lemma}
\begin{proof}
We first consider the case when $r^f=-1$. If we compare the DoFs~\eqref{eq:divSdof} for constructing $\mathbb V_k^{\div}(\boldsymbol r; \mathbb S)$ with the DoFs required for $H(\div\div^+;\mathbb S)$-conforming finite elements, the primary distinction lies in the volume DoF~\eqref{eq:3dCrdivSfemdofT} for $\mathbb B_{k}^{\div}(\boldsymbol{r};\mathbb S)$. In the new context, this particular DoF is replaced by three alternative DoFs:~\eqref{eq:divdiv+SdF3},~\eqref{eq:divdiv+SdT1}, and~\eqref{eq:divdiv+SdT2}.

Let $E_0 = \mathbb B_k^{\div}(\boldsymbol r;\mathbb S)\cap\ker(\div)$, and let $E_0^{\bot}$ denote its $L^2$-orthogonal complement of $\mathbb B_k^{\div}(\boldsymbol r;\mathbb S)$. By performing a decomposition of the dual space, we arrive at:
$$
(\mathbb B_k^{\div}(\boldsymbol r;\mathbb S))' = ( E_0)' \oplus (E_0^{\bot})'.
$$ 
DoF~\eqref{eq:divdiv+SdT2} is exactly a basis of $(E_0)'$. 
The subspace $\div E_0^{\bot}$ can be uniquely determined through the DoFs~\eqref{eq:divdiv+SdF3} and~\eqref{eq:divdiv+SdT1}, both of which are consistent with the requirements for constructing $H(\div)$-conforming elements. 

As we count the dimensions, it is essential to note that $\div \mathbb B_k^{\div}(\boldsymbol r;\mathbb S) = \mathbb B^3_{k-1}(\boldsymbol r\ominus 1)/{\rm RM}$. The difference in the number of DoFs between~\eqref{eq:3dCrdivSfemdofT} and the newly introduced DoFs~\eqref{eq:divdiv+SdF3}-\eqref{eq:divdiv+SdT2} is given by:
$$
\dim\mathbb B_{k-1}^3(T;\boldsymbol r\ominus1)-\dim\mathbb B_{k-1}^{\div}(T;\boldsymbol r\ominus1) - 4\dim\mathbb B_{k-1}(f;\boldsymbol r\ominus1)=0.
$$
So the sum of number of DoFs~\eqref{eq:divdiv+dof} is equal to $\mathbb P_{k}(T;\mathbb S)$.

Now, let $\boldsymbol{\tau}\in\mathbb P_k(T;\mathbb S)$ and assume that all the DoFs~\eqref{eq:divdiv+SdV}-\eqref{eq:divdiv+SdT2} vanish. Due to the vanishing DoFs~\eqref{eq:divdiv+SdV}-\eqref{eq:divdiv+SdE1} and~\eqref{eq:divdiv+SdF3}, we can infer that $\div\boldsymbol{\tau}\in\mathbb B_{k-1}^{\div}(\boldsymbol r\ominus1)$. 
By considering the vanishing DoFs~\eqref{eq:divdiv+SdF1}-\eqref{eq:divdiv+SdF22} and the integration by parts, we deduce that
\begin{equation*}
\int_T (\div\boldsymbol{\tau})\cdot\boldsymbol{q}\dx=0  \quad \forall~\boldsymbol{q}\in {\rm RM},
\end{equation*}
which means $\div\boldsymbol{\tau}\in\mathbb B_{k-1}^{\div}(\boldsymbol r\ominus1)/{\rm RM}$. This together with the vanishing DoF~\eqref{eq:divdiv+SdT1} yields $\div\boldsymbol{\tau}=\boldsymbol{0}$. Finally, utilizing the uniqueness of the DoFs~\eqref{eq:divSdof} for $H(\div;\mathbb S)$-conforming finite elements, we conclude that $\boldsymbol{\tau}=\boldsymbol{0}$.

This completes the explanation of the construction for $H(\div\div^+;\mathbb S)$-conforming finite elements for the case $r^f=-1$. The case for $r^f=0$ follows a similar logic, with the primary difference being in the structure of the bubble space $\mathbb B_k^{\div}(\boldsymbol r;\mathbb S) = \mathbb B_k(\boldsymbol r) \otimes \mathbb S$.
\end{proof}

When $r^f \geq 1$, $\mathbb V^{\div\div^+}_{k}(\boldsymbol{r};\mathbb S) = \mathbb V_k(\boldsymbol{r})\otimes \mathbb S$. 
When $r^f=-1,0$,
define
\begin{align*}
\mathbb V^{\div\div^+}_{k}(\boldsymbol{r};\mathbb S) &= \{\boldsymbol{\tau}\in {L}^2(\Omega;\mathbb S): \boldsymbol{\tau}|_T\in\mathbb P_k(T;\mathbb S)\textrm{ for all } T\in\mathcal T_h, \\
&\qquad\qquad\qquad\qquad\textrm{ and all the DoFs~\eqref{eq:divdiv+dof} are single-valued}\}. 
\end{align*}
Due to DoFs~\eqref{eq:divdiv+SdF1}-\eqref{eq:divdiv+SdF3},
 $\mathbb V^{\div\div^+}_{k}(\boldsymbol{r};\mathbb S)\subset H(\div\div^+;\mathbb S)$.

Next we use the following BGG diagram
\begin{equation*}
\begin{tikzcd}
&
\mathbb V_{k}^{\div\div^+} (
\boldsymbol r; \mathbb S
)
 \arrow{r}{\div}
 &
\mathbb V^{\div}_{k-1}(
\boldsymbol r\ominus 1)
 \arrow{r}{}
 & \boldsymbol{0} \\
 &
\mathbb V_{k-1}^{\div}(
\boldsymbol r\ominus 1) 
 \arrow[ur,"{\rm id}"] 
 \arrow{r}{\div}
 & 
\mathbb V^{L^2}_{k-2}(
\boldsymbol r\ominus 2) 
\arrow[r] 
 & 0 
\end{tikzcd}
\end{equation*}
 to prove the divdiv stability.

\begin{lemma}\label{lem:divdivSr2onto}
Assume $(\boldsymbol r, \boldsymbol r\ominus 1, k)$ is $(\div; \mathbb S)$ stable, and $r^f=-1,0$. It holds that
\begin{equation}\label{eq:divdiv+Sr2onto}
\div\mathbb V^{\div\div^+}_{k}(\boldsymbol{r};\mathbb S)=\mathbb V^{\div}_{k-1}(\boldsymbol{r}\ominus1).
\end{equation}
\end{lemma}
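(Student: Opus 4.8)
The plan is to derive the surjectivity of $\div\colon\mathbb V^{\div\div^+}_{k}(\boldsymbol r;\mathbb S)\to\mathbb V^{\div}_{k-1}(\boldsymbol r\ominus1)$ from the $(\div;\mathbb S)$ stability assumed in the hypothesis, after recognizing $\mathbb V^{\div\div^+}_{k}(\boldsymbol r;\mathbb S)$ as an explicit subspace of $\mathbb V^{\div}_{k}(\boldsymbol r;\mathbb S)$. First I would compare the DoFs~\eqref{eq:divdiv+dof} with the DoFs~\eqref{eq:divSdof} of $\mathbb V^{\div}_{k}(\boldsymbol r;\mathbb S)$ (and, when $r^f=0$, with those of $\mathbb V_k(\boldsymbol r)\otimes\mathbb S$): the vertex DoFs~\eqref{eq:divdiv+SdV}, the edge DoFs~\eqref{eq:divdiv+SdE1}--\eqref{eq:divdiv+SdE2} and the $\boldsymbol\tau$-trace face DoFs~\eqref{eq:divdiv+SdF1}--\eqref{eq:divdiv+SdF22} coincide with the corresponding ones, while the volume DoFs~\eqref{eq:divdiv+SdT1}--\eqref{eq:divdiv+SdT2} and the new face DoF~\eqref{eq:divdiv+SdF3} merely replace the interior moments~\eqref{eq:3dCrdivSfemdofT}. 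Since interior DoFs carry no inter-element constraint, $\boldsymbol\tau$ has single-valued DoFs~\eqref{eq:divdiv+dof} if and only if $\boldsymbol\tau\in\mathbb V^{\div}_{k}(\boldsymbol r;\mathbb S)$ and~\eqref{eq:divdiv+SdF3} is single-valued; and for $\boldsymbol\tau\in\mathbb V^{\div}_{k}(\boldsymbol r;\mathbb S)$ the vertex and edge traces of $\boldsymbol n^{\intercal}\div\boldsymbol\tau$ are already single-valued (they descend one derivative order from~\eqref{eq:divdiv+SdV}--\eqref{eq:divdiv+SdE1}), so that together with the moments against $\mathbb B_{k-1}(f;\boldsymbol r\ominus1)$ they form a unisolvent set for $\mathbb P_{k-1}(f)$ on each face. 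Hence single-valuedness of~\eqref{eq:divdiv+SdF3} is equivalent to $\boldsymbol n^{\intercal}\div\boldsymbol\tau$ being single-valued on every face, that is
\begin{equation*}
\mathbb V^{\div\div^+}_{k}(\boldsymbol r;\mathbb S)=\{\boldsymbol\tau\in\mathbb V^{\div}_{k}(\boldsymbol r;\mathbb S):\div\boldsymbol\tau\in H(\div,\Omega)\}.
\end{equation*}

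Next, for the inclusion $\div\mathbb V^{\div\div^+}_{k}(\boldsymbol r;\mathbb S)\subseteq\mathbb V^{\div}_{k-1}(\boldsymbol r\ominus1)$, take $\boldsymbol\tau$ in the left-hand space: then $\div\boldsymbol\tau$ is a piecewise $\mathbb P_{k-1}(T;\mathbb R^3)$ field lying in $H(\div,\Omega)$, and the vertex/edge smoothness of $\boldsymbol\tau$ descends one order to $\div\boldsymbol\tau$ and two orders to $\div\div\boldsymbol\tau$. The hypothesis $r^f\in\{-1,0\}$ is used crucially here: it forces $(\boldsymbol r\ominus1)^f=(\boldsymbol r\ominus2)^f=-1$, so no face continuity of $\div\boldsymbol\tau$ beyond its (single-valued) normal component is required, and one concludes $\div\boldsymbol\tau\in\mathbb V^3_{k-1}(\boldsymbol r\ominus1)$ and $\div\div\boldsymbol\tau\in\mathbb V^{L^2}_{k-2}(\boldsymbol r\ominus2)$, i.e. $\div\boldsymbol\tau\in\mathbb V^{\div}_{k-1}(\boldsymbol r\ominus1)$.

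For the reverse inclusion, given $\boldsymbol v\in\mathbb V^{\div}_{k-1}(\boldsymbol r\ominus1)\subset\mathbb V^{L^2}_{k-1}(\boldsymbol r\ominus1;\mathbb R^3)$, the $(\div;\mathbb S)$ stability of $(\boldsymbol r,\boldsymbol r\ominus1,k)$ produces $\boldsymbol\tau\in\mathbb V^{\div}_{k}(\boldsymbol r;\mathbb S)$ with $\div\boldsymbol\tau=\boldsymbol v$; since $\boldsymbol v\in\mathbb V^{\div}_{k-1}(\boldsymbol r\ominus1)\subset H(\div,\Omega)$, the characterization of the first paragraph gives $\boldsymbol\tau\in\mathbb V^{\div\div^+}_{k}(\boldsymbol r;\mathbb S)$, hence $\mathbb V^{\div}_{k-1}(\boldsymbol r\ominus1)\subseteq\div\mathbb V^{\div\div^+}_{k}(\boldsymbol r;\mathbb S)$, which together with the previous paragraph gives~\eqref{eq:divdiv+Sr2onto}. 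This is precisely what the preceding BGG diagram records: the bottom row $\mathbb V^{\div}_{k-1}(\boldsymbol r\ominus1)\xrightarrow{\div}\mathbb V^{L^2}_{k-2}(\boldsymbol r\ominus2)\to0$ and the linking map $\mathrm{id}$ express the compatibility used in the two inclusions.

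The hard part is the first paragraph — establishing that the element built from~\eqref{eq:divdiv+dof} is exactly $\{\boldsymbol\tau\in\mathbb V^{\div}_{k}(\boldsymbol r;\mathbb S):\div\boldsymbol\tau\in H(\div,\Omega)\}$; the remaining steps are routine smoothness bookkeeping against $\boldsymbol r\ominus1$ and $\boldsymbol r\ominus2$, and a direct appeal to the hypothesis. Within that, the delicate point is the equivalence between single-valuedness of the moment DoF~\eqref{eq:divdiv+SdF3} and full normal continuity of $\div\boldsymbol\tau$, which relies on the two-dimensional unisolvence: vertex derivatives up to order $(\boldsymbol r\ominus1)^{\texttt{v}}$, edge normal-derivative moments up to order $(\boldsymbol r\ominus1)^e$, and interior moments against $\mathbb B_{k-1}(f;\boldsymbol r\ominus1)$ together determine $\mathbb P_{k-1}(f)$.
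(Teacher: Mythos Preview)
Your proof is correct but follows a different route than the paper's. You identify $\mathbb V^{\div\div^+}_{k}(\boldsymbol r;\mathbb S)$ constructively as the preimage $\{\boldsymbol\tau\in\mathbb V^{\div}_{k}(\boldsymbol r;\mathbb S):\div\boldsymbol\tau\in H(\div,\Omega)\}$ --- which is precisely the $\widetilde{\ }$ operation of Section~\ref{sec:preliminary} applied to $\mathbb V^{\div}_{k}(\boldsymbol r;\mathbb S)\xrightarrow{\div}\mathbb V^{L^2}_{k-1}(\boldsymbol r\ominus1;\mathbb R^3)$ with respect to the subspace $\mathbb V^{\div}_{k-1}(\boldsymbol r\ominus1)$ --- and then surjectivity onto $\mathbb V^{\div}_{k-1}(\boldsymbol r\ominus1)$ is immediate from the $(\div;\mathbb S)$ stability hypothesis. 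The paper instead argues by dimension count: it observes that $\dim\mathbb V^{\div}_{k}(\boldsymbol r;\mathbb S)-\dim\mathbb V^{\div\div^+}_{k}(\boldsymbol r;\mathbb S)$ and $\dim\mathbb V^{L^2}_{k-1}(\boldsymbol r\ominus1;\mathbb R^3)-\dim\mathbb V^{\div}_{k-1}(\boldsymbol r\ominus1)$ both equal $\big(4|\Delta_3(\mathcal T_h)|-|\Delta_2(\mathcal T_h)|\big)\dim\mathbb B_{k-1}(f;\boldsymbol r\ominus1)$, notes that $\ker(\div)$ is unchanged under the modification, and concludes $\dim\div\mathbb V^{\div\div^+}_{k}(\boldsymbol r;\mathbb S)=\dim\mathbb V^{\div}_{k-1}(\boldsymbol r\ominus1)$. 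Your preimage argument is more conceptual and makes the role of the $\widetilde{\ }$ operation explicit; the paper's dimension count is more mechanical but has the virtue of producing the exact defect formula, which is reused later (e.g.\ in the bookkeeping for $\mathbb V^{\div\div}$ versus $\mathbb V^{\div\div^+}$). Both rely on the same hard input --- unisolvence of the DoFs~\eqref{eq:divdiv+dof} and the $(\div;\mathbb S)$ stability --- and on the observation (which you make carefully and the paper states as ``Clearly'') that $r^f\in\{-1,0\}$ forces $(\boldsymbol r\ominus1)^f=-1$, so the only face constraint needed on $\div\boldsymbol\tau$ is normal continuity.
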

\begin{proof}
Clearly $\div\mathbb V^{\div\div^+}_{k}(\boldsymbol{r};\mathbb S)\subseteq\mathbb V^{\div}_{k-1}(\boldsymbol{r}\ominus1)$, then it suffices to count the dimensions.
Both $\dim\mathbb V^{\div}_{k}(\boldsymbol{r};\mathbb S)-\dim\mathbb V^{\div\div^+}_{k}(\boldsymbol{r};\mathbb S)$ and $\dim\mathbb V^{L^2}_{k-1}(\boldsymbol{r}\ominus1;\mathbb R^3) - \dim\mathbb V^{\div}_{k-1}(\boldsymbol{r}\ominus1)$ equal
$$
\big(4|\Delta_3(\mathcal T_h)| - |\Delta_2(\mathcal T_h)|\big)\dim \mathbb B_{k-1}(f;\boldsymbol r\ominus1),
$$
that is
$$
\dim\mathbb V^{\div}_{k}(\boldsymbol{r};\mathbb S)-\dim\mathbb V^{\div\div^+}_{k}(\boldsymbol{r};\mathbb S) = \dim\mathbb V^{L^2}_{k-1}(\boldsymbol{r}\ominus1;\mathbb R^3) - \dim\mathbb V^{\div}_{k-1}(\boldsymbol{r}\ominus1).
$$
As $\div(\mathbb V^{\div}_{k}(\boldsymbol{r};\mathbb S)) = \mathbb V^{L^2}_{k-1}(\boldsymbol{r}\ominus1;\mathbb R^3)$ and the modification will not change $\ker(\div)$, we get $\dim\div\mathbb V^{\div\div^+}_{k}(\boldsymbol{r};\mathbb S) = \dim\mathbb V^{\div}_{k-1}(\boldsymbol{r}\ominus1)$ and~\eqref{eq:divdiv+Sr2onto} follows. 
\end{proof}
Combined the div stability for $(\boldsymbol r\ominus 1, \boldsymbol r\ominus 2, k-1)$, we conclude the divdiv stability. 
\begin{corollary}\label{cor:divdiv+}
Assume $(\boldsymbol r, \boldsymbol r\ominus 1, k)$ is $(\div; \mathbb S)$ stable and  $(\boldsymbol r\ominus 1, \boldsymbol r\ominus 2, k-1)$ is div stable. Then it holds that
\begin{equation*}
\div\div\mathbb V^{\div\div^+}_{k}(\boldsymbol{r};\mathbb S)=\mathbb V^{L^2}_{k-2}(\boldsymbol{r}\ominus 2).
\end{equation*}
\end{corollary}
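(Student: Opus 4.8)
The plan is to realize $\div\div$ as the composition $\div\circ\div$ with the finite element space $\mathbb V^{\div}_{k-1}(\boldsymbol r\ominus 1)$ inserted in the middle, and then to chain two surjectivity results that are already in hand: Lemma~\ref{lem:divdivSr2onto} for the first $\div$ and the div-stability hypothesis for the second. Concretely, the factorization is
\[
\mathbb V^{\div\div^+}_{k}(\boldsymbol{r};\mathbb S)\xrightarrow{\div}\mathbb V^{\div}_{k-1}(\boldsymbol r\ominus 1)\xrightarrow{\div}\mathbb V^{L^2}_{k-2}(\boldsymbol r\ominus 2),
\]
where the first arrow is well defined and onto by Lemma~\ref{lem:divdivSr2onto} (using that $(\boldsymbol r,\boldsymbol r\ominus 1,k)$ is $(\div;\mathbb S)$ stable and $r^f=-1,0$), and the second arrow is well defined and onto because $\mathbb V^{\div}_{k-1}(\boldsymbol r\ominus 1)=\mathbb V^{\div}_{k-1}(\boldsymbol r\ominus 1,(\boldsymbol r\ominus 1)\ominus 1)$ with $(\boldsymbol r\ominus 1)\ominus 1=\boldsymbol r\ominus 2$, so the div-stability hypothesis $(\boldsymbol r\ominus 1,\boldsymbol r\ominus 2,k-1)$ gives $\div\mathbb V^{\div}_{k-1}(\boldsymbol r\ominus 1)=\mathbb V^{L^2}_{k-2}(\boldsymbol r\ominus 2)$.

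The inclusion $\subseteq$ is the easy half: by construction, DoFs~\eqref{eq:divdiv+dof} (in particular~\eqref{eq:divdiv+SdF3} and~\eqref{eq:divdiv+SdT1}) force $\div\boldsymbol\tau\in\mathbb V^{\div}_{k-1}(\boldsymbol r\ominus 1)$ for every $\boldsymbol\tau\in\mathbb V^{\div\div^+}_{k}(\boldsymbol{r};\mathbb S)$, hence $\div\div\boldsymbol\tau\in\div\mathbb V^{\div}_{k-1}(\boldsymbol r\ominus 1)\subseteq\mathbb V^{L^2}_{k-2}(\boldsymbol r\ominus 2)$. For the reverse inclusion, given $w\in\mathbb V^{L^2}_{k-2}(\boldsymbol r\ominus 2)$, the div-stability hypothesis produces $\boldsymbol v\in\mathbb V^{\div}_{k-1}(\boldsymbol r\ominus 1)$ with $\div\boldsymbol v=w$, and then Lemma~\ref{lem:divdivSr2onto} produces $\boldsymbol\tau\in\mathbb V^{\div\div^+}_{k}(\boldsymbol{r};\mathbb S)$ with $\div\boldsymbol\tau=\boldsymbol v$; therefore $\div\div\boldsymbol\tau=w$. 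Equivalently one may simply quote the chain of identities $\div\div\mathbb V^{\div\div^+}_{k}(\boldsymbol{r};\mathbb S)=\div\big(\div\mathbb V^{\div\div^+}_{k}(\boldsymbol{r};\mathbb S)\big)=\div\mathbb V^{\div}_{k-1}(\boldsymbol r\ominus 1)=\mathbb V^{L^2}_{k-2}(\boldsymbol r\ominus 2)$.

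I do not expect a genuine obstacle here: the substantive content — the dimension count behind $\div\mathbb V^{\div\div^+}_{k}(\boldsymbol{r};\mathbb S)=\mathbb V^{\div}_{k-1}(\boldsymbol{r}\ominus1)$ — has already been carried out in Lemma~\ref{lem:divdivSr2onto}, and the verification that $(\boldsymbol r,\boldsymbol r\ominus 1,k)$ being $(\div;\mathbb S)$ stable together with $(\boldsymbol r\ominus 1,\boldsymbol r\ominus 2,k-1)$ being div stable are realizable hypotheses is covered by Corollary~\ref{cor:divX} and Lemma~\ref{thm:divbubbleonto}. The only point that deserves a moment's care is the smoothness-vector bookkeeping, namely the identity $(\boldsymbol r\ominus 1)\ominus 1=\boldsymbol r\ominus 2$ (which follows componentwise from $\max\{\max\{t-1,-1\}-1,-1\}=\max\{t-2,-1\}$), so that the abbreviated space $\mathbb V^{\div}_{k-1}(\boldsymbol r\ominus 1)$ is exactly the one to which the second hypothesis applies and the two surjectivities dovetail along the intermediate space.
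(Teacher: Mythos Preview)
Your proposal is correct and matches the paper's approach exactly: the paper states the corollary immediately after Lemma~\ref{lem:divdivSr2onto} with only the one-line remark ``Combining the div stability for $(\boldsymbol r\ominus 1, \boldsymbol r\ominus 2, k-1)$, we conclude the divdiv stability,'' which is precisely the composition-of-surjections argument you spelled out. Your verification of $(\boldsymbol r\ominus 1)\ominus 1=\boldsymbol r\ominus 2$ and the explicit two-step construction are details the paper leaves implicit.
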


Next we modify the DoFs~\eqref{eq:divdiv+dof} slightly to get an $H(\div\div; \mathbb S)$-conforming element. Take $\mathbb P_k(T;\mathbb S)$ as the space of shape functions. 
When $r^f\geq1$, define $\mathbb V^{\div\div}_{k}(\boldsymbol{r};\mathbb S):=\mathbb V^{\div\div^+}_{k}(\boldsymbol{r};\mathbb S)=\mathbb V_k(\boldsymbol{r})\otimes\mathbb S$. 
For $r^f=-1,0$, the degrees of freedom are
\begin{subequations}
\begin{align}
\label{eq:divdivSdV}
\nabla^j\boldsymbol{\tau} (\texttt{v}), & \quad j=0,1,\ldots,r^{\texttt{v}}, \texttt{v}\in \Delta_0(T),\\
\label{eq:divdivSdE1}
\int_e \frac{\partial^{j}\boldsymbol{\tau}}{\partial n_1^{i}\partial n_2^{j-i}}:\boldsymbol{q} \dd s, & \quad \boldsymbol{q} \in \mathbb P_{k - 2(r^{\texttt{v}}+1) + j}(e;\mathbb S), 0\leq i\leq j\leq r^{e}, e\in \Delta_1(T), \\
\int_e (\boldsymbol{n}_i^{\intercal}\boldsymbol{\tau}\boldsymbol{n}_j)\,q \dd s, &\quad q\in \mathbb B_{k}(e; r^{\texttt{v}}), 1\leq i\leq j\leq 2, e\in \Delta_1(T), \textrm{ if } r^{e}=-1, \label{eq:divdivSdE2}\\
\label{eq:divdivSdF1}
\int_f \boldsymbol{\tau} :\boldsymbol{q} \dd S, & \quad  \boldsymbol{q}\in \mathbb B_{k}(f;\boldsymbol r)\otimes \mathbb S, f\in \Delta_2(T), \textrm{ if } r^f=0,\\
\label{eq:divdivSdF22}
\int_f (\boldsymbol n^{\intercal}\boldsymbol{\tau}\boldsymbol n) \ q \dd S, & \quad  q\in \mathbb B_{k}(f;\boldsymbol{r}_+), f\in \Delta_2(T), \textrm{ if } r^f=-1,\\
\label{eq:divdivSdF3}
\int_f \tr_2^{\div\div}(\boldsymbol \tau) \, q \dd S, & \quad  q\in \mathbb B_{k-1}(f;\boldsymbol r\ominus1), f\in \Delta_2(T),\\
\label{eq:divdivSdT1}
\int_T (\div\boldsymbol{\tau})\cdot\boldsymbol{q}\dx, & \quad \boldsymbol{q}\in \mathbb B_{k-1}^{\div}(\boldsymbol r\ominus1)/{\rm RM}, \\
\label{eq:divdivSdT2}
\int_T \boldsymbol{\tau}:\boldsymbol{q} \dx, & \quad \boldsymbol{q} \in \mathbb B_k^{\div}(\boldsymbol r;\mathbb S)\cap\ker(\div),\\
\label{eq:divdivSdF2}
\int_f (\Pi_f\boldsymbol{\tau}\boldsymbol n) \cdot \boldsymbol{q} \dd S, & \quad  \boldsymbol{q}\in \mathbb B_{k}^{\div}(f;\boldsymbol r), f\in \Delta_2(T), \textrm{ if } r^f=-1.
\end{align}
\end{subequations}
The modification is introduced in~\eqref{eq:divdivSdF3}, where we now enforce the continuity condition:
\begin{equation*}
\tr_2^{\div\div}(\boldsymbol \tau) = \boldsymbol n^{\intercal}\div \boldsymbol \tau +  \div_f(\Pi_f\boldsymbol\tau \boldsymbol n)
\end{equation*}
instead of enforcing continuity for both $\boldsymbol n^{\intercal}\div \boldsymbol \tau$ and $\Pi_f\boldsymbol\tau \boldsymbol n$. Notably, the face DoF~\eqref{eq:divdivSdF2} associated with $\Pi_f\boldsymbol\tau \boldsymbol n$ has been moved to the end to signify its role as a local DoF that contributes to the divdiv bubble space. This implies that~\eqref{eq:divdivSdF2} can take different values in different elements containing the shared face $f$.

Defining the space
\begin{align*}
\mathbb V^{\div\div}_{k}(\boldsymbol{r};\mathbb S) &= \{\boldsymbol{\tau}\in {L}^2(\Omega;\mathbb S): \boldsymbol{\tau}|_T\in\mathbb P_k(T;\mathbb S)\textrm{ for all } T\in\mathcal T_h, \\
&\qquad\textrm{ and all the DoFs~\eqref{eq:divdivSdV}-\eqref{eq:divdivSdT2} are single-valued}\},
\end{align*}
we find that for the case $r^f = 0$, $\mathbb V^{\div\div}_{k}(\boldsymbol{r};\mathbb S)$ and $\mathbb V^{\div\div^+}_{k}(\boldsymbol{r};\mathbb S)$ are indistinguishable. However, in the scenario where $r^f = -1$, again due to \eqref{eq:divdivSdF2}, we can deduce that:
$$
\mathbb V^{\div\div^+}_{k}(\boldsymbol{r};\mathbb S)\subset \mathbb V^{\div\div}_{k}(\boldsymbol{r};\mathbb S).
$$
Consequently, this inclusion relationship ensures the divdiv stability.
\begin{corollary}
Assume $(\boldsymbol r, \boldsymbol r\ominus 1, k)$ is $(\div; \mathbb S)$ stable and  $(\boldsymbol r\ominus 1, \boldsymbol r\ominus 2, k-1)$ is div stable. Then it holds that
\begin{equation*}
\div\div\mathbb V^{\div\div}_{k}(\boldsymbol{r};\mathbb S)=\mathbb V^{L^2}_{k-2}(\boldsymbol{r}\ominus 2).
\end{equation*}
\end{corollary}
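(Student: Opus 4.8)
The plan is to bracket $\div\div\,\mathbb V^{\div\div}_{k}(\boldsymbol r;\mathbb S)$ between two copies of $\mathbb V^{L^2}_{k-2}(\boldsymbol r\ominus 2)$, with Corollary~\ref{cor:divdiv+} doing the heavy lifting. When $r^f\ge 0$ there is nothing to prove: as recorded in the discussion preceding the statement, $\mathbb V^{\div\div}_{k}(\boldsymbol r;\mathbb S)=\mathbb V^{\div\div^+}_{k}(\boldsymbol r;\mathbb S)$, so the identity is exactly Corollary~\ref{cor:divdiv+}. Hence I will assume $r^f=-1$ throughout.

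For the inclusion ``$\supseteq$'' I will use the containment $\mathbb V^{\div\div^+}_{k}(\boldsymbol r;\mathbb S)\subseteq \mathbb V^{\div\div}_{k}(\boldsymbol r;\mathbb S)$ already noted above, together with Corollary~\ref{cor:divdiv+}, to obtain
$$\mathbb V^{L^2}_{k-2}(\boldsymbol r\ominus 2)=\div\div\,\mathbb V^{\div\div^+}_{k}(\boldsymbol r;\mathbb S)\subseteq \div\div\,\mathbb V^{\div\div}_{k}(\boldsymbol r;\mathbb S).$$

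For the reverse inclusion ``$\subseteq$'' I will show that $\div\div\boldsymbol\tau\in\mathbb V^{L^2}_{k-2}(\boldsymbol r\ominus 2)$ for every $\boldsymbol\tau\in\mathbb V^{\div\div}_{k}(\boldsymbol r;\mathbb S)$. On each tetrahedron $\div\div\boldsymbol\tau$ is a polynomial of degree at most $k-2$, so it only remains to verify the vertex and edge regularity dictated by the smoothness vector $\boldsymbol r\ominus 2$; since $r^f=-1$ forces $(\boldsymbol r\ominus 2)^f=-1$, no continuity across faces is required. The vertex degrees of freedom~\eqref{eq:divdivSdV} and edge degrees of freedom~\eqref{eq:divdivSdE1} render $\nabla^{j}\boldsymbol\tau$ single-valued at each vertex for $j\le r^{\texttt{v}}$ and along each edge for $j\le r^{e}$; because any $i$-th order derivative of $\div\div\boldsymbol\tau$ is a contraction of the $(i+2)$-th order derivative of $\boldsymbol\tau$, and $(\boldsymbol r\ominus 2)^{\texttt{v}}=\max\{r^{\texttt{v}}-2,-1\}$, $(\boldsymbol r\ominus 2)^{e}=\max\{r^{e}-2,-1\}$, the derivatives of $\div\div\boldsymbol\tau$ entering the degrees of freedom of $\mathbb V^{L^2}_{k-2}(\boldsymbol r\ominus 2)$ are indeed single-valued up to the required order. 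Putting the two inclusions together yields the claimed identity.

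I expect the reverse inclusion to be the only delicate point. One has to match the orders of the surviving vertex and edge degrees of freedom of $\boldsymbol\tau$ against the components of $\boldsymbol r\ominus 2$, keeping track of the degenerate subcases $r^{\texttt{v}}\le 1$ or $r^{e}\le 1$, where the relevant component of $\boldsymbol r\ominus 2$ collapses to $-1$ and the single-valuedness requirement becomes vacuous, and to confirm that the extra local (element-dependent) freedom in $\Pi_f\boldsymbol\tau\boldsymbol n$ carried by $\mathbb V^{\div\div}_{k}$ but not by $\mathbb V^{\div\div^+}_{k}$ cannot enlarge the image of $\div\div$, since $\div\div$ only sees the bulk polynomial together with its vertex and edge traces.
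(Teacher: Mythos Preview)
Your proof is correct and follows essentially the same approach as the paper: reduce to the case $r^f=-1$, then use the inclusion $\mathbb V^{\div\div^+}_{k}(\boldsymbol r;\mathbb S)\subset \mathbb V^{\div\div}_{k}(\boldsymbol r;\mathbb S)$ together with Corollary~\ref{cor:divdiv+} for the surjectivity. The paper leaves the inclusion $\div\div\,\mathbb V^{\div\div}_{k}(\boldsymbol r;\mathbb S)\subseteq \mathbb V^{L^2}_{k-2}(\boldsymbol r\ominus 2)$ entirely implicit (it is ``by construction''), whereas you spell it out by tracking the vertex and edge smoothness; this added detail is fine and the argument you give for it is sound.
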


In a similar vein to the inequality constraint that ensures $(\div; \mathbb S)$ stability, we also have a comparable flexibility when it comes to divdiv elements. Consider the scenario where $(\boldsymbol r_2, \boldsymbol r_3, k)$ are $(\div; \mathbb S)$ stable, where $\boldsymbol r_3\geq \boldsymbol r_2\ominus 1$, and further assume that $(\boldsymbol r_3, \boldsymbol r_3 \ominus 1, k-1)$ exhibit div stability. In such cases, following the sequence
\begin{equation*}
\mathbb V^{\div}_k(\boldsymbol r_2, \boldsymbol r_3; \mathbb S) \xrightarrow{\div} \mathbb V^{\div}_{k-1}(\boldsymbol r_3) \xrightarrow{\div} \mathbb V^{L^2}_{k-2}(\boldsymbol r_3\ominus 1) \to 0,
\end{equation*}
and the approach in Section~\ref{sec:r2r3}, we can similarly define the spaces $\mathbb V^{\div\div^+}_{k}(\boldsymbol{r}_2, \boldsymbol r_3\ominus 1;\mathbb S)$ and $\mathbb V^{\div\div}_{k}(\boldsymbol{r}_2, \boldsymbol r_3\ominus 1;\mathbb S)$, with the constraint $\boldsymbol r_3 \geq \boldsymbol r_2\ominus 1$.

Regarding the changes in DoFs, when $r_3^f \geq 0$, $\mathbb V^{\div}_{k-1}(\boldsymbol r_3) = \mathbb V^{\grad}_{k-1}(\boldsymbol r_3; \mathbb R^3)$. This implies that $\mathbb V^{\div\div^+}_{k}(\boldsymbol{r}_2, \boldsymbol r_3\ominus 1;\mathbb S) = \mathbb V^{\div}_k(\boldsymbol r_2, \boldsymbol r_3; \mathbb S)$, and therefore, there is no need to modify the DoFs~\eqref{eq:divSr2r3dof}. On the other hand, when $r_3^f = -1$, we partition the DoF~\eqref{eq:3dCr2divSr2r3T1} associated with the bubble component of $\div \boldsymbol \tau$ into:
\begin{subequations}
\begin{align}
 \label{eq:divdiv+r2r3SdF3}
\int_f \boldsymbol{n}^{\intercal}\div\boldsymbol{\tau} \, q \dd S, & \quad {q}\in \mathbb B_{k-1}(f; \boldsymbol r_3), f\in \Delta_2(T),\\
\label{eq:divdiv+r2r3SdT1}
\int_T (\div\boldsymbol{\tau})\cdot\boldsymbol{q}\dx, & \quad \boldsymbol{q}\in \mathbb B_{k-1}^{\div}(\boldsymbol r_3)/{\rm RM},
\end{align}
\end{subequations}
while retaining all other DoFs from~\eqref{eq:divSr2r3dof}. This gives DoFs for $\mathbb V^{\div\div^+}_{k}(\boldsymbol{r}_2, \boldsymbol r_3\ominus 1;\mathbb S)$. 

To construct $\mathbb V^{\div\div}_{k}(\boldsymbol{r}_2, \boldsymbol r_3\ominus 1;\mathbb S)$, we further replace DoF~\eqref{eq:divdiv+r2r3SdF3} by 
$$
\int_f \tr_2^{\div\div}(\boldsymbol{\tau}) \, q \dd S, \quad {q}\in \mathbb B_{k-1}(f; \boldsymbol r_3), f\in \Delta_2(T),
$$
and treat~\eqref{eq:3dCrdivSr2r3F1} local. The procedure is the same as before and thus the details are skipped. 

\begin{corollary}\label{cor:divdiv}
Let $(\boldsymbol r_2, \boldsymbol r_3, k)$ be $(\div; \mathbb S)$ stable with $\boldsymbol r_3\geq \boldsymbol r_2\ominus 1$ and $(\boldsymbol r_3, \boldsymbol r_3 \ominus 1, k-1)$ be div stable. Then it holds that
\begin{equation*}
\div\div\mathbb V^{\div\div^+}_{k}(\boldsymbol{r}_2, \boldsymbol r_3\ominus 1;\mathbb S)=\div\div\mathbb V^{\div\div}_{k}(\boldsymbol{r}_2, \boldsymbol r_3\ominus 1;\mathbb S) = \mathbb V^{L^2}_{k-2}(\boldsymbol r_3\ominus 1).
\end{equation*}
\end{corollary}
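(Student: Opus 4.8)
The plan is to repeat the reasoning behind Lemma~\ref{lem:divdivSr2onto} and Corollary~\ref{cor:divdiv+}, now with the flexible pair $(\boldsymbol r_2,\boldsymbol r_3)$ in place of $(\boldsymbol r,\boldsymbol r\ominus 1)$, and then feed the outcome into the two-step diagram
\[
\mathbb V^{\div}_k(\boldsymbol r_2, \boldsymbol r_3; \mathbb S) \xrightarrow{\div} \mathbb V^{\div}_{k-1}(\boldsymbol r_3) \xrightarrow{\div} \mathbb V^{L^2}_{k-2}(\boldsymbol r_3\ominus 1)
\]
displayed just before the statement (recall $\mathbb V^{\div}_{k-1}(\boldsymbol r_3)=\mathbb V^{\div}_{k-1}(\boldsymbol r_3,\boldsymbol r_3\ominus 1)$). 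First I would prove the one-step surjectivity
\[
\div\,\mathbb V^{\div\div^+}_{k}(\boldsymbol{r}_2, \boldsymbol r_3\ominus 1;\mathbb S)=\mathbb V^{\div}_{k-1}(\boldsymbol r_3).
\]
The inclusion $\subseteq$ is immediate from the DoFs: replacing \eqref{eq:3dCr2divSr2r3T1} in \eqref{eq:divSr2r3dof} by the split DoFs \eqref{eq:divdiv+r2r3SdF3}--\eqref{eq:divdiv+r2r3SdT1} forces $\div\boldsymbol\tau$ to be single-valued as an element of $\mathbb V^{\div}_{k-1}(\boldsymbol r_3)$. For equality I would count dimensions exactly as in Lemma~\ref{lem:divdivSr2onto}: the modification from $\mathbb V^{\div}_k(\boldsymbol r_2, \boldsymbol r_3; \mathbb S)$ to $\mathbb V^{\div\div^+}_{k}(\boldsymbol{r}_2, \boldsymbol r_3\ominus 1;\mathbb S)$ does not change $\ker(\div)$, so it suffices to verify
\[
\dim\mathbb V^{\div}_{k}(\boldsymbol{r}_2, \boldsymbol r_3; \mathbb S)-\dim\mathbb V^{\div\div^+}_{k}(\boldsymbol{r}_2, \boldsymbol r_3\ominus 1;\mathbb S) = \dim\mathbb V^{L^2}_{k-1}(\boldsymbol{r}_3;\mathbb R^3) - \dim\mathbb V^{\div}_{k-1}(\boldsymbol{r}_3),
\]
and then use $\div\,\mathbb V^{\div}_{k}(\boldsymbol{r}_2, \boldsymbol r_3; \mathbb S)=\mathbb V^{L^2}_{k-1}(\boldsymbol{r}_3;\mathbb R^3)$ from the inequality-constraint corollary of Section~\ref{sec:r2r3}.

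Second, composing with the hypothesis that $(\boldsymbol r_3,\boldsymbol r_3\ominus 1,k-1)$ is div stable, i.e. $\div\,\mathbb V^{\div}_{k-1}(\boldsymbol r_3)=\mathbb V^{L^2}_{k-2}(\boldsymbol r_3\ominus 1)$, gives at once
\[
\div\div\,\mathbb V^{\div\div^+}_{k}(\boldsymbol{r}_2, \boldsymbol r_3\ominus 1;\mathbb S)=\div\,\mathbb V^{\div}_{k-1}(\boldsymbol r_3)=\mathbb V^{L^2}_{k-2}(\boldsymbol r_3\ominus 1),
\]
which is the first asserted equality. Third, for the $H(\div\div;\mathbb S)$ variant I would argue by sandwiching. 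As in the discussion preceding the statement (the $\tr_2^{\div\div}$ construction, cf.~\eqref{eq:divdivSdF3} and \eqref{eq:divdivSdF2}), replacing the face DoFs on $\boldsymbol n^{\intercal}\div\boldsymbol\tau$ and $\Pi_f\boldsymbol\tau\boldsymbol n$ by a single DoF on $\tr_2^{\div\div}(\boldsymbol\tau)$ and declaring the $\Pi_f\boldsymbol\tau\boldsymbol n$ DoF local yields $\mathbb V^{\div\div^+}_{k}(\boldsymbol{r}_2, \boldsymbol r_3\ominus 1;\mathbb S)\subseteq\mathbb V^{\div\div}_{k}(\boldsymbol{r}_2, \boldsymbol r_3\ominus 1;\mathbb S)\subset H(\div\div;\mathbb S)$, while the retained DoFs still force $\div\div\boldsymbol\tau\in\mathbb V^{L^2}_{k-2}(\boldsymbol r_3\ominus 1)$ for the larger space. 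Hence
\[
\mathbb V^{L^2}_{k-2}(\boldsymbol r_3\ominus 1)=\div\div\,\mathbb V^{\div\div^+}_{k}(\boldsymbol{r}_2, \boldsymbol r_3\ominus 1;\mathbb S)\subseteq\div\div\,\mathbb V^{\div\div}_{k}(\boldsymbol{r}_2, \boldsymbol r_3\ominus 1;\mathbb S)\subseteq\mathbb V^{L^2}_{k-2}(\boldsymbol r_3\ominus 1),
\]
so both inclusions are equalities and the proof is complete.

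The hard part will be the dimension identity in the first step. After the common vertex, edge, and face DoFs cancel, it collapses to the bubble-level identity $\dim\mathbb B_{k-1}^3(T;\boldsymbol r_3)-\dim\mathbb B_{k-1}^{\div}(T;\boldsymbol r_3)-4\dim\mathbb B_{k-1}(f;\boldsymbol r_3)=0$ already used in Lemma~\ref{lem:divdivSr2onto}; the only new bookkeeping is that the extra smoothness carried by $\boldsymbol r_3$ beyond $\boldsymbol r_2\ominus 1$ is exactly what is redistributed when \eqref{eq:3dCr2divSr2r3T1} is split into \eqref{eq:divdiv+r2r3SdF3}--\eqref{eq:divdiv+r2r3SdT1}, so that both sides of the desired identity depend on $\boldsymbol r_3$ only through the same spaces $\mathbb B_{k-1}(f;\boldsymbol r_3)$ and $\mathbb B^{\div}_{k-1}(\boldsymbol r_3)$ and the count is independent of which $\boldsymbol r_3\geq\boldsymbol r_2\ominus 1$ is chosen, reducing to the case settled in Lemma~\ref{lem:divdivSr2onto}. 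One should also double-check that the $\tr_2^{\div\div}$-based DoFs keep $\mathbb V^{\div\div}_{k}(\boldsymbol{r}_2,\boldsymbol r_3\ominus 1;\mathbb S)$ inside $H(\div\div;\mathbb S)$ with $\div\div$ landing in the claimed $L^2$ space, which is routine from the surface integration-by-parts identity defining $\tr_2^{\div\div}$.
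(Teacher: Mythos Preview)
Your proposal is correct and follows essentially the same approach as the paper, which states just before the corollary that ``The procedure is the same as before and thus the details are skipped.'' You have faithfully unpacked that procedure: the one-step surjectivity via the dimension count of Lemma~\ref{lem:divdivSr2onto}, composition with the assumed div stability of $(\boldsymbol r_3,\boldsymbol r_3\ominus 1,k-1)$, and the sandwich argument $\mathbb V^{\div\div^+}\subseteq\mathbb V^{\div\div}$ for the second equality.
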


\begin{example}\rm 
The space $\mathbb V_k^{\div\div^+}(\boldsymbol r_2, \boldsymbol r_3\ominus 1; \mathbb S)$ for $\boldsymbol r_2 = \boldsymbol r_3 =
( 0,  -1,  -1)^{\intercal}$ and $k\geq 4$ has been constructed recently in~\cite{Hu;Liang;Ma;Zhang:2022conforming}.
 
\end{example}

%

For convenience, we introduce the following notation:
\begin{align*}
\mathbb B_{k}^{\div\div^+}(\boldsymbol{r};\mathbb S) &= \{  \boldsymbol{\tau}\in\mathbb P_k(T;\mathbb S) :  \text{ all the DoFs~\eqref{eq:divdiv+SdV}-\eqref{eq:divdiv+SdF3} vanish } \},\\
\mathbb B_{k}^{\div\div}(\boldsymbol{r};\mathbb S) &= \{  \boldsymbol{\tau}\in\mathbb P_k(T;\mathbb S) :  \text{ all the DoFs~\eqref{eq:divdivSdV}-\eqref{eq:divdivSdF3} vanish } \}.
\end{align*}
By definition, we have the inclusion relationship:
$$
\mathbb B^{\div\div^+}_{k}(\boldsymbol{r};\mathbb S)\subseteq \mathbb B^{\div\div}_{k}(\boldsymbol{r};\mathbb S),
$$
since the face DoF \eqref{eq:divdivSdF2} associated with $\Pi_f\boldsymbol\tau \boldsymbol n$ is zero in the bubble space $\mathbb B^{\div\div^+}_{k}(\boldsymbol{r};\mathbb S)$ and non-zero in $\mathbb B^{\div\div}_{k}(\boldsymbol{r};\mathbb S)$.

By the same proof with BGG diagram for the bubble spaces, we conclude the divdiv stability for these bubble spaces and refer to  Lemma~\ref{lem:fembubbledivdiv+complex} for detailed proof.
\begin{lemma}
Assume $\boldsymbol r$ satisfies the condition in Lemma \ref{lem:divbubbleontoST}, $\boldsymbol r\ominus1$ satisfies the condition in Lemma \ref{lem:divbubbleonto}, and $k\geq \max\{2r^{\texttt{v}}+1,3\}$.
Then it holds that
\begin{equation*}
\div\div\mathbb B^{\div\div^+}_{k}(\boldsymbol{r};\mathbb S)=\div\div\mathbb B^{\div\div}_{k}(\boldsymbol{r};\mathbb S) = \mathbb B^{L^2}_{k-2}(\boldsymbol r\ominus 2)/\mathbb P_1(T).
\end{equation*}
\end{lemma}

\section{Finite Element Complexes}\label{sec:FEMcomplexes}
In the preceding section, our focus was primarily on the last two columns of the diagram~\eqref{eq:3DBGGdomain}, where we established the $(\div; \mathbb X)$ stability. In this section, we turn our attention to the first three columns of the diagram~\eqref{eq:3DBGGdomain} to derive finite element complexes.

\subsection{Finite element Hessian complexes}\label{sec:femHessian}
Let 
$$
\boldsymbol r_0 \geq (4, 2, 1)^{\intercal}, \quad \boldsymbol r_1 = \boldsymbol r_0 -2, \quad \boldsymbol r_2=\boldsymbol r_1\ominus1, \quad \boldsymbol r_3= \boldsymbol r_2\ominus1, \quad k+2\geq 2r_0^{\texttt v} + 1.
$$
Assume both $(\boldsymbol r_0, \boldsymbol r_0-1, \boldsymbol r_1, \boldsymbol r_2)$ and $(\boldsymbol r_0-1, \boldsymbol r_1, \boldsymbol r_2, \boldsymbol r_3)$ are valid de Rham smoothness sequences. As a discretization of \eqref{eq:3DBGGdomainHessian}, we will justify the following BGG diagram
\begin{equation}\label{eq:hessBGG}
\begin{tikzcd}
\mathbb V_{k+2}^{\grad} ( \boldsymbol r_0 )
\arrow{r}{\grad}
&
\mathbb V_{k+1}^{\curl} (\boldsymbol r_0 - 1)
 \arrow{r}{\curl}
 &
\mathbb V^{\div}_k(\boldsymbol r_1)\cap \ker(\div)
 \arrow{r}{\div}
 & 0
 \\
\mathbb V_{k+1}^{\grad}(\boldsymbol r_0-1; \mathbb R^3)
 \arrow[ur,swap,"{\rm id}"'] 
 \arrow{r}{\grad}
 & 
\widetilde{\mathbb V}_{k}^{\curl}(
\boldsymbol r_1
;\mathbb M) 
 \arrow[ur,"- 2\vskw"] \arrow{r}{\curl}
 & 
\mathbb V^{\div}_{k-1}(
\boldsymbol r_2
; \mathbb T) 
\cap \ker(\div)
 \arrow[ur,"\tr"] \arrow{r}{\div} \arrow[r] 
 &\boldsymbol{0}, 
\end{tikzcd}
\end{equation}
where $\widetilde{\mathbb V}_{k}^{\curl}(
\boldsymbol r_1
;\mathbb M)  = \{ \boldsymbol \tau \in \mathbb V_{k}^{\curl}(\boldsymbol r_1;\mathbb M) : \curl \boldsymbol \tau \in  \mathbb V^{\div}_{k-1}(
\boldsymbol r_2
; \mathbb T) \cap \ker(\div)
\}. $

\begin{lemma}
The mapping $\vskw: \widetilde{\mathbb V}_{k}^{\curl}(
\boldsymbol r_1
;\mathbb M) 
\to \mathbb V^{\div}_{k}(
\boldsymbol r_1) 
\cap \ker(\div)
$ is well-defined and surjective.  
\end{lemma}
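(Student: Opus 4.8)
The plan is to prove well-definedness and surjectivity separately, relying only on the exactness of the top row of~\eqref{eq:hessBGG} --- a valid finite element de Rham complex, since $(\boldsymbol r_0,\boldsymbol r_0-1,\boldsymbol r_1,\boldsymbol r_2)$ is assumed a valid de Rham smoothness sequence --- together with the two pointwise tensor--calculus identities $\tr\curl\boldsymbol\tau = 2\div(\vskw\boldsymbol\tau)$ and $\vskw(\grad\boldsymbol v)=\tfrac12\curl\boldsymbol v$, which also appear as anti-commutativity relations in the BGG diagram.

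For well-definedness I would argue as follows. If $\boldsymbol\tau\in\widetilde{\mathbb V}_{k}^{\curl}(\boldsymbol r_1;\mathbb M)$ then $\curl\boldsymbol\tau\in\mathbb V^{\div}_{k-1}(\boldsymbol r_2;\mathbb T)$ is traceless, so the first identity forces $\div(\vskw\boldsymbol\tau)=0$; hence $\vskw\boldsymbol\tau$ has single-valued normal trace across every face, so $\vskw\boldsymbol\tau\in H(\div,\Omega)$. Since $\vskw$ is a constant linear operator commuting with differentiation, $\vskw\boldsymbol\tau$ inherits from $\boldsymbol\tau$ single-valued derivatives up to order $r_1^{\texttt{v}}$ at vertices, $r_1^{e}$ on edges, and --- when $r_1^f\geq0$ --- $r_1^f$ on faces. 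Together with $\div(\vskw\boldsymbol\tau)=0\in\mathbb V^{L^2}_{k-1}(\boldsymbol r_1\ominus1)$ this shows $\vskw\boldsymbol\tau\in\mathbb V^{\div}_{k}(\boldsymbol r_1)\cap\ker(\div)$, the target space appearing in~\eqref{eq:hessBGG}.

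For surjectivity, take $\boldsymbol w\in\mathbb V^{\div}_{k}(\boldsymbol r_1)\cap\ker(\div)$. First I would use exactness of the top row of~\eqref{eq:hessBGG} to obtain a potential $\boldsymbol v\in\mathbb V^{\curl}_{k+1}(\boldsymbol r_0-1)$ with $\curl\boldsymbol v=\boldsymbol w$. The crucial observation is that, because $\boldsymbol r_0-1\geq0$, the ambient space $\mathbb V^{3}_{k+1}(\boldsymbol r_0-1)$ is the $C^{r_0^f-1}$-conforming vector Lagrange space with $r_0^f-1\geq0$, i.e.\ it coincides with $\mathbb V^{\grad}_{k+1}(\boldsymbol r_0-1;\mathbb R^3)$; since $\mathbb V^{\curl}_{k+1}(\boldsymbol r_0-1)\subseteq\mathbb V^{3}_{k+1}(\boldsymbol r_0-1)$ by definition, the potential $\boldsymbol v$ lies automatically in $\mathbb V^{\grad}_{k+1}(\boldsymbol r_0-1;\mathbb R^3)$. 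I would then set $\boldsymbol\tau:=\grad\boldsymbol v$: this is an $\mathbb M$-valued polynomial of degree $k$, of smoothness one order below $\boldsymbol v$ at every sub-simplex (so $\boldsymbol r_0-2=\boldsymbol r_1$), row-wise in $H(\curl,\Omega;\mathbb M)$, and with $\curl\boldsymbol\tau=\boldsymbol{0}\in\mathbb V^{\div}_{k-1}(\boldsymbol r_2;\mathbb T)$, hence $\boldsymbol\tau\in\widetilde{\mathbb V}_{k}^{\curl}(\boldsymbol r_1;\mathbb M)$; moreover $\vskw\boldsymbol\tau=\tfrac12\curl\boldsymbol v=\tfrac12\boldsymbol w$. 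By well-definedness the image of $\vskw$ on $\widetilde{\mathbb V}_{k}^{\curl}(\boldsymbol r_1;\mathbb M)$ is a linear subspace of $\mathbb V^{\div}_{k}(\boldsymbol r_1)\cap\ker(\div)$, and it contains $\tfrac12\boldsymbol w$ --- hence $\boldsymbol w$ --- for every $\boldsymbol w$ in that space, so it is the whole space.

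I expect the only point demanding genuine care is verifying that $\boldsymbol\tau=\grad\boldsymbol v$ indeed lands in the reduced space $\widetilde{\mathbb V}_{k}^{\curl}(\boldsymbol r_1;\mathbb M)$, which reduces to tracking the loss of exactly one order of smoothness at each vertex, edge and face when passing from $\boldsymbol v$ to $\grad\boldsymbol v$ (turning $\boldsymbol r_0-1$ into $\boldsymbol r_1$) and noting $\curl\grad\boldsymbol v=\boldsymbol{0}$. The hypothesis $\boldsymbol r_0\geq1$ is used precisely to guarantee that the de Rham potential $\boldsymbol v$ can be chosen in the $H^1$-conforming Lagrange space --- the same role played by $\boldsymbol r+1\geq0$ when the linking map $S$ was inverted in the proof of the $(\div;\mathbb S)$-stability lemma of Section~\ref{sec:faceelements}. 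All remaining verifications are bookkeeping with the degrees of freedom recalled in Section~\ref{sec:femderhamcomplex}.
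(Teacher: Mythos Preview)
Your proof is correct and follows essentially the same route as the paper: for surjectivity you use exactness of the top row of~\eqref{eq:hessBGG} to produce $\boldsymbol v\in\mathbb V^{\curl}_{k+1}(\boldsymbol r_0-1)$ with $\curl\boldsymbol v=\boldsymbol w$ and set $\boldsymbol\tau=\grad\boldsymbol v$, exactly as the paper does. The only stylistic difference is in the well-definedness step: the paper invokes the pointwise normal-trace identity $2(\vskw\boldsymbol\tau)\boldsymbol n=\tr(\boldsymbol\tau\times\boldsymbol n)$ to read off normal continuity directly from the tangential continuity of $\boldsymbol\tau$, whereas you go through the distributional relation $2\div(\vskw\boldsymbol\tau)=\tr\curl\boldsymbol\tau$ to conclude $\vskw\boldsymbol\tau\in H(\div)$ --- both arguments are valid and are two faces of the same BGG anti-commutativity.
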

\begin{proof}
Recall that the parallelogram formed by the north-east diagonal  $\nearrow$ and the horizontal operators is anticommutative. By substituting the differential operators with the face normal vector, we derive the following relationship:
\begin{equation*}
2(\vskw\boldsymbol\tau)\cdot\boldsymbol{n} = -\tr(\boldsymbol\tau\times\boldsymbol{n}).
\end{equation*}
Since $\boldsymbol\tau\times\boldsymbol{n}$ remains continuous for an $H(\curl)$ function $\boldsymbol \tau$, it follows that $(\vskw\boldsymbol\tau)\boldsymbol{n}$ also maintains continuity across each face. In other words, $\vskw\boldsymbol\tau\in H(\div,\Omega)$. Moreover, due to the traceless property of $\curl \boldsymbol \tau$:
\begin{equation*}
\div 2(\vskw\boldsymbol\tau) = \tr \curl \boldsymbol \tau = 0.
\end{equation*}
This implies that $\vskw(\widetilde{\mathbb V}_{k}^{\curl}(\boldsymbol r_1;\mathbb M)) \subseteq \mathbb V^{\div}_{k}(\boldsymbol r_1) \cap \ker(\div)$.

For the surjectivity proof, we select a function $\boldsymbol u\in \mathbb V^{\div}_{k}(\boldsymbol r_1) \cap \ker(\div)$. Since $\div \boldsymbol u=0$, we can find a $\boldsymbol v\in \mathbb V_{k+1}^{\curl} (\boldsymbol r_0 - 1)$ such that $\curl \boldsymbol v = \boldsymbol u$. Consequently, $\grad \boldsymbol v \in \widetilde{\mathbb V}_{k}^{\curl}(\boldsymbol r_1 ;\mathbb M)$ and $2\vskw \grad \boldsymbol v = \curl \boldsymbol v = \boldsymbol u$. This completes the argument for surjectivity.
\end{proof}

Define
\begin{equation}\label{eq:VcurlSBGG}
\mathbb V^{\curl}_{k}(\boldsymbol{r}_1;\mathbb S): = \widetilde{\mathbb V}_{k}^{\curl}(
\boldsymbol r_1
;\mathbb M) 
\cap \ker(\vskw). 
\end{equation}
With this space in place, we can apply the BGG construction to~\eqref{eq:hessBGG} and subsequently deduce the finite element Hessian complex.

\begin{theorem}
Let $\boldsymbol r_0 \geq 1, \boldsymbol r_1 = \boldsymbol r_0 -2, \boldsymbol r_2=\boldsymbol r_1\ominus1, \boldsymbol r_3= \boldsymbol r_2\ominus1$ be valid smoothness vectors. Assume $(\boldsymbol r_2, \boldsymbol r_3, k-1)$ is $(\div; \mathbb T)$ stable and $k + 2\geq 2r_0^{\texttt{v}}+1$. 
Then the finite element Hessian complex
\begin{equation*}
\mathbb P_1\xrightarrow{\subset} \mathbb V^{\grad}_{k+2}(\boldsymbol{r}_0)\xrightarrow{\hess}\mathbb V^{\curl}_{k}(\boldsymbol{r}_1;\mathbb S)\xrightarrow{\curl} \mathbb V^{\div}_{k-1}(\boldsymbol{r}_2;\mathbb T) \xrightarrow{\div} \mathbb V^{L^2}_{k-2}(\boldsymbol{r}_3;\mathbb R^3)\xrightarrow{}\boldsymbol0
\end{equation*}
is exact. 
\end{theorem}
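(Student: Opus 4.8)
The plan is to run the BGG process on the diagram~\eqref{eq:hessBGG}: show its two rows are exact, check that the linking maps $\mathrm{id},\,-2\vskw,\,\tr$ anti-commute with the differentials and are surjective/injective in the way the BGG construction requires (the only substantial point, surjectivity of $-2\vskw$, is exactly the Lemma preceding the theorem), and then identify the resulting output complex with the complex $\mathbb V^{\grad}_{k+2}(\boldsymbol r_0)\xrightarrow{\hess}\mathbb V^{\curl}_{k}(\boldsymbol r_1;\mathbb S)\xrightarrow{\curl}\mathbb V^{\div}_{k-1}(\boldsymbol r_2;\mathbb T)\cap\ker(\div)\to\boldsymbol0$ (with $\ker\hess=\mathbb P_1(\Omega)$), i.e.\ the truncation of~\eqref{eq:femhesscomplex}; the $(\div;\mathbb T)$ stability of $(\boldsymbol r_2,\boldsymbol r_3,k-1)$, together with $\boldsymbol r_3=\boldsymbol r_2\ominus1$, then splices on the last arrow $\div\colon\mathbb V^{\div}_{k-1}(\boldsymbol r_2;\mathbb T)\to\mathbb V^{L^2}_{k-2}(\boldsymbol r_3;\mathbb R^3)$. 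I would carry this out in three steps.

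\emph{Exactness of the rows.} Under the running hypotheses (valid smoothness vectors, $\boldsymbol r_1=\boldsymbol r_0-2$, $\boldsymbol r_2=\boldsymbol r_1\ominus1$, $\boldsymbol r_3=\boldsymbol r_2\ominus1$, and $(\boldsymbol r_2,\boldsymbol r_3,k-1)$ $(\div;\mathbb T)$ stable) the parameter sequences $(\boldsymbol r_0,\boldsymbol r_0-1,\boldsymbol r_1,\boldsymbol r_2)$ and $(\boldsymbol r_0-1,\boldsymbol r_1,\boldsymbol r_2,\boldsymbol r_3)$ are valid de Rham parameter sequences, so Theorem~\ref{thm:femderhamcomplex3dgeneral} provides exact finite element de Rham complexes at polynomial degrees $k+2$ and $k+1$. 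Applying a $\,\widetilde{}\,$ operation (see~\eqref{eq:tilde}) to the first, restricting its terminal $L^2$-space to $\{\boldsymbol0\}$, gives the exact top row of~\eqref{eq:hessBGG}: the preimage of $\{\boldsymbol0\}$ under $\div$ is $\mathbb V^{\div}_k(\boldsymbol r_1)\cap\ker(\div)$, and since $\grad$ and $\curl$ already map into $\ker(\div)$ the earlier spaces are unchanged. Tensoring the second de Rham complex with $\mathbb R^3$ row-wise and truncating yields the exact sequence $\mathbb V^{\grad}_{k+1}(\boldsymbol r_0-1;\mathbb R^3)\xrightarrow{\grad}\mathbb V^{\curl}_k(\boldsymbol r_1;\mathbb M)\xrightarrow{\curl}\mathbb V^{\div}_{k-1}(\boldsymbol r_2;\mathbb M)\cap\ker(\div)\to\boldsymbol0$; a $\,\widetilde{}\,$ operation with $\widetilde V_2:=\mathbb V^{\div}_{k-1}(\boldsymbol r_2;\mathbb T)\cap\ker(\div)$ produces — because $\div\circ\curl=0$ makes the $\ker(\div)$ constraint on the preimage automatic — exactly the space $\widetilde{\mathbb V}^{\curl}_k(\boldsymbol r_1;\mathbb M)$ defined below~\eqref{eq:hessBGG}, so the bottom row is exact as well.

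\emph{Algebraic identities and the chase.} Anti-commutativity reduces to the pointwise identities $2\vskw\,\grad\boldsymbol v=\curl\boldsymbol v$ on vector fields and $\tr\circ\curl=2\div\circ\vskw$ on matrix fields (the latter noted in the proof of the preceding Lemma), from which $\curl\circ\hess=0$; together with $\div\circ\curl=0$ this makes~\eqref{eq:femhesscomplex} a complex, and one checks the differentials land in the named spaces — e.g.\ $\boldsymbol r_0\geq1$ forces $H(\curl)$-fields in these spaces to be $C^0$, so $\hess u$ is a symmetric, componentwise-$\mathbb V_k(\boldsymbol r_1)$ field with vanishing (hence traceless) curl, i.e.\ $\hess u\in\mathbb V^{\curl}_k(\boldsymbol r_1;\mathbb S)=\widetilde{\mathbb V}^{\curl}_k(\boldsymbol r_1;\mathbb M)\cap\ker(\vskw)$ by~\eqref{eq:VcurlSBGG}. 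Exactness at $\mathbb V^{\grad}_{k+2}(\boldsymbol r_0)$: $\hess u=0$ forces $\grad u$ to be a constant vector by bottom-row exactness, so $u\in\mathbb P_1(\Omega)\subset\mathbb V^{\grad}_{k+2}(\boldsymbol r_0)$ (using $k+2\geq2r_0^{\texttt{v}}+1$). Exactness at $\mathbb V^{\curl}_k(\boldsymbol r_1;\mathbb S)$: if $\curl\boldsymbol\tau=0$ then $\boldsymbol\tau=\grad\boldsymbol v$ with $\boldsymbol v\in\mathbb V^{\grad}_{k+1}(\boldsymbol r_0-1;\mathbb R^3)$ (bottom row), and $\tfrac12\curl\boldsymbol v=\vskw\boldsymbol\tau=0$, so $\boldsymbol v=\grad u$ (top row, via the inclusion $\mathrm{id}$) and $\boldsymbol\tau=\hess u$. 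Exactness at $\mathbb V^{\div}_{k-1}(\boldsymbol r_2;\mathbb T)$: given $\boldsymbol\sigma$ there with $\div\boldsymbol\sigma=0$, bottom-row exactness gives $\boldsymbol\sigma=\curl\boldsymbol\tau$, $\boldsymbol\tau\in\widetilde{\mathbb V}^{\curl}_k(\boldsymbol r_1;\mathbb M)$; since $\div(2\vskw\boldsymbol\tau)=\tr\boldsymbol\sigma=0$, the preceding Lemma produces $\boldsymbol v\in\mathbb V^{\curl}_{k+1}(\boldsymbol r_0-1)$ with $\grad\boldsymbol v\in\widetilde{\mathbb V}^{\curl}_k(\boldsymbol r_1;\mathbb M)$ and $2\vskw\grad\boldsymbol v=2\vskw\boldsymbol\tau$, whence $\boldsymbol\tau-\grad\boldsymbol v\in\mathbb V^{\curl}_k(\boldsymbol r_1;\mathbb S)$ and $\curl(\boldsymbol\tau-\grad\boldsymbol v)=\boldsymbol\sigma$.

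\emph{The tail, and the hard part.} The previous step shows $\img\bigl(\curl|_{\mathbb V^{\curl}_k(\boldsymbol r_1;\mathbb S)}\bigr)=\mathbb V^{\div}_{k-1}(\boldsymbol r_2;\mathbb T)\cap\ker(\div)$, which is exactness of~\eqref{eq:femhesscomplex} at $\mathbb V^{\div}_{k-1}(\boldsymbol r_2;\mathbb T)$; and the $(\div;\mathbb T)$ stability hypothesis gives $\div\mathbb V^{\div}_{k-1}(\boldsymbol r_2;\mathbb T)=\mathbb V^{L^2}_{k-2}(\boldsymbol r_2\ominus1;\mathbb R^3)=\mathbb V^{L^2}_{k-2}(\boldsymbol r_3;\mathbb R^3)$, exactness at the final term. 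I expect the main obstacle to be not any single step but the bookkeeping that ties them together: verifying that the $\,\widetilde{}\,$ operation in Step 1 reproduces precisely the space $\widetilde{\mathbb V}^{\curl}_k(\boldsymbol r_1;\mathbb M)$ of~\eqref{eq:hessBGG}, that the parameter/degree shifts line up so both de Rham complexes are legitimately valid, and that the various operators ($\hess$ on $\mathbb V^{\grad}_{k+2}(\boldsymbol r_0)$, $\curl$ into $\mathbb V^{\div}_{k-1}(\boldsymbol r_2;\mathbb T)$, and $\grad\boldsymbol v$ for $\boldsymbol v$ in an $H(\curl)$ finite element space) are well defined — these last points all hinging on $\boldsymbol r_0\geq1$ — together with correctly invoking the surjectivity of $-2\vskw$, which is where the real work, already done in the preceding Lemma, resides.
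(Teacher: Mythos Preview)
Your proposal is correct and follows the same approach as the paper: apply the BGG construction to diagram~\eqref{eq:hessBGG}, using the preceding Lemma for the key surjectivity of $-2\vskw$, then splice on the $(\div;\mathbb T)$-stable tail. The only difference is that the paper invokes the abstract BGG framework in one sentence, whereas you unpack it as an explicit diagram chase; the content is the same.
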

While we have established finite element descriptions for several spaces, including $\mathbb V_{k+2}^{\grad}(\boldsymbol{r}_0)$, $\mathbb V_{k-1}^{\div}(\boldsymbol{r}_2;\mathbb{T})$, and $\mathbb V_{k-2}^{L^2}(\boldsymbol{r}_3;\mathbb{R}^3)$, we currently face a challenge in providing a finite element description for $\mathbb V_{k}^{\curl}(\boldsymbol{r}_1;\mathbb{S})$. The DoFs for this space are not easily derived using the BGG construction and will be discussed in Section \ref{sec:edgeelements}.

Throughout this paper, we will further simplify the notation in examples by presenting only the smoothness vectors and omitting the space notation, which should be clear from the differential operator attached to the space.

\begin{example}\rm 
Taking $\boldsymbol{r}_0=(4,2,1)^{\intercal}$, $\boldsymbol{r}_1=\boldsymbol{r}_0-2$, $\boldsymbol{r}_2=\boldsymbol{r}_1\ominus1$, $\boldsymbol{r}_3=\boldsymbol{r}_2\ominus1$, and $k\geq7$, we obtain the first finite element Hessian complex constructed in~\cite{HuLiang2021}
$$
\mathbb P_1 \hookrightarrow
\begin{pmatrix}
	 4\\
	 2\\
	 1
\end{pmatrix}
\xrightarrow{\hess}
\begin{pmatrix}
 2\\
 0\\
 -1
\end{pmatrix}
\xrightarrow{\curl}
\begin{pmatrix}
	1\\
	-1\\
	-1
\end{pmatrix}
\xrightarrow{\div} 
\begin{pmatrix}
	 0\\
	 -1\\
	 -1
\end{pmatrix} \to \boldsymbol{0}.
$$
\end{example}

\begin{remark}\rm
The macro-element Hessian complex based on the Alfeld split and virtual element Hessian complex with $k\geq3$ developed in \cite{Chen;Huang:2020Discrete} correspond, within our notation, to the following sequence:
$$
\mathbb P_1 \hookrightarrow
\begin{pmatrix}
	 2\\
	 1\\
	 1
\end{pmatrix}
\xrightarrow{\hess}
\begin{pmatrix}
 0\\
 -1\\
 -1
\end{pmatrix}
\xrightarrow{\curl}
\begin{pmatrix}
	0\\
	-1\\
	-1
\end{pmatrix}
\xrightarrow{\div} 
\begin{pmatrix}
	 -1\\
	 -1\\
	 -1
\end{pmatrix} \to \boldsymbol{0}.
$$
This particular complex cannot be derived using our framework due to the fact that $\boldsymbol r_0 = (2, 1, 1)^{\intercal}$ does not constitute a valid smoothness vector for a $C^1$-element. 
\qed
\end{remark}

By modifying the smoothness constraint for $\mathbb V_{k}^{\curl}(\boldsymbol r_1; \mathbb S)$ to $\boldsymbol r_2\geq\boldsymbol r_1\ominus1$ and then introducing the space $\mathbb V_{k}^{\curl}(\boldsymbol r_1, \boldsymbol r_2; \mathbb S) = \{\boldsymbol \tau\in\mathbb V_{k}^{\curl}(\boldsymbol r_1; \mathbb S): \curl \boldsymbol \tau \in \mathbb V^{\div}_{k-1}(\boldsymbol{r}_2;\mathbb T)\}$, we can derive finite element Hessian complexes with inequality constraints of the smoothness vectors. 

\begin{corollary}
Let $\boldsymbol r_0 \geq 1, \boldsymbol r_1 = \boldsymbol r_0 -2, \boldsymbol r_2\geq\boldsymbol r_1\ominus1, \boldsymbol r_3\geq \boldsymbol r_2\ominus1$. 
Assume $(\boldsymbol r_2, \boldsymbol r_3, k-1)$ is $(\div; \mathbb T)$ stable and $k \geq \max \{2r_0^{\texttt{v}} - 1, 2r_2^{\texttt{v}} + 2, 2r_3^{\texttt{v}} +3 \}$. 
Then the finite element Hessian complex
\begin{equation*}
\mathbb P_1\xrightarrow{\subset} \mathbb V^{\grad}_{k+2}(\boldsymbol{r}_0)\xrightarrow{\hess}\mathbb V^{\curl}_{k}(\boldsymbol{r}_1,\boldsymbol{r}_2;\mathbb S)\xrightarrow{\curl} \mathbb V^{\div}_{k-1}(\boldsymbol{r}_2, \boldsymbol{r}_3;\mathbb T) \xrightarrow{\div} \mathbb V^{L^2}_{k-2}(\boldsymbol{r}_3;\mathbb R^3)\xrightarrow{}\boldsymbol0
\end{equation*}
is exact.
\end{corollary}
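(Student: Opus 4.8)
The plan is to deduce the complex in the statement from the finite element Hessian complex~\eqref{eq:femhesscomplex} of the preceding theorem by two successive applications of the $\widetilde{\quad}$ operation of Section~\ref{sec:preliminary}: one that enlarges the smoothness vector in the $L^2$-slot from the minimal value $\boldsymbol s_3:=(\boldsymbol r_1\ominus1)\ominus1$ to the given $\boldsymbol r_3$, and one that enlarges it in the $H(\div;\mathbb T)$-slot from $\boldsymbol s_2:=\boldsymbol r_1\ominus1$ to the given $\boldsymbol r_2$. The $\widehat{\quad}$ operation is not needed, since the $\mathbb V^{\grad}$-space is unchanged.

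First I would check that the parameters $(\boldsymbol r_0,\boldsymbol r_0-1,\boldsymbol s_2,\boldsymbol s_3)$ satisfy the hypotheses of the preceding theorem under the present assumptions, so that~\eqref{eq:femhesscomplex} with $\boldsymbol r_2,\boldsymbol r_3$ replaced by $\boldsymbol s_2,\boldsymbol s_3$ is exact. Here $\boldsymbol s_2,\boldsymbol s_3$ are valid smoothness vectors because $\boldsymbol r_1=\boldsymbol r_0-2$ (with $\boldsymbol r_0\geq1$) is valid and $\ominus1$ preserves validity; the bound $k+2\geq2r_0^{\texttt{v}}+1$ is precisely the assumed $k\geq2r_0^{\texttt{v}}-1$; and $(\boldsymbol s_2,\boldsymbol s_3,k-1)$ is $(\div;\mathbb T)$ stable, which one extracts from Corollary~\ref{cor:divX} together with the degree bounds $k\geq2r_2^{\texttt{v}}+2$ and $k\geq2r_3^{\texttt{v}}+3$. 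This last verification is the one genuinely fiddly part of the argument.

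Next I would apply the $\widetilde{\quad}$ operation to the three-term tail $\mathbb V^{\curl}_k(\boldsymbol r_1;\mathbb S)\xrightarrow{\curl}\mathbb V^{\div}_{k-1}(\boldsymbol s_2;\mathbb T)\xrightarrow{\div}\mathbb V^{L^2}_{k-2}(\boldsymbol s_3;\mathbb R^3)\to0$, whose kernel $\ker(\dd_0)=\ker(\curl)\cap\mathbb V^{\curl}_k(\boldsymbol r_1;\mathbb S)$ equals $\hess\,\mathbb V^{\grad}_{k+2}(\boldsymbol r_0)$ by the exactness just recalled. Choosing $\widetilde V_2=\mathbb V^{L^2}_{k-2}(\boldsymbol r_3;\mathbb R^3)\subseteq\mathbb V^{L^2}_{k-2}(\boldsymbol s_3;\mathbb R^3)$ (legitimate since $\boldsymbol r_3\geq\boldsymbol s_3$), the $\div$-preimage of $\widetilde V_2$ in $\mathbb V^{\div}_{k-1}(\boldsymbol s_2;\mathbb T)$ is exactly $\mathbb V^{\div}_{k-1}(\boldsymbol s_2,\boldsymbol r_3;\mathbb T)$; since the operation leaves $\ker(\dd_0)$ and $\mathbb V^{\curl}_k(\boldsymbol r_1;\mathbb S)$ untouched, prepending $\mathbb P_1(\Omega)\xrightarrow{\subset}\mathbb V^{\grad}_{k+2}(\boldsymbol r_0)\xrightarrow{\hess}(\cdot)$ restores an exact five-term complex ending in $\mathbb V^{\div}_{k-1}(\boldsymbol s_2,\boldsymbol r_3;\mathbb T)\xrightarrow{\div}\mathbb V^{L^2}_{k-2}(\boldsymbol r_3;\mathbb R^3)\to0$. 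I would then apply the $\widetilde{\quad}$ operation a second time, now to the three-term head $\mathbb V^{\grad}_{k+2}(\boldsymbol r_0)\xrightarrow{\hess}\mathbb V^{\curl}_k(\boldsymbol r_1;\mathbb S)\xrightarrow{\curl}\bigl(\ker(\div)\cap\mathbb V^{\div}_{k-1}(\boldsymbol s_2,\boldsymbol r_3;\mathbb T)\bigr)\to0$, whose kernel is $\mathbb P_1(\Omega)$, restricting its last space to $\widetilde V_2=\ker(\div)\cap\mathbb V^{\div}_{k-1}(\boldsymbol r_2,\boldsymbol r_3;\mathbb T)$, a subspace of the previous one since $\boldsymbol r_2\geq\boldsymbol s_2$. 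The $\curl$-preimage of $\widetilde V_2$ inside $\mathbb V^{\curl}_k(\boldsymbol r_1;\mathbb S)$ is exactly $\mathbb V^{\curl}_k(\boldsymbol r_1,\boldsymbol r_2;\mathbb S)$, because $\curl\boldsymbol\tau\in\ker(\div)$ and $\div\curl\boldsymbol\tau=0\in\mathbb V^{L^2}_{k-2}(\boldsymbol r_3;\mathbb R^3)$ automatically, so $\curl\boldsymbol\tau\in\widetilde V_2$ is equivalent to $\curl\boldsymbol\tau\in\mathbb V^{\div}_{k-1}(\boldsymbol r_2;\mathbb T)$. Finally, splicing in the map $\div\colon\mathbb V^{\div}_{k-1}(\boldsymbol r_2,\boldsymbol r_3;\mathbb T)\to\mathbb V^{L^2}_{k-2}(\boldsymbol r_3;\mathbb R^3)$ — surjective by the assumed $(\div;\mathbb T)$ stability of $(\boldsymbol r_2,\boldsymbol r_3,k-1)$ and with kernel $\ker(\div)\cap\mathbb V^{\div}_{k-1}(\boldsymbol r_2,\boldsymbol r_3;\mathbb T)$ by construction — produces precisely the complex in the statement, and it is exact.

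The substantial inputs — the BGG derivation of~\eqref{eq:femhesscomplex} and the $(\div;\mathbb T)$ stability of Corollary~\ref{cor:divX} — are already at hand, so I expect the main obstacle to be the stability bookkeeping of the first step, namely confirming that the minimal pair $(\boldsymbol s_2,\boldsymbol s_3,k-1)$ remains $(\div;\mathbb T)$ stable under the stated polynomial-degree bounds (in the borderline-regularity ranges this may require handling $\mathbb V^{\div}$ with vertex index $0$ directly rather than through $\boldsymbol r_1\ominus1$); everything else is the formal exactness accounting of the two $\widetilde{\quad}$ reductions. As with the theorem, this argument establishes exactness only, and does not produce degrees of freedom for $\mathbb V^{\curl}_k(\boldsymbol r_1,\boldsymbol r_2;\mathbb S)$.
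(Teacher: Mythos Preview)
Your proposal is correct and follows the same approach as the paper, which proves the corollary in a single sentence: ``As one $\widetilde{\quad}$ operation applied to~\eqref{eq:femhesscomplex}.'' Your two-step version is the honest formalization of that remark (the $\widetilde{\quad}$ operation of Section~\ref{sec:preliminary} is defined on three-term sequences, so modifying both the $H(\div;\mathbb T)$-slot and the $L^2$-slot really does take two applications), and you correctly isolate the only nonformal ingredient---checking that the minimal pair $(\boldsymbol s_2,\boldsymbol s_3,k-1)$ inherits $(\div;\mathbb T)$ stability from the degree bounds---which the paper leaves implicit.
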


By a similar proof, we can obtain the bubble Hessian complex and refer to appendix Section~\ref{sec:bubblehesscomplex} for the detailed proof.
\begin{proposition}
Let $\boldsymbol r_0 \geq (4,2,1)^{\intercal}, \boldsymbol r_1 = \boldsymbol r_0 -2, \boldsymbol r_2=\boldsymbol r_1\ominus1, \boldsymbol r_3= \boldsymbol r_2\ominus1$. 
Assume $\boldsymbol r_2$ satisfy the condition in Lemma \ref{lem:divbubbleontoST}, and $k\geq 2r_2^{\texttt{v}}+2$.
Then the bubble Hessian complex
\begin{equation*}
\resizebox{.925\hsize}{!}{$
0\xrightarrow{\subset} \mathbb B_{k+2}(\boldsymbol{r}_0)\xrightarrow{\hess}\mathbb B^{\curl}_{k}(\boldsymbol{r}_1;\mathbb S)\xrightarrow{\curl} \mathbb B^{\div}_{k-1}(\boldsymbol{r}_2;\mathbb T) \xrightarrow{\div} \mathbb B_{k-2}(\boldsymbol{r}_3;\mathbb R^3)/{\rm RT}\xrightarrow{}0
$}
\end{equation*}
is exact, where $\mathbb B^{\curl}_{k}(\boldsymbol{r}_1;\mathbb S):=\{\boldsymbol{\tau}\in\mathbb B_{k}(\boldsymbol{r}_1;\mathbb S): \boldsymbol{\tau}\times\boldsymbol{n}|_{\partial T}=0\}$.
\end{proposition}

\subsection{Finite element elasticity complexes}
Let 
\begin{equation}\label{eq:relasticity}
\boldsymbol{r}_0 \geq (2, 1, 0)^{\intercal}, 
\;
\boldsymbol r_1 =\boldsymbol{r}_0-1, 
\;
 \boldsymbol{r}_2=\max\{\boldsymbol{r}_1\ominus2, (0, -1, -1)^{\intercal}\},
\;
 \boldsymbol{r}_3=\boldsymbol{r}_2\ominus1.
\end{equation}
Assume $(\boldsymbol r_1 \ominus 1, \boldsymbol r_2, k)$ is div-stable, and that $(\boldsymbol r_2, \boldsymbol r_3, k-1)$ exhibits $(\div;\mathbb S)$ stability. 

We first consider a slightly smoother case
\begin{equation}\label{eq:smoothr0}
\bs r_0 \geq (4, 2, 1)^{\intercal}, \quad \bs r_1 = \bs r_0-1, \quad \bs r_2 = \bs r_1\ominus 2, \quad \bs r_3 = \bs r_2\ominus 1.
\end{equation}
Evidently $\boldsymbol r_2 = (\boldsymbol r_1\ominus 1)\ominus 1$. Moreover, by our assumptions, $(\boldsymbol r_2, \boldsymbol r_3, k-1)$ is div-stable. Consequently, the sequence $(\boldsymbol r_1, \boldsymbol r_1\ominus 1, \boldsymbol r_2, \boldsymbol r_3)$ forms a valid de Rham smoothness sequence:
\begin{equation*}
\mathbb R^3\xrightarrow{\subset}\mathbb V_{k+1}^{\grad} (\boldsymbol{r}_1; \mathbb R^3)\xrightarrow{\grad}\mathbb V_{k}^{\curl} (\boldsymbol{r}_1 \ominus 1; \mathbb M
) \xrightarrow{\curl} \mathbb V_{k-1}^{\div}(\boldsymbol r_2;\mathbb M)\cap\ker(\div) \xrightarrow{\div}  \boldsymbol{0}.
\end{equation*}
Employing a $\widetilde{\quad}$ operation to transfer from $\mathbb M$ to $\mathbb S$, we then obtain the exact sequence
\begin{equation*}
\mathbb R^3\xrightarrow{\subset}\mathbb V_{k+1}^{\grad} (\boldsymbol{r}_1; \mathbb R^3)\xrightarrow{\grad}\widetilde{\mathbb V}_{k}^{\curl} (\boldsymbol{r}_1 \ominus 1; \mathbb M
) \xrightarrow{\curl} \mathbb V_{k-1}^{\div}(\boldsymbol r_2;\mathbb S)\cap\ker(\div) \xrightarrow{\div}  \boldsymbol{0},
\end{equation*}
%
where 
$$
\widetilde{\mathbb V}_{k}^{\curl} (\boldsymbol{r}_1 \ominus 1; \mathbb M
) :=\{\boldsymbol{\tau}\in\mathbb V_{k}^{\curl}(\boldsymbol r_1\ominus1;\mathbb M):\curl\boldsymbol{\tau}\in\mathbb V^{\div}_{k-1}(
\boldsymbol{r}_2
; \mathbb S)\}.
$$

\begin{lemma}\label{lem:div0Mfem1}
Define
\begin{equation}\label{eq:elasticityVtilde}
\widetilde{\mathbb V}_{k}^{\div}(\boldsymbol r_1\ominus1;\mathbb M):=S(\widetilde{\mathbb V}_{k}^{\curl}(\boldsymbol r_1\ominus1;\mathbb M)).
\end{equation} It holds
$$
\widetilde{\mathbb V}_{k}^{\div}(\boldsymbol r_1\ominus1;\mathbb M)\subseteq \mathbb V_{k}^{\div}(\boldsymbol r_1\ominus1;\mathbb M)\cap\ker(\div).
$$
\end{lemma}
\begin{proof}
By $(S\boldsymbol\tau)\boldsymbol{n} = -2\vskw(\boldsymbol\tau\times\boldsymbol{n})$, $\widetilde{\mathbb V}_{k}^{\div}(\boldsymbol r_1\ominus1;\mathbb M)\subseteq \mathbb V_{k}^{\div}(\boldsymbol r_1\ominus1;\mathbb M)$.
And $\div\widetilde{\mathbb V}^{\div}_k(
\boldsymbol{r}_1\ominus1; \mathbb M
)=\boldsymbol{0}$ follows from $\div(S\boldsymbol\tau) = 2\vskw(\curl\boldsymbol\tau).$
\end{proof}

It is straightforward to demonstrate that the sequence $(\boldsymbol r_0, \boldsymbol r_1, \boldsymbol r_1\ominus 1, \boldsymbol r_2)$ is also a valid de Rham smoothness sequence. With the application of a $\widetilde{\quad}$ operation to this finite element de Rham complex, an exact sequence holds as follows:
\begin{equation*}
\mathbb R^3\xrightarrow{\subset}\mathbb V_{k+2}^{\grad} (
\boldsymbol{r}_0; \mathbb R^3) \xrightarrow{\grad}\widetilde{\mathbb V}_{k+1}^{\curl} (\boldsymbol{r}_1; \mathbb M
) \xrightarrow{\curl} \widetilde{\mathbb V}^{\div}_k(
\boldsymbol{r}_1\ominus1; \mathbb M
) \xrightarrow{\div}  \boldsymbol{0},
\end{equation*}
where $\widetilde{\mathbb V}^{\div}_k(\boldsymbol{r}_1\ominus1; \mathbb M)$ is defined in \eqref{eq:elasticityVtilde} and
$$
\widetilde{\mathbb V}_{k+1}^{\curl}(\boldsymbol r_1;\mathbb M):=\{\boldsymbol{\tau}\in\mathbb V_{k+1}^{\curl}(\boldsymbol r_1;\mathbb M): \curl\boldsymbol{\tau}\in\widetilde{\mathbb V}_{k}^{\div}(\boldsymbol r_1\ominus1;\mathbb M)\}.
$$

\begin{theorem}\label{th:elasticitysmooth}
Let $(\boldsymbol r_0, \boldsymbol r_1, \boldsymbol r_2, \boldsymbol r_3)$ be given by~\eqref{eq:smoothr0}. Assume $(\boldsymbol r_1 \ominus 1, \boldsymbol r_2, k)$ is div stable, and $(\boldsymbol r_2, \boldsymbol r_3, k-1)$ is $(\div;\mathbb S)$ stable. Let $k+2\geq 2r_0^{\texttt{v}}+1$. We have the BGG diagram
\begin{equation*}
\adjustbox{scale=0.96,center}{
\begin{tikzcd}
\mathbb V_{k+2}^{\grad} (
\boldsymbol{r}_0; \mathbb R^3)
\arrow{r}{\grad}
&
\widetilde{\mathbb V}_{k+1}^{\curl} (\boldsymbol{r}_1; \mathbb M
)
 \arrow{r}{\curl}
 &
\widetilde{\mathbb V}^{\div}_k(
\boldsymbol{r}_1\ominus1; \mathbb M
)
 \arrow{r}{\div}
 & 0
 \\
\mathbb V_{k+1}^{\grad}(\boldsymbol{r}_1; \mathbb R^3)
 \arrow[ur,swap,"\mskw"'] \arrow{r}{\grad}
 & 
\widetilde{\mathbb V}_{k}^{\curl}(
\boldsymbol{r}_1\ominus1
;\mathbb M) 
 \arrow[ur,swap,"S"'] \arrow{r}{\curl}
 & 
\mathbb V^{\div}_{k-1}(
\boldsymbol{r}_2
; \mathbb S) 
 \arrow[ur,swap,"-2\vskw"'] \arrow{r}{\div} \arrow[r] 
 & \mathbb V^{L^2}_{k-2}(
\boldsymbol{r}_3
; \mathbb R^3), 
\end{tikzcd}
}
\end{equation*}
which leads to the finite element elasticity complex
\begin{align*}
{\rm RM}\xrightarrow{\subset}\mathbb V_{k+2}^{\grad} (\boldsymbol{r}_0;\mathbb R^3) \xrightarrow{\defm} \mathbb V_{k+1}^{\inc^+}(\boldsymbol{r}_1;\mathbb{S}) \xrightarrow{\inc}  \mathbb V_{k-1}^{\div}(\boldsymbol{r}_2;\mathbb{S}) \xrightarrow{\div} \mathbb V_{k-2}^{L^2} (\boldsymbol{r}_3; \mathbb R^3)\rightarrow \boldsymbol{0},
\end{align*}
where $\mathbb V_{k+1}^{\inc^+}(\boldsymbol{r}_1;\mathbb{S}):=\{\boldsymbol{\tau}\in \mathbb V_{k+1}^{\curl} (\boldsymbol{r}_1; \mathbb M): \boldsymbol \tau \in \mathbb S, \curl\boldsymbol{\tau}\in\widetilde{\mathbb V}_{k}^{\div}(\boldsymbol r_1\ominus1;\mathbb M)\}$.
\end{theorem}
\begin{proof}
The bijectiveness of the mapping $S:\widetilde{\mathbb V}_{k}^{\curl}(\boldsymbol r_1\ominus1;\mathbb M)\to\widetilde{\mathbb V}_{k}^{\div}(\boldsymbol r_1\ominus1;\mathbb M)$ is a direct outcome of its definition. 
As $r_1^f\geq 0$, $\vskw \widetilde{\mathbb V}_{k+1}^{\curl} (\boldsymbol{r}_1; \mathbb M) \subseteq \mathbb V_{k+1}^{\grad}(\boldsymbol{r}_1; \mathbb R^3)$. Therefore $\widetilde{\mathbb V}_{k+1}^{\curl} (\boldsymbol{r}_1; \mathbb M) /\mskw \mathbb V_{k+1}^{\grad}(\boldsymbol{r}_1; \mathbb R^3) = \mathbb V_{k+1}^{\inc^+}(\boldsymbol{r}_1;\mathbb{S})$. 
We arrive at our desired conclusion by applying the BGG framework.
\end{proof}

%
%
As $r_1^f \geq 0$, $\mathbb V_{k+1}^{\inc^+}(\boldsymbol{r}_1;\mathbb{S}) \subset H^1(\Omega;\mathbb S)$ and $\mathbb V_{k+2}^{\grad} (\boldsymbol{r}_0;\mathbb R^3)\subset H^{2} (\Omega;\mathbb R^3)$.
The corresponding continuous version is the elasticity complex that initiates with $H^2(\Omega;\mathbb R^3)$:
\begin{equation}\label{eq:smoothelasticitycomplex}
{\rm RM}\xrightarrow{\subset}  H^{2} (\Omega;\mathbb R^3) \xrightarrow{\defm}  H^1(\mathrm{inc}, \Omega;\mathbb{S}) \xrightarrow{\inc}  H(\operatorname{div}, \Omega; \mathbb{S}) \xrightarrow{\div}  L^{2} (\Omega; \mathbb R^3)\rightarrow \boldsymbol{0}.
\end{equation}
Here, the space 
\[
H^1(\mathrm{inc}, \Omega;\mathbb{S}) := \{\boldsymbol \tau \in H^1(\Omega; \mathbb S) : \inc \boldsymbol \tau \in L^2(\Omega; \mathbb S)\}.
\]

\begin{example}\rm 
For the case of $k+1\geq 8$ and $\boldsymbol{r}_0=(4,2,1)^{\intercal}$, we arrive at a discrete elasticity complex~\eqref{eq:smoothelasticitycomplex} originating from a subspace of $H^2(\Omega)$:
\begin{equation*}
{\rm RM}\xrightarrow{\subset} 
\begin{pmatrix}
	 4\\
	 2\\
	 1
\end{pmatrix}
\xrightarrow{\defm}
\begin{pmatrix}
 3\\
 1\\
 0
\end{pmatrix}
\xrightarrow{\inc}
\begin{pmatrix}
	1\\
	-1\\
	-1
\end{pmatrix}
\xrightarrow{\div} 
\begin{pmatrix}
	 0\\
	 -1\\
	 -1
\end{pmatrix} \rightarrow \boldsymbol{0}.
\end{equation*}
\qed
\end{example}

\begin{remark}\rm
It is worth noting that a finite element elasticity complex has been recently established for the Alfeld split of a tetrahedron~\cite{Christiansen;Gopalakrishnan;Guzman;Hu:2020discrete}, presenting another discrete counterpart of~\eqref{eq:smoothelasticitycomplex}. The discrete elasticity complex introduced in~\cite{Christiansen;Gopalakrishnan;Guzman;Hu:2020discrete} corresponds, within our notation, to the following sequence:
\begin{equation*}
{\rm RM}\xrightarrow{\subset} 
\begin{pmatrix}
	 2\\
	 1\\
	 1
\end{pmatrix}
\xrightarrow{\defm}
\begin{pmatrix}
 1\\
 0\\
 0
\end{pmatrix}
\xrightarrow{\inc}
\begin{pmatrix}
	0\\
	-1\\
	-1
\end{pmatrix}
\xrightarrow{\div} 
\begin{pmatrix}
	 -1\\
	 -1\\
	 -1
\end{pmatrix} \rightarrow \boldsymbol{0}.
\end{equation*}
This particular complex cannot be derived using our framework due to the fact that $\boldsymbol r_0 = (2, 1, 1)^{\intercal}$ does not constitute a valid smoothness vector for a $C^1$-element. In~\cite{Christiansen;Gopalakrishnan;Guzman;Hu:2020discrete}, the space $\mathbb V^{\grad}((2, 1, 1)^{\intercal}; \mathbb R^3)$ is constructed on Alfeld splits of tetrahedra.
%
\qed
\end{remark}

For $\boldsymbol{\tau}\in\mathbb V_{k+1}^{\inc^+}(\boldsymbol{r}_1;\mathbb{S})$, the super-script ${}^+$ means some additional smoothness more than $\boldsymbol \tau \in H(\inc)$.
To relax the smoothness of $\mathbb V_{k+1}^{\inc^+}(\boldsymbol{r}_1;\mathbb{S})$, we introduce
\begin{equation}\label{eq:VincS}
\mathbb V_{k+1}^{\inc}(\boldsymbol r_1;\mathbb S):=\{\boldsymbol{\tau}\in\mathbb V_{k+1}(\boldsymbol r_1)\otimes \mathbb S: \inc\boldsymbol{\tau}\in\mathbb V^{\div}_{k-1}(\boldsymbol{r}_2; \mathbb S)\}.
\end{equation}
Obviously $\mathbb V_{k+1}^{\inc^+}(\boldsymbol r_1;\mathbb S)\subseteq \mathbb V_{k+1}^{\inc}(\boldsymbol r_1;\mathbb S)$. By applying an inverse hat operation, we can obtain the finite element elasticity complex for $\bs r_0 = (r_0^{\texttt{v}}, r_0^e, 0)^{\intercal}$ with $r_0^{\texttt{v}}\geq4$ and $r_0^e\geq1$.

\begin{corollary}\label{cor:femelasticityinc410}
Let $(\boldsymbol r_0, \boldsymbol r_1, \boldsymbol r_2, \boldsymbol r_3)$ be given by~\eqref{eq:relasticity} and $\bs r_0 = (r_0^{\texttt{v}}, r_0^e, 0)^{\intercal}$ with $r_0^{\texttt{v}}\geq4$ and $r_0^e\geq1$. Assume $(\boldsymbol r_1 \ominus 1, \boldsymbol r_2, k)$ is div stable, $(\boldsymbol r_2, \boldsymbol r_3, k-1)$ is $(\div;\mathbb S)$ stable, and $k+2\geq 2r_0^{\texttt{v}}+1$. We have the finite element elasticity complex
 \begin{equation*}
{\rm RM}\xrightarrow{\subset}\mathbb V_{k+2}^{\grad} (\boldsymbol{r}_0;\mathbb R^3) \xrightarrow{\defm} \mathbb V_{k+1}^{\inc}(\boldsymbol{r}_1;\mathbb{S}) \xrightarrow{\inc}  \mathbb V_{k-1}^{\div}(\boldsymbol{r}_2;\mathbb{S}) \xrightarrow{\div} \mathbb V_{k-2}^{L^2} (\boldsymbol{r}_3; \mathbb R^3)\rightarrow \boldsymbol{0}.
\end{equation*}
\end{corollary}
\begin{proof}
Let $\hat{\bs r}_0 = (r_0^{\texttt{v}}, \max\{r_0^e, 2\}, 1)^{\intercal}\geq(4,2,1)^{\intercal}$ and $\hat{\bs r}_1 = (r_1^{\texttt{v}}, \max\{r_1^e, 1\}, 0)^{\intercal}$, then $\hat{\bs r}_1\ominus 2=\bs r_2$. So we can apply the inverse hat operation. To verify $\defm \mathbb V_{k+2}^{\grad} (\boldsymbol{r}_0;\mathbb R^3) = \mathbb V_{k+1}^{\inc}(\boldsymbol{r}_1;\mathbb{S})\cap \ker(\inc)$, we use the exactness of the elasticity complex \eqref{eq:elasticitycomplex} and the fact if $\defm \bs u \in \mathbb V_{k+1}(\bs r_1;\mathbb S)$, then $\bs u\in \mathbb V_{k+2}(\bs r_0;\mathbb R^3)$. 
\end{proof}

Next consider $\boldsymbol{r}_0= (3, 1, 0)^{\intercal}$. 


\begin{lemma}
Let $\boldsymbol{r}_0= (3, 1, 0)^{\intercal}$, $\boldsymbol{r}_1= (2, 0, -1)^{\intercal}$ and $\boldsymbol{r}_2= (0, -1, -1)^{\intercal}$.
The de Rham complex 
\begin{equation}\label{eq:femcomplex3dTensor1}
\mathbb R^3\xrightarrow{\subset}\widehat{\mathbb V}_{k+1}^{\div}((\boldsymbol{r}_1)_+) \xrightarrow{\grad}\widetilde{\mathbb V}_{k}^{\curl} (\boldsymbol{r}_1 \ominus 1; \mathbb M
) \xrightarrow{\curl} \mathbb V^{\div}_{k-1}(
\boldsymbol{r}_2
; \mathbb S)\cap\ker(\div) \to 0
\end{equation}
is exact, where $\widehat{\mathbb V}_{k+1}^{\div}((\boldsymbol{r}_1)_+)=\mathbb V_{k+1}^{\grad}((\boldsymbol{r}_1)_+;\mathbb R^3)\cap\ker(\div)$ and $\widetilde{\mathbb V}_{k}^{\curl} (\boldsymbol{r}_1 \ominus 1; \mathbb M
) =\{\boldsymbol{\tau}\in\mathbb V_{k}^{\curl}(\boldsymbol r_1\ominus1;\mathbb M): \tr\boldsymbol{\tau}=0, \curl\boldsymbol{\tau}\in\mathbb V^{\div}_{k-1}(
\boldsymbol{r}_2
; \mathbb S)\}$.
\end{lemma}
\begin{proof}
We introduce auxiliary smoothness vector $\hat{\bs r}_1 = (2, 1, 0)^{\intercal} \geq (\bs r_1)_+$ and $\hat{\bs r}_1 \ominus 2 = \bs r_2$. 
We have the finite element de Rham complex
and the triangular diagram
$$
\begin{tikzcd}
\mathbb V_{k+1}^{\grad} (\hat{\boldsymbol{r}}_1;\mathbb R^3)
\arrow[d,swap,"\div"] 
\arrow{r}{\grad}
&
\mathbb V_{k}^{\curl} (\hat{\boldsymbol{r}}_1-1; \mathbb M)
 \arrow[dl,swap," \tr "] 
 \arrow{r}{\curl}
 &
\mathbb V^{\div}_{k-1}(
\boldsymbol{r}_2; \mathbb M)\cap\ker(\div)
 \arrow{r}{\div}
 & 0
 \\
\mathbb V_{k}^{L^2} (\hat{\boldsymbol{r}}_1-1)
 & 
\end{tikzcd}.
$$
Notice that $(\hat{\bs r}_1, \hat{\bs r}_1 - 1)$ is div stable, and thus $\tr\mathbb V_{k}^{\curl} (\hat{\bs r}_1-1; \mathbb M)=\mathbb V_{k}^{L^2} (\hat{\bs r}_1-1)$. 
Applying one hat operation (with $W = \{0\}$) will induce the exact sequence
\begin{equation*}
\mathbb R^3\xrightarrow{\subset} \widehat{\mathbb V}_{k+1}^{\div}(\hat{\boldsymbol{r}}_1) \xrightarrow{\grad}\mathbb V_{k}^{\curl} (\hat{\boldsymbol{r}}_1-1; \mathbb M)\cap\ker(\tr) \xrightarrow{\curl} \mathbb V^{\div}_{k-1}(
\boldsymbol{r}_2; \mathbb M)\cap\ker(\div).
\end{equation*}
As $\hat{\bs r}_1\geq (\bs r_1)_+$, $\hat{\bs r}_1 - 1 \geq \bs r_1\ominus 1$, and $(\bs r_1)_+ - 1 = \bs r_1\ominus 1$, applying one inverse hat operation, we get the following exact sequence
\begin{equation*}
\mathbb R^3\xrightarrow{\subset} \widehat{\mathbb V}_{k+1}^{\div}((\boldsymbol{r}_1)_+) \xrightarrow{\grad}\mathbb V_{k}^{\curl} (\boldsymbol{r}_1 \ominus 1; \mathbb M)\cap\ker(\tr) \xrightarrow{\curl} \mathbb V^{\div}_{k-1}(
\boldsymbol{r}_2; \mathbb M)\cap\ker(\div).
\end{equation*}
Apply one more tilde operation $\widetilde{\quad}$ to change $\mathbb M$ to $\mathbb S$ to get complex \eqref{eq:femcomplex3dTensor1}.
\end{proof}



\begin{lemma}
Let $\boldsymbol{r}_0= (3, 1, 0)^{\intercal}$, $\boldsymbol{r}_1= (2, 0, -1)^{\intercal}$ and $\boldsymbol{r}_2= (0, -1, -1)^{\intercal}$.
The de Rham complex 
\begin{equation}\label{eq:femcomplex3dTensor0310}
\mathbb R^3\xrightarrow{\subset}\mathbb V_{k+2}^{\curl} (
\boldsymbol{r}_0, (\boldsymbol{r}_1)_+
) \xrightarrow{\grad}\widehat{\widetilde{\mathbb V}}^{\curl}_{k+1} (\boldsymbol{r}_1; \mathbb M
) \xrightarrow{\curl} \widetilde{\mathbb V}_{k}^{\div}(\boldsymbol r_1\ominus1;\mathbb M) \xrightarrow{\div}  \boldsymbol{0}
\end{equation}
is exact, 
where $\widetilde{\mathbb V}_{k}^{\div}(\boldsymbol r_1\ominus1;\mathbb M)=S(\widetilde{\mathbb V}_{k}^{\curl}(\boldsymbol r_1\ominus1;\mathbb M))$, and
\begin{align*}
\widehat{\widetilde{\mathbb V}}_{k+1}^{\curl}(\boldsymbol r_1;\mathbb M)&:=\{\boldsymbol{\tau}\in\mathbb V_{k+1}^{\curl}(\boldsymbol r_1;\mathbb M):\vskw\boldsymbol{\tau}\in\widehat{\mathbb V}_{k+1}^{\div}((\boldsymbol{r}_1)_+), \\
&\qquad\qquad\qquad\qquad\qquad\;\;\;\curl\boldsymbol{\tau}\in\widetilde{\mathbb V}_{k}^{\div}(\boldsymbol r_1\ominus1;\mathbb M)\}.
\end{align*}
\end{lemma}
\begin{proof}
Consider the diagram
$$
\begin{tikzcd}
\mathbb V_{k+2}^{\curl} (\boldsymbol r_0)
\arrow[d,swap,"\curl"] 
\arrow{r}{\grad}
&
\widetilde{\mathbb V}_{k+1}^{\curl} (
\boldsymbol r_1; \mathbb M
)
 \arrow[dl,swap," 2\vskw "] 
 \\
\mathbb V_{k+1}^{\div}(\boldsymbol{r}_1)\cap\ker(\div)
 & 
\end{tikzcd},
\begin{tikzcd}
\mathbb V_{k+2}^{\curl} (
\boldsymbol r_0, (\boldsymbol r_1)_+
)
\arrow[d,swap,"\curl"] 
\arrow{r}{\grad}
&
\widehat{\widetilde{\mathbb V}}_{k+1}^{\curl} (
\boldsymbol r_1; \mathbb M
)
 \arrow[dl,swap," 2\vskw "] 
 \\
\mathbb V_{k+1}^{\div}((\boldsymbol{r}_1)_+)\cap\ker(\div)
 & 
\end{tikzcd}.
$$  
As $\bs r_0+1 = (4, 2, 1)^{\intercal}$ and $(\bs r_1, \bs r_1\ominus 1)$ is div stable, we have the finite element de Rham complex \eqref{eq:femderhamcomplex3dgeneral} with valid de Rham smoothness vectors $(\bs r_0+1, \bs r_0, \bs r_1, \bs r_1\ominus 1)$, and consequently
$\curl\mathbb V_{k+2}^{\curl} (\boldsymbol r_0)=\mathbb V_{k+1}^{\div}(\boldsymbol{r}_1)\cap\ker(\div)$.

Then applying one tilde operation $\widetilde{\quad}$, it follows from $\mathbb V_{k+1}^{\div}((\boldsymbol{r}_1)_+)\cap\ker(\div)\subseteq \mathbb V_{k+1}^{\div}(\boldsymbol{r}_1)\cap\ker(\div)$ that $\curl\mathbb V_{k+2}^{\curl}(\boldsymbol r_0, (\boldsymbol r_1)_+)=\mathbb V_{k+1}^{\div}((\boldsymbol{r}_1)_+)\cap\ker(\div)$.
As the curl operators in the diagram are surjective 
we can apply one hat operation $\widehat{\quad}$, cf. 
Lemma~\ref{lm:hat} to acquire complex \eqref{eq:femcomplex3dTensor0310}.

We use $\bs r_0 = (3, 1, 0)^{\intercal}$ to ensure $(\bs r_0+1, \bs r_0, \bs r_1, \bs r_1\ominus 1)$ is a de Rham parameter sequence, which is not true for $\bs r_0 = (2, 1, 0)^{\intercal}$ as $\bs r_0 + 1 = (3, 2, 1)^{\intercal}$ is not a valid smoothness vector.
\end{proof}

We further give characterization of the space $\widehat{\widetilde{\mathbb V}}_{k+1}^{\curl} (\boldsymbol{r}_1; \mathbb M)$.
\begin{lemma}
Let $(\boldsymbol r_0, \boldsymbol r_1, \boldsymbol r_2, \boldsymbol r_3)$ be given by~\eqref{eq:relasticity} and $\boldsymbol{r}_0= (3, 1, 0)^{\intercal}$.
We have
\begin{equation}\label{eq:VcurlMtildeDecomp}
\widehat{\widetilde{\mathbb V}}_{k+1}^{\curl} (\boldsymbol{r}_1; \mathbb M)=\mathbb V_{k+1}^{\inc^+}(\boldsymbol{r}_1;\mathbb{S}) \oplus\mskw\widehat{\mathbb V}_{k+1}^{\div}((\boldsymbol{r}_1)_+),
\end{equation}
where $\mathbb V_{k+1}^{\inc^+}(\boldsymbol{r}_1;\mathbb{S}):=\{\boldsymbol{\tau}\in \mathbb V_{k+1}^{\curl} (\boldsymbol{r}_1; \mathbb M): \boldsymbol \tau \in \mathbb S, \curl\boldsymbol{\tau}\in\widetilde{\mathbb V}_{k}^{\div}(\boldsymbol r_1\ominus1;\mathbb M)\}$.
\end{lemma}
\begin{proof}
For $\boldsymbol{\tau}=\mskw\boldsymbol{v}\in\mskw\widehat{\mathbb V}_{k+1}^{\div}((\boldsymbol{r}_1)_+)$, it follows
$$
\curl\boldsymbol{\tau}=\curl\mskw\boldsymbol{v}=-S(\grad\boldsymbol{v})\in\widetilde{\mathbb V}_{k}^{\div}(\boldsymbol r_1\ominus1;\mathbb M),
$$
and 
$\vskw \boldsymbol \tau = \boldsymbol v\in \widehat{\mathbb V}_{k+1}^{\div}((\boldsymbol{r}_1)_+).$ 
We thus have proved 
$$
\mskw\widehat{\mathbb V}_{k+1}^{\div}((\boldsymbol{r}_1)_+)\subseteq\widehat{\widetilde{\mathbb V}}_{k+1}^{\curl} (\boldsymbol{r}_1; \mathbb M).
$$
Then decomposition~\eqref{eq:VcurlMtildeDecomp} holds from $$\boldsymbol{\tau}=\sym\boldsymbol{\tau}+\skw\boldsymbol{\tau} = \sym \boldsymbol \tau + \mskw (\vskw \boldsymbol \tau).$$
\end{proof}

\begin{theorem}
Let $(\boldsymbol r_0, \boldsymbol r_1, \boldsymbol r_2, \boldsymbol r_3)$ be given by~\eqref{eq:relasticity} and $\boldsymbol{r}_0= (3, 1, 0)^{\intercal}$. Let $k+2\geq 2r_0^{\texttt{v}}+1$. We have the BGG diagram
\begin{equation*}
\adjustbox{scale=0.96,center}{
\begin{tikzcd}
\mathbb V_{k+2}^{\curl} (\boldsymbol{r}_0, (\boldsymbol{r}_1)_+)
\arrow{r}{\grad}
&
\widehat{\widetilde{\mathbb V}}_{k+1}^{\curl} (\boldsymbol{r}_1; \mathbb M
)
 \arrow{r}{\curl}
 &
\widetilde{\mathbb V}^{\div}_k(
\boldsymbol{r}_1\ominus1; \mathbb M
)
 \arrow{r}{\div}
 & 0
 \\
\widehat{\mathbb V}_{k+1}^{\div}((\boldsymbol{r}_1)_+)
 \arrow[ur,swap,"\mskw"'] \arrow{r}{\grad}
 & 
\widetilde{\mathbb V}_{k}^{\curl}(
\boldsymbol{r}_1\ominus1
;\mathbb M) 
 \arrow[ur,swap,"S"'] \arrow{r}{\curl}
 & 
\mathbb V^{\div}_{k-1}(
\boldsymbol{r}_2
; \mathbb S) 
 \arrow[ur,swap,"-2\vskw"'] \arrow{r}{\div} \arrow[r] 
 & \mathbb V^{L^2}_{k-2}(
\boldsymbol{r}_3
; \mathbb R^3), 
\end{tikzcd}
}
\end{equation*}
which leads to the finite element elasticity complex
\begin{align*}
{\rm RM}\xrightarrow{\subset}\mathbb V_{k+2}^{\curl} (\boldsymbol{r}_0, (\boldsymbol{r}_1)_+) &\xrightarrow{\defm} \mathbb V_{k+1}^{\inc^+}(\boldsymbol{r}_1;\mathbb{S}) \\
&\xrightarrow{\inc}  \mathbb V_{k-1}^{\div}(\boldsymbol{r}_2;\mathbb{S}) \xrightarrow{\div} \mathbb V_{k-2}^{L^2} (\boldsymbol{r}_3; \mathbb R^3)\rightarrow \boldsymbol{0}. \notag
\end{align*}
Consequently, by applying one inverse hat operation, we have
 \begin{equation}
\label{eq:femelasticitycomplex3d310}
{\rm RM}\xrightarrow{\subset}\mathbb V_{k+2}^{\grad} (\boldsymbol{r}_0;\mathbb R^3) \xrightarrow{\defm} \mathbb V_{k+1}^{\inc}(\boldsymbol{r}_1;\mathbb{S}) \xrightarrow{\inc}  \mathbb V_{k-1}^{\div}(\boldsymbol{r}_2;\mathbb{S}) \xrightarrow{\div} \mathbb V_{k-2}^{L^2} (\boldsymbol{r}_3; \mathbb R^3)\rightarrow \boldsymbol{0}.
\end{equation}
\end{theorem}
\begin{proof}
The bijectiveness of the mapping $S:\widetilde{\mathbb V}_{k}^{\curl}(\boldsymbol r_1\ominus1;\mathbb M)\to\widetilde{\mathbb V}_{k}^{\div}(\boldsymbol r_1\ominus1;\mathbb M)$ is a direct outcome of its definition. With complex~\eqref{eq:femcomplex3dTensor0310}, and the decomposition~\eqref{eq:VcurlMtildeDecomp} at our disposal, we arrive at our desired conclusion by applying the BGG framework.
\end{proof}

After we obtain the complex \eqref{eq:femelasticitycomplex3d310} for the case $\boldsymbol{r}_0= (3, 1, 0)^{\intercal}$, we can extend to the case $\boldsymbol{r}_0= (2, 1, 0)^{\intercal}$ by applying one inverse hat operation, as they share the same $\bs r_2 = (0, -1, -1)^{\intercal}$.

Combining all the cases, we conclude the construction of finite element elasticity complexes.

\begin{theorem}\label{cor:femelasticityinc421}
Let $$
\boldsymbol{r}_0 \geq (2, 1, 0)^{\intercal}, 
\boldsymbol r_1 =\boldsymbol{r}_0-1, 
\;
 \boldsymbol{r}_2=\max\{\boldsymbol{r}_1\ominus2, (0, -1, -1)^{\intercal}\},
\;
 \boldsymbol{r}_3=\boldsymbol{r}_2\ominus1.$$ 
Assume $(\boldsymbol r_1 \ominus 1, \boldsymbol r_2, k)$ is div stable, and $(\boldsymbol r_2, \boldsymbol r_3, k-1)$ is $(\div;\mathbb S)$ stable. Let $k+2\geq 2r_0^{\texttt{v}}+1$.  We have the finite element elasticity complex
 \begin{equation}
\label{eq:femelasticitycomplex3d}
{\rm RM}\xrightarrow{\subset}\mathbb V_{k+2}^{\grad} (\boldsymbol{r}_0;\mathbb R^3) \xrightarrow{\defm} \mathbb V_{k+1}^{\inc}(\boldsymbol{r}_1;\mathbb{S}) \xrightarrow{\inc}  \mathbb V_{k-1}^{\div}(\boldsymbol{r}_2;\mathbb{S}) \xrightarrow{\div} \mathbb V_{k-2}^{L^2} (\boldsymbol{r}_3; \mathbb R^3)\rightarrow \boldsymbol{0}.
\end{equation}
\end{theorem}

By the exactness of complex~\eqref{eq:femelasticitycomplex3d}, 
we have the dimension identity
\begin{align}\label{eq:incSfemdim}  
  - 6 + \dim \mathbb V_{k+2}^{\grad} (\boldsymbol{r}_0; \mathbb R^3) - & \dim\mathbb V_{k+1}^{\inc}(\boldsymbol{r}_1;\mathbb{S}) \\
+& \dim \mathbb V_{k-1}^{\div}(\boldsymbol{r}_2;\mathbb{S}) -  \dim \mathbb V_{k-2}^{L^2} (\boldsymbol{r}_3; \mathbb R^3) = 0. \notag
\end{align}
So far we have finite element descriptions for spaces $\mathbb V_{k+2}^{\grad} (\boldsymbol{r}_0; \mathbb R^3), \mathbb V_{k-1}^{\div}(\boldsymbol{r}_2;\mathbb{S}), $ and $\mathbb V_{k-2}^{L^2} (\boldsymbol{r}_3; \mathbb R^3)$ but not for $\mathbb V_{k+1}^{\inc}(\boldsymbol{r}_1;\mathbb{S})$ which will be given in Section \ref{sec:edgeelements}. 

\begin{example}\rm 
Taking $\boldsymbol{r}_0=(3,1,0)^{\intercal}$, $\boldsymbol{r}_1=\boldsymbol{r}_0-1$, $\boldsymbol{r}_2=(0,-1,-1)^{\intercal}$, $\boldsymbol{r}_3=\boldsymbol{r}_2\ominus1$, we have 
\begin{align*}
{\rm RM}\xrightarrow{\subset} 
\mathbb V_{k+2}^{\curl}
(\begin{pmatrix}
3\\
1 \\
0
\end{pmatrix}, 
\begin{pmatrix}
2\\
0 \\
0
\end{pmatrix}
)
& \xrightarrow{\defm} 
\mathbb V_{k+1}^{\inc^+}(
\begin{pmatrix}
2\\
0 \\
-1
\end{pmatrix};\mathbb S)\\
& \xrightarrow{\inc}
\mathbb V^{\div}_{k-1}(
\begin{pmatrix}
0\\
-1 \\
-1
\end{pmatrix}; \mathbb S) 
\xrightarrow{\div} 
\mathbb V_{k-2}^{L^2}(
\begin{pmatrix}
-1\\
-1 \\
-1
\end{pmatrix}; \mathbb R^3)\rightarrow \boldsymbol{0},
\end{align*}
which is the finite element discretization of the elasticity complex~\eqref{eq:elasticitycomplex1inc}.
\qed
\end{example}

\begin{example}\rm 
Consider the choice $\boldsymbol{r}_0=(2,1,0)^{\intercal}$, $\boldsymbol{r}_1=\boldsymbol{r}_0-1$, $\boldsymbol{r}_2=(0,-1,-1)^{\intercal}$, $\boldsymbol{r}_3=\boldsymbol{r}_2\ominus1$ and $k+1 \geq 6$. From the finite element elasticity complex~\eqref{eq:femelasticitycomplex3d} we get a finite element discretization of the elasticity complex \eqref{eq:elasticitycomplex}
\begin{equation*}
{\rm RM}\xrightarrow{\subset} \begin{pmatrix}
2\\
1 \\
0
\end{pmatrix} \xrightarrow{\defm} \begin{pmatrix}
1\\
0 \\
-1
\end{pmatrix} \xrightarrow{\inc}\begin{pmatrix}
0\\
-1 \\
-1
\end{pmatrix} \xrightarrow{\div} \begin{pmatrix}
-1\\
-1 \\
-1
\end{pmatrix}\rightarrow \boldsymbol{0}.
\end{equation*} 
This sequence presents a variation of the finite element elasticity complex in the work by Chen and Huang~\cite{Chen;Huang:2021Finite}, where the Hu-Zhang $H(\div;\mathbb S)$-conforming element is used. 
\qed
\end{example}

The bubble elasticity complex is shown in Proposition~\ref{prp:bubbleelascomplex}, whose proof is given in Section~\ref{sec:bubbleelascomplex}.
\begin{proposition}\label{prp:bubbleelascomplex}
Let 
\begin{equation*}
\boldsymbol{r}_0 \geq (2, 1, 0)^{\intercal},
\; 
\boldsymbol r_1 =\boldsymbol{r}_0-1, 
\;
 \boldsymbol{r}_2=\max\{\boldsymbol{r}_1\ominus2, (0, -1, -1)^{\intercal}\},
\;
 \boldsymbol{r}_3=\boldsymbol{r}_2\ominus1.
\end{equation*}
Assume $\boldsymbol r_2$ satisfies the condition in Lemma \ref{lem:divbubbleontoST}, and $k\geq\max\{2r_2^{\texttt{v}}+2,3\}$. 
Then the bubble elasticity complex 
\begin{equation*}
\resizebox{.925\hsize}{!}{$
0\xrightarrow{\subset} \mathbb B_{k+2}(\boldsymbol{r}_0;\mathbb R^3)\xrightarrow{\defm}\mathbb B^{\inc}_{k+1}(\boldsymbol{r}_1;\mathbb S)\xrightarrow{\inc} \mathbb B^{\div}_{k-1}(\boldsymbol{r}_2;\mathbb S) \xrightarrow{\div} \mathbb B_{k-2}(\boldsymbol{r}_3;\mathbb R^3)/{\rm RM}\xrightarrow{}0
$}
\end{equation*}
is exact.
\end{proposition}

\subsection{Finite element divdiv complexes}\label{sec:femdivdiv}
Let 
\begin{equation}\label{eq:divdivrsequence}
\boldsymbol{r}_0\geq (1, 0, 0)^{\intercal}, \quad 
\boldsymbol{r}_1=\boldsymbol{r}_0-1,\quad 
\boldsymbol{r}_2= \max\{\boldsymbol{r}_1\ominus1, (0, -1, -1)^{\intercal}\},\quad 
\boldsymbol{r}_3=\boldsymbol{r}_2\ominus2.
\end{equation}
Assume both $(\boldsymbol r_2, \boldsymbol r_2\ominus 1, k)$ and $(\boldsymbol r_2\ominus 1, \boldsymbol r_3, k-1)$ are div stable. Consequently both $(\boldsymbol r_0, \boldsymbol r_1, \boldsymbol r_2, \boldsymbol r_2\ominus 1)$ and $((\boldsymbol r_1)_+, \boldsymbol r_1\ominus 1, \boldsymbol r_2\ominus 1, \boldsymbol r_3)$ 
are valid de Rham smoothness sequences. 

\begin{lemma}
Let $(\boldsymbol r_0, \boldsymbol r_1, \boldsymbol r_2, \boldsymbol r_3)$ be given by~\eqref{eq:divdivrsequence} and $\boldsymbol{r}_0\geq (2, 1, 0)^{\intercal}$. 
Assume  $(\boldsymbol r_2, \boldsymbol r_2\ominus 1, k)$ is  div stable. 
The complex 
\begin{equation}\label{eq:femcomplex3dTensor2}
\mathbb R^3\xrightarrow{\subset}\mathbb V_{k+2}^{\div} (\boldsymbol{r}_0, (\boldsymbol r_1)_+) \xrightarrow{\grad}\widehat{\widetilde{\mathbb V}}_{k+1}^{\curl} (\boldsymbol r_1;\mathbb M) \xrightarrow{\curl} \widetilde{\mathbb V}_{k}^{\div}(\boldsymbol r_2;\mathbb M) \xrightarrow{\div}  \mathbb V^{\div}_{k-1}(\boldsymbol{r}_2\ominus1)\to\boldsymbol{0}
\end{equation}
is exact, where
\begin{align*}
\widetilde{\mathbb V}_{k}^{\div}(\boldsymbol r_2;\mathbb M)&:=\mathbb V^{\div\div^+}_{k}(\boldsymbol{r}_2;\mathbb S)\oplus\mskw\mathbb V^{\curl}_{k}(\boldsymbol{r}_1\ominus1), \\
\widetilde{\mathbb{V}}_{k+1}^{\curl}(\boldsymbol r_1;\mathbb M)&:=\{\boldsymbol{\tau}\in\mathbb V_{k+1}^{\curl}(\boldsymbol r_1;\mathbb M): \curl\boldsymbol{\tau}\in\widetilde{\mathbb V}_{k}^{\div}(\boldsymbol r_2;\mathbb M)\},\\
\widehat{\widetilde{\mathbb{V}}}_{k+1}^{\curl}(\boldsymbol r_1;\mathbb M)&:=\{\boldsymbol{\tau}\in\widetilde{\mathbb V}_{k+1}^{\curl}(\boldsymbol r_1;\mathbb M): \tr\boldsymbol{\tau}\in\mathbb V_{k+1}^{\grad}((\boldsymbol r_1)_+)\}.
\end{align*}
\end{lemma}
\begin{proof}
By definition, we have the de Rham complex
\begin{equation}\label{eq:femcomplex3dTensor3}
\mathbb R^3\xrightarrow{\subset}\mathbb V_{k+2}^{\grad} (\boldsymbol{r}_0; \mathbb R^3) \xrightarrow{\grad}\widetilde{\mathbb V}_{k+1}^{\curl} (\boldsymbol r_1;\mathbb M) \xrightarrow{\curl} \widetilde{\mathbb V}_{k}^{\div}(\boldsymbol r_2;\mathbb M) \xrightarrow{\div}  \mathbb V^{\div}_{k-1}(\boldsymbol{r}_2\ominus1)\to\boldsymbol{0}.
\end{equation}

When $r_1^f\geq 0$, $(\boldsymbol r_1)_+ = \boldsymbol r_1 = \bs r_0 - 1$, and complex \eqref{eq:femcomplex3dTensor2} is exactly complex \eqref{eq:femcomplex3dTensor3}.

Then consider case $r_1^f=-1$ and thus $r_0^f = 0$. We have the diagram
$$
\begin{tikzcd}
\mathbb V_{k+2}^{\div} (\boldsymbol r_0)
\arrow[d,swap,"\div"] 
\arrow{r}{\grad}
&
\widetilde{\mathbb V}_{k+1}^{\curl} (\boldsymbol r_1; \mathbb M)
 \arrow[dl,swap," \tr "] 
 \\
\mathbb V_{k+1}^{L^2}(\boldsymbol r_1)
 & 
\end{tikzcd},
\begin{tikzcd}
\mathbb V_{k+2}^{\div} (\boldsymbol r_0, (\boldsymbol r_1)_+)
\arrow[d,swap,"\div"] 
\arrow{r}{\grad}
&
\widehat{\widetilde{\mathbb V}}_{k+1}^{\curl} (\boldsymbol r_1; \mathbb M)
 \arrow[dl,swap," \tr "] 
 \\
\mathbb V_{k+1}^{\grad}((\boldsymbol r_1)_+)
 & 
\end{tikzcd}.
$$

Both div operators are surjective. Then apply Lemma \ref{lm:hat} to conclude the sequence~\eqref{eq:femcomplex3dTensor2} is exact. 
We do need the div stability $\div \mathbb V_{k+2}^{\div} (\boldsymbol r_0)
 = \mathbb V_{k+1}^{L^2}(\boldsymbol r_1)$. Otherwise the range of div operator is only a sub-space, i.e., $\div \mathbb V_{k+2}^{\div} (\boldsymbol r_0)
\subset \mathbb V_{k+1}^{L^2}(\boldsymbol r_1)$ and how the dimension change in the hat operation is unclear.
\end{proof}

We construct a finite element divdiv complex by the BGG procedure.
\begin{theorem}
Let $(\boldsymbol r_0, \boldsymbol r_1, \boldsymbol r_2, \boldsymbol r_3)$ be given by~\eqref{eq:divdivrsequence} and $\boldsymbol{r}_0\geq (2, 1, 0)^{\intercal}$. 
Assume  $(\boldsymbol r_2, \boldsymbol r_2\ominus 1, k)$ and $(\boldsymbol r_2\ominus 1, \boldsymbol r_3, k-1)$ are div stable. Then we have the BGG diagram
\begin{equation*}
\adjustbox{scale=0.96,center}{
\begin{tikzcd}
\mathbb V_{k+2}^{\div} (\boldsymbol{r}_0, (\boldsymbol r_1)_+)
\arrow{r}{\grad}
& \widehat{\widetilde{\mathbb V}}_{k+1}^{\curl} (\boldsymbol{r}_1; \mathbb M)
 \arrow{r}{\curl}
 &
\widetilde{\mathbb V}^{\div}_k(\boldsymbol{r}_2; \mathbb M)
 \arrow{r}{\div}
 & \mathbb V^{\div}_{k-1}(\boldsymbol{r}_2\ominus1)\to \boldsymbol{0}
 \\
\mathbb V_{k+1}^{\grad}((\boldsymbol r_1)_+)
 \arrow[ur,swap,"\iota"'] \arrow{r}{\grad}
 & 
\mathbb V_{k}^{\curl}(\boldsymbol{r}_1\ominus1) 
 \arrow[ur,swap,"\mskw"'] \arrow{r}{\curl}
 & 
\mathbb V^{\div}_{k-1}(\boldsymbol{r}_2\ominus1) 
 \arrow[ur,swap,"\mathrm{id}"'] \arrow{r}{\div} \arrow[r] 
 & \mathbb V^{L^2}_{k-2}(\boldsymbol{r}_3) 
\to 0,
\end{tikzcd}
}
\end{equation*}
which leads to the exact finite element divdiv complex
\begin{align}\label{eq:femdivdivcomplex3d+}
{\rm RT}\xrightarrow{\subset}\mathbb V_{k+2}^{\div} (\boldsymbol{r}_0, (\boldsymbol r_1)_+) &\xrightarrow{\dev\grad} \mathbb V_{k+1}^{\sym\curl_+^+}(\boldsymbol{r}_1;\mathbb{T}) \\
&\xrightarrow{\sym\curl}  \mathbb V_{k}^{\div\div^+}(\boldsymbol{r}_2;\mathbb{S}) \xrightarrow{\div\div} \mathbb V_{k-2}^{L^2} (\boldsymbol{r}_3)\rightarrow 0, \notag
\end{align}
where 
\begin{align*}
\mathbb V_{k+1}^{\sym\curl_+^+}(\boldsymbol{r}_1;\mathbb{T})&:=\{\boldsymbol{\tau}\in\mathbb V_{k+1}^{\curl}(\boldsymbol r_1;\mathbb M): \tr\boldsymbol{\tau}=0, \curl\boldsymbol{\tau}\in\widetilde{\mathbb V}_{k}^{\div}(\boldsymbol r_2;\mathbb M)\}.
\end{align*}
\end{theorem}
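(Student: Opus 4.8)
The plan is to recognize the displayed two‑row diagram as a finite element instance of the bottom two rows of the continuous BGG diagram~\eqref{eq:3DBGG} and to invoke the abstract BGG construction recalled in Section~\ref{sec:preliminary}; the analytic core has already been established in the preceding sections, so the proof is chiefly a matter of checking the hypotheses and identifying the output spaces. First I would observe that both rows are exact complexes. The bottom row $\mathbb V_{k+1}^{\grad}((\boldsymbol r_1)_+)\xrightarrow{\grad}\mathbb V_{k}^{\curl}(\boldsymbol r_1\ominus1)\xrightarrow{\curl}\mathbb V^{\div}_{k-1}(\boldsymbol r_2\ominus1)\xrightarrow{\div}\mathbb V^{L^2}_{k-2}(\boldsymbol r_3)\to 0$ is the finite element de Rham complex attached to the parameter sequence $((\boldsymbol r_1)_+,\boldsymbol r_1\ominus1,\boldsymbol r_2\ominus1,\boldsymbol r_3)$, valid under the present hypotheses (as noted right after~\eqref{eq:divdivrsequence}), so its exactness is Theorem~\ref{thm:femderhamcomplex3dgeneral}. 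The top row is exactly the complex~\eqref{eq:femcomplex3dTensor2}, whose exactness was just established.

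Next I would check that the three vertical maps $\iota$, $\mskw$, $\mathrm{id}$ make the diagram a legitimate BGG diagram. Well‑definedness is forced by the definitions of the reduced spaces: for $v\in\mathbb V_{k+1}^{\grad}((\boldsymbol r_1)_+)$ one has $\tr(v\boldsymbol I)=3v\in\mathbb V_{k+1}^{\grad}((\boldsymbol r_1)_+)$ and $\curl(v\boldsymbol I)=-\mskw(\grad v)\in\mskw\mathbb V_k^{\curl}(\boldsymbol r_1\ominus1)\subseteq\widetilde{\mathbb V}_k^{\div}(\boldsymbol r_2;\mathbb M)$ (using $\grad v\in\mathbb V_k^{\curl}(\boldsymbol r_1\ominus1)$ from the bottom row), so $\iota$ maps $\mathbb V_{k+1}^{\grad}((\boldsymbol r_1)_+)$ into $\widehat{\widetilde{\mathbb V}}_{k+1}^{\curl}(\boldsymbol r_1;\mathbb M)$; $\mskw$ maps $\mathbb V_k^{\curl}(\boldsymbol r_1\ominus1)$ into $\widetilde{\mathbb V}_k^{\div}(\boldsymbol r_2;\mathbb M)$ by the very definition of the latter; and the rightmost vertical arrow is the identity of $\mathbb V^{\div}_{k-1}(\boldsymbol r_2\ominus1)$. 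Anti‑commutativity reduces to the algebraic identities $\curl(v\boldsymbol I)=-\mskw(\grad v)$ and $\div(\mskw\boldsymbol u)=-\curl\boldsymbol u$ already underlying~\eqref{eq:3DBGG}, which hold verbatim on piecewise polynomials. Finally, on the coefficient level $\iota\colon\mathbb R\to\mathbb M$ and $\mskw\colon\mathbb R^3\to\mathbb M$ are injective while $\mathrm{id}\colon\mathbb R^3\to\mathbb R^3$ is bijective, so the $J$‑injectivity/surjectivity hypothesis holds with $J=2$ and the construction of Section~\ref{sec:preliminary} applies as in the elasticity case.

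The construction then produces an exact sequence $0\to\varUpsilon_0\xrightarrow{\mathcal D_0}\varUpsilon_1\xrightarrow{\mathcal D_1}\varUpsilon_2\xrightarrow{\mathcal D_2}\varUpsilon_3\to 0$, and it remains to identify the terms. Directly $\varUpsilon_0=\mathbb V_{k+2}^{\div}(\boldsymbol r_0,(\boldsymbol r_1)_+)$ and $\varUpsilon_3=\mathbb V^{L^2}_{k-2}(\boldsymbol r_3)$. Because $\widetilde{\mathbb V}_k^{\div}(\boldsymbol r_2;\mathbb M)=\mathbb V_k^{\div\div^+}(\boldsymbol r_2;\mathbb S)\oplus\mskw\mathbb V_k^{\curl}(\boldsymbol r_1\ominus1)$ is the defining decomposition, the quotient of $\widetilde{\mathbb V}_k^{\div}(\boldsymbol r_2;\mathbb M)$ by $\img(\mskw)$ is identified with $\mathbb V_k^{\div\div^+}(\boldsymbol r_2;\mathbb S)$ and $\mathcal D_1$ with the symmetric part of $\curl$, namely $\sym\curl$. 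For $\varUpsilon_1$ I would note that $\mathbb V_{k+1}^{\sym\curl_+^+}(\boldsymbol r_1;\mathbb T)$ equals $\{\boldsymbol\tau\in\widehat{\widetilde{\mathbb V}}_{k+1}^{\curl}(\boldsymbol r_1;\mathbb M):\tr\boldsymbol\tau=0\}$ (the constraint $\tr\boldsymbol\tau\in\mathbb V_{k+1}^{\grad}((\boldsymbol r_1)_+)$ being automatic when $\tr\boldsymbol\tau=0$), so together with the well‑definedness of $\iota$ this yields the splitting
\[
\widehat{\widetilde{\mathbb V}}_{k+1}^{\curl}(\boldsymbol r_1;\mathbb M)=\iota\,\mathbb V_{k+1}^{\grad}((\boldsymbol r_1)_+)\oplus\mathbb V_{k+1}^{\sym\curl_+^+}(\boldsymbol r_1;\mathbb T),\qquad\boldsymbol\tau=\iota(\tfrac13\tr\boldsymbol\tau)+\dev\boldsymbol\tau,
\]
the analogue of~\eqref{eq:VcurlMtildeDecomp}; hence $\varUpsilon_1\cong\mathbb V_{k+1}^{\sym\curl_+^+}(\boldsymbol r_1;\mathbb T)$, $\mathcal D_0$ becomes $\dev\grad$, and $\mathcal D_2=\tilde d_2(S_2)^{-1}d_2=\div\div$ since $S_2=\mathrm{id}$ and $d_2,\tilde d_2$ are both $\div$ (the first $\div$ landing in $\mathbb V^{\div}_{k-1}(\boldsymbol r_2\ominus1)$ precisely by the $\div\div^+$ constraint, cf.~Lemma~\ref{lem:divdivSr2onto}). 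Finally $\ker(\dev\grad)$ inside $\mathbb V_{k+2}^{\div}(\boldsymbol r_0,(\boldsymbol r_1)_+)$ is $\boldsymbol{RT}$: if $\dev\grad\boldsymbol v=\boldsymbol0$ then $\grad\boldsymbol v=\phi\boldsymbol I$, whence $\mskw\grad\phi=-\curl(\phi\boldsymbol I)=-\curl\grad\boldsymbol v=\boldsymbol0$, so $\phi$ is constant and $\boldsymbol v\in\boldsymbol{RT}$, while $\boldsymbol{RT}$ plainly lies in the space and is killed by $\dev\grad$. Assembling these identifications turns the exact sequence into~\eqref{eq:femdivdivcomplex3d+}.

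The main obstacle is not a single hard estimate---the relevant exactness of the two rows rests on the div and divdiv stability theorems already proven---but the bookkeeping that makes the abstract BGG hypotheses literally hold for the constrained finite element spaces: one must see that the linking maps $\iota$ and $\mskw$ land inside $\widehat{\widetilde{\mathbb V}}_{k+1}^{\curl}(\boldsymbol r_1;\mathbb M)$ and $\widetilde{\mathbb V}_k^{\div}(\boldsymbol r_2;\mathbb M)$, which is exactly where the compatibility of the $\widehat{\quad}$ and $\widetilde{\quad}$ reductions with the trace/deviator and symmetric/skew splittings of $\mathbb M$ is used, and must recognize the abstract quotients $\varUpsilon_1,\varUpsilon_2$ as the concrete spaces $\mathbb V_{k+1}^{\sym\curl_+^+}(\boldsymbol r_1;\mathbb T)$ and $\mathbb V_k^{\div\div^+}(\boldsymbol r_2;\mathbb S)$ via the direct‑sum decompositions above. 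Once these are in place, the exactness of~\eqref{eq:femdivdivcomplex3d+} is a formal consequence of the abstract BGG lemma.
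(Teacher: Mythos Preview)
Your proposal is correct and follows essentially the same approach as the paper: verify the linking maps are well defined and injective, establish the two direct-sum decompositions $\widehat{\widetilde{\mathbb V}}_{k+1}^{\curl}(\boldsymbol r_1;\mathbb M)=\iota\,\mathbb V_{k+1}^{\grad}((\boldsymbol r_1)_+)\oplus\mathbb V_{k+1}^{\sym\curl_+^+}(\boldsymbol r_1;\mathbb T)$ and $\widetilde{\mathbb V}_k^{\div}(\boldsymbol r_2;\mathbb M)=\mathbb V_k^{\div\div^+}(\boldsymbol r_2;\mathbb S)\oplus\mskw\mathbb V_k^{\curl}(\boldsymbol r_1\ominus1)$, and invoke the abstract BGG construction. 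The paper's proof is a terse three-line version of what you wrote; you have filled in the details (exactness of the two rows via Theorem~\ref{thm:femderhamcomplex3dgeneral} and~\eqref{eq:femcomplex3dTensor2}, anti-commutativity, identification of $\mathcal D_2=\div\div$, and $\ker(\dev\grad)=\boldsymbol{RT}$) that the paper leaves implicit.
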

\begin{proof}
By definition of spaces, both $\iota$ and $\mskw$ are injective. In addition, we have the decomposition
\begin{align*}
\widehat{\widetilde{\mathbb V}}_{k+1}^{\curl} (\boldsymbol{r}_1; \mathbb M) &=\mathbb V_{k+1}^{\sym\curl_+^+}(\boldsymbol{r}_1;\mathbb{T})\oplus\iota\mathbb V_{k+1}^{\grad}((\boldsymbol r_1)_+),\\
\widetilde{\mathbb V}_{k}^{\div}(\boldsymbol r_2;\mathbb M)&=\mathbb V^{\div\div^+}_{k}(\boldsymbol{r}_2;\mathbb S)\oplus\mskw\mathbb V^{\curl}_{k}(\boldsymbol{r}_1\ominus1).
\end{align*}
We conclude the result by employing the BGG framework. 
\end{proof}

Again, the complexes derived from the BGG framework typically include spaces that have a slightly higher degree of smoothness. We can relax the smoothness
and define
\begin{align*}
\mathbb V_{k+1}^{\sym\curl_+}(\boldsymbol{r}_1;\mathbb{T})&:=\{\boldsymbol{\tau}\in\mathbb V_{k+1}(\boldsymbol r_1)\otimes \mathbb T: \sym\curl\boldsymbol{\tau}\in{\mathbb V}_{k}^{\div\div^+}(\boldsymbol r_2;\mathbb S)\}.
\end{align*}
\begin{corollary}
Let $(\boldsymbol r_0, \boldsymbol r_1, \boldsymbol r_2, \boldsymbol r_3)$ be given by~\eqref{eq:divdivrsequence}. 
Assume both $(\boldsymbol r_2, \boldsymbol r_2\ominus 1, k)$ and $(\boldsymbol r_2\ominus 1, \boldsymbol r_3, k-1)$ are div stable. We have the exact finite element divdiv complex
\begin{align}
\label{eq:femdivdivcomplex3dvariant1}
{\rm RT}\xrightarrow{\subset}\mathbb V_{k+2}^{\grad} (\boldsymbol{r}_0;\mathbb R^3) &\xrightarrow{\dev\grad} \mathbb V_{k+1}^{\sym\curl_+}(\boldsymbol{r}_1;\mathbb{T}) \\
&\xrightarrow{\sym\curl}  \mathbb V_{k}^{\div\div^+}(\boldsymbol{r}_2;\mathbb{S}) \xrightarrow{\div\div} \mathbb V_{k-2}^{L^2} (\boldsymbol{r}_3)\rightarrow 0. \notag
\end{align}
Consequently
\begin{align}
\label{eq:symcurlTfemdim}
- 4 + \dim\mathbb V_{k+2}^{\grad} (\boldsymbol{r}_0;\mathbb R^3)  & - \dim\mathbb V_{k+1}^{\sym\curl_+}(\boldsymbol{r}_1;\mathbb{T})\\
& +\dim \mathbb V_{k}^{\div\div^+}(\boldsymbol{r}_2;\mathbb{S}) -  \dim \mathbb V_{k-2}^{L^2} (\boldsymbol{r}_3) = 0.\notag
\end{align}
\end{corollary}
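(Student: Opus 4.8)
The plan is to derive \eqref{eq:femdivdivcomplex3dvariant1} from the already-established exact complex \eqref{eq:femdivdivcomplex3d+} by relaxing the extra smoothness, following verbatim the passage from \eqref{eq:femelasticitycomplex3d+} to \eqref{eq:femelasticitycomplex3d} in the proof of Corollary~\ref{cor:femelasticityinc}. First I would record that \eqref{eq:femdivdivcomplex3dvariant1} is a complex: the identities $\sym\curl\,\dev\grad=0$ and $\div\div\,\sym\curl=0$ hold for smooth tensor fields—they are the defining relations of the continuous divdiv complex \eqref{eq:divdivcomplex3d}—hence also for piecewise polynomials. Moreover $\dev\grad\boldsymbol v$ is traceless and $\sym\curl\,\dev\grad\boldsymbol v=0\in\mathbb V_k^{\div\div^+}(\boldsymbol r_2;\mathbb S)$, so $\dev\grad\,\mathbb V_{k+2}^{\grad}(\boldsymbol r_0;\mathbb R^3)\subseteq\mathbb V_{k+1}^{\sym\curl_+}(\boldsymbol r_1;\mathbb T)$, and $\mathbb V_{k+1}^{\sym\curl_+}(\boldsymbol r_1;\mathbb T)\subseteq\ker(\div\div\,\sym\curl)$.

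Next I would establish the inclusion $\mathbb V_{k+1}^{\sym\curl_+^+}(\boldsymbol r_1;\mathbb T)\subseteq\mathbb V_{k+1}^{\sym\curl_+}(\boldsymbol r_1;\mathbb T)$. If $\boldsymbol\tau$ lies in the former, then $\boldsymbol\tau\in\mathbb V_{k+1}^{\curl}(\boldsymbol r_1;\mathbb M)\subseteq\mathbb V_{k+1}(\boldsymbol r_1)\otimes\mathbb M$ with $\tr\boldsymbol\tau=0$, hence $\boldsymbol\tau\in\mathbb V_{k+1}(\boldsymbol r_1)\otimes\mathbb T$; and $\curl\boldsymbol\tau\in\widetilde{\mathbb V}_k^{\div}(\boldsymbol r_2;\mathbb M)=\mathbb V_k^{\div\div^+}(\boldsymbol r_2;\mathbb S)\oplus\mskw\mathbb V_k^{\curl}(\boldsymbol r_1\ominus1)$, so, since $\sym$ annihilates the $\mskw$ summand, $\sym\curl\boldsymbol\tau\in\mathbb V_k^{\div\div^+}(\boldsymbol r_2;\mathbb S)$, which is exactly the membership condition for $\mathbb V_{k+1}^{\sym\curl_+}(\boldsymbol r_1;\mathbb T)$. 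Combining this inclusion with the exactness of \eqref{eq:femdivdivcomplex3d+} gives exactness of \eqref{eq:femdivdivcomplex3dvariant1} at its last two positions: surjectivity of $\div\div$ onto $\mathbb V_{k-2}^{L^2}(\boldsymbol r_3)$ is inherited verbatim since $\mathbb V_k^{\div\div^+}(\boldsymbol r_2;\mathbb S)$ is unchanged, while at the middle slot the sandwich
\[
\mathbb V_k^{\div\div^+}(\boldsymbol r_2;\mathbb S)\cap\ker(\div\div)=\sym\curl\,\mathbb V_{k+1}^{\sym\curl_+^+}(\boldsymbol r_1;\mathbb T)\subseteq\sym\curl\,\mathbb V_{k+1}^{\sym\curl_+}(\boldsymbol r_1;\mathbb T)\subseteq\mathbb V_k^{\div\div^+}(\boldsymbol r_2;\mathbb S)\cap\ker(\div\div)
\]
forces equality.

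For exactness at $\mathbb V_{k+1}^{\sym\curl_+}(\boldsymbol r_1;\mathbb T)$, take $\boldsymbol\tau$ there with $\sym\curl\boldsymbol\tau=0$. The continuous divdiv complex \eqref{eq:divdivcomplex3d} yields $\boldsymbol\tau=\dev\grad\boldsymbol v$ with $\boldsymbol v\in H^1(\Omega;\mathbb R^3)$, unique modulo $\boldsymbol{RT}$; since $\boldsymbol\tau$ is a piecewise polynomial tensor of degree $k+1$ carrying the $\boldsymbol r_1$ smoothness and $\boldsymbol r_1=\boldsymbol r_0-1$, the primitive $\boldsymbol v$ is elementwise a polynomial of degree $k+2$, and a short bootstrap using $\boldsymbol v\in H^1$ (so that the tangential part of $\grad\boldsymbol v$ is single-valued across faces) together with the $\boldsymbol r_1$-smoothness of $\boldsymbol\tau$ upgrades the interface matching of $\boldsymbol v$ to $C^{r_0^f}$ and its vertex/edge DoFs to the smoothness $\boldsymbol r_0$, i.e. $\boldsymbol v\in\mathbb V_{k+2}^{\grad}(\boldsymbol r_0;\mathbb R^3)$—exactly as in the closing line of Corollary~\ref{cor:femelasticityinc}. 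Exactness at $\mathbb V_{k+2}^{\grad}(\boldsymbol r_0;\mathbb R^3)$ is then immediate, because $\ker(\dev\grad)=\boldsymbol{RT}$ on vector fields and $\boldsymbol{RT}\subset\mathbb V_{k+2}^{\grad}(\boldsymbol r_0;\mathbb R^3)$ consists of degree-one polynomials. Finally \eqref{eq:symcurlTfemdim} is the alternating dimension count along the exact complex $0\to\boldsymbol{RT}\to\mathbb V_{k+2}^{\grad}(\boldsymbol r_0;\mathbb R^3)\to\mathbb V_{k+1}^{\sym\curl_+}(\boldsymbol r_1;\mathbb T)\to\mathbb V_k^{\div\div^+}(\boldsymbol r_2;\mathbb S)\to\mathbb V_{k-2}^{L^2}(\boldsymbol r_3)\to0$, using $\dim\boldsymbol{RT}=4$.

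The step requiring genuine care is the recovery of the primitive $\boldsymbol v$: one must check that dropping the extra smoothness of $\boldsymbol\tau$ does not enlarge $\ker(\sym\curl)$ beyond $\dev\grad\,\mathbb V_{k+2}^{\grad}(\boldsymbol r_0;\mathbb R^3)$, i.e. that an $H^1$ primitive of a finite element tensor in $\mathbb V_{k+1}^{\sym\curl_+}(\boldsymbol r_1;\mathbb T)\cap\ker(\sym\curl)$ automatically has the prescribed elementwise polynomial degree and the correct $C^{r_0^f}$ continuity. This rests only on the elementary facts that $\dev\grad$ determines $\grad$ up to its trace, that the tangential part of $\grad\boldsymbol v$ is single-valued across faces for $\boldsymbol v\in H^1$, and that $\boldsymbol\tau$ already carries the smoothness $\boldsymbol r_1=\boldsymbol r_0-1$; the same bookkeeping underlies the corresponding step in Corollary~\ref{cor:femelasticityinc}.
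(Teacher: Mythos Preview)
Your proposal is correct and follows essentially the same approach as the paper, which simply says ``Similar to the proof of Corollary~\ref{cor:femelasticityinc}.'' You have spelled out in detail exactly that parallel argument: inherit exactness at the last two slots from the already-established complex~\eqref{eq:femdivdivcomplex3d+} via the inclusion $\mathbb V_{k+1}^{\sym\curl_+^+}(\boldsymbol r_1;\mathbb T)\subseteq\mathbb V_{k+1}^{\sym\curl_+}(\boldsymbol r_1;\mathbb T)$, and recover the primitive at the $\sym\curl$ slot from the continuous divdiv complex, just as Corollary~\ref{cor:femelasticityinc} does with the continuous elasticity complex.
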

\begin{proof}
We apply an inverse hat operation to obtain complex \eqref{eq:femdivdivcomplex3dvariant1} for $\bs r_0\geq (2, 1, 0)^{\intercal}$ and then apply one more inverse hat operation to relax to $\bs r_0\geq (1, 0, 0)^{\intercal}$ since they share the same $\bs r_2$.
%
\end{proof}

We can further enlarge the space $\mathbb V_{k}^{\div\div^+}(\boldsymbol{r}_2;\mathbb{S}) $ to $\mathbb V_{k}^{\div\div}(\boldsymbol{r}_2;\mathbb{S}) $
and define
$$
\mathbb V_{k+1}^{\sym\curl}(\boldsymbol{r}_1):=\{ \boldsymbol{\tau}\in\mathbb V_{k+1}(\boldsymbol r_1)\otimes \mathbb T: \sym\curl\boldsymbol{\tau}\in\mathbb V_{k}^{\div\div}(\boldsymbol{r}_2;\mathbb{S})\}
$$
to get the finite element divdiv complex
\begin{align}
\label{eq:femdivdivcomplex3d}
{\rm RT}\xrightarrow{\subset}\mathbb V_{k+2}^{\grad} (\boldsymbol{r}_0;\mathbb R^3) &\xrightarrow{\dev\grad} \mathbb V_{k+1}^{\sym\curl}(\boldsymbol{r}_1;\mathbb{T}) \\
&\xrightarrow{\sym\curl}  \mathbb V_{k}^{\div\div}(\boldsymbol{r}_2;\mathbb{S}) \xrightarrow{\div\div} \mathbb V_{k-2}^{L^2} (\boldsymbol{r}_3)\rightarrow 0. \notag
\end{align}
%
We can also define
\begin{align*}
\mathbb V_{k+1}^{\sym\curl^+}(\boldsymbol{r}_1;\mathbb{T})&:=\{\boldsymbol{\tau}\in\mathbb V_{k+1}^{\curl}(\boldsymbol r_1;\mathbb M): \tr\boldsymbol{\tau}=0, \sym\curl\boldsymbol{\tau}\in{\mathbb V}_{k}^{\div\div}(\boldsymbol r_2;\mathbb S)\},
\end{align*}
and obtain another finite element divdiv complex
\begin{align}\label{eq:femdivdivcomplex3dvariant2}
{\rm RT}\xrightarrow{\subset}\mathbb V_{k+2}^{\div} (\boldsymbol{r}_0, (\boldsymbol r_1)_+) &\xrightarrow{\dev\grad} \mathbb V_{k+1}^{\sym\curl ^+}(\boldsymbol{r}_1;\mathbb{T}) \\
&\xrightarrow{\sym\curl}  \mathbb V_{k}^{\div\div}(\boldsymbol{r}_2;\mathbb{S}) \xrightarrow{\div\div} \mathbb V_{k-2}^{L^2} (\boldsymbol{r}_3)\rightarrow 0. \notag
\end{align}
\begin{figure}[htbp]
\begin{center}
\includegraphics[width=4.35in]{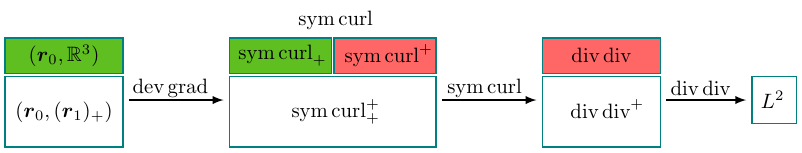}
\caption{Diverse configurations of finite element divdiv complexes. The fundamental finite element divdiv complex~\eqref{eq:femdivdivcomplex3d+}, depicted by white blocks, is formulated through the BGG framework, with additional smoothness. Then add green blocks to get ~\eqref{eq:femdivdivcomplex3dvariant1}, add red blocks to get~\eqref{eq:femdivdivcomplex3dvariant2}, and add both to get~\eqref{eq:femdivdivcomplex3d}.}
\label{fig:divdiv}
\end{center}
\end{figure}
Those variants are summarized in Fig.~\ref{fig:divdiv}. 

Verification of the exactness for the complexes~\eqref{eq:femdivdivcomplex3d} and~\eqref{eq:femdivdivcomplex3dvariant2} becomes intricate when the space ${\mathbb V}_{k}^{\div\div}(\boldsymbol r_2;\mathbb S)$ is introduced. A rigorous proof will be presented subsequently, following the constructive characterization of these spaces. 

\begin{example}\rm 
We recover the finite element divdiv complex in~\cite{Hu;Liang;Ma:2021Finite}
\begin{equation*}
{\rm RT}\xrightarrow{\subset} \begin{pmatrix}
2\\
0 \\
0
\end{pmatrix} \xrightarrow{\dev\grad} \begin{pmatrix}
1\\
-1 \\
-1
\end{pmatrix} \xrightarrow{\sym\curl}\begin{pmatrix}
0\\
-1 \\
-1
\end{pmatrix} \xrightarrow{\div\div} \begin{pmatrix}
-1\\
-1 \\
-1
\end{pmatrix}\rightarrow 0.
\end{equation*} 
\qed
\end{example}

\begin{example}\rm 
%
We recover the finite element divdiv complex in~\cite{Chen;Huang:2020Finite}
\begin{equation*}
{\rm RT}\xrightarrow{\subset} \begin{pmatrix}
1\\
0 \\
0
\end{pmatrix} \xrightarrow{\dev\grad} \begin{pmatrix}
0\\
-1 \\
-1
\end{pmatrix} \xrightarrow{\sym\curl}\begin{pmatrix}
0\\
-1 \\
-1
\end{pmatrix} \xrightarrow{\div\div} \begin{pmatrix}
-1\\
-1 \\
-1
\end{pmatrix}\rightarrow 0.
\end{equation*}
\qed
\end{example}

\begin{remark}\rm 
Recently in~\cite{ChenHuang2024div-div-conforming} we have constructed $H(\div\div)$-conforming element without vertex continuity and the corresponding divdiv complex:
 \begin{equation*}
{\rm RT}\xrightarrow{\subset} \begin{pmatrix}
1\\
0 \\
0
\end{pmatrix} \xrightarrow{\dev\grad} \begin{pmatrix}
0\\
-1 \\
-1
\end{pmatrix} \xrightarrow{\sym\curl}\begin{pmatrix}
-1\\
-1 \\
-1
\end{pmatrix} \xrightarrow{\div\div} \begin{pmatrix}
-1\\
-1 \\
-1
\end{pmatrix}\rightarrow 0.
\end{equation*}
The construction of $\mathbb V_{k}^{\div\div}(\bs{-1};\mathbb{S})$ and the $\div\div$ stability requires careful redistribution of degrees of freedom (DoFs) and appears challenging to derive from the BGG construction. The last space can be further relaxed to the generalized $H^2$ non-conforming Morley-Wang-Xu elements~\cite{WangXu2006,WangXu2013} when $\div\div$ is understood in the distribution sense; see~\cite[Section 5.2]{ChenHuang2024div-div-conforming}.
 \qed
\end{remark}


The bubble divdiv complex is shown in Proposition~\ref{prp:bubbledivdivcomplex}, whose proof is given in Section~\ref{sec:bubbledivdivcomplex}.
\begin{proposition}\label{prp:bubbledivdivcomplex}
Let 
\begin{equation*}
\boldsymbol{r}_0\geq (1, 0, 0)^{\intercal}, \quad 
\boldsymbol{r}_1=\boldsymbol{r}_0-1,\quad 
\boldsymbol{r}_2= \max\{\boldsymbol{r}_1\ominus1, (0, -1, -1)^{\intercal}\},\quad 
\boldsymbol{r}_3=\boldsymbol{r}_2\ominus2.
\end{equation*}
Assume $\boldsymbol r_2$ satisfies the condition in Lemma \ref{lem:divbubbleontoST}, $\boldsymbol r_2\ominus1$ satisfies \eqref{eq:boundr2fordivbubble}, and $k\geq \max\{2r_2^{\texttt{v}}+1,3\}$.
We have the following exact bubble divdiv complex   
\begin{align*}
0\xrightarrow{\subset}\mathbb B_{k+2} (\boldsymbol{r}_0;\mathbb R^3) &\xrightarrow{\dev\grad} \mathbb B_{k+1}^{\sym\curl}(\boldsymbol{r}_1;\mathbb{T}) \\
&\xrightarrow{\sym\curl}  \mathbb B_{k}^{\div\div}(\boldsymbol{r}_2;\mathbb{S}) \xrightarrow{\div\div} \mathbb B_{k-2}(\boldsymbol{r}_3)/\mathbb P_1(T)\rightarrow 0. 
\end{align*}
\end{proposition}

\section{Edge Elements}\label{sec:edgeelements}
In this section we shall construct finite element spaces for $H(\curl; \mathbb S),$ $H(\inc; \mathbb S)$, and $H(\sym\curl; \mathbb T)$ spaces. For $H(\curl; \mathbb S),$ we use the $t$-$n$ decomposition approach, and for the other two, we use the trace bubble complexes and the knowledge for div elements to determine the face DoFs and the edge traces to determine the edge DoFs. 

\subsection{$H(\curl; \mathbb S)$-conforming elements}
When $r^f\geq 0$, it is simply the tensor product: $$\mathbb V_k^{\curl}(\boldsymbol r,\mathbb S): = \mathbb V_k(\boldsymbol r)\otimes \mathbb S \quad  \text{ when }\boldsymbol r\geq 0.$$ 

Recall that the Hessian complex starts with an $H^2$-conforming element $\mathbb V^{\hess}_{k+2}(\boldsymbol r+2)$, where $\boldsymbol r+2 \geq (4, 2, 1)^{\intercal}$, and consequently, $r^e \geq 0$. Hence, for the remainder of this subsection, we will focus exclusively on the cases where $r^f = -1$ and $r^e \geq 0$. 

Since we have $r^{\texttt{v}} \geq 2r^e \geq 0$, the vertex and edge DoFs take on a tensor product structure: ${\rm DoF}_k(s; \boldsymbol r) \otimes \mathbb S$, where $s=\texttt{v}$ or $e$. However, determining the face DoFs requires a different approach. We continue to use the diagram~\eqref{eq:DoFdec}, but now with a modified $t$-$n$ decomposition, as the trace of the $\curl$ operator $(\cdot)\times \boldsymbol n$ contains only the tangential component. Consequently, the normal component $\mathbb B_{k} (f;\boldsymbol r) \otimes \text{span} \{\boldsymbol n \otimes \boldsymbol n\}$ contributes to the bubble space. 

To clarify this process, we define
\begin{align*} 
\mathbb B_{k}^{\curl}(\boldsymbol{r}; \mathbb S):={}& \mathbb B_{k}(\boldsymbol{r}_+;\mathbb S) \oplus[r^f=-1]\Oplus_{f\in\Delta_2(T)}(\mathbb B_{k} (f;\boldsymbol r) \otimes \text{span} \{\boldsymbol n \otimes \boldsymbol n\}),
\end{align*}
and refer to Fig.~\ref{fig:tnScurl} for an illustration.

\begin{figure}[htbp]
\begin{center}
\includegraphics[width=3.5cm]{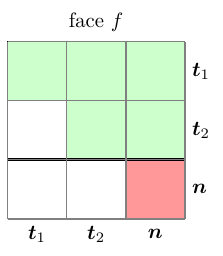}
\caption{A $t$-$n$ decomposition of $\mathbb S$ on a face for $H(\curl; \mathbb S)$ element. The normal component $\mathbb B_{k} (f;\boldsymbol r) \otimes \spa \{\boldsymbol n\otimes \boldsymbol n\}$ (red bloack) will contribute to the bubble space.}
\label{fig:tnScurl}
\end{center}
\end{figure}

Take $\mathbb P_{k}(T;\mathbb S)$ as the space of shape functions. Let $\boldsymbol r$ be a smoothness vector with $r^f=-1$ and $r^e\geq 0$. The degrees of freedom are 
\begin{subequations}\label{eq:curlSdof}
\begin{align}
\nabla^i\boldsymbol{\tau}(\texttt{v}), & \quad i=0,\ldots, r^{\texttt{v}}, \texttt{v}\in \Delta_{0}(T), \label{eq:3dCrmodicurlSfemdofV}\\
\int_e \frac{\partial^{j}\boldsymbol{\tau}}{\partial n_1^{i}\partial n_2^{j-i}}:\boldsymbol{q} \dd s, &\quad \boldsymbol{q}\in \mathbb B_{k-j}(e; r^{\texttt{v}} - j)\otimes \mathbb S, 0\leq i\leq j\leq r^e, e\in \Delta_{1}(T), \label{eq:3dCrmodicurlSfemdofE1}\\
\int_f (\boldsymbol n\times \boldsymbol{\tau}\times \boldsymbol n):\boldsymbol{q} \dd S, &\quad \boldsymbol{q}\in \mathbb B_{k}(f; \boldsymbol r)\otimes \mathbb S(f), f\in \Delta_{2}(T), \label{eq:3dCrmodicurlSfemdofF1}\\
\int_f (\boldsymbol n^{\intercal} \boldsymbol{\tau}\Pi_f )\cdot \boldsymbol q \dd S, &\quad \boldsymbol q \in \mathbb B_{k}^2(f;\boldsymbol r), f\in \Delta_{2}(T), \label{eq:3dCrmodicurlSfemdofF2}\\
\int_T \boldsymbol{\tau}:\boldsymbol{q} \dx, &\quad \boldsymbol{q}\in \mathbb B_{k}^{\curl}(\boldsymbol{r};\mathbb S), \label{eq:3dCrmodicurlSfemdofT}
\end{align}
\end{subequations}
where $\mathbb S(f):=\mathscr T^f(\mathbb S)= \textrm{span}\big\{ \sym(\boldsymbol t_i^f\otimes \boldsymbol t_j^f),  1\leq i\leq j\leq 2 \big\}$.
\begin{lemma}\label{lm:curlS}
Let $\boldsymbol r$ be a smoothness vector with $r^f = -1, r^{e}\geq 0$, and let $k\geq 2r^{\texttt{v}}+1$. DoFs~\eqref{eq:curlSdof} are unisolvent for $\mathbb P_{k}(T;\mathbb S)$. 
Given a triangulation $\mathcal T_h$ of $\Omega$, define
\begin{align}  
\label{eq:VcurlS}
\mathbb V^{\curl}_{k}(\boldsymbol{r};\mathbb S):=\{\boldsymbol{\tau}\in L^2(\Omega;\mathbb S): &\, \boldsymbol{\tau}|_T\in\mathbb P_{k}(T;\mathbb S) \textrm{ for all } T\in\mathcal T_h, \\
&\textrm{ and all the DoFs~\eqref{eq:curlSdof} are single-valued}\}. \notag
\end{align}  
Then  $\mathbb V^{\curl}_{k}(\boldsymbol{r};\mathbb S)\subset H(\curl,\Omega;\mathbb S)$. 
\end{lemma}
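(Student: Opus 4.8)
The plan is to prove Lemma~\ref{lm:curlS} in two parts: first the local unisolvence of the DoFs~\eqref{eq:curlSdof} for $\mathbb P_k(T;\mathbb S)$, then the global $H(\curl)$-conformity, which is the easier of the two.

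\textbf{Dimension count.} First I would verify that the number of DoFs~\eqref{eq:curlSdof} matches $\dim\mathbb P_k(T;\mathbb S)=6\binom{k+3}{3}$. The natural strategy, exactly as in the proofs of Lemma~\ref{lm:divT} and Lemma~\ref{lm:divS}, is to start from the unisolvent tensor-product DoF set ${\rm DoF}_k(\boldsymbol r_+)\otimes\mathbb S$ coming from~\eqref{eq:r+unisolvence} (recall $\mathbb P_k(T)\otimes\mathbb S\Longleftrightarrow{\rm DoF}_k(\boldsymbol r_+)\otimes\mathbb S$ since $\boldsymbol r_+\geq 0$), and show that~\eqref{eq:curlSdof} is merely a rearrangement of it. Since $r^{\texttt v}\geq 2r^e\geq 0$, the vertex DoFs~\eqref{eq:3dCrmodicurlSfemdofV} and edge DoFs~\eqref{eq:3dCrmodicurlSfemdofE1} are literally the ${\rm DoF}_k(\texttt v;\boldsymbol r)\otimes\mathbb S$ and ${\rm DoF}_k(e;\boldsymbol r)\otimes\mathbb S$ pieces (here $\boldsymbol r_+$ and $\boldsymbol r$ agree on vertices and edges because $r^{\texttt v},r^e\geq 0$), so I only need to account for the face and interior parts. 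On each face $f$ I would invoke the $t$–$n$ decomposition of $\mathbb S$ from Fig.~\ref{fig:tnScurl}, writing $\mathbb S=\spa\{\boldsymbol n\otimes\boldsymbol n\}\oplus\spa\{\sym(\boldsymbol t_i^f\otimes\boldsymbol n),i=1,2\}\oplus\mathscr T^f(\mathbb S)$, where $\mathscr T^f(\mathbb S)=\mathbb S(f)$ has dimension $3$ and the first two summands have dimensions $1$ and $2$. The face piece ${\rm DoF}_k(f;\boldsymbol r_+)\otimes\mathbb S$ splits accordingly: the $\mathbb S(f)$ component becomes~\eqref{eq:3dCrmodicurlSfemdofF1} (the tangential-tangential part $\boldsymbol n\times\boldsymbol\tau\times\boldsymbol n$), the $\sym(\boldsymbol t_i^f\otimes\boldsymbol n)$ component becomes~\eqref{eq:3dCrmodicurlSfemdofF2} (the mixed part $\boldsymbol n^{\intercal}\boldsymbol\tau\Pi_f$), and the $\boldsymbol n\otimes\boldsymbol n$ component is moved into the interior and absorbed into the bubble DoF~\eqref{eq:3dCrmodicurlSfemdofT} via the definition $\mathbb B_k^{\curl}(\boldsymbol r;\mathbb S)=\mathbb B_k(\boldsymbol r_+;\mathbb S)\oplus[r^f=-1]\Oplus_f(\mathbb B_k(f;\boldsymbol r)\otimes\spa\{\boldsymbol n\otimes\boldsymbol n\})$. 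One subtlety to check is that $\boldsymbol r_+$ and $\boldsymbol r$ differ only in the face index (by assumption $r^f=-1$, $r^e\geq 0$), so the face bubble spaces $\mathbb B_k(f;\boldsymbol r)$ appearing in~\eqref{eq:3dCrmodicurlSfemdofF1}–\eqref{eq:3dCrmodicurlSfemdofF2} are exactly the full face traces (using $\mathbb B_k(f;\boldsymbol r_+)$ on the face equals the surface space $\mathbb P_k(f)$ restricted by the edge bubble conditions, matched against $\mathbb B_k(f;\boldsymbol r)$). After this bookkeeping the DoF count equals $\dim\mathbb P_k(T;\mathbb S)$.

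\textbf{Vanishing argument.} For the injectivity half, suppose $\boldsymbol\tau\in\mathbb P_k(T;\mathbb S)$ annihilates all DoFs~\eqref{eq:curlSdof}. The vertex and edge DoFs~\eqref{eq:3dCrmodicurlSfemdofV}–\eqref{eq:3dCrmodicurlSfemdofE1} together with $k\geq 2r^{\texttt v}+1$ force $\nabla^j\boldsymbol\tau$ to vanish on all edges for $0\leq j\leq r^e$ (and all vertices for $0\leq j\leq r^{\texttt v}$); this is the standard argument already used repeatedly in the excerpt. On each face $f$, the three components of $\boldsymbol\tau|_f$ in the $t$–$n$ splitting are then controlled: $\boldsymbol n\times\boldsymbol\tau\times\boldsymbol n$ vanishes on $f$ by~\eqref{eq:3dCrmodicurlSfemdofF1} (its trace on $\partial f$ is already zero from the edge conditions, and it is a face-bubble tested against the full $\mathbb B_k(f;\boldsymbol r)\otimes\mathbb S(f)$), and $\boldsymbol n^{\intercal}\boldsymbol\tau\Pi_f$ vanishes on $f$ by~\eqref{eq:3dCrmodicurlSfemdofF2}. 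Hence on each face the only surviving component is $(\boldsymbol n^{\intercal}\boldsymbol\tau\boldsymbol n)\,\boldsymbol n\otimes\boldsymbol n$; equivalently $\boldsymbol\tau$ lies in the bubble space $\mathbb B_k^{\curl}(\boldsymbol r;\mathbb S)$. Testing against DoF~\eqref{eq:3dCrmodicurlSfemdofT} with $\boldsymbol q=\boldsymbol\tau$ yields $\boldsymbol\tau=\boldsymbol 0$. Combined with the dimension count, this gives unisolvence.

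\textbf{Conformity.} Finally, $\mathbb V_k^{\curl}(\boldsymbol r_1;\mathbb S)\subset H(\curl,\Omega;\mathbb S)$ follows because the tangential trace $\boldsymbol\tau\times\boldsymbol n|_f$ of a matrix function is determined by the components $\boldsymbol n\times\boldsymbol\tau\times\boldsymbol n$ and $\boldsymbol n^{\intercal}\boldsymbol\tau\Pi_f$ on $f$ (the purely normal part $\boldsymbol n\otimes\boldsymbol n$ contributes nothing to $(\cdot)\times\boldsymbol n$, since $(\boldsymbol n\otimes\boldsymbol n)\times\boldsymbol n$ involves $\boldsymbol n\times\boldsymbol n=\boldsymbol 0$ in the row cross product), plus the vertex and edge values; all of these are single-valued by the shared DoFs~\eqref{eq:3dCrmodicurlSfemdofV}–\eqref{eq:3dCrmodicurlSfemdofF2}. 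Therefore $\boldsymbol\tau\times\boldsymbol n$ is continuous across every interior face, which is precisely $H(\curl)$-conformity.

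\textbf{Main obstacle.} I expect the delicate point to be the precise face bookkeeping in the dimension count: making sure that removing the $\boldsymbol n\otimes\boldsymbol n$ direction from the face DoFs and re-inserting it into $\mathbb B_k^{\curl}(\boldsymbol r;\mathbb S)$ is an \emph{exact} rearrangement, i.e. that $\dim\bigl(\mathbb B_k(f;\boldsymbol r)\otimes\spa\{\boldsymbol n\otimes\boldsymbol n\}\bigr)$ taken away from each face is exactly compensated in the interior bubble dimension, and that the face-bubble degree/smoothness indices ($\boldsymbol r$ versus $\boldsymbol r_+$, and the interplay with $r^e\geq 0$) line up so that~\eqref{eq:3dCrmodicurlSfemdofF1}–\eqref{eq:3dCrmodicurlSfemdofF2} really do capture the whole tangential trace. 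The rest is a routine adaptation of the arguments in Lemmas~\ref{lm:divT} and~\ref{lm:divS}.
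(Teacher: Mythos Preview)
Your proposal is correct and follows essentially the same approach as the paper: both argue unisolvence by recognizing~\eqref{eq:curlSdof} as a rearrangement of the tensor-product DoFs ${\rm DoF}_k(\boldsymbol r_+)\otimes\mathbb S$, with the $\boldsymbol n\otimes\boldsymbol n$ face component moved into the curl bubble space, and both derive $H(\curl)$-conformity from the single-valuedness of $\boldsymbol\tau\times\boldsymbol n$ via DoFs~\eqref{eq:3dCrmodicurlSfemdofV}--\eqref{eq:3dCrmodicurlSfemdofF2}. Your separate vanishing argument is more explicit than the paper's one-line appeal to the rearrangement (once the rearrangement is seen to be bijective on dual spaces, injectivity is automatic), but it is a harmless redundancy rather than a different route.
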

\begin{proof}
The unisolvence property can be established by leveraging the unisolvence of the tensor product space $\mathbb V_k(\boldsymbol r_+)\otimes \mathbb S$ and relocating the face DoF $\int_f \boldsymbol n^{\intercal}\boldsymbol \tau \boldsymbol n \, q$ to $\mathbb B_{k}^{\curl}(\boldsymbol{r};\mathbb S)$. The $H(\curl)$-conformity is a direct consequence of the continuity exhibited by $\boldsymbol \tau\times \boldsymbol n$, which arises from the single-valued DoFs~\eqref{eq:3dCrmodicurlSfemdofV}-\eqref{eq:3dCrmodicurlSfemdofF2}.
\end{proof}

Recall that in Section~\ref{sec:femHessian}, we have defined a space $\mathbb V^{\curl}_{k}(\boldsymbol{r}_1;\mathbb S)=\widetilde{\mathbb V}_{k}^{\curl}(\boldsymbol r_1;\mathbb M) \cap \ker(\vskw)$ from the BGG construction. Next we show they are equal.
\begin{lemma}
Let $\bs r_1\geq (2, 0, -1)^{\intercal}$. The space defined by~\eqref{eq:VcurlSBGG} is equal to the space $\mathbb V^{\curl}_{k}(\boldsymbol{r}_1;\mathbb S)$ defined by~\eqref{eq:VcurlS} when $r_1^f=-1$, and to $\mathbb V_k^{\curl}(\boldsymbol r_1,\mathbb S): = \mathbb V_k(\boldsymbol r_1)\otimes \mathbb S$ when $r_1^f\geq 0$.
\end{lemma}
\begin{proof}
Certainly, we have the inclusion $\mathbb V^{\curl}_{k}(\boldsymbol{r}_1;\mathbb S)\subseteq \widetilde{\mathbb V}_{k}^{\curl}(\boldsymbol r_1;\mathbb M) \cap \ker(\vskw)$. Thus, it is enough to demonstrate the dimension equality:
\begin{equation}\label{eq:dimcurlS}
\dim\mathbb V^{\curl}_{k}(\boldsymbol{r}_1;\mathbb S) = \dim (\widetilde{\mathbb V}^{\curl}_{k}(\boldsymbol{r}_1;\mathbb M)\cap \ker(\vskw)).
\end{equation}
Let us compare this with $\dim \mathbb V^{\curl}_{k}(\boldsymbol{r}_1;\mathbb M)$. Using the definition of the $\widetilde{\quad}$ operation and~\eqref{eq:divTdimdiff}, we can express:
\begin{align*}
\dim \widetilde{\mathbb V}^{\curl}_{k}(\boldsymbol{r}_1;\mathbb M) &= \dim \mathbb V^{\curl}_{k}(\boldsymbol{r}_1;\mathbb M) - (\dim \mathbb V^{\div}_{k-1}(\boldsymbol r_2; \mathbb M) - \dim \mathbb V^{\div}_{k-1}(\boldsymbol r_2; \mathbb T))\\
& = \dim \mathbb V^{\curl}_{k}(\boldsymbol{r}_1;\mathbb M)  - \dim \mathbb V_{k-1}^{L^2}(\boldsymbol r_2).
\end{align*}
Also, observe that $\dim \mathbb V^{\div}_k(\boldsymbol r_1)\cap \ker(\div) = \dim \mathbb V^{\div}_k(\boldsymbol r_1) - \dim \mathbb V_{k-1}^{L^2}(\boldsymbol r_2)$. With the surjectiveness of $\vskw$, we can write:
\begin{equation}\label{eq:dimtildeVcurl}
\dim (\widetilde{\mathbb V}^{\curl}_{k}(\boldsymbol{r}_1;\mathbb M)\cap \ker(\vskw)) = \dim \mathbb V^{\curl}_{k}(\boldsymbol{r}_1;\mathbb M) - \dim \mathbb V^{\div}_k(\boldsymbol r_1). 
\end{equation}
We now proceed to the construction of $\mathbb{V}^{\curl}_{k}(\boldsymbol{r}_1; \mathbb{S})$ and examine the associated dimension reduction in transitioning from $\mathbb{M}$ to $\mathbb{S}$.

When $r_1^f\geq0$, it is evident that $\dim\mathbb V^{\curl}_{k}(\boldsymbol{r}_1;\mathbb S) = \dim \mathbb V^{\curl}_{k}(\boldsymbol{r}_1;\mathbb M) - \dim \mathbb V^{\div}_k(\boldsymbol r_1)$. For the scenario where $r_1^f=-1$, a more intricate analysis is needed. On vertices and edges, since $r_1^{\texttt{v}}\geq 2 r_1^e\geq 0$, the change is a net decrease of $3$ DoFs from $\mathbb M$ to $\mathbb S$. In the case of faces, referring to Fig.~\ref{fig:tnScurl}, the component $\boldsymbol t_1\boldsymbol t_2^{\intercal}$ goes missing. In terms of the bubble space, transitioning from $\mathbb B_k(\boldsymbol r_1; \mathbb M)$ to $\mathbb B_k(\boldsymbol r_1; \mathbb S)$ incurs a reduction of $3$ DoFs. Additionally, for the face bubbles, we observe a reduction of $2$ DoFs (namely, the components $\boldsymbol t_1\boldsymbol n^{\intercal}$ and $\boldsymbol t_2\boldsymbol n^{\intercal}$). This reduction perfectly aligns with the dimension decrease required to define $\mathbb V^{\div}_k(\boldsymbol r_1)$. Therefore, we have successfully demonstrated that $\dim\mathbb V^{\curl}_{k}(\boldsymbol{r}_1;\mathbb S) = \dim \mathbb V^{\curl}_{k}(\boldsymbol{r}_1;\mathbb M) - \dim \mathbb V^{\div}_k(\boldsymbol r_1)$, which implies the validity of~\eqref{eq:dimcurlS} by referring back to~\eqref{eq:dimtildeVcurl}.
\end{proof}

A finite element characterization of $\mathbb{V}^{\curl}_{k}(\boldsymbol{r}_1, \boldsymbol{r}_2; \mathbb{S})$ can be derived by incorporating DoFs associated with $\curl \boldsymbol{\tau} \in \mathbb{V}^{\div}_{k-1}(\boldsymbol{r}_2; \mathbb{T})$, followed by systematically eliminating redundancies in the DoFs related to derivatives. The detailed derivation is omitted for brevity.

\subsection{$H(\inc; \mathbb S)$-conforming elements}
We proceed to provide a detailed and explicit characterization of the $H(\inc)$-conforming element space $\mathbb V_{k+1}^{\inc}(\boldsymbol{r}_1;\mathbb{S})$, as defined in~\eqref{eq:VincS}. Our focus will be primarily on scenarios where $r_1^f=0$ or $r_1^f=-1$, as the cases with $r_1^f \geq 1$  are simply tensor product $\mathbb V_{k+1}^{\inc}(\boldsymbol{r}_1;\mathbb{S}) = \mathbb V_{k+1}(\boldsymbol{r}_1)\otimes\mathbb{S}$. As $\boldsymbol r_0\geq (2, 1, 0)^{\intercal}$ in the finite element elasticity complex, our subsequent analysis is restricted to $r_1^{\texttt{v}}\geq 1$ and $r_1^e \geq 0$.

To motivate the edge DoFs, we first recall the trace complexes. For a smooth and symmetric tensor $\boldsymbol  \tau \in H(\inc, T; \mathbb S)$, define two trace operators as
\begin{align*}
{\rm tr}_1^{\inc}(\boldsymbol  \tau) = {}& \boldsymbol  n\times\boldsymbol  \tau \times \boldsymbol  n, \\
{\rm tr}_2^{\inc}(\boldsymbol  \tau) = {}&\boldsymbol  n\times(\curl\bs\tau)^{\intercal}\Pi_f + \grad_f(\Pi_f\boldsymbol  \tau\cdot \boldsymbol  n ).
\end{align*}
In~\cite[Section 4.2]{Chen;Huang:2021Finite} we have obtained the following trace complexes
\begin{equation*}
\begin{array}{c}
\xymatrix{
\boldsymbol  a\times \boldsymbol  x + \boldsymbol  b \ar[r]^-{\subset} & \boldsymbol  v \ar[d]^{} \ar[r]^-{\defm}
                & \boldsymbol  \tau \ar[d]^{}   \ar[r]^-{\inc} & \ar[d]^{}\boldsymbol  \sigma \ar[r]^{\div} & \boldsymbol{p} \\
a_f \boldsymbol  x_f + \boldsymbol  b_f \ar[r]^{\subset} & \boldsymbol  v\times \boldsymbol  n \ar[r]^{\sym \curl_f}
                & \boldsymbol  n\times \boldsymbol  \tau \times \boldsymbol  n   \ar[r]^{\mathrm{div}_f{\mathrm{div}}_f} &  \boldsymbol  n\cdot \boldsymbol  \sigma \cdot \boldsymbol  n \ar[r]^{}& 0,    }
\end{array}
\end{equation*}
and
\begin{equation*}
\begin{array}{c}
\xymatrix{
\boldsymbol  a\times \boldsymbol  x + \boldsymbol  b \ar[r]^-{\subset} & \boldsymbol  v \ar[d]^{} \ar[r]^-{\defm}
                & \boldsymbol  \tau \ar[d]^{}   \ar[r]^-{\inc} & \ar[d]^{}\boldsymbol  \sigma \ar[r]^{\div} & \boldsymbol{p} \\
\boldsymbol  a_f \cdot \boldsymbol  x_f + b_f \ar[r]^{\subset} & \boldsymbol  v\cdot \boldsymbol  n \ar[r]^{\nabla_f^2}
                & {\rm tr}^{\inc}_2(\boldsymbol  \tau)   \ar[r]^{\nabla_f^{\bot}\cdot} &  \boldsymbol  n\cdot \boldsymbol  \sigma \times \boldsymbol  n \ar[r]^{}& \boldsymbol{0}.    }
\end{array}
\end{equation*}

Take $\mathbb P_{k+1}(T; \mathbb S)$ as the shape function space. Let $(\boldsymbol r_0, \boldsymbol r_1, \boldsymbol r_2, \boldsymbol r_3)$ be given by~\eqref{eq:relasticity}.  
The degrees of freedom are 
\begin{subequations}\label{eq:incdof}
\begin{align}
\nabla^i\boldsymbol{\tau}(\texttt{v}), & \quad i=0,\ldots, r_1^{\texttt{v}}, \label{eq:3dCrincSfemdofV1}\\
\inc\boldsymbol{\tau}(\texttt{v}), & \quad \textrm{ if }r_1^{\texttt{v}}=1, \label{eq:3dCrincSfemdofV2}\\
\int_e \frac{\partial^{j}\boldsymbol{\tau}}{\partial n_1^{i}\partial n_2^{j-i}}:\boldsymbol{q} \dd s, &\quad \boldsymbol{q}\in \mathbb B_{k+1 -j }(e; r_1^{\texttt{v}}- j)\otimes \mathbb S, 0\leq i\leq j\leq r_1^e, \label{eq:3dCrincSfemdofE1}\\
\int_e (\curl\boldsymbol{\tau})^{\intercal}\boldsymbol{t} \cdot\boldsymbol{q} \dd s, &\quad \boldsymbol{q}\in \mathbb B_{k}^3(e; r_1^{\texttt{v}}-1), \textrm{ if } r_1^{e}=0, \label{eq:3dCrincSfemdofE2}\\
\int_e (\boldsymbol{n}_i^{\intercal}(\inc\boldsymbol{\tau})\boldsymbol{n}_j)\,q \dd s, &\quad q\in \mathbb B_{k-1}(e; r_2^{\texttt{v}}), 1\leq i \leq j\leq 2, \textrm{ if } r_2^{e}=-1, \label{eq:3dCrincSfemdofE3}\\
%
\int_f (\boldsymbol n\times \boldsymbol \tau\times \boldsymbol n): \boldsymbol{q} \dd S, &\quad \boldsymbol{q}\in \sym \curl_f \mathbb B_{k+2}^2(f; \boldsymbol r_0), \label{eq:3dCrincSfemdofF1}\\
\int_f \tr_2^{\inc}(\boldsymbol \tau): \boldsymbol q \dd S, &\quad \boldsymbol{q}\in \nabla_f^2\mathbb B_{k+2}(f; \boldsymbol r_0), \label{eq:3dCrincSfemdofF2}\\
\int_f (\boldsymbol{\tau}\boldsymbol{n})\cdot \boldsymbol q \dd S, &\quad \boldsymbol{q}\in \mathbb B_{k+1}^3(f;\boldsymbol r_1), \textrm{ if } r_1^{f}=0, \label{eq:3dCrincSfemdofF3}\\
\int_f \Pi_f(\inc\boldsymbol{\tau})\boldsymbol{n}\cdot\boldsymbol{q} \dd S, &\quad \boldsymbol{q}\in \mathbb B_{k-1}^{\div}(f; \boldsymbol r_2)/{\rm RT}(f), \label{eq:3dCrincSfemdofF4}\\
\int_f \boldsymbol{n}^{\intercal}(\inc\boldsymbol{\tau})\boldsymbol{n}\,q \dd S, &\quad q\in \mathbb B_{k-1}(f; (\boldsymbol r_2)_+)/\mathbb P_1(f), \label{eq:3dCrincSfemdofF5}\\
%
%
\int_T \boldsymbol{\tau}:\boldsymbol{q} \dx, &\quad \boldsymbol{q}\in \defm(\mathbb B_{k+2}^{3}(\boldsymbol{r}_0)), \label{eq:3dCrincSfemdofT1} \\
\int_T (\inc\boldsymbol{\tau}):\boldsymbol{q} \dx, &\quad \boldsymbol{q}\in \mathbb B_{k-1}^{\div}(\boldsymbol{r}_2;\mathbb S)\cap\ker(\div) \label{eq:3dCrincSfemdofT2}
\end{align}
\end{subequations}
for each $\texttt{v}\in \Delta_{0}(T)$, $e\in \Delta_{1}(T)$ and $f\in \Delta_{2}(T)$.

The motivation behind incorporating DoFs such as~\eqref{eq:3dCrincSfemdofV2},\eqref{eq:3dCrincSfemdofE3},\eqref{eq:3dCrincSfemdofF4},\eqref{eq:3dCrincSfemdofF5}, and\eqref{eq:3dCrincSfemdofT2} lies in their role in enforcing the condition $\inc \boldsymbol \tau \in \mathbb V_{k-1}^{\div}(\boldsymbol r_2; \mathbb S)$, which mirrors the purpose of DoFs~\eqref{eq:3dCrdivSfemdofV}-\eqref{eq:3dCrdivSfemdofT}. The inclusion of DoFs~\eqref{eq:3dCrincSfemdofT1}-\eqref{eq:3dCrincSfemdofT2} serves the distinct purpose of determining the bubble component. 
By the trace complexes, the face bubble complexes would be
\begin{align*}
\tr_1: \quad &\boldsymbol{0}\to \mathbb B^2_{k+2}(f; \boldsymbol r_0)\xrightarrow{\sym \curl_f} \mathbb B_{k+1}^{\div_f\div_f}(f; \boldsymbol r_1)\xrightarrow{\div_f\div_f} \mathbb B_{k-1}(f; \boldsymbol r_2)/\mathbb P_1(f) \to 0,\\
\tr_2: \quad &0\to \mathbb B_{k+2}(f; \boldsymbol r_0)\xrightarrow{\hess_f} \mathbb B_{k}^{\rot _f}(f; \boldsymbol r_1\ominus 1, \mathbb S)\xrightarrow{\rot_f} \mathbb B_{k-1}^2(f; \boldsymbol r_2)/{\rm RM}(f) \to \boldsymbol{0}. 
\end{align*}
However the face DoFs~\eqref{eq:3dCrincSfemdofF4} and~\eqref{eq:3dCrincSfemdofF5} imply the face bubble complexes are
\begin{align*}
\tr_1: \;  \boldsymbol{0}\to \mathbb B^2_{k+2}(f; \boldsymbol r_0)\xrightarrow{\sym \curl_f}  &\mathbb B_{k+1}^{\div_f\div_f}(f; \boldsymbol r_1, (\boldsymbol r_2)_+)\\
\notag \xrightarrow{\div_f\div_f} &\mathbb B_{k-1}(f; (\boldsymbol r_2)_+)/\mathbb P_1(f) \to 0,
\end{align*}
\begin{equation*}
\tr_2: \; 0\to \mathbb B_{k+2}(f; \boldsymbol r_0)\xrightarrow{\hess_f} \widetilde{\mathbb B}_{k}^{\rot _f}(f; \boldsymbol r_1\ominus 1, \mathbb S)\xrightarrow{\rot_f} \mathbb B_{k-1}^{\rot_f}(f; \boldsymbol r_2)/{\rm RM}(f)\to \boldsymbol{0}.
\end{equation*}
These modifications have been accounted in the face DoFs for $\inc \boldsymbol \tau$, without affecting the components stemming from $\mathbb B^2_{k+2}(f; \boldsymbol r_0)$ and $\mathbb B_{k+2}(f; \boldsymbol r_0)$, as described by DoFs~\eqref{eq:3dCrincSfemdofF1}-\eqref{eq:3dCrincSfemdofF2}. For further insight into the specifics of these two-dimensional bubble polynomial spaces and finite element complexes, we refer to our recent work~\cite{Chen;Huang:2022femcomplex2d}.

In the event that $r_1^e=0$, the inclusion of~\eqref{eq:3dCrincSfemdofE2} is rooted in the aim of enforcing $(\curl\boldsymbol \tau)^{\intercal} \in \mathbb V_{k}^{\curl}(\boldsymbol r_1 \ominus 1; \mathbb M)$. Conversely, if $r_1^e\geq 1$, the same condition is inherently encompassed by~\eqref{eq:3dCrincSfemdofE1}, given that $\boldsymbol \tau$ exhibits $C^1$ continuity across edges. A similar rationale underpins the inclusion of~\eqref{eq:3dCrincSfemdofE3}, which is exclusively required when $r_2^e = -1$. Importantly, all traces of $\boldsymbol \tau$ are confined to its tangential component. In instances where $r_1^f = 0$, the introduction of~\eqref{eq:3dCrincSfemdofF3} becomes crucial to ensure the continuous nature of the normal component $\boldsymbol \tau\boldsymbol n$.


We present the following lemma for the ease of the dimension count. 
\begin{lemma}
The polynomial elasticity complex
\begin{equation}\label{eq:polyelasticity}
{\rm RM} \xrightarrow{\subset}\mathbb P_{k+1}(T; \mathbb R^3)\xrightarrow{\defm} \mathbb P_{k}(T; \mathbb S)\xrightarrow{\inc}\mathbb P_{k-2}(T; \mathbb S)\xrightarrow{\div} \mathbb P_{k-3}(T; \mathbb R^3) \to \boldsymbol{0}
\end{equation}
is exact for $k\geq 3$.
For integer $k\geq 0$,
\begin{equation}\label{eq:polydimelas}
-6 + 3{k+4\choose3}-6{k+3\choose3}+6{k+1\choose3}-3{k\choose3}=0.	
\end{equation}
\end{lemma}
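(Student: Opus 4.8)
The plan is to first establish the exactness of the polynomial elasticity complex \eqref{eq:polyelasticity} and then read off \eqref{eq:polydimelas} as an immediate consequence. To begin with, \eqref{eq:polyelasticity} is a complex: the operator identities $\inc\circ\defm=\boldsymbol 0$ and $\div\circ\inc=\boldsymbol 0$ are classical — they are precisely what makes the elasticity complex \eqref{eq:elasticitycomplex} a complex — and since $\defm$ lowers the polynomial degree by one, $\inc$ by two, and $\div$ by one, every arrow lands in the asserted polynomial space. Exactness at $\mathbb P_{k+1}(T;\mathbb R^3)$ is immediate: $\defm\boldsymbol v=\boldsymbol 0$ forces $\boldsymbol v\in\bs{RM}\subset\mathbb P_1(T;\mathbb R^3)\subseteq\mathbb P_{k+1}(T;\mathbb R^3)$.

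For the three remaining cohomology groups I would argue that exactness of the smooth elasticity complex on a contractible set — classical, see \cite{Eastwood2000,ArnoldFalkWinther2006} — descends to the polynomial level because all relevant potential operators are degree-graded. The cleanest way to package this is via the BGG construction reviewed in Section~\ref{sec:preliminary}: starting from two copies of the polynomial de Rham complex $\mathbb R\hookrightarrow\mathbb P_{r}(T)\xrightarrow{\grad}\mathbb P_{r-1}(T;\mathbb R^3)\xrightarrow{\curl}\mathbb P_{r-2}(T;\mathbb R^3)\xrightarrow{\div}\mathbb P_{r-3}(T)\to 0$, which is exact, suitably shifted so that $\mathbb P_{k+1}(T;\mathbb R^3)$ appears at the first slot, one assembles the polynomial analogue of the relevant portion of the diagram \eqref{eq:3DBGG}; the linking maps $\mskw$, $S$, $-2\vskw$, $\tr$ are algebraic and hence degree preserving, so the J-injectivity/surjectivity hypotheses of the BGG construction — purely linear-algebraic statements about these maps — hold verbatim, and the construction outputs an exact complex which, after the usual identification of quotient and kernel spaces, is exactly \eqref{eq:polyelasticity}. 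Alternatively one can verify the three statements by hand: $\div\mathbb P_{k-2}(T;\mathbb S)=\mathbb P_{k-3}(T;\mathbb R^3)$ by exhibiting for each homogeneous $\boldsymbol p$ of degree $m=k-3\geq 0$ the explicit preimage
\[
\boldsymbol\sigma=\tfrac{2}{m+4}\,\sym(\boldsymbol p\,\boldsymbol x^{\intercal})-\tfrac{1}{(m+4)(m+3)}\,(\div\boldsymbol p)\,\boldsymbol x\,\boldsymbol x^{\intercal}\in\mathbb P_{k-2}(T;\mathbb S),
\]
checking $\div\boldsymbol\sigma=\boldsymbol p$ by a short computation and summing over homogeneous components; and $\ker(\inc)\cap\mathbb P_k(T;\mathbb S)=\defm\mathbb P_{k+1}(T;\mathbb R^3)$, $\ker(\div)\cap\mathbb P_{k-2}(T;\mathbb S)=\inc\mathbb P_k(T;\mathbb S)$ from the Cesàro–Volterra and Beltrami potential formulas, which produce a degree-$(k+1)$ displacement from a degree-$k$ strain and a degree-$k$ stress function from a degree-$(k-2)$ divergence-free stress, respectively.

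Once exactness of \eqref{eq:polyelasticity} is in hand, \eqref{eq:polydimelas} follows from the rank–nullity theorem: for an exact sequence the alternating sum of dimensions vanishes, i.e.
\[
\dim\bs{RM}-\dim\mathbb P_{k+1}(T;\mathbb R^3)+\dim\mathbb P_{k}(T;\mathbb S)-\dim\mathbb P_{k-2}(T;\mathbb S)+\dim\mathbb P_{k-3}(T;\mathbb R^3)=0,
\]
and inserting $\dim\bs{RM}=6$, $\dim\mathbb P_{\ell}(T)=\binom{\ell+3}{3}$ and $\dim\mathbb S=6$ gives exactly \eqref{eq:polydimelas} for $k\geq 3$; both sides are polynomial in $k$ and the binomial coefficients with small or negative argument vanish by convention, so the identity in fact holds for all integers $k\geq 0$. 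The step I expect to be the crux is the passage from smooth to polynomial exactness — concretely, verifying that the homotopy/potential operators (equivalently, the internal inversions performed inside the BGG construction) genuinely preserve the polynomial degree grading with the stated shifts; granting that, the surjectivity of $\div$ and the arithmetic behind \eqref{eq:polydimelas} are routine.
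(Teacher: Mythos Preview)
Your proposal is correct and follows essentially the same approach as the paper: both invoke the BGG construction on polynomial de Rham complexes (or simply cite \cite{ArnoldAwanouWinther2008}) to obtain exactness of \eqref{eq:polyelasticity}, and then deduce \eqref{eq:polydimelas} from the vanishing alternating sum of dimensions. The only minor difference is that for $k=0,1,2$ the paper checks \eqref{eq:polydimelas} by direct computation, whereas you argue that the left-hand side, being a polynomial in $k$ that vanishes for all $k\geq3$, must be identically zero.
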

\begin{proof}
The polynomial elasticity complex is comprehensively presented in~\cite[(2.6)]{ArnoldAwanouWinther2008}, or can be systematically derived via polynomial de Rham complexes within the BGG framework; see~\cite[Section 2.2]{Chen;Huang:2021Finite}.

Upon investigating cases where $k\geq 3$, the exactness of~\eqref{eq:polyelasticity} implies that~\eqref{eq:polydimelas} can be deduced by taking the alternating sum of the dimensions of the involved spaces. For the instances of $k=0,1,2$, the validity of~\eqref{eq:polydimelas} remains intact and can be readily confirmed through direct verification.
\end{proof}

\begin{lemma}\label{lem:incSDoFsnumber}
For $r_1^f = 0, -1$, the sum of the number of DoFs~\eqref{eq:incdof} equals $\dim\mathbb P_{k+1}(T;\mathbb S)$.
\end{lemma}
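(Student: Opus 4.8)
The plan is to prove the dimension identity by a bookkeeping argument that separates the degrees of freedom in~\eqref{eq:incdof} into two groups: those that are "intrinsic" to $\boldsymbol\tau$ (i.e. would appear anyway in a naive $\mathbb S$-valued finite element) and those that are added to enforce $\inc\boldsymbol\tau\in\mathbb V_{k-1}^{\div}(\boldsymbol r_2;\mathbb S)$. Concretely, I would first observe that DoFs~\eqref{eq:3dCrincSfemdofV2}, \eqref{eq:3dCrincSfemdofE3}, \eqref{eq:3dCrincSfemdofF4}, \eqref{eq:3dCrincSfemdofF5}, \eqref{eq:3dCrincSfemdofT2}, together with the $\inc$-free parts of the vertex/edge/face/volume DoFs, are exactly (a rearrangement of) the DoFs~\eqref{eq:3dCrdivSfemdofV}--\eqref{eq:3dCrdivSfemdofT} applied to $\boldsymbol\sigma=\inc\boldsymbol\tau$, hence their total count is $\dim\mathbb P_{k-1}(T;\mathbb S)$ minus $\dim\mathbb P_{k-2}(T;\mathbb R^3)$ — or, more usefully, I would invoke the dimension identity~\eqref{eq:incSfemdim} from the finite element elasticity complex (at the element level there is the polynomial version~\eqref{eq:polydimelas}), which already "knows" that the global alternating sum vanishes.

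The cleaner route, which I would actually write out, is to match~\eqref{eq:incdof} term-by-term against the polynomial elasticity complex~\eqref{eq:polyelasticity}. The DoFs~\eqref{eq:3dCrincSfemdofV1}, \eqref{eq:3dCrincSfemdofE1}, \eqref{eq:3dCrincSfemdofF1}--\eqref{eq:3dCrincSfemdofF3}, \eqref{eq:3dCrincSfemdofT1} determine $\boldsymbol\tau$ modulo $\ker$ of the trace operators, and they are built from $\defm\mathbb B_{k+2}^3(\boldsymbol r_0)$ and the two face trace complexes~\eqref{eq:tracecomplex1}--\eqref{eq:tracecomplex2}, whose exactness lets me replace $\dim(\sym\curl_f\mathbb B_{k+2}^2(f;\boldsymbol r_0))$ by $\dim\mathbb B_{k+2}^2(f;\boldsymbol r_0)$ (since $\sym\curl_f$ is injective on face bubbles, its kernel being $\boldsymbol{RM}(f)$-type elements that vanish as bubbles) and similarly for $\nabla_f^2$. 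The remaining DoFs — the ones listed in the previous paragraph — I count as $\dim\mathbb V_{k-1}^{\div}(\boldsymbol r_2;\mathbb S)$ localized to $T$, i.e. $\dim\mathbb P_{k-1}(T;\mathbb S)$ rearranged into vertex/edge/face/bubble pieces exactly as in Lemma~\ref{lm:divS}. Then the total is
$$
\underbrace{\dim\defm\mathbb P_{k+2}^3(T)}_{\text{range part}} \;+\; \underbrace{\dim\mathbb P_{k-1}(T;\mathbb S)}_{\inc\boldsymbol\tau\text{ part}} \;-\;(\text{overlap}),
$$
and the overlap/correction terms are precisely accounted for by the polynomial elasticity complex exactness $\dim\defm\mathbb P_{k+2}^3(T)=\dim\mathbb P_{k+2}^3(T)-6$ and $\dim\inc\mathbb P_{k}(T;\mathbb S)=\dim\mathbb P_{k-1}(T;\mathbb S)-\dim\mathbb P_{k-2}(T;\mathbb R^3)$, which rearranges via~\eqref{eq:polydimelas} into $\dim\mathbb P_{k+1}(T;\mathbb S)$.

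The case split $r_1^f=0$ versus $r_1^f=-1$ (and correspondingly $r_2^e=-1$ or $r_2^f=-1$ triggering the Iverson-type extra DoFs~\eqref{eq:3dCrincSfemdofV2}, \eqref{eq:3dCrincSfemdofE2}, \eqref{eq:3dCrincSfemdofE3}, \eqref{eq:3dCrincSfemdofF3}) must be handled carefully: when $r_1^f=0$ the normal component $\boldsymbol\tau\boldsymbol n$ needs DoF~\eqref{eq:3dCrincSfemdofF3}, but then the bubble spaces in~\eqref{eq:3dCrincSfemdofT1}--\eqref{eq:3dCrincSfemdofT2} shrink accordingly, and I would verify the two changes cancel; when $r_1^f=-1$ the bubble is larger but DoF~\eqref{eq:3dCrincSfemdofF3} is absent. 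I expect the main obstacle to be precisely this bookkeeping of the face bubble complexes~\eqref{eq:tracecomplex1}--\eqref{eq:tracecomplex2}: one must know the exact dimensions of $\mathbb B_{k+1}^{\div_f\div_f}(f;\boldsymbol r_1,(\boldsymbol r_2)_+)$ and $\widetilde{\mathbb B}_k^{\rot_f}(f;\boldsymbol r_1\ominus1,\mathbb S)$, which are supplied by the 2D theory in~\cite{Chen;Huang:2022femcomplex2d}, and then confirm that $\dim(\text{DoFs}~\eqref{eq:3dCrincSfemdofF1})+\dim(\text{DoFs}~\eqref{eq:3dCrincSfemdofF2})$ plus the $\inc$-trace face DoFs reproduce exactly $4\dim\mathbb B_{k+1}(f;\boldsymbol r_1)$-worth of face data minus the $\boldsymbol{RM}(f)$/$\mathbb P_1(f)$ corrections. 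Once the face accounting is pinned down, the edge and vertex contributions are routine, and the final equality with $\dim\mathbb P_{k+1}(T;\mathbb S)$ follows from~\eqref{eq:polydimelas} by a purely algebraic rearrangement.
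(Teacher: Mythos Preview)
Your proposal is essentially the paper's approach: count the DoFs sub-simplex by sub-simplex, using the face trace bubble complexes~\eqref{eq:tracecomplex1}--\eqref{eq:tracecomplex2} to handle the face terms and the polynomial elasticity identity~\eqref{eq:polydimelas} to close the bookkeeping. The paper's organizing principle, which you gesture at but do not state sharply, is that at each sub-simplex $s$ one has
\[
|{\rm DoF}_{k+1}^{\inc}(s;\boldsymbol r_1)| = \epsilon_s\cdot 6 + |{\rm DoF}_{k+2}^{\grad}(s;\boldsymbol r_0;\mathbb R^3)| + |{\rm DoF}_{k-1}^{\div}(s;\boldsymbol r_2;\mathbb S)| - |{\rm DoF}_{k-2}^{L^2}(s;\boldsymbol r_3;\mathbb R^3)|
\]
with $\epsilon_s=(-1)^{\dim s}$, after which Euler's formula on $T$ plus~\eqref{eq:polydimelas} finishes the count.

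One correction: you call the edge contribution ``routine,'' but it is the most delicate part. When $r_1^e\geq 2$, only~\eqref{eq:3dCrincSfemdofE1} is present and the paper verifies the above identity on $e$ by a symbolic calculation (splitting a double sum into a linear-in-$k$ piece and a constant piece and checking both vanish); the borderline cases $r_1^e=1,0$ require separate treatment because the conditional DoFs~\eqref{eq:3dCrincSfemdofE2}--\eqref{eq:3dCrincSfemdofE3} switch on and $r_2^{\texttt v}, r_3^{\texttt v}$ collapse. Your face target ``$4\dim\mathbb B_{k+1}(f;\boldsymbol r_1)$ minus corrections'' is also not quite the right endpoint --- the paper shows the face DoFs sum to $-6+|{\rm DoF}_{k+2}^{\grad}(f;\boldsymbol r_0)|+|{\rm DoF}_{k-1}^{\div}(f;\boldsymbol r_2)|$, with the $r_1^f=0$ extra term~\eqref{eq:3dCrincSfemdofF3} matching the added $\partial_n$-layer in ${\rm DoF}_{k+2}^{\grad}(f;\boldsymbol r_0)$. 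With those two adjustments your plan is the paper's proof.
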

\begin{proof}
At each vertex,  when $r_1^{\texttt{v}}\geq 2$, only DoFs~\eqref{eq:3dCrincSfemdofV1} exist with dimension $6 {r_1^{\texttt{v}}+3\choose3}$. By ~\eqref{eq:polydimelas}, we have
\begin{equation}\label{eq:incvertexdofnumber}
|{\rm DoF}_{k+1}^{\inc}(\texttt{v}; \boldsymbol r_1) |= -6 +3{r_0^{\texttt{v}}+3\choose3}+6{r_2^{\texttt{v}}+3\choose3}-3{r_3^{\texttt{v}}+3\choose3}.
\end{equation}
When $r_1^{\texttt{v}}=1$, $( r_0^{\texttt{v}}, r_1^{\texttt{v}}, r_2^{\texttt{v}}, r_3^{\texttt{v}}) = (2,1,0,-1)$, and $6$ DoFs~\eqref{eq:3dCrincSfemdofV2} are added. But the sum of number of DoFs~\eqref{eq:3dCrincSfemdofV1} -\eqref{eq:3dCrincSfemdofV2} is still equal to~\eqref{eq:incvertexdofnumber} by direct calculation. 

On tetrahedron $T$, the number of DoFs~\eqref{eq:3dCrincSfemdofT1}-\eqref{eq:3dCrincSfemdofT2} is
\begin{equation}\label{eq:incvolumedofnumber}
|{\rm DoF}_{k+1}^{\inc}(T; \boldsymbol r_1) | = 6 +\dim\mathbb B_{k+2}^{3}(\boldsymbol{r}_0) + \dim\mathbb B_{k-1}^{\div}(\boldsymbol{r}_2;\mathbb S) - \dim\mathbb B_{k-2}^3(\boldsymbol{r}_3),
\end{equation}
as $\div \mathbb B_{k-1}^{\div}(\boldsymbol{r}_2;\mathbb S) = \mathbb B_{k-2}^3(\boldsymbol{r}_3)/{\rm RM}$. 

On each face $f$, we first consider the case $r^f_1 = -1$. The number of DoFs~\eqref{eq:3dCrincSfemdofF1}-\eqref{eq:3dCrincSfemdofF5} is
\begin{align*}
&\dim \mathbb B_{k+2}^2(f;\boldsymbol r_0) + \dim \mathbb B_{k-1}(f; (\boldsymbol r_2)_+) - 3\\
+& \dim \mathbb B_{k+2}(f;\boldsymbol r_0) + \dim \mathbb B_{k-1}^{\rot}(f; \boldsymbol r_2) - 3\\
= & - 6 + 3|{\rm DoF}_{k+2}^{\grad}(f; \boldsymbol r_0)| + |{\rm DoF}_{k-1}^{\rm div}(f; \boldsymbol r_2, \mathbb S)|.
\end{align*}
No DoFs for ${\rm DoF}_{k-2}^{\grad}(f; \boldsymbol r_3)$ as $r_3^f = -1$. When $r_1^f = 0$, $r_0^f = 1$, we have one more layer in ${\rm DoF}_{k+2}^{\grad}(f; \boldsymbol r_0)$ for $\partial_n \boldsymbol v$: $\mathbb B_{k+2-1}^3(f; \boldsymbol r_0 - 1)$ which matches the number of DoF~\eqref{eq:3dCrincSfemdofF3} added for $r_1^f = 0$. So we conclude the 
the number of face DoFs~\eqref{eq:3dCrincSfemdofF1}-\eqref{eq:3dCrincSfemdofF5} satisfies
\begin{equation}\label{eq:incfacedofnumber}
{\rm DoF}_{k+1}^{\inc}(f; \boldsymbol r_1) = - 6 + 3|{\rm DoF}_{k+2}^{\grad}(f; \boldsymbol r_0)|  +   |{\rm DoF}_{k-1}^{\div}(f; \boldsymbol r_2, \mathbb S)|.
\end{equation}

%

On each edge $e$, when $r^e_1\geq 2$, 
only the~\eqref{eq:3dCrincSfemdofE1} exists and its number satisfies 
\begin{align}  
\notag 
&6 + 3\sum_{j=0}^{r_0^e}(j+1)(k -2r_1^{\texttt{v}}-1+j) - 6\sum_{j=0}^{r_1^e}(j+1)(k-2r_1^{\texttt{v}}+j) \\
&\;\; + 6\sum_{j=0}^{r_2^e}(j+1)(k-2r_1^{\texttt{v}}+2+j) 
-3\sum_{j=0}^{r_3^e}(j+1)(k-2r_1^{\texttt{v}}+3+j).
\label{eq:incedgedofnumber0}
\end{align}
Let $m = k -2r_1^{\texttt{v}} - 1\geq 0$. We split~\eqref{eq:incedgedofnumber0} into terms containing $m$ or not:
\begin{align*}
{\rm I}_1 &= m\left [ 3{r_1^e+3\choose2}-6{r_1^e+2\choose2}+6{r_1^e \choose2} - 3{r_1^e-1\choose 2} \right ],\\
{\rm I}_2 &= 6 + 3\sum_{j=0}^{r_1^e+1}(j+1)j - 6\sum_{j=0}^{r_1^e}(j+1)^2+6\sum_{j=0}^{r_1^e-2}(j+1)(j+3) - 3\sum_{j=0}^{r_1^e-3}(j+1)(j+4).
\end{align*}
By symbolical calculation, ${\rm I}_1 = 0$ and ${\rm I}_2 = 0$ for all integers $r^e_1\geq 2$ even for the boundary case $(r_0^{e}, r_1^{e}, r_2^{e}, r_3^{e}) = (3,2,0,-1)$. 


When $r^e_1= 1,r^e_2= -1$,~\eqref{eq:3dCrincSfemdofE3} is added. The added number of DoFs~\eqref{eq:3dCrincSfemdofE3} matches that of~\eqref{eq:3dCrdivSfemdofE2} for ${\rm DoF}_{k-1}^{\div}(e;\mathbb S)$. When $r^e_1= 0$,~\eqref{eq:3dCrincSfemdofE2} is further added. Recall that  $m = k -2r_1^{\texttt{v}} - 1$. By direct calculation, $ 3|{\rm DoF}_{k+2}^{\grad}(e; \boldsymbol r_0)| = 9m + 6$. Sum of number of DoFs~\eqref{eq:3dCrincSfemdofE1}-\eqref{eq:3dCrincSfemdofE2} is 
$$ 
6\dim\mathbb B_{k+1}(e; \boldsymbol r_1) + 3 \dim\mathbb B_{k}(e; \boldsymbol r_1 -1 ) =  9m+12 = 6+  3|{\rm DoF}_{k+2}^{\grad}(e; \boldsymbol r_0)| .  
$$ 
So for all cases $r^e_1\geq 0$
the sum of number of edge DoFs~\eqref{eq:3dCrincSfemdofE1}-\eqref{eq:3dCrincSfemdofE3} satisfies
\begin{equation}\label{eq:incedgedofnumber}
|{\rm DoF}_{k}^{\inc}(e; \boldsymbol r_1)| = 6 + 3|{\rm DoF}_{k+2}^{\grad}(e; \boldsymbol r_0)|  +   |{\rm DoF}_{k-1}^{\div}(e; \boldsymbol r_2,\mathbb S)| - 3|{\rm DoF}_{k-2}^{L^2}(e; \boldsymbol r_3)|.
\end{equation}


Then by the DoFs~\eqref{eq:C13d0}-\eqref{eq:C13d3} of spaces $\mathbb V_{k+2}(\boldsymbol{r}_0;\mathbb R^3)$ and $\mathbb V^{L^2}_{k-2}(\boldsymbol{r}_3;\mathbb R^3)$, Euler's formulae $|\Delta_0(T)| - |\Delta_1(T)| + |\Delta_2(T)| - |\Delta_3(T)| = 1$, and the DoFs~\eqref{eq:3dCrdivSfemdofV} -~\eqref{eq:3dCrdivSfemdofT} of space $\mathbb V^{\div}_{k-1}(\boldsymbol{r}_2;\mathbb S)$, and identities~\eqref{eq:incvertexdofnumber},\eqref{eq:incedgedofnumber},~\eqref{eq:incfacedofnumber}, and~\eqref{eq:incvolumedofnumber}, the number of DoFs~\eqref{eq:incdof} is
$$
-6 + \dim\mathbb P_{k+2}(T;\mathbb R^3)+\dim\mathbb P_{k-1}(T;\mathbb S)-\dim\mathbb P_{k-2}(T;\mathbb R^3),
$$
which equals $\dim\mathbb P_{k+1}(T;\mathbb S)$ in view of the polynomial elasticity complex~\eqref{eq:polyelasticity}.
\end{proof}

\begin{lemma}
The DoFs~\eqref{eq:incdof} are uni-solvent for $\mathbb P_{k+1}(T;\mathbb S)$.
\end{lemma}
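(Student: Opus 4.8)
The plan is to prove unisolvence in the standard way: since Lemma~\ref{lem:incSDoFsnumber} already establishes that the number of DoFs in~\eqref{eq:incdof} equals $\dim\mathbb P_{k+1}(T;\mathbb S)$, it suffices to show that if $\boldsymbol\tau\in\mathbb P_{k+1}(T;\mathbb S)$ makes all the DoFs in~\eqref{eq:incdof} vanish, then $\boldsymbol\tau=\boldsymbol 0$. The argument proceeds by a cascade: first control $\boldsymbol\tau$ on vertices, then on edges, then on faces, which forces $\boldsymbol\tau$ into the appropriate bubble space, and finally use the interior DoFs~\eqref{eq:3dCrincSfemdofT1}-\eqref{eq:3dCrincSfemdofT2} to kill it. Throughout, one exploits the two trace complexes displayed before~\eqref{eq:incdof} and the associated face bubble complexes~\eqref{eq:tracecomplex1}-\eqref{eq:tracecomplex2}, together with the already-established unisolvence of the $H(\div;\mathbb S)$ element (Lemma~\ref{lm:divS}) and the smooth scalar/vector finite element spaces.

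\textbf{Key steps.} First, from the vanishing of~\eqref{eq:3dCrincSfemdofV1} (and~\eqref{eq:3dCrincSfemdofV2} when $r_1^{\texttt{v}}=1$) together with~\eqref{eq:3dCrincSfemdofE1}-\eqref{eq:3dCrincSfemdofE2}, one deduces that $\nabla^j\boldsymbol\tau$ vanishes on all edges for $0\le j\le r_1^e$ and that $(\curl\boldsymbol\tau)^\intercal\boldsymbol t$ vanishes along each edge; this mirrors the edge analysis in Lemma~\ref{lem:PkSr2r3unisolvence}, writing the relevant surface operators in the edge frame $\{\boldsymbol t,\boldsymbol n_1,\boldsymbol n_2\}$. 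Second, on each face $f$ one uses the two trace complexes: the vanishing of DoFs~\eqref{eq:3dCrincSfemdofF1} shows $\boldsymbol n\times\boldsymbol\tau\times\boldsymbol n$ lies in a complement of $\sym\curl_f\mathbb B^2_{k+2}(f;\boldsymbol r_0)$ within $\mathbb B_{k+1}^{\div_f\div_f}(f;\boldsymbol r_1,(\boldsymbol r_2)_+)$, and then the DoFs~\eqref{eq:3dCrincSfemdofF4}-\eqref{eq:3dCrincSfemdofF5} controlling $\div_f\div_f$ of this trace (i.e., $\boldsymbol n^\intercal(\inc\boldsymbol\tau)\boldsymbol n$ and $\Pi_f(\inc\boldsymbol\tau)\boldsymbol n$) force $\boldsymbol n\times\boldsymbol\tau\times\boldsymbol n=\boldsymbol 0$ on $f$ using exactness of~\eqref{eq:tracecomplex1}. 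The analogous argument with DoFs~\eqref{eq:3dCrincSfemdofF2} and the $\tr_2^{\inc}$ trace complex~\eqref{eq:tracecomplex2} forces $\tr_2^{\inc}(\boldsymbol\tau)=\boldsymbol 0$ on $f$. When $r_1^f=0$ the extra DoF~\eqref{eq:3dCrincSfemdofF3} additionally kills $\boldsymbol\tau\boldsymbol n$ on $f$. Third, having shown $\boldsymbol\tau$ vanishes to the required order on the whole boundary $\partial T$, conclude $\boldsymbol\tau\in\mathbb B_{k+1}^{\inc}(\boldsymbol r_1;\mathbb S)$ — more precisely, a bubble-type space whose interior DoFs are exactly~\eqref{eq:3dCrincSfemdofT1}-\eqref{eq:3dCrincSfemdofT2}. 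Using the polynomial elasticity complex~\eqref{eq:polyelasticity}, write $\inc\boldsymbol\tau\in\mathbb B_{k-1}^{\div}(\boldsymbol r_2;\mathbb S)$; the vanishing of~\eqref{eq:3dCrincSfemdofT2} combined with the already-established $(\div;\mathbb S)$-stability (so that $\div$ maps onto $\mathbb B_{k-2}^3(\boldsymbol r_3)/\bs{RM}$ with kernel the space appearing in~\eqref{eq:3dCrincSfemdofT2}) gives $\inc\boldsymbol\tau=\boldsymbol 0$; then exactness of~\eqref{eq:polyelasticity} yields $\boldsymbol\tau=\defm\boldsymbol v$ for some $\boldsymbol v\in\mathbb B_{k+2}^3(\boldsymbol r_0)$, and the vanishing of~\eqref{eq:3dCrincSfemdofT1} forces $\boldsymbol v=\boldsymbol 0$, hence $\boldsymbol\tau=\boldsymbol 0$.

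\textbf{Main obstacle.} The delicate point is the face step: one must verify that the chosen face DoFs~\eqref{eq:3dCrincSfemdofF1}-\eqref{eq:3dCrincSfemdofF5} are exactly adapted to the \emph{modified} face bubble complexes~\eqref{eq:tracecomplex1}-\eqref{eq:tracecomplex2} rather than the naive ones, i.e., that replacing $\mathbb B_{k-1}(f;\boldsymbol r_2)/\mathbb P_1(f)$ by $\mathbb B_{k-1}(f;(\boldsymbol r_2)_+)/\mathbb P_1(f)$ (and similarly for the $\rot_f$ space) is compatible with imposing $\inc\boldsymbol\tau\in\mathbb V^{\div}_{k-1}(\boldsymbol r_2;\mathbb S)$ through the face DoFs~\eqref{eq:3dCrincSfemdofF4}-\eqref{eq:3dCrincSfemdofF5}. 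This requires knowing the exactness of the 2D bubble complexes from~\cite{Chen;Huang:2022femcomplex2d} and carefully matching which scalar/vector bubble DoFs of the $H(\div;\mathbb S)$ trace are "seen" by $\tr_1^{\inc}$ versus $\tr_2^{\inc}$. The edge step when $r_1^e\in\{0,1\}$ (where the extra DoFs~\eqref{eq:3dCrincSfemdofE2}-\eqref{eq:3dCrincSfemdofE3} appear) is a secondary technical nuisance, handled exactly as the corresponding boundary cases in Lemma~\ref{lem:incSDoFsnumber}. The remaining calculations — writing differential operators in local frames and invoking exactness of already-proven complexes — are routine and will be stated without belaboring the algebra.
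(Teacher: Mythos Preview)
Your overall strategy is correct and close to the paper's, but you invert the order of the two key face steps, which makes the argument more convoluted than necessary. The paper proceeds as follows: after establishing the edge vanishing of $\boldsymbol\tau$ and $(\curl\boldsymbol\tau)^{\intercal}$, it \emph{first} shows $\inc\boldsymbol\tau=\boldsymbol 0$ directly, by observing that the DoFs~\eqref{eq:3dCrincSfemdofV1}--\eqref{eq:3dCrincSfemdofV2}, \eqref{eq:3dCrincSfemdofE3}, \eqref{eq:3dCrincSfemdofF4}--\eqref{eq:3dCrincSfemdofF5}, \eqref{eq:3dCrincSfemdofT2} (together with the $\boldsymbol{RT}(f)$ and $\mathbb P_1(f)$ moments recovered by integration by parts) are precisely the DoFs of the $H(\div;\mathbb S)$ element~\eqref{eq:divSdof} applied to $\inc\boldsymbol\tau\in\mathbb P_{k-1}(T;\mathbb S)$, restricted to $\ker(\div)$ for the volume DoF, which suffices since $\div\inc=0$. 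Only \emph{after} $\inc\boldsymbol\tau=\boldsymbol 0$ and hence $\boldsymbol\tau=\defm\boldsymbol v$ are the face DoFs~\eqref{eq:3dCrincSfemdofF1}--\eqref{eq:3dCrincSfemdofF3} invoked, and at that point they become DoFs for $\boldsymbol v$ directly (via $\sym\curl_f(\boldsymbol v\times\boldsymbol n)$ and $\nabla_f^2(\boldsymbol v\cdot\boldsymbol n)$), yielding $\boldsymbol v\in\mathbb B_{k+2}^3(\boldsymbol r_0)$ without ever needing the exactness of the face bubble complexes~\eqref{eq:tracecomplex1}--\eqref{eq:tracecomplex2}.

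Your route instead tries to kill $\tr_1^{\inc}(\boldsymbol\tau)$ and $\tr_2^{\inc}(\boldsymbol\tau)$ on each face \emph{before} knowing $\inc\boldsymbol\tau=\boldsymbol 0$, which forces you to appeal explicitly to the exactness of~\eqref{eq:tracecomplex1}--\eqref{eq:tracecomplex2}. This can be made to work, but note a slip: $\div_f\div_f(\boldsymbol n\times\boldsymbol\tau\times\boldsymbol n)=\boldsymbol n^{\intercal}(\inc\boldsymbol\tau)\boldsymbol n$ is a scalar, so only DoF~\eqref{eq:3dCrincSfemdofF5} is relevant for the $\tr_1^{\inc}$ complex, while $\rot_f(\tr_2^{\inc}\boldsymbol\tau)$ corresponds to $\Pi_f(\inc\boldsymbol\tau)\boldsymbol n$ and hence to DoF~\eqref{eq:3dCrincSfemdofF4}; your text conflates the two. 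The paper's ordering avoids this bookkeeping entirely and does not rely on the 2D bubble complex exactness from~\cite{Chen;Huang:2022femcomplex2d} in the unisolvence proof itself.
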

\begin{proof}
By leveraging Lemma~\ref{lem:incSDoFsnumber}, our objective is to establish $\boldsymbol{\tau}=\boldsymbol{0}$ when $\boldsymbol{\tau}\in\mathbb P_{k+1}(T;\mathbb S)$ adheres to the condition that all the specified DoFs~\eqref{eq:incdof} vanish.

To commence, the vanishing~\eqref{eq:3dCrincSfemdofV1} implies that $\boldsymbol \tau(\texttt{v}) = 0$ and $(\curl \boldsymbol \tau)^{\intercal}(\texttt{v}) = 0$. This, combined with the vanishing DoFs~\eqref{eq:3dCrincSfemdofE1}-\eqref{eq:3dCrincSfemdofE2}, leads to $\boldsymbol \tau|_{e} =0$ and $(\curl \boldsymbol \tau)^{\intercal}|_{e} = 0$. Employing integration by parts further yields:
\begin{align*}
\int_f \Pi_f(\inc\boldsymbol{\tau})\boldsymbol{n}\cdot\boldsymbol{q} \dd S= \int_f {\rm rot}_f (\boldsymbol n\times (\curl\boldsymbol \tau)^{\intercal}\Pi_f)\cdot\boldsymbol{q} \dd S = 0, &\quad \boldsymbol{q}\in {\rm RT}(f), \\
\int_f \boldsymbol{n}^{\intercal}(\inc\boldsymbol{\tau})\boldsymbol{n}\,q \dd S=\int_f \div_f \div_f (\boldsymbol n\times \boldsymbol \tau \times \boldsymbol n)\,q \dd S=0, &\quad q\in \mathbb P_1(f).
\end{align*}
More detailed descriptions of edge traces can be found in~\cite[Lemma 4.8]{Chen;Huang:2021Finite}. 
This, combined with the nullified DoFs~\eqref{eq:3dCrincSfemdofV1}-\eqref{eq:3dCrincSfemdofE1},~\eqref{eq:3dCrincSfemdofE3},~\eqref{eq:3dCrincSfemdofF4}-\eqref{eq:3dCrincSfemdofF5}, and~\eqref{eq:3dCrincSfemdofT2},  yields $\inc\boldsymbol{\tau}=\boldsymbol{0}$.

Consequently it implies that $\boldsymbol{\tau}=\defm(\boldsymbol{v})$ for $\boldsymbol{v}\in\mathbb P_{k+2}(T;\mathbb R^3)$.
Furthermore, the nullification of DoFs~\eqref{eq:3dCrincSfemdofV1} and~\eqref{eq:3dCrincSfemdofE1} leads to $(\nabla^i\boldsymbol{v})(\texttt{v})=\boldsymbol{0}$ for $\texttt{v}\in\Delta_0(T)$ and $i=0,\ldots, r_1^{\texttt{v}}+1$, and $(\nabla^i\boldsymbol{v})|_e=\boldsymbol{0}$ for $e\in\Delta_1(T)$ and $i=0,\ldots, r_1^{e}+1$. Similarly, from the nullification of~\eqref{eq:3dCrincSfemdofF1}-\eqref{eq:3dCrincSfemdofF3}, we deduce that $(\nabla^i\boldsymbol{v})|_f=\boldsymbol{0}$ for $f\in\Delta_2(T)$ and $i=0,\ldots, r_1^{f}+1$.

Combining these outcomes, it is apparent that $\boldsymbol{v}\in\mathbb B_{k+2}^{3}(\boldsymbol{r}_0)$. Consequently, $\boldsymbol{v}=\boldsymbol{0}$ is deduced from the vanishing DoF~\eqref{eq:3dCrincSfemdofT1}.
\end{proof}

Next we show the constructed $H(\inc)$-conforming finite element space is indeed the space $\mathbb V_{k+1}^{\inc}(\boldsymbol{r}_1;\mathbb{S})$ defined by~\eqref{eq:VincS} and used in the finite element elasticity complex~\eqref{eq:femelasticitycomplex3d}. 
\begin{lemma}
For $r_1^f=-1, 0$, let
\begin{align*}  
\mathbb V_{k+1}^{\inc}(\boldsymbol{r}_1;\mathbb{S}) :=\{\boldsymbol{\tau}\in{L}^2(\Omega;\mathbb S): &\, \boldsymbol{\tau}|_T\in\mathbb P_{k+1}(T;\mathbb S) \textrm{ for all } T\in\mathcal T_h, \\
&\textrm{ and all the DoFs~\eqref{eq:incdof} are single-valued}\}.
\end{align*}
Then it is equal to the space $\{\boldsymbol{\tau}\in\mathbb V_{k+1}(\boldsymbol r_1)\otimes \mathbb S: \inc\boldsymbol{\tau}\in\mathbb V^{\div}_{k-1}(\boldsymbol{r}_2; \mathbb S)\}$.
\end{lemma}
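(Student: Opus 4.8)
The plan is to show the two spaces coincide by establishing one inclusion and matching dimensions, as in the analogous identifications earlier in the paper (for instance the characterization of $\mathbb V^{\curl}_k(\boldsymbol r_1;\mathbb S)$). Write $X_h$ for the space cut out by requiring the DoFs~\eqref{eq:incdof} to be single-valued and $Y_h=\{\boldsymbol\tau\in\mathbb V_{k+1}(\boldsymbol r_1)\otimes\mathbb S:\inc\boldsymbol\tau\in\mathbb V^{\div}_{k-1}(\boldsymbol r_2;\mathbb S)\}$. I would first prove $X_h\subseteq Y_h$ and then $\dim X_h=\dim Y_h$; since both are finite dimensional, this yields $X_h=Y_h$, so the finite element elasticity complex~\eqref{eq:femelasticitycomplex3d} is realized with the constructively defined $H(\inc)$-conforming space.

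For the inclusion, the membership $\boldsymbol\tau\in\mathbb V_{k+1}(\boldsymbol r_1)\otimes\mathbb S$ is immediate because the single-valued DoFs~\eqref{eq:3dCrincSfemdofV1}, \eqref{eq:3dCrincSfemdofE1}, \eqref{eq:3dCrincSfemdofF3} reproduce the DoFs~\eqref{eq:C13d0}--\eqref{eq:C13d2} of $\mathbb V_{k+1}(\boldsymbol r_1)$ tensored with $\mathbb S$, hence enforce the required matching of derivatives up to orders $r_1^{\texttt{v}}$, $r_1^e$, $r_1^f$ at vertices, edges and faces. The substantive point is $\inc\boldsymbol\tau\in\mathbb V^{\div}_{k-1}(\boldsymbol r_2;\mathbb S)$, and the key tools are the trace operators $\tr_1^{\inc}(\boldsymbol\tau)=\boldsymbol n\times\boldsymbol\tau\times\boldsymbol n$ and $\tr_2^{\inc}(\boldsymbol\tau)$ and the two trace complexes. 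Single-valuedness of the coupling DoFs~\eqref{eq:3dCrincSfemdofF1}--\eqref{eq:3dCrincSfemdofF2} (and, when $r_1^e=0$, of the edge DoF~\eqref{eq:3dCrincSfemdofE2}), together with the vertex and edge DoFs, forces $\tr_1^{\inc}(\boldsymbol\tau)$ and $\tr_2^{\inc}(\boldsymbol\tau)$ to be single-valued on every face; by the trace characterization of $H(\inc,\Omega;\mathbb S)$ from~\cite{Chen;Huang:2021Finite} this gives $\boldsymbol\tau\in H(\inc,\Omega;\mathbb S)$, so the piecewise $\inc\boldsymbol\tau$ is the distributional one, lies in $L^2(\Omega;\mathbb S)$ and automatically in $\ker(\div)$. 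Using the trace-complex identities $\div_f\div_f\tr_1^{\inc}(\boldsymbol\tau)=\boldsymbol n^{\intercal}(\inc\boldsymbol\tau)\boldsymbol n$ and $\rot_f\tr_2^{\inc}(\boldsymbol\tau)=\boldsymbol n\cdot(\inc\boldsymbol\tau)\times\boldsymbol n$, the surface-differentiated parts of the normal trace of $\inc\boldsymbol\tau$ are already continuous, while the remaining components $\Pi_f(\inc\boldsymbol\tau)\boldsymbol n$, $\boldsymbol n^{\intercal}(\inc\boldsymbol\tau)\boldsymbol n$ and the vertex and edge derivatives of $\inc\boldsymbol\tau$ are pinned down by the single-valued DoFs~\eqref{eq:3dCrincSfemdofV2}, \eqref{eq:3dCrincSfemdofE3}, \eqref{eq:3dCrincSfemdofF4}--\eqref{eq:3dCrincSfemdofF5}, which are exactly the DoFs~\eqref{eq:3dCrdivSfemdofV}--\eqref{eq:3dCrdivSfemdofF2} of $\mathbb V^{\div}_{k-1}(\boldsymbol r_2;\mathbb S)$ evaluated on $\inc\boldsymbol\tau$. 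Hence $\inc\boldsymbol\tau\in\mathbb V^{\div}_{k-1}(\boldsymbol r_2;\mathbb S)$ and $X_h\subseteq Y_h$.

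For the dimension count, unisolvence of~\eqref{eq:incdof} on $\mathbb P_{k+1}(T;\mathbb S)$ gives that $\dim X_h$ equals the number of global DoFs; summing the per-simplex counts~\eqref{eq:incvertexdofnumber}, \eqref{eq:incedgedofnumber}, \eqref{eq:incfacedofnumber}, \eqref{eq:incvolumedofnumber} from the proof of Lemma~\ref{lem:incSDoFsnumber} over $\mathcal T_h$ and using Euler's formula $|\Delta_0|-|\Delta_1|+|\Delta_2|-|\Delta_3|=1$ yields
$$
\dim X_h=-6+\dim\mathbb V_{k+2}^{\grad}(\boldsymbol r_0;\mathbb R^3)+\dim\mathbb V^{\div}_{k-1}(\boldsymbol r_2;\mathbb S)-\dim\mathbb V^{L^2}_{k-2}(\boldsymbol r_3;\mathbb R^3).
$$
On the other hand, Corollary~\ref{cor:femelasticityinc} already shows the complex~\eqref{eq:femelasticitycomplex3d} with $Y_h$ in the $\inc$-slot is exact, so the alternating-sum identity~\eqref{eq:incSfemdim} gives the same value for $\dim Y_h$; therefore $\dim X_h=\dim Y_h$ and $X_h=Y_h$. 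I expect the inclusion $X_h\subseteq Y_h$ to be the main obstacle: one must verify simultaneously that a DoF-conforming $\boldsymbol\tau$ has $\inc\boldsymbol\tau\in H(\div)$ and that $\inc\boldsymbol\tau$ carries the prescribed vertex/edge/face smoothness, which requires tracking sub-simplex by sub-simplex how the DoFs~\eqref{eq:3dCrincSfemdofF1}--\eqref{eq:3dCrincSfemdofF2} encode $H(\inc)$-conformity while~\eqref{eq:3dCrincSfemdofF4}--\eqref{eq:3dCrincSfemdofF5} (and their edge and vertex analogues) encode the $H(\div;\mathbb S)$ structure of $\inc\boldsymbol\tau$, and this is precisely where the two trace complexes and the 2D bubble complexes~\eqref{eq:tracecomplex1}--\eqref{eq:tracecomplex2} are needed.
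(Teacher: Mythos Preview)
Your proposal is correct and follows essentially the same approach as the paper's proof: establish the inclusion $X_h\subseteq Y_h$ and then match dimensions via~\eqref{eq:incSfemdim} and the per-simplex DoF counts from Lemma~\ref{lem:incSDoFsnumber}. The paper is terser, stating the inclusion as ``Clearly'' without the detailed trace analysis you provide, but the logic is identical.
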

\begin{proof}
Clearly $\mathbb V_{k+1}^{\inc}(\boldsymbol{r}_1;\mathbb{S})\subseteq \{\boldsymbol{\tau}\in\mathbb V_{k+1}(\boldsymbol r_1)\otimes \mathbb S: \inc\boldsymbol{\tau}\in\mathbb V^{\div}_{k-1}(\boldsymbol{r}_2; \mathbb S)\}$. By~\eqref{eq:incSfemdim} and the proof of Lemma~\ref{lem:incSDoFsnumber}, their dimensions are equal.
\end{proof}

There exist various variations of the finite element elasticity complexes. To illustrate one of these variations, we construct the space $\mathbb V_{k+1}^{\inc^+}(\boldsymbol r_1; \mathbb S)$. In cases where $r_1^f = -1$, we introduce an additional face degree of freedom:
\begin{equation}
\int_f (\boldsymbol{n}^{\intercal}\boldsymbol{\tau}\Pi_f)\cdot\boldsymbol{q} \dd S, \quad \boldsymbol{q}\in \mathbb B_{k+1}^2(f;\boldsymbol r_1).
\label{eq:3dCrinc+SfemdofF}
\end{equation}
Moreover, we modify~\eqref{eq:3dCrincSfemdofT1} to:
\begin{equation}
\int_T \boldsymbol{\tau}:\boldsymbol{q} \dx, \quad \boldsymbol{q}\in \defm(\mathbb B_{k+2}^{\curl}(\boldsymbol{r}_0, (\boldsymbol{r}_1)_+)). \label{eq:3dCrinc+SfemdofT1}
\end{equation}
Recall that $\boldsymbol r_0\geq (2,1,0)^{\intercal}$. For $\boldsymbol u \in \mathbb V_{k+2}^{\curl}(\boldsymbol{r}_0)$, the normal continuity of $\curl \boldsymbol u$ is always maintained. To achieve continuity for all components, we require $\int_f \boldsymbol n\times \curl \boldsymbol u \cdot \boldsymbol q \dd S$ for $\boldsymbol q\in \mathbb B_{k+1}^2(f;\boldsymbol r_1)$. This ensures a balance between the added DoFs in~\eqref{eq:3dCrinc+SfemdofF} and the reduced DoFs from ~\eqref{eq:3dCrincSfemdofT1} to ~\eqref{eq:3dCrinc+SfemdofT1}, maintaining unisolvence in a similar manner.

An advantage of using the space $\mathbb V_{k+1}^{\inc^+}(\boldsymbol{r}_1; \mathbb{S})$ is the reduction in dimension from the space $\mathbb V_{k+1}^{\inc}(\boldsymbol{r}_1; \mathbb{S})$:
$$
\dim \mathbb V_{k+1}^{\inc}(\boldsymbol{r}_1; \mathbb{S}) - \dim \mathbb V_{k+1}^{\inc^+}(\boldsymbol{r}_1; \mathbb{S}) = (4 |\Delta_3(\mathcal T_h)| - |\Delta_2(\mathcal T_h)|) \dim \mathbb B_{k+1}^2(f; \boldsymbol r_1).
$$

A relaxed constraint, $\boldsymbol r_2 \geq \max\{\boldsymbol{r}_1 \ominus 2, (0, -1, -1)^{\intercal}\}$, leads to another variation of the space $\mathbb V_{k+1}^{\inc}(\boldsymbol r_1; \mathbb S)$:
$$
\mathbb V_{k+1}^{\inc}(\boldsymbol r_1, \boldsymbol r_2; \mathbb S) :=  \{ \boldsymbol \tau \in \mathbb V_{k+1}^{\inc}(\boldsymbol r_1; \mathbb S): \inc \boldsymbol \tau \in \mathbb V_{k-1}^{\div}(\boldsymbol r_2; \mathbb S)\}.
$$ 
A finite element description of this space can be derived by first introducing the necessary DoFs to determine $\inc \boldsymbol \tau$, and then eliminating any redundant DoFs. Due to the complexity of these variations, we omit the detailed explanation here.

\subsection{$H(\sym\curl; \mathbb T)$-conforming elements}
We will now provide a comprehensive description of the $H(\sym\curl)$-conforming element space $\mathbb V_{k+1}^{\sym\curl_+}(\boldsymbol{r}_1;\mathbb{T})$. When $r_1^f\geq 1$, it is simply $\mathbb V_{k+1}(\boldsymbol{r}_1)\otimes \mathbb{T}$. So our focus is $r_1^f=-1$ or $r_1^f=0$. 

It is important to recall the trace complexes that were previously established in~\cite{Chen;Huang:2020Finite}:
\begin{equation*}
\begin{array}{c}
\xymatrix{
{\rm RT} \ar[d]^{} \ar[r]^-{\subset} & \boldsymbol  v \ar[d]^{} \ar[r]^-{\dev \grad}
                & \boldsymbol  \tau \ar[d]^{}   \ar[r]^-{\sym \curl} & \ar[d]^{}\boldsymbol  \sigma \ar[r]^{\div\div} & p \\
\mathbb R \ar[r]^{\subset\;\;} & \boldsymbol  v\cdot \boldsymbol  n \ar[r]^-{-\curl_f}
                & \boldsymbol  n\cdot \boldsymbol  \tau \times \boldsymbol  n   \ar[r]^-{\mathrm{div}_f} &  \boldsymbol  n\cdot \boldsymbol  \sigma \cdot \boldsymbol  n \ar[r]^{}& 0,    }
\end{array}
\end{equation*}
and
\begin{equation*}
\begin{array}{c}
\xymatrix{
{\rm RT} \ar[d]^{} \ar[r]^-{\subset} & \boldsymbol  v \ar[d]^{} \ar[r]^-{\dev \grad}
                & \boldsymbol  \tau \ar[d]^{}   \ar[r]^-{\sym \curl} & \ar[d]^{}\boldsymbol  \sigma \ar[r]^{\div\div} & p \\
{\rm RT}_f \ar[r]^{\quad\subset} & \Pi_f\boldsymbol v \ar[r]^-{-\sym \curl_f}
                & \Pi_f\sym(\boldsymbol \tau\times \boldsymbol n)\Pi_f   \ar[r]^-{\div_f\div_f} &  {\rm tr}_2^{\div\div}(\boldsymbol \sigma) \ar[r]^{}& 0.    }
\end{array}
\end{equation*}
The trace complexes above play a crucial role in guiding the design of edge and face DoFs to ensure the necessary continuity. As shown in~\cite[Lemma 6.1]{Chen;Huang:2020Finite}, the expression
\begin{equation*}
\tr_{2,e}^{\div_f\div_f}(\Pi_f\sym(\boldsymbol \tau\times \boldsymbol n)\Pi_f) =\boldsymbol n_{f,e}^{\intercal}(\curl\boldsymbol \tau)\boldsymbol n_f + \partial_{t}(\boldsymbol t^{\intercal}\bs\tau\boldsymbol t),
\end{equation*}
provides the motivation for introducing DoFs involving terms like $\boldsymbol{n}_2^{\intercal}(\curl\boldsymbol{\tau})\boldsymbol{n}_1+\partial_t(\boldsymbol{t}^{\intercal}\boldsymbol{\tau}\boldsymbol{t})$ on edges. In cases where $r_1^e = 0$, the focus is on enforcing the continuity of $\boldsymbol{n}_2^{\intercal}(\curl\boldsymbol{\tau})\boldsymbol{n}_1$ on edges, which aligns with the requirement $\skw \curl \widehat{\widetilde{\mathbb V}}_{k+1}^{\curl} (\boldsymbol r_1;\mathbb M) \subseteq \mskw\mathbb V^{\curl}_{k}(\boldsymbol{r}_1\ominus1)$. The other edge traces further ensure the continuity of terms like $\boldsymbol n_i^{\intercal}\boldsymbol\tau\boldsymbol t$ on edges.


The shape function space is $\mathbb P_{k+1}(T; \mathbb T)$. The degrees of freedom are 
\begin{subequations}\label{eq:symcurldof}
\begin{align}
\nabla^i\boldsymbol{\tau}(\texttt{v}), & \quad i=0,\ldots, r_1^{\texttt{v}}, \label{eq:3dCrsymcurlTfemdofV1}\\
\sym\curl\boldsymbol{\tau}(\texttt{v}), & \quad \textrm{ if }r_1^{\texttt{v}}=0, \label{eq:3dCrsymcurlTfemdofV2}\\
\int_e \frac{\partial^{j}\boldsymbol{\tau}}{\partial n_1^{i}\partial n_2^{j-i}}:\boldsymbol{q} \dd s, &\quad \boldsymbol{q}\in \mathbb B_{k+1-j}(e; r_1^{\texttt{v}} - j)\otimes  \mathbb T, 0\leq i\leq j\leq r_1^e, \label{eq:3dCrsymcurlTfemdofE1}\\
\int_e (\boldsymbol{n}_i^{\intercal}\boldsymbol{\tau}\boldsymbol{t})\,q \dd s, &\quad q\in \mathbb B_{k+1}(e;  r_1^{\texttt{v}}), \textrm{ if } r_1^{e}=-1, \label{eq:3dCrsymcurlTfemdofE2}\\
\int_e (\boldsymbol{n}_2^{\intercal}(\curl\boldsymbol{\tau})\boldsymbol{n}_1+\partial_t(\boldsymbol{t}^{\intercal}\boldsymbol{\tau}\boldsymbol{t}))\,q \dd s, &\quad q\in \mathbb B_{k}(e;  r_1^{\texttt{v}}-1), \textrm{ if } r_1^{e}=-1,0, \label{eq:3dCrsymcurlTfemdofE3}\\
\int_e (\boldsymbol{n}_i^{\intercal}(\sym\curl\boldsymbol{\tau})\boldsymbol{n}_j)\,q \dd s, &\quad q\in \mathbb B_{k}(e;  r_2^{\texttt{v}}), 1\leq i\leq j\leq 2, \textrm{ if } r_2^{e}=-1, \label{eq:3dCrsymcurlTfemdofE4}\\
\int_f \boldsymbol \tau \boldsymbol n \cdot \boldsymbol q \dd S, &\quad \boldsymbol{q} \in \mathbb B_{k+1}^3(f; \boldsymbol r_1), \text{ if } r_1^f = 0, 
\label{eq:3dCrsymcurlTfemdofF0}\\
\int_f (\boldsymbol{n}\cdot\boldsymbol{\tau}\times\boldsymbol{n})\cdot\boldsymbol{q} \dd S, &\quad \boldsymbol{q}\in \curl_f\mathbb B_{k+2}(f;\boldsymbol r_0), \label{eq:3dCrsymcurlTfemdofF1}\\
\int_f \div_f(\boldsymbol{n}\cdot\boldsymbol{\tau}\times\boldsymbol{n})\,q \dd S, &\quad q\in \mathbb B_{k}(f;(\boldsymbol r_2)_+)/\mathbb R, \label{eq:3dCrsymcurlTfemdofF2}\\
\int_f \Pi_f\sym(\boldsymbol{\tau}\times\boldsymbol{n})\Pi_f:\boldsymbol{q} \dd S, &\quad \boldsymbol{q}\in \sym\curl_f\mathbb B_{k+2}^2(f;\boldsymbol r_0), \label{eq:3dCrsymcurlTfemdofF3}\\
\int_f \div_f\div_f(\sym(\boldsymbol{\tau}\times\boldsymbol{n}))\,q \dd S, &\quad q\in \mathbb B_{k-1}(f;\boldsymbol r_2\ominus 1)/\mathbb P_1(f), \label{eq:3dCrsymcurlTfemdofF4}\\
\int_f \Pi_f(\sym\curl \boldsymbol \tau)\boldsymbol n\cdot \boldsymbol q \dd S, &\quad \boldsymbol q\in \mathbb B_{k}^{\div}(f;\boldsymbol r_2), \label{eq:3dCrsymcurlTfemdofF5}\\
\int_T \boldsymbol{\tau}:\boldsymbol{q} \dx, &\quad \boldsymbol{q}\in \dev\grad(\mathbb B_{k+2}^{3}(\boldsymbol{r}_0)), \label{eq:3dCrsymcurlTfemdofT1} \\
\int_T (\sym\curl\boldsymbol{\tau}):\boldsymbol{q} \dx, &\quad \boldsymbol{q}\in \mathbb B_{k}^{\div\div^+}(\boldsymbol{r}_2;\mathbb{S})\cap\ker(\div\div) \label{eq:3dCrsymcurlTfemdofT2}
\end{align}
\end{subequations}
for each $\texttt{v}\in \Delta_{0}(T)$, $e\in \Delta_{1}(T)$ and $f\in \Delta_{2}(T)$.

\begin{lemma}
 The following polynomial $\div\div$ complex
\begin{equation}\label{eq:divdivcomplex3dPoly}
{\rm RT}\xrightarrow{\subset} \mathbb P_{k+2}(T; \mathbb R^3)\xrightarrow{\dev\grad}\mathbb P_{k+1}(T; \mathbb T)\xrightarrow{\sym\curl} \mathbb P_k(T; \mathbb S) \xrightarrow{\div{\div}} \mathbb P_{k-2}(T)\xrightarrow{}0
\end{equation}
is exact. For integer $k\geq 2$,
\begin{equation}\label{eq:divdivdim}
-4 + \dim  \mathbb P_{k+2}(T; \mathbb R^3) - \dim \mathbb P_{k+1}(T; \mathbb T) + \mathbb P_k(T; \mathbb S) - \mathbb P_{k-2}(T) = 0.
\end{equation}
And, for all integers $m\geq 0$, 
\begin{equation}\label{eq:polydim}
-4 +3{m+4\choose3} - 8 {m + 3\choose 3} +6{m+2 \choose 3}-{m \choose 3} = 0.
\end{equation}
\end{lemma}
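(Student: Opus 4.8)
The plan is to mirror the proof of the polynomial elasticity complex given above, with the exactness of the complex imported from the literature (or from the BGG machinery) and the two displayed identities obtained as routine alternating-sum and polynomial-degree arguments.

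\emph{Step 1 (exactness).} The complex \eqref{eq:divdivcomplex3dPoly} is the polynomial version of the $\div\div$ complex \eqref{eq:divdivcomplex3d}; its exactness is established in~\cite{Arnold;Hu:2020Complexes,PaulyZulehner2020}. Alternatively one may derive it from the BGG construction of Section~\ref{sec:preliminary}: start from the polynomial de Rham complex
\[
\mathbb R\hookrightarrow\mathbb P_{j}(T)\xrightarrow{\grad}\mathbb P_{j-1}(T;\mathbb R^3)\xrightarrow{\curl}\mathbb P_{j-2}(T;\mathbb R^3)\xrightarrow{\div}\mathbb P_{j-3}(T)\to0,
\]
which is exact for every $j$, stack the appropriate copies as in diagram~\eqref{eq:3DBGG} with each $H^s$ replaced by the corresponding polynomial space, and apply the BGG machinery. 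Since the input complexes are exact and the linking maps are the same algebraic operators $\iota,\mskw,\vskw,\tr,\ldots$, the output is the exact complex \eqref{eq:divdivcomplex3dPoly}; the analogous reduction operations were already used for the polynomial elasticity complex.

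\emph{Step 2 (the dimension identity \eqref{eq:divdivdim}).} Granted exactness, the alternating sum of the dimensions along \eqref{eq:divdivcomplex3dPoly} vanishes. Reading left to right and using $\dim\boldsymbol{RT}=4$,
\[
4-\dim\mathbb P_{k+2}(T;\mathbb R^3)+\dim\mathbb P_{k+1}(T;\mathbb T)-\dim\mathbb P_k(T;\mathbb S)+\dim\mathbb P_{k-2}(T)=0,
\]
which after multiplying by $-1$ is exactly \eqref{eq:divdivdim}. The hypothesis $k\geq2$ ensures $\mathbb P_{k-2}(T)$ is a genuine space and every arrow acts between the claimed spaces; for $k=0,1$ one reads $\mathbb P_{k-2}(T)=\boldsymbol{0}$ and verifies \eqref{eq:divdivdim} by hand.

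\emph{Step 3 (the binomial identity \eqref{eq:polydim}).} Substituting $\dim\mathbb P_j(T)=\binom{j+3}{3}$, $\dim\mathbb P_j(T;\mathbb R^3)=3\binom{j+3}{3}$, $\dim\mathbb P_j(T;\mathbb S)=6\binom{j+3}{3}$, $\dim\mathbb P_j(T;\mathbb T)=8\binom{j+3}{3}$ into \eqref{eq:divdivdim} and setting $m=k+1$ turns it into
\[
-4+3\binom{m+4}{3}-8\binom{m+3}{3}+6\binom{m+2}{3}-\binom{m}{3}=0 .
\]
Step 2 proves this for $m\geq3$; to obtain it for all $m\geq0$ note that the left-hand side is a polynomial in $m$ all of whose positive-degree terms cancel (the $m^3$-coefficient is $(3-8+6-1)/6=0$, and the $m^2$- and $m^1$-coefficients likewise vanish), hence it is constant, and the constant must be $0$ since the identity already holds for $m\geq3$; equivalently, check $m=0,1,2$ directly. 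This is just \eqref{eq:polydim}.

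I expect no genuine obstacle here: the only content is the exactness of \eqref{eq:divdivcomplex3dPoly}, which is quoted, and the rest is a bookkeeping exercise — keeping the signs in the alternating sum correct and tracking the index shift $m=k+1$ between \eqref{eq:divdivdim} and \eqref{eq:polydim}.
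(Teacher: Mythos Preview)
Your proposal is correct and follows essentially the same approach as the paper: quote the exactness of the polynomial $\div\div$ complex from the literature, read off \eqref{eq:divdivdim} as the alternating sum, then substitute the binomial dimension formulae with the shift $m=k+1$ and handle the remaining cases $m=0,1,2$ directly. The paper cites \cite{Chen;Huang:2020Finite} rather than \cite{Arnold;Hu:2020Complexes,PaulyZulehner2020} for the polynomial complex (the latter references treat the Sobolev setting), but your BGG alternative covers this, and your polynomial-cancellation argument for extending \eqref{eq:polydim} to small $m$ is a slightly cleaner variant of the paper's direct verification.
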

\begin{proof}
A proof of~\eqref{eq:divdivcomplex3dPoly} can be found in~\cite{Chen;Huang:2020Finite}. A consequence of the exactness of~\eqref{eq:divdivcomplex3dPoly} is the dimension identity~\eqref{eq:divdivdim}. 
Identity~\eqref{eq:polydim} follows from~\eqref{eq:divdivdim} when $m\geq 3$ and can be verified directly for $m = 0,1,2$. 
\end{proof}
 
 
\begin{lemma}\label{lem:symcurlTDoFsnumber}
The sum of the number of DoFs~\eqref{eq:symcurldof} equals $\dim\mathbb P_{k+1}(T;\mathbb T)$.
\end{lemma}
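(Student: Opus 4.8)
The plan is to count the degrees of freedom in~\eqref{eq:symcurldof} sub-simplex by sub-simplex, exactly mirroring the strategy used for Lemma~\ref{lem:incSDoFsnumber}, and then sum using the polynomial $\div\div$ complex~\eqref{eq:divdivcomplex3dPoly}. The target identity to land on is
$$
|{\rm DoF}~\eqref{eq:symcurldof}| = -4 + \dim\mathbb P_{k+2}(T;\mathbb R^3) + \dim\mathbb P_{k}(T;\mathbb S) - \dim\mathbb P_{k-2}(T),
$$
which by~\eqref{eq:divdivdim} equals $\dim\mathbb P_{k+1}(T;\mathbb T)$. The bookkeeping devices are the same as before: group the DoFs according to which space in the divdiv complex they are "borrowed" from — DoFs~\eqref{eq:3dCrsymcurlTfemdofV1}, \eqref{eq:3dCrsymcurlTfemdofE1}, \eqref{eq:3dCrsymcurlTfemdofF0}, \eqref{eq:3dCrsymcurlTfemdofT1} track $\mathbb V_{k+2}^{\grad}(\boldsymbol r_0;\mathbb R^3)$; the DoFs on $\sym\curl\boldsymbol\tau$ (i.e.~\eqref{eq:3dCrsymcurlTfemdofV2}, \eqref{eq:3dCrsymcurlTfemdofE4}, \eqref{eq:3dCrsymcurlTfemdofF5}, part of \eqref{eq:3dCrsymcurlTfemdofT2}) track $\mathbb V_{k}^{\div\div^+}(\boldsymbol r_2;\mathbb S)$; and the remaining trace DoFs track the image $\mathbb P_{k-2}(T)$ through the surjectivity of $\div\div$ on bubbles.

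The concrete steps are: (1) \textbf{Vertex count.} For $r_1^{\texttt v}\geq 1$ only~\eqref{eq:3dCrsymcurlTfemdofV1} is present, with $8\binom{r_1^{\texttt v}+3}{3}$ components; for $r_1^{\texttt v}=0$ the $6$ extra DoFs~\eqref{eq:3dCrsymcurlTfemdofV2} bring the total to the same value given by~\eqref{eq:polydim}, namely $-4 + 3\binom{r_0^{\texttt v}+3}{3} + 6\binom{r_2^{\texttt v}+3}{3} - \binom{r_3^{\texttt v}+3}{3}$ with $r_3^{\texttt v}$ read off from~\eqref{eq:divdivrsequence}. (2) \textbf{Volume count.} Using $\div\div\mathbb B_k^{\div\div^+}(\boldsymbol r_2;\mathbb S)=\mathbb B_{k-2}(\boldsymbol r_3)/\bs{RT}$ from Corollary~\ref{cor:divdiv+}-type reasoning, DoFs~\eqref{eq:3dCrsymcurlTfemdofT1}-\eqref{eq:3dCrsymcurlTfemdofT2} number $4 + \dim\mathbb B_{k+2}^3(\boldsymbol r_0) + \dim\mathbb B_k^{\div\div^+}(\boldsymbol r_2;\mathbb S) - \dim\mathbb B_{k-2}(\boldsymbol r_3)$. (3) \textbf{Face count.} Split into $r_1^f=-1$ and $r_1^f=0$; invoke the two face bubble $\div\div$-type complexes implied by the trace complexes of~\cite{Chen;Huang:2020Finite} (one governed by $\curl_f$ through~\eqref{eq:3dCrsymcurlTfemdofF1}-\eqref{eq:3dCrsymcurlTfemdofF2}, one by $\sym\curl_f$ through~\eqref{eq:3dCrsymcurlTfemdofF3}-\eqref{eq:3dCrsymcurlTfemdofF5}), to get $|{\rm DoF}(f)| = -4 + |{\rm DoF}_{k+2}^{\grad}(f;\boldsymbol r_0)| + |{\rm DoF}_{k}^{\div\div^+\text{-face}}(f;\boldsymbol r_2)| - |{\rm DoF}^{\grad}_{k-2}(f;\boldsymbol r_3)|$, with the last term absent when $r_3^f=-1$; verify the extra DoF~\eqref{eq:3dCrsymcurlTfemdofF0} for $r_1^f=0$ matches the extra $\partial_n\boldsymbol v$-layer in the $\mathbb V^{\grad}$ count, and that the bubbles appearing are the ones from~\cite{Chen;Huang:2022femcomplex2d}. (4) \textbf{Edge count.} For $r_1^e\geq 1$ only~\eqref{eq:3dCrsymcurlTfemdofE1} is present; reduce the alternating sum to polynomial factors in $m = k+1-2r_1^{\texttt v}$ and check the two pieces vanish by symbolic computation (as in the ${\rm I}_1={\rm I}_2=0$ step of Lemma~\ref{lem:incSDoFsnumber}). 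For $r_1^e=0$ add~\eqref{eq:3dCrsymcurlTfemdofE3}, and for $r_1^e=-1$ add both~\eqref{eq:3dCrsymcurlTfemdofE2} and~\eqref{eq:3dCrsymcurlTfemdofE3}; the extra DoF~\eqref{eq:3dCrsymcurlTfemdofE4} (present only when $r_2^e=-1$) must match the edge DoF count for $\mathbb V_k^{\div\div^+}(\boldsymbol r_2;\mathbb S)$ on an edge. The outcome is $|{\rm DoF}(e)| = 4 + |{\rm DoF}_{k+2}^{\grad}(e;\boldsymbol r_0)| + |{\rm DoF}_{k}^{\div\div^+\text{-edge}}(e;\boldsymbol r_2)| - |{\rm DoF}_{k-2}^{L^2}(e;\boldsymbol r_3)|$. (5) \textbf{Assemble.} Sum over all sub-simplices, use Euler's formula $|\Delta_0|-|\Delta_1|+|\Delta_2|-|\Delta_3|=1$, recognize that the vertex/edge/face/volume DoFs for $\boldsymbol r_0$, $\boldsymbol r_2$, $\boldsymbol r_3$ reassemble into $\dim\mathbb P_{k+2}(T;\mathbb R^3)$, $\dim\mathbb P_k(T;\mathbb S)$, $\dim\mathbb P_{k-2}(T)$ respectively, and the constant $-4$ per vertex/face combines with $+4$ per edge/volume to give a single $-4$ via Euler. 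Then apply~\eqref{eq:divdivdim}.

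The main obstacle, as in the $H(\inc)$ case, will be the edge count: the case $r_1^e\geq 1$ requires reducing a four-term alternating sum of weighted binomial-like expressions to zero, which is only transparent after splitting into the part proportional to $m$ and the part independent of $m$ and checking each vanishes identically in $r_1^e$, including the boundary case $(r_0^e,r_1^e,r_2^e,r_3^e)$ forced by~\eqref{eq:divdivrsequence}. A secondary subtlety is correctly identifying the \emph{right} face bubble complexes~\eqref{eq:tracecomplex1}-\eqref{eq:tracecomplex2}-analogues here — the ones with the smoothness tweaks built in from DoFs~\eqref{eq:3dCrsymcurlTfemdofF2}, \eqref{eq:3dCrsymcurlTfemdofF4}, \eqref{eq:3dCrsymcurlTfemdofF5} (with $(\boldsymbol r_2)_+$ versus $\boldsymbol r_2$, and $\bs{RT}(f)$ versus $\bs{RM}(f)$ quotients) rather than the naive ones — and confirming the dimension contribution of those spaces is unchanged because the modifications live entirely in the $\div\div$-side face DoFs. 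I would lean on~\cite{Chen;Huang:2022femcomplex2d} for the bubble dimensions and restrict the computation to matching constants, exactly as the excerpt does for the $\inc$ element.
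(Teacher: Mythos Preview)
Your proposal is correct and follows essentially the same approach as the paper's proof: the same sub-simplex-by-sub-simplex count, the same case splits on $r_1^{\texttt v}$, $r_1^e$, $r_1^f$, the same use of the polynomial identity~\eqref{eq:polydim} at vertices, the same $I_1+I_2$ splitting for the $r_1^e\geq 1$ edge case, and the same Euler-formula assembly at the end. One minor slip: the paper defines $m=k-2r_1^{\texttt v}-1$, not $m=k+1-2r_1^{\texttt v}$, but this does not affect the structure of the argument.
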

\begin{proof}
At each vertex,  when $r_1^{\texttt{v}}\geq 1$, only  DoFs~\eqref{eq:3dCrsymcurlTfemdofV1} exists with dimension $8 {r_1^{\texttt{v}} + 3\choose 3}$. By ~\eqref{eq:polydim}, we have
\begin{equation}\label{eq:symcurldofnumber0}
{\rm DoF}_{k+1}^{\sym\curl_+}(\texttt{v}; \boldsymbol r_1) =  -4 +3{r_0^{\texttt{v}}+3\choose3}+6{r_2^{\texttt{v}}+3\choose3}-{r_3^{\texttt{v}}+3 \choose 3}.
\end{equation}
When $r_1^{\texttt{v}}=0$, additional $6$ DoFs~\eqref{eq:3dCrsymcurlTfemdofV2} are added but now $r_2^{\texttt{v}} = 0, r_3^{\texttt{v}} = -1$ and so~\eqref{eq:symcurldofnumber0} still holds.

On tetrahedron $T$, the number of DoFs~\eqref{eq:3dCrsymcurlTfemdofT1}-\eqref{eq:3dCrsymcurlTfemdofT2} is
\begin{equation}\label{eq:symcurldofnumber3}
4 +\dim\mathbb B_{k+2}^{3}(\boldsymbol{r}_0) + \dim\mathbb B_{k}^{\div\div^+}(\boldsymbol{r}_2;\mathbb S) - \dim\mathbb B_{k-2}(\boldsymbol{r}_3),
\end{equation}
as $\div\div \mathbb B_{k}^{\div\div^+}(\boldsymbol{r}_2;\mathbb T) = \mathbb B_{k-2}(\boldsymbol{r}_3)/{\rm RT}$. 

On each face, we consider $r_1^f = -1$ first. 
Sum of number of DoFs~\eqref{eq:3dCrsymcurlTfemdofF1} and~\eqref{eq:3dCrsymcurlTfemdofF3} is $\dim\mathbb B_{k+2}^3(f; \boldsymbol r_0) =  3|{\rm DoF}_{k+2}^{\grad}(f; \boldsymbol r_0)|$. Sum of~\eqref{eq:3dCrsymcurlTfemdofF2} and~\eqref{eq:3dCrsymcurlTfemdofF4} is $\dim\mathbb B_{k}(f; (\boldsymbol r_2)_+) + \dim\mathbb B_{k-1}(f; \boldsymbol r_2\ominus 1)  -4$. Comparing with the face DoFs~\eqref{eq:divdiv+SdF2}-\eqref{eq:divdiv+SdF3} for $\mathbb V_k^{\div\div^+}(\boldsymbol r_2; \mathbb S)$, plus~\eqref{eq:3dCrsymcurlTfemdofF5}, we conclude the dimension identity
\begin{equation}\label{eq:symcurldofnumber2}
|{\rm DoF}_{k+1}^{\sym\curl_+}(f; \boldsymbol r_1) |= -4 + 3|{\rm DoF}_{k+2}^{\grad}(f; \boldsymbol r_0)| + |{\rm DoF}_{k}^{\div\div^+}(f; \boldsymbol r_2)|.
\end{equation}
When $r_1^f = 0$, $r_2^f=-1$ and thus no change of $|{\rm DoF}_{k}^{\div\div^+}(f; \boldsymbol r_2)|$. But $r_0^f = 1$, one more layer in ${\rm DoF}_{k+1}^{\grad}(f; \boldsymbol r_0)$ is added for $\partial_n \boldsymbol v$: $\dim \mathbb B_{k+2-1}^3(f; \boldsymbol r_0 - 1)$  which matches the number of DoF~\eqref{eq:3dCrsymcurlTfemdofF0} added for $r_1^f = 0$. So~\eqref{eq:symcurldofnumber2} holds for both $r_1^f = -1, 0$. No face DoFs for $\mathbb V_{k-2}^{L^2}(f; \boldsymbol r_3)$ as $r_3^f = -1$.

On each edge, we separate into three cases. \\
\step 1 When $r^e_1\geq 1$, only~\eqref{eq:3dCrsymcurlTfemdofE1} exists. We write out the dimension of edge DoFs for spaces in the divdiv complex 
\begin{align}  
\notag 4 +& 3\sum_{j=0}^{r_0^e}(j+1)(k -2r_1^{\texttt{v}}-1+j) - 8\sum_{j=0}^{r_1^e}(j+1)(k-2r_1^{\texttt{v}}+j) \\
& +6\sum_{j=0}^{r_2^e}(j+1)(k-2r_1^{\texttt{v}}+1+j) 
-\sum_{j=0}^{r_3^e}(j+1)(k-2r_1^{\texttt{v}}+3+j).
\label{eq:symcurledgedofnumber}
\end{align}
Let $m = k -2r_1^{\texttt{v}} - 1\geq 0$. We split~\eqref{eq:symcurledgedofnumber} into terms containing $m$ or not:
\begin{align*}
{\rm I}_1 &= m\left [ 3{r_1^e+3\choose2}-8{r_1^e+2\choose2}+6{r_1^e+1\choose2} - {r_1^e-1\choose 2} \right ],\\
{\rm I}_2 &= 4 + 3\sum_{j=0}^{r_0^e}(j+1)j - 8\sum_{j=0}^{r_1^e}(j+1)^2+6\sum_{j=0}^{r_2^e}(j+1)(j+2) - \sum_{j=0}^{r_3^e}(j+1)(j+4).
\end{align*}
By symbolical calculation, ${\rm I}_1 = 0$ and ${\rm I}_2 = 0$ for all integers $r^e_1\geq 1$ even for the case $(r_0^{e}, r_1^{e}, r_2^{e}, r_3^{e}) = (2,1,0,-1)$. 

%

\medskip
\step 2
When $r_1^e = 0, r_2^e = -1, r_3^e = -1$, in~\eqref{eq:symcurledgedofnumber} the last two terms disappeared. DoFs 
\eqref{eq:3dCrsymcurlTfemdofE3} and~\eqref{eq:3dCrsymcurlTfemdofE4} are added for ${\rm DoF}_{k+1}^{\sym\curl_+}(e; \boldsymbol r_1)$. The number of DoFs~\eqref{eq:3dCrsymcurlTfemdofE4} is $|{\rm DoF}_{k}^{\div\div^+}(e; \boldsymbol r_2)|$. Recall that  $m = k -2r_1^{\texttt{v}} - 1$. By direct calculation, $ 3|{\rm DoF}_{k+2}^{\grad}(e; \boldsymbol r_0)| = 9m + 6$. Sum of number of DoFs~\eqref{eq:3dCrsymcurlTfemdofE1} and~\eqref{eq:3dCrsymcurlTfemdofE3} is $8\dim\mathbb B_{k+1}(e; r_1^{\texttt{v}})+\dim\mathbb B_{k}(e; r_1^{\texttt{v}}-1) = 9 m + 10$. Therefore we conclude 
\begin{equation}\label{eq:r1e=0symcurl}
|{\rm DoF}_{k+1}^{\sym\curl_+}(e; \boldsymbol r_1)| = 4 + 3|{\rm DoF}_{k+2}^{\grad}(e; \boldsymbol r_0)| + |{\rm DoF}_{k}^{\div\div^+}(e; \boldsymbol r_2)|.
\end{equation}
No $|{\rm DoF}_{k-2}^{L^2}(e; \boldsymbol r_3)|$ is in~\eqref{eq:r1e=0symcurl} as $r_3^e=-1$. 

\medskip
\step 3
When $r_1^e = -1,  r_2^e = -1, r_0^e = 0$,~\eqref{eq:3dCrsymcurlTfemdofE1} is further removed. Now~\eqref{eq:3dCrsymcurlTfemdofE2},~\eqref{eq:3dCrsymcurlTfemdofE3}, and~\eqref{eq:3dCrsymcurlTfemdofE4} are present. The number of DoFs~\eqref{eq:3dCrsymcurlTfemdofE4} is still $|{\rm DoF}_{k}^{\div\div^+}(e; \boldsymbol r_2)|$. The number of DoFs~\eqref{eq:3dCrsymcurlTfemdofE2}-\eqref{eq:3dCrsymcurlTfemdofE3} is
\begin{align*}
&\quad 2\dim\mathbb B_{k+1}(e; r_1^{\texttt{v}})+\dim\mathbb B_{k}(e; r_1^{\texttt{v}}-1) = 3m+1 = 4 + 3|{\rm DoF}_{k+2}^{\grad}(e; \boldsymbol r_0)|.
\end{align*}
So~\eqref{eq:r1e=0symcurl} still holds. 

In summary, for all cases the number of DoFs~\eqref{eq:3dCrsymcurlTfemdofE1}-\eqref{eq:3dCrsymcurlTfemdofE4} at an edge satisfies
\begin{equation}\label{eq:symcurldofnumber1}
|{\rm DoF}_{k+1}^{\sym\curl_+}(e; \boldsymbol r_1) |= 4 + 3|{\rm DoF}_{k+2}^{\grad}(e; \boldsymbol r_0)| + |{\rm DoF}_{k}^{\div\div^+}(e; \boldsymbol r_2)| - |{\rm DoF}_{k-2}^{L^2}(e; \boldsymbol r_3)|.
\end{equation}

Then combining~\eqref{eq:symcurldofnumber0},~\eqref{eq:symcurldofnumber1},~\eqref{eq:symcurldofnumber2}, and~\eqref{eq:symcurldofnumber3}, 
by the DoFs of spaces $\mathbb V_{k+2}(\boldsymbol{r}_1+1;\mathbb R^3)$, $\mathbb V^{\div\div^+}_{k}(\boldsymbol{r}_2;\mathbb S)$ and $\mathbb V^{L^2}_{k-2}(\boldsymbol{r}_3)$ and the Euler's formulae $|\Delta_0(T)| - |\Delta_1(T)|+|\Delta_2(T)| - |\Delta_3(T)| = 1$, the number of DoFs~\eqref{eq:3dCrsymcurlTfemdofV1}-\eqref{eq:3dCrsymcurlTfemdofT2} is
$$
-4 + \dim\mathbb P_{k+2}(T;\mathbb R^3)+\dim\mathbb P_{k}(T;\mathbb S)-\dim\mathbb P_{k-2}(T),
$$
which equals $\dim\mathbb P_{k+1}(T;\mathbb T)$ in view of~\eqref{eq:divdivdim}.
\end{proof}

\begin{lemma}
The DoFs~\eqref{eq:symcurldof} are uni-solvent for $\mathbb P_{k+1}(T;\mathbb T)$.
\end{lemma}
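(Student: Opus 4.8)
The plan is to follow the standard two-part argument for unisolvence: since Lemma~\ref{lem:symcurlTDoFsnumber} already establishes that the number of DoFs~\eqref{eq:symcurldof} equals $\dim\mathbb P_{k+1}(T;\mathbb T)$, it suffices to show that a tensor $\boldsymbol\tau\in\mathbb P_{k+1}(T;\mathbb T)$ for which all DoFs~\eqref{eq:symcurldof} vanish must be zero. First I would extract the consequences of the vanishing vertex and edge DoFs~\eqref{eq:3dCrsymcurlTfemdofV1}--\eqref{eq:3dCrsymcurlTfemdofE4}: vanishing~\eqref{eq:3dCrsymcurlTfemdofV1} gives $\nabla^j\boldsymbol\tau(\texttt{v})=\boldsymbol 0$ for $j\le r_1^{\texttt v}$ and, through the relation between $\sym\curl\boldsymbol\tau$ and derivatives of $\boldsymbol\tau$, also that $\sym\curl\boldsymbol\tau$ vanishes at vertices when $r_1^{\texttt v}=0$ (DoF~\eqref{eq:3dCrsymcurlTfemdofV2}); combined with~\eqref{eq:3dCrsymcurlTfemdofE1}--\eqref{eq:3dCrsymcurlTfemdofE3} and the edge-trace identity $\tr_{2,e}^{\div_f\div_f}(\Pi_f\sym(\boldsymbol\tau\times\boldsymbol n)\Pi_f)=\boldsymbol n_{f,e}^{\intercal}(\curl\boldsymbol\tau)\boldsymbol n_f+\partial_t(\boldsymbol t^{\intercal}\boldsymbol\tau\boldsymbol t)$ from~\cite[Lemma 6.1]{Chen;Huang:2020Finite}, I would conclude $\boldsymbol\tau|_e=\boldsymbol 0$ and the appropriate edge traces of $\sym\curl\boldsymbol\tau$ and $\curl\boldsymbol\tau$ vanish.

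The second step is to show $\sym\curl\boldsymbol\tau=\boldsymbol 0$. The key is that $\boldsymbol\sigma:=\sym\curl\boldsymbol\tau\in\mathbb P_k(T;\mathbb S)$ and the vanishing DoFs are exactly designed so that $\boldsymbol\sigma$ satisfies the homogeneous version of all the DoFs~\eqref{eq:divdiv+dof} for $\mathbb V_k^{\div\div^+}(\boldsymbol r_2;\mathbb S)$. Concretely: the vanishing of~\eqref{eq:3dCrsymcurlTfemdofE4},~\eqref{eq:3dCrsymcurlTfemdofF5} (together with~\eqref{eq:3dCrsymcurlTfemdofF2},~\eqref{eq:3dCrsymcurlTfemdofF4}) controls the boundary traces of $\boldsymbol\sigma=\sym\curl\boldsymbol\tau$ via integration by parts on faces and edges using the trace complexes recalled above, so that $\boldsymbol n\cdot\boldsymbol\sigma\times\boldsymbol n$, $\boldsymbol n^{\intercal}\boldsymbol\sigma\boldsymbol n$, $\Pi_f\boldsymbol\sigma\boldsymbol n$, and $\div_f(\Pi_f\boldsymbol\sigma\boldsymbol n)$ are all pinned down; combined with the volume DoF~\eqref{eq:3dCrsymcurlTfemdofT2} stating $\int_T\boldsymbol\sigma:\boldsymbol q=0$ for $\boldsymbol q\in\mathbb B_k^{\div\div^+}(\boldsymbol r_2;\mathbb S)\cap\ker(\div\div)$, and using the surjectivity $\div\div\mathbb V_k^{\div\div^+}(\boldsymbol r_2;\mathbb S)=\mathbb V_{k-2}^{L^2}(\boldsymbol r_3)$ (Corollary~\ref{cor:divdiv+}) to handle the $\div\div\boldsymbol\sigma$ part, one gets $\boldsymbol\sigma\in\mathbb B_k^{\div\div^+}(\boldsymbol r_2;\mathbb S)\cap\ker(\div\div)$ and hence $\boldsymbol\sigma=\boldsymbol 0$. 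Then exactness of the polynomial $\div\div$ complex~\eqref{eq:divdivcomplex3dPoly} gives $\boldsymbol\tau=\dev\grad\boldsymbol v$ for some $\boldsymbol v\in\mathbb P_{k+2}(T;\mathbb R^3)$.

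The final step mirrors the end of the $H(\inc;\mathbb S)$ proof: from $\boldsymbol\tau=\dev\grad\boldsymbol v$, the vanishing of~\eqref{eq:3dCrsymcurlTfemdofV1},~\eqref{eq:3dCrsymcurlTfemdofE1} forces $\nabla^i\boldsymbol v(\texttt{v})=\boldsymbol 0$ up to order $r_1^{\texttt v}+1$ and $\nabla^i\boldsymbol v|_e=\boldsymbol 0$ up to order $r_1^e+1$ (modulo the $\bs{RT}$ kernel of $\dev\grad$, which is fixed by the low-order conditions), and the face DoFs~\eqref{eq:3dCrsymcurlTfemdofF0}--\eqref{eq:3dCrsymcurlTfemdofF3} force $\nabla^i\boldsymbol v|_f=\boldsymbol 0$ up to order $r_1^f+1$, so $\boldsymbol v\in\mathbb B_{k+2}^3(\boldsymbol r_0)$; then the vanishing volume DoF~\eqref{eq:3dCrsymcurlTfemdofT1} gives $\boldsymbol v=\boldsymbol 0$, hence $\boldsymbol\tau=\boldsymbol 0$.

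I expect the main obstacle to be the careful bookkeeping in the second step: reconstructing, from the face and edge DoFs expressed in terms of the surface differential operators $\curl_f$, $\div_f$, $\sym\curl_f$, $\div_f\div_f$, exactly which scalar/vector traces of $\boldsymbol\sigma=\sym\curl\boldsymbol\tau$ on $\partial T$ are controlled — this requires invoking the two trace complexes from~\cite{Chen;Huang:2020Finite} and the face bubble $\div\div$ complex, and verifying that the integration-by-parts identities $\int_f\div_f\div_f(\sym(\boldsymbol\tau\times\boldsymbol n))\,q$ and $\int_f\div_f(\boldsymbol n\cdot\boldsymbol\tau\times\boldsymbol n)\,q$ supply precisely the boundary data needed to match the homogeneous DoFs of $\mathbb V_k^{\div\div^+}(\boldsymbol r_2;\mathbb S)$, including the $\bs{RM}(f)$/$\mathbb P_1(f)$ low-order complements and the vertex conditions arising when $r_2^e=-1$ or $r_2^{\texttt v}=0$. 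The low-order edge cases $r_1^e\in\{-1,0\}$ and vertex case $r_1^{\texttt v}=0$, where extra DoFs~\eqref{eq:3dCrsymcurlTfemdofV2},~\eqref{eq:3dCrsymcurlTfemdofE2},~\eqref{eq:3dCrsymcurlTfemdofE3} enter, will need separate attention but follow the same pattern.
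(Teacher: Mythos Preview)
Your proposal is correct and follows essentially the same route as the paper: reduce to injectivity via Lemma~\ref{lem:symcurlTDoFsnumber}, use the vanishing vertex/edge DoFs together with the edge-trace identity from~\cite[Lemma~6.1]{Chen;Huang:2020Finite} and integration by parts on faces to recover the missing low-order moments ($\mathbb P_0(f)$ for $\div_f(\boldsymbol n\cdot\boldsymbol\tau\times\boldsymbol n)$ and $\mathbb P_1(f)$ for $\div_f\div_f\sym(\boldsymbol\tau\times\boldsymbol n)$), conclude $\sym\curl\boldsymbol\tau=\boldsymbol 0$, invoke the polynomial $\div\div$ complex~\eqref{eq:divdivcomplex3dPoly} to write $\boldsymbol\tau=\dev\grad\boldsymbol v$, and finish by showing $\boldsymbol v\in\mathbb B_{k+2}^3(\boldsymbol r_0)$ via~\eqref{eq:3dCrsymcurlTfemdofF0}--\eqref{eq:3dCrsymcurlTfemdofF3} and~\eqref{eq:3dCrsymcurlTfemdofT1}.

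One small simplification: you do not need Corollary~\ref{cor:divdiv+} to ``handle the $\div\div\boldsymbol\sigma$ part''. Since~\eqref{eq:divdivcomplex3dPoly} is a complex, $\div\div\circ\sym\curl=0$ identically, so $\boldsymbol\sigma=\sym\curl\boldsymbol\tau$ already lies in $\ker(\div\div)$; the boundary DoFs then place it in $\mathbb B_k^{\div\div^+}(\boldsymbol r_2;\mathbb S)\cap\ker(\div\div)$ and~\eqref{eq:3dCrsymcurlTfemdofT2} finishes. The paper proceeds exactly this way, simply listing which vanishing DoFs~\eqref{eq:3dCrsymcurlTfemdofV1}--\eqref{eq:3dCrsymcurlTfemdofE1},~\eqref{eq:3dCrsymcurlTfemdofE3}--\eqref{eq:3dCrsymcurlTfemdofE4},~\eqref{eq:3dCrsymcurlTfemdofF2},~\eqref{eq:3dCrsymcurlTfemdofF4}--\eqref{eq:3dCrsymcurlTfemdofF5},~\eqref{eq:3dCrsymcurlTfemdofT2} together force $\sym\curl\boldsymbol\tau=\boldsymbol 0$.
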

\begin{proof}
In light of Lemma~\ref{lem:symcurlTDoFsnumber}, we only need to prove $\boldsymbol{\tau}=\boldsymbol{0}$ for $\boldsymbol{\tau}\in\mathbb P_{k+1}(T;\mathbb T)$ that all the specified DoFs~\eqref{eq:symcurldof} are nullified. 

The vanishing DoF~\eqref{eq:3dCrsymcurlTfemdofE1} implies 
$$
\int_e \frac{\partial^{j}(\sym\curl \boldsymbol \tau)}{\partial n_1^{i}\partial n_2^{j-i}}:\boldsymbol{q} \dd s=0,  \quad \boldsymbol{q}\in \mathbb B_{k-j}(e; r_2^{\texttt{v}}-j)\otimes\mathbb S, 0\leq i\leq j\leq r_2^{e}.
$$
The vanishing DoF~\eqref{eq:3dCrsymcurlTfemdofE3} implies 
$$
\int_e \tr_2^{\div_f\div_f}(\Pi_f\sym(\boldsymbol \tau\times \boldsymbol n)\Pi_f)\,q \dd s=0, \quad q\in \mathbb B_{k}(e;  r_1^{\texttt{v}}-1), \textrm{ if } r_1^{e}=-1,0.
$$
Detailed expressions for these formulations are provided in~\cite[Lemma 6.1]{Chen;Huang:2020Finite}.

Applying the integration by parts, it follows from the vanishing DoFs~\eqref{eq:3dCrsymcurlTfemdofE1}-\eqref{eq:3dCrsymcurlTfemdofE3} that
\begin{align*}
\int_f \div_f(\boldsymbol{n}\cdot\boldsymbol{\tau}\times\boldsymbol{n}) \dd S=0, & \\
\int_f \div_f\div_f(\sym(\boldsymbol{\tau}\times\boldsymbol{n}))\,q \dd S=0, &\quad q\in \mathbb P_1(f).
\end{align*}
Using the identities
\begin{align*}
\tr_2^{\div\div}(\sym\curl \boldsymbol \tau) &= \div_f\div_f \sym(\boldsymbol{\tau}\times\boldsymbol{n}),\\
\tr_2^{\div\div}(\sym\curl \boldsymbol \tau) & = (\div \sym\curl \boldsymbol \tau)\cdot \boldsymbol n+ \div_f (\Pi_f (\sym\curl \boldsymbol \tau)\boldsymbol n),
\end{align*}
the linear combination of DoFs~\eqref{eq:3dCrsymcurlTfemdofF4} and~\eqref{eq:3dCrsymcurlTfemdofF5} implies the continuity of $(\div \sym\curl \boldsymbol \tau)\cdot \boldsymbol n$, i.e., $ \sym\curl \boldsymbol \tau\in  H(\div,\Omega;\mathbb S)$.  

This, combined with the vanishing~\eqref{eq:3dCrsymcurlTfemdofV1}-\eqref{eq:3dCrsymcurlTfemdofE1},\eqref{eq:3dCrsymcurlTfemdofE3}-\eqref{eq:3dCrsymcurlTfemdofE4},\eqref{eq:3dCrsymcurlTfemdofF2},\eqref{eq:3dCrsymcurlTfemdofF4}-\eqref{eq:3dCrsymcurlTfemdofF5}, and~\eqref{eq:3dCrsymcurlTfemdofT2}, lead us to the conclusion that $\sym\curl\boldsymbol{\tau}=\boldsymbol{0}$. As a result, we find that $\boldsymbol{\tau}=\dev\grad\boldsymbol{v}$ where $\boldsymbol{v}\in\mathbb P_{k+2}(T;\mathbb R^3)$.

Then the vanishing DoFs~\eqref{eq:3dCrsymcurlTfemdofV1} and~\eqref{eq:3dCrsymcurlTfemdofE1}-\eqref{eq:3dCrsymcurlTfemdofE3} imply $(\nabla^i\boldsymbol{v})(\texttt{v})=\boldsymbol{0}$ for $\texttt{v}\in\Delta_0(T)$ and $i=0,\ldots, r_0^{\texttt{v}}$, and $(\nabla^i\boldsymbol{v})|_e=\boldsymbol{0}$ for $e\in\Delta_1(T)$ and $i=0,\ldots, r_0^{e}$. By the vanishing DoFs~\eqref{eq:3dCrsymcurlTfemdofF1} and~\eqref{eq:3dCrsymcurlTfemdofF3}, we get $(\nabla^i\boldsymbol{v})|_f=\boldsymbol{0}$ for $f\in\Delta_2(T)$ and $i=0,\ldots, r_0^{f}$.
Combining these results indicates $\boldsymbol{v}\in\mathbb B_{k+2}^{3}(\boldsymbol{r}_0)$. Therefore $\boldsymbol{v}=\boldsymbol{0}$ holds from the vanishing DoF~\eqref{eq:3dCrsymcurlTfemdofT1}.
\end{proof}

\begin{lemma}
For $r_1^f=-1,0$, define
\begin{align*}  
\mathbb V_{k+1}^{\sym\curl_+}(\boldsymbol{r}_1;\mathbb{T}):=\{\boldsymbol{\tau}\in{L}^2(\Omega;\mathbb T): &\, \boldsymbol{\tau}|_T\in\mathbb P_{k+1}(T;\mathbb T) \textrm{ for all } T\in\mathcal T_h, \\
&\textrm{ and all the DoFs~\eqref{eq:symcurldof} are single-valued}\}.
\end{align*}
Then it is equal to the space $\{\boldsymbol{\tau}\in\mathbb V_{k+1}(\boldsymbol r_1)\otimes \mathbb T: \sym\curl\boldsymbol{\tau}\in{\mathbb V}_{k}^{\div\div^+}(\boldsymbol r_2;\mathbb S)\}.$
\end{lemma}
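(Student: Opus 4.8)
The plan is to show the two spaces coincide by proving they have the same dimension, since one inclusion is immediate. First I would verify the trivial inclusion: any $\boldsymbol\tau$ whose DoFs~\eqref{eq:symcurldof} are single-valued is a piecewise polynomial in $\mathbb V_{k+1}(\boldsymbol r_1)\otimes\mathbb T$, and the DoFs~\eqref{eq:3dCrsymcurlTfemdofV2},~\eqref{eq:3dCrsymcurlTfemdofE2}--\eqref{eq:3dCrsymcurlTfemdofE4},~\eqref{eq:3dCrsymcurlTfemdofF1}--\eqref{eq:3dCrsymcurlTfemdofF5}, and~\eqref{eq:3dCrsymcurlTfemdofT2} were precisely engineered (via the trace complexes of~\cite{Chen;Huang:2020Finite} and the identities $\tr_2^{\div\div}(\sym\curl\boldsymbol\tau)=\div_f\div_f\sym(\boldsymbol\tau\times\boldsymbol n)=(\div\sym\curl\boldsymbol\tau)\cdot\boldsymbol n+\div_f(\Pi_f(\sym\curl\boldsymbol\tau)\boldsymbol n)$) to force $\sym\curl\boldsymbol\tau$ to be single-valued with respect to all the DoFs~\eqref{eq:divdiv+dof} defining $\mathbb V_{k}^{\div\div^+}(\boldsymbol r_2;\mathbb S)$. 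Hence $\mathbb V_{k+1}^{\sym\curl_+}(\boldsymbol r_1;\mathbb T)\subseteq\{\boldsymbol\tau\in\mathbb V_{k+1}(\boldsymbol r_1)\otimes\mathbb T:\sym\curl\boldsymbol\tau\in\mathbb V_k^{\div\div^+}(\boldsymbol r_2;\mathbb S)\}$.

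For the reverse, I would appeal to the dimension identity~\eqref{eq:symcurlTfemdim} coming from the exact finite element divdiv complex~\eqref{eq:femdivdivcomplex3dvariant1}, which expresses $\dim\mathbb V_{k+1}^{\sym\curl_+}(\boldsymbol r_1;\mathbb T)$ (the abstract space in that complex) in terms of $\dim\mathbb V_{k+2}^{\grad}(\boldsymbol r_0;\mathbb R^3)$, $\dim\mathbb V_k^{\div\div^+}(\boldsymbol r_2;\mathbb S)$, and $\dim\mathbb V_{k-2}^{L^2}(\boldsymbol r_3)$. On the other hand, the unisolvence of DoFs~\eqref{eq:symcurldof} (proved in the preceding lemma) together with the counting in the proof of Lemma~\ref{lem:symcurlTDoFsnumber} shows that the globally assembled finite element space has dimension equal to the sum over subsimplices of the single-valued DoF counts, which by the local identities~\eqref{eq:symcurldofnumber0},~\eqref{eq:symcurldofnumber1},~\eqref{eq:symcurldofnumber2},~\eqref{eq:symcurldofnumber3} and the Euler formula telescopes to $-4+\dim\mathbb V_{k+2}^{\grad}(\boldsymbol r_0;\mathbb R^3)+\dim\mathbb V_k^{\div\div^+}(\boldsymbol r_2;\mathbb S)-\dim\mathbb V_{k-2}^{L^2}(\boldsymbol r_3)$. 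This matches~\eqref{eq:symcurlTfemdim}, so the constructed space and the abstract space have equal dimension; combined with the inclusion, they are equal.

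The main obstacle I expect is the bookkeeping for the global dimension count: one must be careful that the local DoF-count identities~\eqref{eq:symcurldofnumber0}--\eqref{eq:symcurldofnumber3} translate correctly into a global count when DoFs are shared across elements, which requires checking that each DoF in~\eqref{eq:symcurldof} associated to a subsimplex $s$ is a valid continuity condition (i.e., depends only on traces of $\boldsymbol\tau$ and $\sym\curl\boldsymbol\tau$ on $s$, not on the ambient element) — this is exactly where the trace complexes of~\cite{Chen;Huang:2020Finite} enter, guaranteeing that $\boldsymbol n\cdot\boldsymbol\tau\times\boldsymbol n$, $\Pi_f\sym(\boldsymbol\tau\times\boldsymbol n)\Pi_f$, the edge quantity $\boldsymbol n_2^{\intercal}(\curl\boldsymbol\tau)\boldsymbol n_1+\partial_t(\boldsymbol t^{\intercal}\boldsymbol\tau\boldsymbol t)$, etc., are intrinsic to the respective subsimplices. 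A secondary technical point is handling the two sub-cases $r_1^f=0$ and $r_1^f=-1$ uniformly, using DoF~\eqref{eq:3dCrsymcurlTfemdofF0} to absorb the extra normal-continuity layer when $r_1^f=0$; this was already addressed in the proof of Lemma~\ref{lem:symcurlTDoFsnumber} and only needs to be invoked here.
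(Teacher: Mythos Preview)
Your proposal is correct and follows essentially the same approach as the paper: establish the inclusion $\mathbb V_{k+1}^{\sym\curl_+}(\boldsymbol r_1;\mathbb T)\subseteq\{\boldsymbol\tau\in\mathbb V_{k+1}(\boldsymbol r_1)\otimes\mathbb T:\sym\curl\boldsymbol\tau\in\mathbb V_k^{\div\div^+}(\boldsymbol r_2;\mathbb S)\}$, then invoke the dimension identity~\eqref{eq:symcurlTfemdim} together with the counting in Lemma~\ref{lem:symcurlTDoFsnumber} to conclude equality. The paper's own proof is a terse two-sentence version of exactly this argument; your added discussion of why the DoFs are intrinsic to subsimplices and how the local counts assemble globally is useful elaboration but not a different strategy.
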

\begin{proof}
Clearly $\mathbb V_{k+1}^{\sym\curl_+}(\boldsymbol{r}_1;\mathbb{T})\subseteq \{\boldsymbol{\tau}\in\mathbb V_{k+1}(\boldsymbol r_1)\otimes \mathbb T: \sym\curl\boldsymbol{\tau}\in{\mathbb V}_{k}^{\div\div^+}(\boldsymbol r_2;\mathbb S)\}$. By~\eqref{eq:symcurlTfemdim} and the proof of Lemma~\ref{lem:symcurlTDoFsnumber}, their dimensions are equal. 
\end{proof}

To construct the space $\mathbb V_{k+1}^{\sym\curl}(\boldsymbol{r}_1;\mathbb{T})$, we will modify the DoFs~\eqref{eq:symcurldof}. The changes involve removing the DoF~\eqref{eq:3dCrsymcurlTfemdofF5}, and extending the DoF~\eqref{eq:3dCrsymcurlTfemdofT2} to a more general form:
\begin{align}
\int_T (\sym\curl\boldsymbol{\tau}):\boldsymbol{q} \dx, &\quad \boldsymbol{q}\in \mathbb B_{k}^{\div\div}(\boldsymbol{r}_2;\mathbb{S})\cap\ker(\div\div).
\label{eq:3dCrsymcurlTfemdofT2modified}
\end{align}
These modifications maintain the sum of DoFs unchanged, as determined by the bubble space definition. The unisolvence of the modified DoFs can be proven in a manner similar to the original ones.

For the case when $r_1^f=-1$ or $0$, we need to redefine the space as follows:
\begin{align*}  
 \mathbb V_{k+1}^{\sym\curl}(\boldsymbol{r}_1;\mathbb{T}):= &\{\boldsymbol{\tau}\in{L}^2(\Omega;\mathbb T): \, \boldsymbol{\tau}|_T\in\mathbb P_{k+1}(T;\mathbb T) \textrm{ for all } T\in\mathcal T_h, \text{ and }\\
&\textrm{ all DoFs~\eqref{eq:3dCrsymcurlTfemdofV1}-\eqref{eq:3dCrsymcurlTfemdofT1}, \text{ removing }\eqref{eq:3dCrsymcurlTfemdofF5} \text{ adding}~\eqref{eq:3dCrsymcurlTfemdofT2modified}, are single-valued}\}.
\end{align*}
By this construction, we ensure that $\sym\curl \mathbb V_{k+1}^{\sym\curl}(\boldsymbol{r}_1;\mathbb{T}) \subset \mathbb V_{k+1}^{\div\div}(\boldsymbol{r}_2;\mathbb{S})$. Thus, we obtain the complex~\eqref{eq:femdivdivcomplex3d}. To establish its exactness, we verify the dimension identity:
\begin{align*}
- 4 + \dim\mathbb V_{k+2}^{\grad} (\boldsymbol{r}_0;\mathbb R^3)  & - \dim\mathbb V_{k+1}^{\sym\curl}(\boldsymbol{r}_1;\mathbb{T})\\
& +\dim \mathbb V_{k}^{\div\div}(\boldsymbol{r}_2;\mathbb{S}) -  \dim \mathbb V_{k-2}^{L^2} (\boldsymbol{r}_3) = 0,\notag
\end{align*}
which can be derived from~\eqref{eq:symcurlTfemdim} by notifying 
\begin{align*}
& \dim \mathbb V_{k+1}^{\sym\curl}(\boldsymbol{r}_1;\mathbb{T}) - \dim \mathbb V_{k+1}^{\sym\curl_+}(\boldsymbol{r}_1;\mathbb{T})
\\
 = &\dim \mathbb V_{k}^{\div\div}(\boldsymbol{r}_2;\mathbb{S}) - \dim \mathbb V_{k}^{\div\div^+}(\boldsymbol{r}_2;\mathbb{S}) \\
= &\left (4|\Delta_3(\mathcal T_h)| -  |\Delta_2(\mathcal T_h)|\right ) \dim \mathbb B^{\div}_k(f; \boldsymbol r_2). 
\end{align*}
The removed DoF~\eqref{eq:3dCrsymcurlTfemdofF5} will contribute to the bubble functions. 

\begin{remark}\rm 
When considering the $\div\div^+$ element, one can explore various variants such as the Hu-Zhang type element (cf. Remark~\ref{rm:HZdof}). This may lead to an increase in the number of edge DoFs for $\mathbb V_k^{\div\div^+}(\boldsymbol r_2)$ when $r_2^e=-1$. 
%
However, the modifications introduced will not affect the dimension count for edge DoFs, as the added edge DoFs will correspond to $|{\rm DoF}_{k}^{\div\div^+}(e; \boldsymbol r_2)|$, thus preserving the relationship stated in~\eqref{eq:r1e=0symcurl}. $\qed$
\end{remark}


There are more variants of $\sym\curl$ elements. We can add edge continuity of $\boldsymbol t^{\intercal}\boldsymbol \tau\boldsymbol t$ and face continuity $\int_f \skw (\Pi_f \boldsymbol \tau \times\boldsymbol{n})$ so that $\boldsymbol \tau \times \boldsymbol n$ is continuous. Then we can construct finite element spaces $\mathbb V_{k+1}^{\sym\curl^+}(\boldsymbol{r}_1;\mathbb{T})$ and $\mathbb V_{k+1}^{\sym\curl_+^+}(\boldsymbol{r}_1;\mathbb{T})$.
We can also relax to $\boldsymbol r_3\geq \boldsymbol r_2\ominus 1$ and impose condition
$$
\sym\curl \boldsymbol \tau \in \mathbb V_k^{\div\div^+}(\boldsymbol r_2, \boldsymbol r_3; \mathbb S),
$$
which require additional DoFs for $\div \sym\curl \boldsymbol \tau \in \mathbb V_{k-1}^{\div}(\boldsymbol r_3; \mathbb S)$. The divdiv complex in~\cite{Hu;Liang;Ma;Zhang:2022conforming} belongs to this type of variant. Furthermore $\boldsymbol r_2$ can be relaxed to $\boldsymbol r_2\geq \boldsymbol r_1\ominus 1$ but the modification of DoFs will be more involved and the lengthy detail is skipped here. 

Notice that the space $\mathbb V_{k+1}^{\sym\curl}( (0,-1,-1)^{\intercal};\mathbb{T})$ constructed in~\cite{ChenHuang2024div-div-conforming} is much simpler as $\mathbb V_{k}^{\div\div}(\bs{-1};\mathbb{S})$ is used.
%
%
%

\appendix
\section{Bubble Polynomial Complexes}\label{sec:bubblecomplex}
In this appendix we will develop various bubble polynomial complexes. We refer to Section \ref{sec:smoothbubble} for the definition of polynomial  bubble spaces on faces and tetrahedron. 

\subsection{Bubble de Rham complex}
On the dimension of the bubble polynomials, it holds that~\cite[Lemma 3.11]{Chen;Huang:2022FEMcomplex3D}
\begin{equation}\label{eq:dimBr}
\begin{aligned}
\dim\mathbb B_k(T; \boldsymbol{r}) & =-3{k+3\choose3} + 8{r^{\texttt{v}}+3 \choose 3} - 6(k+r^e-2r^{\texttt{v}}-1){r^e+2\choose 2} \\
&\quad + 6{r^e+2\choose 3} +12{k-2r^{\texttt{v}}-1 \choose 3} +12(r^f+1){k-2r^{\texttt{v}}+r^e\choose 2}  \\
&\quad +4{k+2-r^f\choose3} -12{r^{\texttt{v}}+2-r^f\choose3} - 12{k-2r^{\texttt{v}} +r^f\choose 3}.
\end{aligned}
\end{equation}

We first consider the construction of bubble de Rham complexes in two dimensions.
\begin{lemma}
Let smoothness vectors $\boldsymbol{r}_1\geq-1$, $\boldsymbol{r}_0 = \boldsymbol{r}_1+ 1$, and $\boldsymbol{r}_2=\boldsymbol{r}_1\ominus1$. Let $f\in\Delta_2(T)$ and $k\geq\max\{2r_1^{\texttt{v}}-1,0\}$.
Then the bubble de Rham complexes 
\begin{equation}\label{eq:femderhambubblecomplex2d}
0\xrightarrow{\subset}\mathbb B_{k+2}(f;\boldsymbol{r}_0)\xrightarrow{\curl_f}\mathbb B^{\div_f}_{k+1}(f; \boldsymbol{r}_1)\xrightarrow{\div_f}\mathbb B_{k}(f;\boldsymbol{r}_2)/\mathbb R\to0,
\end{equation}
\begin{equation}\label{eq:rotfemderhambubblecomplex2d}
0\xrightarrow{\subset}\mathbb B_{k+2}(f;\boldsymbol{r}_0)\xrightarrow{\grad_f}\mathbb B^{\rot_f}_{k+1}(f; \boldsymbol{r}_1)\xrightarrow{\rot_f}\mathbb B_{k}(f;\boldsymbol{r}_2)/\mathbb R\to0
\end{equation}
are exact.
\end{lemma}
\begin{proof}
Since complex \eqref{eq:rotfemderhambubblecomplex2d} is only the rotation of complex \eqref{eq:femderhambubblecomplex2d}, it suffices to prove the exactness of complex \eqref{eq:femderhambubblecomplex2d}.

We refer to~\cite[Corollary~4.1]{Chen;Huang:2022femcomplex2d} for the case $r_1^e \geq 0$. Now, we assume $r_1^e = -1$. It is straightforward to verify that  
$$
\mathbb B^{\div_f}_{k+1}(f; \boldsymbol{r}_1) \cap \ker(\div_f) = \curl_f \mathbb B_{k+2}(f; \boldsymbol{r}_0).
$$  
To complete the proof of the exactness of the complex \eqref{eq:femderhambubblecomplex2d}, we proceed by verifying the dimension identity
\begin{equation}\label{eq:20240316}
\dim\mathbb B_{k+2}(f;\boldsymbol{r}_0)+\dim\mathbb B_{k}(f;\boldsymbol{r}_2)/\mathbb R=\dim\mathbb B^{\div_f}_{k+1}(f; \boldsymbol{r}_1).
\end{equation}
By Lemma~3.5 in~\cite{Chen;Huang:2022femcomplex2d},
\begin{align*}
\dim\mathbb B_{k+2}(f;\boldsymbol{r}_0)&=\dim\mathbb P_{k+2}(f)-3{r_1^{\texttt{v}}+3\choose2}-3(k-2r_1^{\texttt{v}}-1), \\
\dim\mathbb B_{k}(f;\boldsymbol{r}_2)&=\dim\mathbb P_{k}(f)-3{r_2^{\texttt{v}}+2\choose2}, \\
\dim\mathbb B^{\div_f}_{k+1}(f; \boldsymbol{r}_1)&=2\dim\mathbb P_{k+1}(f)-6{r_1^{\texttt{v}}+2\choose2} -3(k-2r_1^{\texttt{v}}). 
\end{align*}
Noting that $\dim\mathbb P_{k+2}(f)+\dim\mathbb P_{k}(f)=2\dim\mathbb P_{k+1}(f)+1$, we have
\begin{align*}
&\quad \dim\mathbb B_{k+2}(f;\boldsymbol{r}_0)+\dim\mathbb B_{k}(f;\boldsymbol{r}_2)/\mathbb R-\dim\mathbb B^{\div_f}_{k+1}(f; \boldsymbol{r}_1) \\
&=6{r_1^{\texttt{v}}+2\choose2} +3-3{r_1^{\texttt{v}}+3\choose2}-3{r_2^{\texttt{v}}+2\choose2}=0.
\end{align*}
Thus, \eqref{eq:20240316} follows.
\end{proof}

We then move to the three dimensions.
\begin{lemma}
Let $\boldsymbol r_0 \geq 0, \bs r_1 = \bs r_0 - 1, \boldsymbol r_2=\boldsymbol r_1\ominus1, \boldsymbol r_3=\boldsymbol r_2\ominus1$ be smoothness vectors.
Assume $\boldsymbol r_2$ satisfies \eqref{eq:boundr2fordivbubble}, and $k\geq\max\{2r_{2}^{\texttt{v}}, 1\}$.
Then the bubble de Rham complex 
\begin{equation}\label{eq:femderhambubblecomplex}
0\xrightarrow{\subset}\mathbb B_{k+2}(\boldsymbol{r}_0)\xrightarrow{\grad}\mathbb B^{\curl}_{k+1}(\boldsymbol{r}_1)\xrightarrow{\curl}\mathbb B^{\div}_{k}(\boldsymbol{r}_2)\xrightarrow{\div}\mathbb B_{k-1}(\boldsymbol{r}_3)/\mathbb R\to0
\end{equation}
is exact.
\end{lemma}
\begin{proof}
Clearly \eqref{eq:femderhambubblecomplex} is a complex, and $\grad\mathbb B_{k+2}(\boldsymbol{r}_0)=\mathbb B^{\curl}_{k+1}(\boldsymbol{r}_1)\cap\ker(\curl)$. Then
$
\dim\curl\mathbb B^{\curl}_{k+1}(\boldsymbol{r}_1)=\dim\mathbb B^{\curl}_{k+1}(\boldsymbol{r}_1)-\dim\mathbb B_{k+2}(\boldsymbol{r}_0).
$
On the other side, by the div stability \eqref{eq:divbubbleonto},
$
\dim\mathbb B^{\div}_{k}(\boldsymbol{r}_2)\cap\ker(\div)=\dim\mathbb B^{\div}_{k}(\boldsymbol{r}_2)-\dim\mathbb B_{k-1}(\boldsymbol{r}_3)/\mathbb R.
$
Hence 
\begin{align*}
\dim\mathbb B^{\div}_{k}(\boldsymbol{r}_2)\cap\ker(\div)-\dim\curl\mathbb B^{\curl}_{k+1}(\boldsymbol{r}_1)&=\dim\mathbb B_{k+2}(\boldsymbol{r}_0)-\dim\mathbb B^{\curl}_{k+1}(\boldsymbol{r}_1)\\
&\quad +\dim\mathbb B^{\div}_{k}(\boldsymbol{r}_2)-\dim\mathbb B_{k-1}(\boldsymbol{r}_3)/\mathbb R.
\end{align*}

\step 1 Consider $r_0^f\geq2$. 
Using the last row of Table 2 in~\cite{Chen;Huang:2022FEMcomplex3D}, we obtain $\mathbb B^{\div}_{k}(\boldsymbol{r}_2)\cap\ker(\div)=\curl\mathbb B^{\curl}_{k+1}(\boldsymbol{r}_1)$.

\step 2 Consider $\boldsymbol{r}_0=(r_0^{\texttt{v}}, r_0^e, 1)^{\intercal}$ with $r_0^{e}\geq4$. 
Set $\widetilde{\boldsymbol{r}}_0=(r_0^{\texttt{v}}, r_0^e, 2)^{\intercal}$, $\widetilde{\boldsymbol{r}}_1=\widetilde{\boldsymbol{r}}_0-1$, $\widetilde{\boldsymbol{r}}_2=\widetilde{\boldsymbol{r}}_1-1$ and $\widetilde{\boldsymbol{r}}_3=\boldsymbol{r}_3$.
By DoFs \eqref{eq:Cr3D},
\begin{align*}
\dim\mathbb B_{k+2}(\boldsymbol{r}_0)-\dim\mathbb B_{k+2}(\widetilde{\boldsymbol{r}}_0)&=\sum_{f\in\Delta_2(T)}\dim\mathbb B_{k}(f;\boldsymbol{r}_0-2),
\end{align*}
\begin{align*}
\dim\mathbb B^{\curl}_{k+1}(\boldsymbol{r}_1)-\dim\mathbb B^{\curl}_{k+1}(\widetilde{\boldsymbol{r}}_1)&=3\sum_{f\in\Delta_2(T)}\dim\mathbb B_{k}(f;\boldsymbol{r}_1-1),
\end{align*}
and by \eqref{eq:divbubbledecomp},
\begin{align*}
\dim\mathbb B^{\div}_{k}(\boldsymbol{r}_2)-\dim\mathbb B^{\div}_{k}(\widetilde{\boldsymbol{r}}_2) & =2\sum_{f\in\Delta_2(T)}\dim\mathbb B_{k}(f;\boldsymbol{r}_2).
\end{align*}
Then it follows from $\boldsymbol{r}_0-2=\boldsymbol{r}_1-1=\boldsymbol{r}_2$ that
\begin{align*}
\dim\mathbb B^{\curl}_{k+1}(\boldsymbol{r}_1)-\dim\mathbb B^{\curl}_{k+1}(\widetilde{\boldsymbol{r}}_1)&=\dim\mathbb B_{k+2}(\boldsymbol{r}_0)-\dim\mathbb B_{k+2}(\widetilde{\boldsymbol{r}}_0) \\
&\quad+\dim\mathbb B^{\div}_{k}(\boldsymbol{r}_2)-\dim\mathbb B^{\div}_{k}(\widetilde{\boldsymbol{r}}_2).
\end{align*} 
Since $\mathbb B_{k-1}^{L^2}(\boldsymbol{r}_3)=\mathbb B_{k-1}^{L^2}(\widetilde{\boldsymbol{r}}_3)$,
we conclude the exactness of the bubble complex \eqref{eq:femderhambubblecomplex} beginning with $\boldsymbol{r}_0$ from the bubble complex \eqref{eq:femderhambubblecomplex} beginning with $\widetilde{\boldsymbol{r}}_0$.

\step 3 Consider $\boldsymbol{r}_0=(r_0^{\texttt{v}}, 2, 1)^{\intercal}$ with $r_0^{\texttt{v}}\geq4$. 
We have
\begin{align*}
\dim\mathbb B_{k+2}(\boldsymbol{r}_0) & =-3{k+5\choose3} + 8{r_1^{\texttt{v}}+4 \choose 3} +4{k+3\choose3} -12{r_1^{\texttt{v}}+2\choose3} \\
&\quad + 24(k-2r_1^{\texttt{v}}),
\end{align*}
\begin{align*}
\dim\mathbb B^{\curl}_{k+1}(\boldsymbol{r}_1)&=-9{k+4\choose3} + 24{r_1^{\texttt{v}}+3 \choose 3}+12{k+3\choose3} -36{r_1^{\texttt{v}}+2\choose3} \\
&\quad+18(k-2r_1^{\texttt{v}}),
\end{align*}
\begin{align*}
\dim\mathbb B_k^{\div}(\boldsymbol{r}_2) & =-9{k+3\choose3} + 24{r_1^{\texttt{v}}+2 \choose 3} +12{k+2\choose3} -36{r_1^{\texttt{v}}+1\choose3} \\
&\quad + 18(k-2r_1^{\texttt{v}}+1)  +8{k-1\choose 2}-24{r_2^{\texttt{v}}\choose 2},
\end{align*}
\begin{align*}
\dim\mathbb B_{k-1}^{L^2}(\boldsymbol{r}_3) & ={k+2\choose3} - 4{r_1^{\texttt{v}}+1 \choose 3}-1.
\end{align*}
Then 
\begin{equation*}
\dim\mathbb B_{k+2}(\boldsymbol{r}_0)-\dim\mathbb B^{\curl}_{k+1}(\boldsymbol{r}_1)+\dim\mathbb B_k^{\div}(\boldsymbol{r}_2)-\dim\mathbb B_{k-1}^{L^2}(\boldsymbol{r}_3)=0.
\end{equation*}

\step 4 The exactness of the bubble complex \eqref{eq:femderhambubblecomplex} beginning with  $\boldsymbol{r}_0=(r_0^{\texttt{v}}, 3, 1)^{\intercal}$ with $r_0^{\texttt{v}}\geq6$ can be proved similarly.

\step 5 Consider $\boldsymbol{r}_0=(r_0^{\texttt{v}}, r_0^e, 0)^{\intercal}$ with $r_0^{e}\geq2$.
Set $\widetilde{\boldsymbol{r}}_0=(r_0^{\texttt{v}}, r_0^e, 1)^{\intercal}$, $\widetilde{\boldsymbol{r}}_1=\widetilde{\boldsymbol{r}}_0-1$, $\widetilde{\boldsymbol{r}}_2=\boldsymbol{r}_2$ and $\widetilde{\boldsymbol{r}}_3=\boldsymbol{r}_3$.
By DoFs \eqref{eq:Cr3D} and \eqref{eq:curlbubbledecomp},
\begin{align*}
\dim\mathbb B_{k+2}(\boldsymbol{r}_0)-\dim\mathbb B_{k+2}(\widetilde{\boldsymbol{r}}_0)&=\sum_{f\in\Delta_2(T)}\dim\mathbb B_{k+1}(f;\boldsymbol{r}_0-1), \\
\dim\mathbb B^{\curl}_{k+1}(\boldsymbol{r}_1)-\dim\mathbb B^{\curl}_{k+1}(\widetilde{\boldsymbol{r}}_1)&=\sum_{f\in\Delta_2(T)}\dim \mathbb B_{k+1}(f; \bs r_1).
\end{align*}
So
\begin{align*}
\dim\mathbb B^{\curl}_{k+1}(\boldsymbol{r}_1)-\dim\mathbb B^{\curl}_{k+1}(\widetilde{\boldsymbol{r}}_1)&=\dim\mathbb B_{k+2}(\boldsymbol{r}_0)-\dim\mathbb B_{k+2}(\widetilde{\boldsymbol{r}}_0).
\end{align*} 
Since $\mathbb B^{\div}_{k}(\boldsymbol{r}_2)=\mathbb B^{\div}_{k}(\widetilde{\boldsymbol{r}}_2)$ and $\mathbb B_{k-1}^{L^2}(\boldsymbol{r}_3)=\mathbb B_{k-1}^{L^2}(\widetilde{\boldsymbol{r}}_3)$,
we conclude the exactness of the bubble complex \eqref{eq:femderhambubblecomplex} beginning with $\boldsymbol{r}_0$ from the bubble complex \eqref{eq:femderhambubblecomplex} beginning with $\widetilde{\boldsymbol{r}}_0$.

\step 6 Consider $\boldsymbol{r}_0=(r_0^{\texttt{v}}, 1, 0)^{\intercal}\geq 0$ with $r_0^{\texttt{v}}\geq2$. 
We have
\begin{align*}
\dim\mathbb B_{k+2}(\boldsymbol{r}_0) & =-3{k+5\choose3} + 8{r_0^{\texttt{v}}+3 \choose 3} +4{k+4\choose3} -12{r_0^{\texttt{v}}+2\choose3} \\
&\quad + 6(k-2r_0^{\texttt{v}}+1),
\end{align*}
\begin{align*}
\dim\mathbb B^{\curl}_{k+1}(\boldsymbol{r}_1) & =-9{k+4\choose3} + 24{r_1^{\texttt{v}}+3 \choose 3} +12{k+3\choose3} -36{r_1^{\texttt{v}}+2\choose3}  \\
&\quad + 18(k-2r_1^{\texttt{v}}) +4{k\choose2}-12{r_1^{\texttt{v}}\choose 2},
\end{align*}
\begin{align*}
\dim\mathbb B_k^{\div}(\boldsymbol{r}_2) & =-9{k+3\choose3} + 24{r_1^{\texttt{v}}+2 \choose 3} +12{k+2\choose3} -36{r_1^{\texttt{v}}+1\choose3} \\
&\quad + 24(k-2r_1^{\texttt{v}}+1)  +8{k-1\choose 2}-24{r_2^{\texttt{v}}\choose 2},
\end{align*}
\begin{align*}
\dim\mathbb B_{k-1}^{L^2}(\boldsymbol{r}_3) & ={k+2\choose3} - 4{r_1^{\texttt{v}}+1 \choose 3}-1.
\end{align*}
Then 
\begin{equation*}
\dim\mathbb B_{k+2}(\boldsymbol{r}_0)-\dim\mathbb B^{\curl}_{k+1}(\boldsymbol{r}_1)+\dim\mathbb B_k^{\div}(\boldsymbol{r}_2)-\dim\mathbb B_{k-1}^{L^2}(\boldsymbol{r}_3)=0.
\end{equation*}

\step 7 The exactness of the bubble complex \eqref{eq:femderhambubblecomplex} beginning with  $\boldsymbol{r}_0=(r_0^{\texttt{v}}, 0, 0)^{\intercal}$ with $r_0^{\texttt{v}}\geq0$ can be proved similarly. 
\end{proof}

\subsection{Bubble spaces for tensors}

For a tensor space $\mathbb X$ with $\mathbb X=\mathbb M, \mathbb S, \mathbb T$,  define bubble spaces
\begin{align*}
\mathbb B^{\div}_{k}(\boldsymbol{r}; \mathbb X):={}&\{\boldsymbol{\tau}\in \mathbb B_{k}(\boldsymbol{r}; \mathbb X): \boldsymbol{\tau}\boldsymbol{n}|_{\partial T}=0\},\\
\mathbb B^{\curl}_{k}(\boldsymbol{r};\mathbb X):={}&\{\boldsymbol{\tau}\in\mathbb B_{k}(\boldsymbol{r}; \mathbb X): \boldsymbol{\tau}\times\boldsymbol{n}|_{\partial T}=0\},\\
\mathbb B^{\div\div}_{k}(\boldsymbol{r}; \mathbb X):={}&\{\boldsymbol{\tau}\in \mathbb B_{k}(\boldsymbol{r}; \mathbb X): (\boldsymbol n^{\intercal}\boldsymbol \tau\boldsymbol n)|_{\partial T}=0, \tr_2^{\div\div}(\boldsymbol \tau)=0, \\
&\qquad\qquad\qquad\quad\;\; (\boldsymbol n_i^{\intercal}\boldsymbol \tau\boldsymbol n_j)|_{e}=0 \textrm{ for } e\in\Delta_{1}(T) \textrm{ and }i,j=1,2\},\\
\mathbb B^{\sym\curl}_{k}(\boldsymbol{r}; \mathbb X):={}&\{\boldsymbol{\tau}\in \mathbb B_{k}(\boldsymbol{r}; \mathbb X): (\boldsymbol{n}\times\sym(\boldsymbol{\tau}\times\boldsymbol{n})\times\boldsymbol{n})|_{\partial T}=0, \\
&\quad\; (\boldsymbol{n}\cdot\boldsymbol{\tau}\times\boldsymbol{n})|_{\partial T}=0, \;\sym\curl\boldsymbol{\tau}\in \mathbb B^{\div\div}_{k-1}((0,-1,-1)^{\intercal}; \mathbb S)\},\\
\mathbb B^{\inc}_{k}(\boldsymbol{r};\mathbb X):={}&\{\boldsymbol{\tau}\in\mathbb B_{k}(\boldsymbol{r}; \mathbb X): (\boldsymbol{n}\times\boldsymbol{\tau}\times\boldsymbol{n})|_{\partial T}=0, \tr_2^{\inc}(\boldsymbol{\tau})=0, \\
&\qquad\qquad\qquad\qquad\qquad \inc\boldsymbol{\tau}\in \mathbb B^{\div}_{k-2}((0,-1,-1)^{\intercal}; \mathbb S)\}.
\end{align*}

\subsection{Bubble Hessian complex}\label{sec:bubblehesscomplex}

\begin{lemma}\label{lem:divbubbleontoT}
Assume the polynomial degree $k\geq \max\{2r^{\texttt{v}}+2,3\}$, and the smoothness vector $\boldsymbol r$ satisfies either: 
\begin{enumerate}
\item $r^{\texttt{v}}\geq 2r^e+1$ and $r^{e}\geq 2(r^f+1)$,  or
\item $r^{\texttt{v}}\geq 0$ and $r^{e}=r^{f}=-1$.
\end{enumerate}
Then we have the $(\div; \mathbb T)$ stability
\begin{equation}\label{eq:divbubbleontoT}
\div\mathbb B^{\div}_{k-1}(\boldsymbol{r};\mathbb T)=\mathbb B_{k-2}(\boldsymbol{r}\ominus1;\mathbb R^3)/{\rm RT}.
\end{equation}
\end{lemma}
\begin{proof}
(1) For case $r^{\texttt{v}}\geq 1$ and $r^{e}\geq 0$,
consider the bubble diagram
\begin{equation*}
\begin{tikzcd}[column sep=small, row sep=normal]
&
\mathbb B^{\div}_{k}(\boldsymbol{r}+1)
  \arrow{r}{\div}
 &
\mathbb B_{k-1}(\boldsymbol{r})
\arrow{r}{}
& \mathbb R \\
{\mathbb B}^{\curl}_{k}(\boldsymbol{r}+1;\mathbb M)
 \arrow[ur,swap,"- 2\vskw"'] \arrow{r}{\curl}
 & 
\mathbb B^{\div}_{k-1}(\boldsymbol{r};\mathbb M)
 \arrow[ur,swap,"\tr "'] \arrow{r}{\div}
 & 
\mathbb B_{k-2}(\boldsymbol{r}\ominus 1;\mathbb R^3)/\mathbb R^3
\arrow[r] 
&0.
\end{tikzcd}
\end{equation*}
For $\boldsymbol{u}\in\mathbb B_{k-2}(\boldsymbol{r}\ominus1;\mathbb R^3)/{\rm RT}$, since $(\boldsymbol r, \boldsymbol r\ominus 1, k-1)$ is div stable, there exists $\boldsymbol{\tau}\in\mathbb B^{\div}_{k-1}(\boldsymbol{r};\mathbb M)$ such that $\div\boldsymbol{\tau}=\boldsymbol{u}$. Noting that $\tr\boldsymbol{\tau}\in\mathbb B_{k-1}(\boldsymbol{r})/\mathbb R$ and $(\boldsymbol r+1, \boldsymbol r, k)$ is div stable, we have $\tr\boldsymbol{\tau}=\div\boldsymbol{v}$ with $\boldsymbol{v}\in\mathbb B^{\div}_{k}(\boldsymbol{r}+1)=\mathbb B_{k}(\boldsymbol{r}+1;\mathbb R^3)$. Choose $\boldsymbol{\sigma}=\boldsymbol{\tau}-\frac{1}{2}\curl(\mskw\boldsymbol{v})\in\mathbb B^{\div}_{k-1}(\boldsymbol{r};\mathbb M)$. Then $\div\boldsymbol{\sigma}=\div\boldsymbol{\tau}=\boldsymbol{u}$, and by the anticommutativity, $\tr\boldsymbol{\sigma}=\tr\boldsymbol{\tau}-\div\boldsymbol{v}=0$. Thus, $\boldsymbol{\sigma}\in\mathbb B^{\div}_{k-1}(\boldsymbol{r};\mathbb T)$, and $\boldsymbol{u}\in\div\mathbb B^{\div}_{k-1}(\boldsymbol{r};\mathbb T)$.

(2) The case $r^{\texttt{v}}= 0$ and $r^{e}=r^{f}=-1$ can be found in~\cite[Theorem 4.4]{HuLiang2021}. Now consider case $r^{\texttt{v}}\geq 1$ and $r^{e}=r^{f}=-1$. Notice that $$\mathbb B^{\div}_{k-1}((r^{\texttt{v}},0,-1)^{\intercal};\mathbb T)\subseteq \mathbb B^{\div}_{k-1}((r^{\texttt{v}},-1,-1)^{\intercal};\mathbb T).$$ We conclude \eqref{eq:divbubbleontoT} with $\boldsymbol{r}=(r^{\texttt{v}},-1,-1)^{\intercal}$ from \eqref{eq:divbubbleontoT} with $\boldsymbol{r}=(r^{\texttt{v}},0,-1)^{\intercal}$.
\end{proof}

\begin{lemma}
Let $\boldsymbol r_0 \geq (4,2,1)^{\intercal}, \boldsymbol r_1 = \boldsymbol r_0 -2, \boldsymbol r_2=\boldsymbol r_1\ominus1, \boldsymbol r_3= \boldsymbol r_2\ominus1$. 
Assume $\boldsymbol r_2$ satisfies the condition in Lemma \ref{lem:divbubbleontoT}, and $k\geq2r_2^{\texttt{v}}+2$.
Then the bubble Hessian complex
\begin{equation}\label{eq:fembubblehessiancomplex}
\resizebox{.925\hsize}{!}{$
0\xrightarrow{\subset} \mathbb B_{k+2}(\boldsymbol{r}_0)\xrightarrow{\hess}\mathbb B^{\curl}_{k}(\boldsymbol{r}_1;\mathbb S)\xrightarrow{\curl} \mathbb B^{\div}_{k-1}(\boldsymbol{r}_2;\mathbb T) \xrightarrow{\div} \mathbb B_{k-2}(\boldsymbol{r}_3;\mathbb R^3)/{\rm RT}\xrightarrow{}0
$}
\end{equation}
is exact.
\end{lemma}
\begin{proof}
Obviously \eqref{eq:fembubblehessiancomplex} is a complex, $\mathbb B^{\curl}_{k}(\boldsymbol{r}_1;\mathbb S)\cap\ker(\curl)=\hess\mathbb B_{k+2}(\boldsymbol{r}_0)$, and
\begin{align*}
\dim\curl\mathbb B_{k}^{\curl}(\boldsymbol{r}_1;\mathbb S)=3\dim\mathbb B_{k}((\boldsymbol{r}_1)_+)+\dim\mathbb B_{k}^{\curl}(\boldsymbol{r}_1)-\dim\mathbb B_{k+2}(\boldsymbol{r}_0).
\end{align*}
By bubble complex \eqref{eq:femderhambubblecomplex},
\begin{align*}
\dim\mathbb B_{k+2}(\boldsymbol{r}_0)&=3\dim\mathbb B_{k+1}(\boldsymbol{r}_0-1)-\dim\mathbb B_{k}^{\div}(\boldsymbol{r}_1)+\dim\mathbb B_{k-1}(\boldsymbol{r}_2)-1, \\
\dim\mathbb B_{k+1}(\boldsymbol{r}_0-1)&=\dim\mathbb B_{k}^{\curl}(\boldsymbol{r}_1)-\dim\mathbb B_{k-1}^{\div}(\boldsymbol{r}_2)+\dim\mathbb B_{k-2}(\boldsymbol{r}_3)-1.
\end{align*}
Noting that $2\dim\mathbb B_{k}^{\curl}(\boldsymbol{r}_1)=\dim\mathbb B_{k}^{\div}(\boldsymbol{r}_1)+3\dim\mathbb B_{k}((\boldsymbol{r}_1)_+)$, we get from the last two equations that
\begin{align*}
\dim\mathbb B_{k+2}(\boldsymbol{r}_0)&=3\dim\mathbb B_{k}((\boldsymbol{r}_1)_+)+\dim\mathbb B_{k}^{\curl}(\boldsymbol{r}_1)\\
&\quad -3\dim\mathbb B_{k-1}^{\div}(\boldsymbol{r}_2)+\dim\mathbb B_{k-1}(\boldsymbol{r}_2)+3\dim\mathbb B_{k-2}(\boldsymbol{r}_3)-4.
\end{align*}
Then
\begin{equation*}
\dim\curl\mathbb B_{k}^{\curl}(\boldsymbol{r}_1;\mathbb S)=  3\dim\mathbb B_{k-1}^{\div}(\boldsymbol{r}_2)-\dim\mathbb B_{k-1}(\boldsymbol{r}_2)-3\dim\mathbb B_{k-2}(\boldsymbol{r}_3)+4.
\end{equation*}

Thanks to the $(\div; \mathbb T)$ stability \eqref{eq:divbubbleontoT}, we only need to prove $\mathbb B^{\div}_{k-1}(\boldsymbol{r}_2;\mathbb T)\cap\ker(\div)=\curl\mathbb B_{k}^{\curl}(\boldsymbol{r}_1;\mathbb S)$. Since $\curl\mathbb B_{k}^{\curl}(\boldsymbol{r}_1;\mathbb S)\subseteq\mathbb B^{\div}_{k-1}(\boldsymbol{r}_2;\mathbb T)\cap\ker(\div)$, we will finish the proof by dimension count.
According to the $(\div; \mathbb T)$ stability \eqref{eq:divbubbleontoT} and $\dim \mathbb B^{\div}_{k-1}(\boldsymbol{r}_2;\mathbb T)=3\dim \mathbb B^{\div}_{k-1}(\boldsymbol{r}_2)-\dim \mathbb B_{k-1}(\boldsymbol{r}_2)$,
\begin{align*}
\dim \mathbb B^{\div}_{k-1}(\boldsymbol{r}_2;\mathbb T)\cap\ker(\div)&=\dim \mathbb B^{\div}_{k-1}(\boldsymbol{r}_2;\mathbb T)-3\dim\mathbb B_{k-2}(\boldsymbol{r}_3) + 4 \\
&=3\dim \mathbb B^{\div}_{k-1}(\boldsymbol{r}_2)-\dim \mathbb B_{k-1}(\boldsymbol{r}_2)-3\dim\mathbb B_{k-2}(\boldsymbol{r}_3) + 4.
\end{align*}
Hence, $\dim \mathbb B^{\div}_{k-1}(\boldsymbol{r}_2;\mathbb T)\cap\ker(\div)=\dim\curl\mathbb B_{k}^{\curl}(\boldsymbol{r}_1;\mathbb S)$.
\end{proof}

\subsection{Bubble elasticity complex}\label{sec:bubbleelascomplex}

\begin{lemma}\label{lem:divbubbleontoS}
Assume the polynomial degree $k\geq \max\{2r^{\texttt{v}},1\}$, and the smoothness vector $\boldsymbol r$ satisfies either: 
\begin{enumerate}
\item $r^{\texttt{v}}\geq 2r^e+1$ and $r^{e}\geq 2(r^f+1)$,  or
\item $r^{\texttt{v}}\geq 0$ and $r^{e}=r^{f}=-1$.
\end{enumerate}
Then we have the $(\div; \mathbb S)$ stability
\begin{equation}\label{eq:divbubbleontoS}
\div\mathbb B^{\div}_{k+1}(\boldsymbol{r};\mathbb S)=\mathbb B_{k}(\boldsymbol{r}\ominus1;\mathbb R^3)/{\rm RM}.
\end{equation}
\end{lemma}
\begin{proof}
(1) For case $r^{\texttt{v}}\geq 1$ and $r^{e}\geq 0$,
consider the bubble diagram
\begin{equation*}
\begin{tikzcd}[column sep=small, row sep=normal]
&
\mathbb B^{\div}_{k+2}(\boldsymbol{r}+1;\mathbb M)
  \arrow{r}{\div}
 &
\mathbb B_{k+1}(\boldsymbol{r};\mathbb R^3)
\arrow{r}{}
& \mathbb R^3 \\
{\mathbb B}^{\curl}_{k+2}(\boldsymbol{r}+1;\mathbb M)
 \arrow[ur,swap,"S"'] \arrow{r}{\curl}
 & 
\mathbb B^{\div}_{k+1}(\boldsymbol{r};\mathbb M)
 \arrow[ur,swap,"- 2\vskw"'] \arrow{r}{\div}
 & 
\mathbb B_{k}(\boldsymbol{r}\ominus 1;\mathbb R^3)/\mathbb R^3
\arrow[r] 
&0.
\end{tikzcd}
\end{equation*}
Then follow the proof of the first part of Lemma~\ref{lem:divbubbleontoT} to obtain \eqref{eq:divbubbleontoS}.

(2) The case $r^{\texttt{v}}= 0$ and $r^{e}=r^{f}=-1$ can be found in~\cite[Lemma 3.2]{HuZhang2015}. Now consider case $r^{\texttt{v}}\geq 1$ and $r^{e}=r^{f}=-1$. Notice that $$\mathbb B^{\div}_{k+1}((r^{\texttt{v}},0,-1)^{\intercal};\mathbb S)\subseteq \mathbb B^{\div}_{k+1}((r^{\texttt{v}},-1,-1)^{\intercal};\mathbb S).$$ We conclude \eqref{eq:divbubbleontoS} with $\boldsymbol{r}=(r^{\texttt{v}},-1,-1)^{\intercal}$ from \eqref{eq:divbubbleontoS} with $\boldsymbol{r}=(r^{\texttt{v}},0,-1)^{\intercal}$.
\end{proof}

\begin{lemma}
Let 
\begin{equation*}
\boldsymbol{r}_0 \geq (2, 1, 0)^{\intercal},
\; 
\boldsymbol r_1 =\boldsymbol{r}_0-1, 
\;
 \boldsymbol{r}_2=\max\{\boldsymbol{r}_1\ominus2, (0, -1, -1)^{\intercal}\},
\;
 \boldsymbol{r}_3=\boldsymbol{r}_2\ominus1.
\end{equation*}
Assume $\boldsymbol r_2$ satisfies the condition in Lemma \ref{lem:divbubbleontoS}, and $k\geq\max\{2r_2^{\texttt{v}},1\}$. 
Then the bubble elasticity complex
\begin{equation}\label{eq:fembubbleelasticitycomplex}
\resizebox{.925\hsize}{!}{$
0\xrightarrow{\subset} \mathbb B_{k+4}(\boldsymbol{r}_0;\mathbb R^3)\xrightarrow{\defm}\mathbb B^{\inc}_{k+3}(\boldsymbol{r}_1;\mathbb S)\xrightarrow{\inc} \mathbb B^{\div}_{k+1}(\boldsymbol{r}_2;\mathbb S) \xrightarrow{\div} \mathbb B_{k}(\boldsymbol{r}_3;\mathbb R^3)/{\rm RM}\xrightarrow{}0
$}
\end{equation}
is exact.
\end{lemma}
\begin{proof}
Obviously \eqref{eq:fembubbleelasticitycomplex} is a complex, $\mathbb B^{\inc}_{k+3}(\boldsymbol{r}_1;\mathbb S)\cap\ker(\inc)=\defm\mathbb B_{k+4}(\boldsymbol{r}_0;\mathbb R^3)$, and
\begin{align*}
\dim\inc\mathbb B^{\inc}_{k+3}(\boldsymbol{r}_1;\mathbb S)=\dim\mathbb B^{\inc}_{k+3}(\boldsymbol{r}_1;\mathbb S)-3\dim\mathbb B_{k+4}(\boldsymbol{r}_0).
\end{align*}
Thanks to the $(\div; \mathbb S)$ stability \eqref{eq:divbubbleontoS}, we only need to prove $\mathbb B^{\div}_{k+1}(\boldsymbol{r}_2;\mathbb S)\cap\ker(\div)=\inc\mathbb B^{\inc}_{k+3}(\boldsymbol{r}_1;\mathbb S)$. Since $\inc\mathbb B^{\inc}_{k+3}(\boldsymbol{r}_1;\mathbb S)\subseteq\mathbb B^{\div}_{k+1}(\boldsymbol{r}_2;\mathbb S)\cap\ker(\div)$, we will finish the proof by dimension count.

We first consider case $r_1^f=-1,0$.
By DoFs \eqref{eq:incdof}, 
\begin{equation*}
\dim\mathbb B^{\inc}_{k+3}(\boldsymbol{r}_1;\mathbb S)=3\dim\mathbb B_{k+4}(\boldsymbol{r}_0)+\dim\mathbb B^{\div}_{k+1}(\boldsymbol{r}_2;\mathbb S)\cap\ker(\div).
\end{equation*}
This indicates $\dim\inc\mathbb B^{\inc}_{k+3}(\boldsymbol{r}_1;\mathbb S)=\dim\mathbb B^{\div}_{k+1}(\boldsymbol{r}_2;\mathbb S)\cap\ker(\div)$, as required.

Next consider case $r_1^f\geq1$.
By bubble complex \eqref{eq:femderhambubblecomplex},
\begin{align*}
\dim\mathbb B_{k+4}(\boldsymbol{r}_0)&=3\dim\mathbb B_{k+3}(\boldsymbol{r}_1)-3\dim\mathbb B_{k+2}(\boldsymbol{r}_1-1)+\dim\mathbb B_{k+1}(\boldsymbol{r}_2)-1, \\
\dim\mathbb B_{k+3}(\boldsymbol{r}_1)&=3\dim\mathbb B_{k+2}(\boldsymbol{r}_1-1)-\dim\mathbb B_{k+1}^{\div}(\boldsymbol{r}_2)+\dim\mathbb B_{k}(\boldsymbol{r}_3)-1.
\end{align*}
Combining the last two equations yields
\begin{align*}
\dim\inc\mathbb B^{\inc}_{k+3}(\boldsymbol{r}_1;\mathbb S)&=6\dim\mathbb B_{k+3}(\boldsymbol{r}_1)-3\dim\mathbb B_{k+4}(\boldsymbol{r}_0) \\
&=3\dim\mathbb B_{k+1}^{\div}(\boldsymbol{r}_2)-3\dim\mathbb B_{k+1}(\boldsymbol{r}_2)-3\dim\mathbb B_{k}(\boldsymbol{r}_3)+6.
\end{align*}
Noting that $\dim\mathbb B^{\div}_{k+1}(\boldsymbol{r}_2;\mathbb S)=3\dim\mathbb B_{k+1}^{\div}(\boldsymbol{r}_2)-3\dim\mathbb B_{k+1}(\boldsymbol{r}_2)$, we get from \eqref{eq:divbubbleontoS} that
\begin{equation*}
\dim\inc\mathbb B^{\inc}_{k+3}(\boldsymbol{r}_1;\mathbb S)=\dim\mathbb B^{\div}_{k+1}(\boldsymbol{r}_2;\mathbb S)\cap\ker(\div).
\end{equation*}
Therefore, $\inc\mathbb B^{\inc}_{k+3}(\boldsymbol{r}_1;\mathbb S)=\mathbb B^{\div}_{k+1}(\boldsymbol{r}_2;\mathbb S)\cap\ker(\div)$.
\end{proof}

\subsection{Bubble divdiv complex}\label{sec:bubbledivdivcomplex}

\begin{lemma}\label{lem:fembubbledivdiv+complex}
Let 
\begin{equation*}
\boldsymbol{r}_0\geq (1, 0, 0)^{\intercal}, \quad 
\boldsymbol{r}_1=\boldsymbol{r}_0-1,\quad 
\boldsymbol{r}_2= \max\{\boldsymbol{r}_1\ominus1, (0, -1, -1)^{\intercal}\},\quad 
\boldsymbol{r}_3=\boldsymbol{r}_2\ominus2.
\end{equation*}
Assume $\boldsymbol r_2$ satisfies the condition in Lemma \ref{lem:divbubbleontoS}, $\boldsymbol r_2\ominus1$ satisfies \eqref{eq:boundr2fordivbubble}, and $k\geq \max\{2r_2^{\texttt{v}}+1,3\}$.
We have the following exact bubble divdiv complex   
\begin{align}
\label{eq:fembubbledivdiv+complex}
0\xrightarrow{\subset}\mathbb B_{k+2} (\boldsymbol{r}_0;\mathbb R^3) &\xrightarrow{\dev\grad} \mathbb B_{k+1}^{\sym\curl_+}(\boldsymbol{r}_1;\mathbb{T}) \\
&\xrightarrow{\sym\curl}  \mathbb B_{k}^{\div\div^+}(\boldsymbol{r}_2;\mathbb{S}) \xrightarrow{\div\div} \mathbb B_{k-2}(\boldsymbol{r}_3)/\mathbb P_1(T)\rightarrow 0, \notag
\end{align}
where 
\begin{align*}
\mathbb B_{k+1}^{\sym\curl_+}(\boldsymbol{r}_1;\mathbb{T})&:=\{\boldsymbol{\tau}\in\mathbb B_{k+1}^{\sym\curl}(\boldsymbol{r}_1;\mathbb{T}): \sym\curl\boldsymbol{\tau}\in\mathbb B_{k}^{\div\div^+}(\boldsymbol{r}_2;\mathbb{S})\}, \\
\mathbb B_{k}^{\div\div^+}(\boldsymbol{r}_2;\mathbb{S})&:=\{\boldsymbol{\tau}\in\mathbb B_{k}^{\div\div}(\boldsymbol{r}_2;\mathbb{S}): \boldsymbol{\tau}\boldsymbol{n}|_{\partial T}=0\}.
\end{align*}
\end{lemma}
\begin{proof}
By the definition of the bubble spaces, we can see that \eqref{eq:fembubbledivdiv+complex} is a complex, and $\mathbb B_{k+1}^{\sym\curl_+}(\boldsymbol{r}_1;\mathbb{T})\cap\ker(\sym\curl)=\dev\grad\mathbb B_{k+2} (\boldsymbol{r}_0;\mathbb R^3)$. 

We first prove $\div\div\mathbb B_{k}^{\div\div^+}(\boldsymbol{r}_2;\mathbb{S})=\mathbb B_{k-2}(\boldsymbol{r}_3)/\mathbb P_1(T)$. When $r_2^f\geq1$, we have $\mathbb B_{k}^{\div\div^+}(\boldsymbol{r}_2;\mathbb{S})=\mathbb B_{k}(\boldsymbol{r}_2;\mathbb{S})$, then apply the bubble complex~\eqref{eq:femderhambubblecomplex} to get
\begin{align*}
\div\div\mathbb B_{k}^{\div\div^+}(\boldsymbol{r}_2;\mathbb{S})&=\div\div\mathbb B_{k}(\boldsymbol{r}_2;\mathbb{S})=\div\div\mathbb B_{k}(\boldsymbol{r}_2;\mathbb{M}) \\
&=\div(\mathbb B_{k-1}(\boldsymbol{r}_2-1;\mathbb{M})/\mathbb R^3)=\mathbb B_{k-2}(\boldsymbol{r}_3)/\mathbb P_1(T).
\end{align*}
When $r_2^f=-1,0$, we follow the proof of Lemma~\ref{lem:divdivSr2onto} and Corollary~\ref{cor:divdiv+}
to acquire $\div\div\mathbb B_{k}^{\div\div^+}(\boldsymbol{r}_2;\mathbb{S})=\mathbb B_{k-2}(\boldsymbol{r}_3)/\mathbb P_1(T)$.

We next prove $\sym\curl\mathbb B_{k+1}^{\sym\curl_+}(\boldsymbol{r}_1;\mathbb{T})= \mathbb B_{k}^{\div\div^+}(\boldsymbol{r}_2;\mathbb{S})\cap\ker(\div\div)$.

\step1
First consider $\boldsymbol{r}_0\geq (2, 1, 0)^{\intercal}$.
Let $\boldsymbol{\sigma}\in\mathbb B_{k}^{\div\div^+}(\boldsymbol{r}_2;\mathbb{S})\cap\ker(\div\div)$. Then $\div\boldsymbol{\sigma}\in\mathbb B_{k-1}^{\div}(\boldsymbol{r}_2\ominus1;\mathbb{R}^3)\cap\ker(\div)$. By the bubble complex~\eqref{eq:femderhambubblecomplex}, $\div\boldsymbol{\sigma}=\curl\boldsymbol{v}$ with $\boldsymbol{v}\in\mathbb B_{k}^{\curl}(\boldsymbol{r}_2;\mathbb{R}^3)$. That is $\boldsymbol{\sigma}-\mskw\boldsymbol{v}\in\mathbb B_{k}^{\div}(\boldsymbol{r}_2;\mathbb{M})\cap\ker(\div)$. Apply the bubble complex~\eqref{eq:femderhambubblecomplex} again to get $\boldsymbol{\sigma}-\mskw\boldsymbol{v}=\curl\boldsymbol{\tau}$ with $\boldsymbol{\tau}\in\mathbb B_{k+1}^{\curl}(\boldsymbol{r}_1;\mathbb{M})$. By the symmetry of $\boldsymbol{\sigma}$, we have $\boldsymbol{\sigma}=\sym\curl\boldsymbol{\tau}=\sym\curl(\dev\boldsymbol{\tau})$, where $\dev\boldsymbol{\tau}\in\mathbb B_{k+1}^{\sym\curl_+}(\boldsymbol{r}_1;\mathbb{T})$.

\step2 Then consider $\boldsymbol{r}_0=(r_0^{\texttt{v}}, 0, 0)^{\intercal}$ with $r_0^{\texttt{v}}\geq1$. Let $\hat{\bs r}_1 = (\max\{r_1^{\texttt{v}},1\}, 0, -1)^{\intercal}\geq(1,0,-1)^{\intercal}$. We have $\boldsymbol{r}_1\leq \hat{\bs r}_1$ and $\hat{\bs r}_1\ominus1=\boldsymbol{r}_2$. Thus, we conclude the result from 
$$
\sym\curl\mathbb B_{k+1}^{\sym\curl_+}(\hat{\bs r}_1;\mathbb{T})\subseteq\sym\curl\mathbb B_{k+1}^{\sym\curl_+}(\boldsymbol{r}_1;\mathbb{T})\subseteq \mathbb B_{k}^{\div\div^+}(\boldsymbol{r}_2;\mathbb{S})\cap\ker(\div\div)
$$
and $\sym\curl\mathbb B_{k+1}^{\sym\curl_+}(\hat{\bs r}_1;\mathbb{T})=\mathbb B_{k}^{\div\div^+}(\boldsymbol{r}_2;\mathbb{S})\cap\ker(\div\div)$.
\end{proof}

\begin{lemma}
Under the same assumption of Lemma~\ref{lem:fembubbledivdiv+complex}, we have the following exact bubble divdiv complex   
\begin{align}
\label{eq:fembubbledivdivcomplex}
0\xrightarrow{\subset}\mathbb B_{k+2} (\boldsymbol{r}_0;\mathbb R^3) &\xrightarrow{\dev\grad} \mathbb B_{k+1}^{\sym\curl}(\boldsymbol{r}_1;\mathbb{T}) \\
&\xrightarrow{\sym\curl}  \mathbb B_{k}^{\div\div}(\boldsymbol{r}_2;\mathbb{S}) \xrightarrow{\div\div} \mathbb B_{k-2}(\boldsymbol{r}_3)/\mathbb P_1(T)\rightarrow 0. \notag
\end{align}
\end{lemma}
\begin{proof}
When $r_2^f\geq0$, the complex \eqref{eq:fembubbledivdivcomplex} is exactly the complex \eqref{eq:fembubbledivdiv+complex}. Then we focus on the proof for the case $r_2^f=-1$.
By the definition of the bubble spaces, we can see that \eqref{eq:fembubbledivdivcomplex} is a complex, and $\mathbb B_{k+1}^{\sym\curl}(\boldsymbol{r}_1;\mathbb{T})\cap\ker(\sym\curl)=\dev\grad\mathbb B_{k+2} (\boldsymbol{r}_0;\mathbb R^3)$. 

Since $\div\div\mathbb B_{k}^{\div\div^+}(\boldsymbol{r}_2;\mathbb{S})\subseteq\div\div\mathbb B_{k}^{\div\div}(\boldsymbol{r}_2;\mathbb{S})\subseteq \mathbb B_{k-2}(\boldsymbol{r}_3)/\mathbb P_1(T)$, by complex \eqref{eq:fembubbledivdiv+complex}, we have
\begin{equation*}
\div\div\mathbb B_{k}^{\div\div}(\boldsymbol{r}_2;\mathbb{S})=\div\div\mathbb B_{k}^{\div\div^+}(\boldsymbol{r}_2;\mathbb{S}) = \mathbb B_{k-2}(\boldsymbol{r}_3)/\mathbb P_1(T).
\end{equation*}

Thanks to \eqref{eq:divdivSdF2} and \eqref{eq:3dCrsymcurlTfemdofF5}, 
we have
\begin{equation*}
\dim\mathbb B_{k}^{\div\div}(\boldsymbol{r}_2;\mathbb{S})-\dim \mathbb B_{k+1}^{\sym\curl}(\boldsymbol{r}_1;\mathbb{T}) = \dim\mathbb B_{k}^{\div\div^+}(\boldsymbol{r}_2;\mathbb{S})-\dim \mathbb B_{k+1}^{\sym\curl_+}(\boldsymbol{r}_1;\mathbb{T}).
\end{equation*}
Finally, apply the exactness of complex \eqref{eq:fembubbledivdiv+complex} to get
$$\mathbb B_{k}^{\div\div}(\boldsymbol{r}_2;\mathbb{S})\cap\ker(\div\div)=\sym\curl\mathbb B_{k+1}^{\sym\curl}(\boldsymbol{r}_1;\mathbb{T}).$$
\end{proof}


\bibliographystyle{abbrv}
\bibliography{./paper,refgeodecomp}
\end{document}